\numberwithin{equation}{section}
\theoremstyle{plain}
\newtheorem{theorem}{Theorem}[section]
\newtheorem{lemma}[theorem]{Lemma}
\newtheorem{proposition}[theorem]{Proposition}
\newtheorem{corollary}[theorem]{Corollary}
\theoremstyle{definition}
\newtheorem{definition}[theorem]{Definition}
\newtheorem{example}[theorem]{Example}
\newtheorem{hypothesis}[theorem]{Hypothesis}
\newtheorem{remark}[theorem]{Remark}
\newtheorem*{remark*}{Remark}
\let\c@equation\c@theorem  
\newcommand{\Sh}{\mathcal S}
\newcommand{\fg}{\mathfrak g}
\DeclareMathOperator{\sign}{sign}
\DeclareMathOperator{\gldim}{gldim}
\DeclareMathOperator{\coker}{coker}
\DeclareMathOperator{\Ext}{Ext}
\DeclareMathOperator{\Cotor}{Cotor}
\DeclareMathOperator{\pdim}{projdim}
\DeclareMathOperator{\HB}{H}
\DeclareMathOperator{\gr}{gr}
\DeclareMathOperator{\Tor}{Tor}
\DeclareMathOperator{\GKdim}{GKdim}
\DeclareMathOperator{\PCdim}{PCdim}
\DeclareMathOperator{\im}{im}
\newcommand\PP{{\mathfrak P}}
\begin{document}

\title{Primitive Cohomology of Hopf algebras}

\author{D.-G. Wang, J.J. Zhang and G. Zhuang}

\address{Wang: School of Mathematical Sciences,
Qufu Normal University, Qufu, Shandong 273165, P.R.China}

\email{dgwang@mail.qfnu.edu.cn, dingguo95@126.com}

\address{Zhang: Department of Mathematics, Box 354350,
University of Washington, Seattle, Washington 98195, USA}

\email{zhang@math.washington.edu}

\address{Zhuang: Department of Mathematics, University of Southern California, Los Angeles 90089-2532, USA}

\email{gzhuang@usc.edu}

\begin{abstract}
Primitive cohomology of a Hopf algebra is defined by using a 
modification of the cobar construction of the underlying coalgebra.
Among many of its applications, two classifications are 
presented. Firstly we classify all non locally PI, pointed Hopf 
algebra domains of Gelfand-Kirillov dimension two; and secondly 
we classify all pointed Hopf algebras of rank one. The first 
classification extends some results of Brown, Goodearl and others 
in an ongoing project to understand all Hopf algebras of 
low Gelfand-Kirillov dimension. The second generalizes results of 
Krop-Radford and Wang-You-Chen which classified Hopf algebras of 
rank one under extra hypothesis. Properties and algebraic structures 
of the primitive cohomology are discussed.
\end{abstract}

\subjclass[2000]{Primary 16T05, 16E65; Secondary 16S34, 16S40}


\keywords{Hopf algebra, primitive cohomology, Gelfand-Kirillov 
dimension, pointed Hopf algebra, grouplike element, skew 
primitive element}

\maketitle


\setcounter{section}{-1}
\section{Introduction}
\label{zzsec0}
One motivation of this paper is to re-introduce cohomological invariants 
derived from the coalgebraic structure of Hopf algebras and to use 
them to study (mostly) infinite dimensional Hopf algebras. 
Recently significant progress has been made in classifying 
infinite dimensional noetherian Hopf algebras of low Gelfand-Kirillov 
dimension (which is abbreviated to GK-dimension from now on), see the 
papers by Brown, Goodearl, Lu, Wu, Wu-Liu-Ding and authors 
\cite{BZ,GZ, LWZ, WZZ2, WZZ3, WLD}. 
Another very important result is a classification of pointed Hopf 
algebra domains of finite GK-dimension with generic infinitesimal 
braiding in the work of Andruskiewitsch-Schneider \cite{AS2} and 
Andruskiewitsch-Angiono \cite{AA}. Invariants derived from the algebraic 
side of a Hopf algebra, such as GK-dimension, global dimension and 
homological integral, have been effectively used in the above mentioned 
papers. Invariants derived from the coalgebraic side of a Hopf 
algebra are also playing a crucial role in the study of infinite 
dimensional Hopf algebras, as illustrated in the present paper.

Throughout the introduction let $k$ be a base field that is algebraically
closed of characteristic zero. Our first goal is to show the 
following. We say an algebra is {\it affine} 
if it is finitely generated over the base field and say an algebra
is {\it PI} if it satisfies a polynomial identity. 

\begin{theorem}
\label{zzthm0.1} 
Let $H$ be an affine pointed Hopf domain such that $\GKdim H<3$. 
If $H$ is not PI, then $H$ is isomorphic to one of following:
\begin{enumerate}
\item[(1)]
The enveloping algebra $U(\mathfrak g)$ of the 2-dimensional solvable Lie 
algebra ${\mathfrak g}$.
\item[(2)]
The algebra $A(n,q)$ \cite[Construction 1.1]{GZ} {\rm{(}}see Example 
\ref{zzex5.4}{\rm{)}} for $n> 0$ and $q$ is not a root of unity.
\item[(3)]
The algebra $C(n)$ \cite[Construction 1.4]{GZ} {\rm{(}}see Example 
\ref{zzex5.5}{\rm{)}} for $n\geq 2$.
\end{enumerate}
\end{theorem}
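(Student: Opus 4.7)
The plan is to reduce to the case $\GKdim H = 2$ and then perform a case analysis based on the structure of the coradical $H_0 = kG$, using the primitive cohomology machinery developed earlier in the paper to control the passage from $H_0$ to higher terms of the coradical filtration. Since $H$ is a non-PI affine domain, classical results force $\GKdim H \ge 2$, so the hypothesis $\GKdim H < 3$ gives $\GKdim H = 2$. Because $H$ is pointed, $H_0 = kG$ for some group $G$, and since $H$ is a domain $G$ must be torsion-free; the inclusion $kG \subseteq H$ together with $\GKdim H = 2$ forces the torsion-free abelian group $G$ to have rank at most $2$. A short argument rules out rank $2$: if $G \cong \ZZ^2$ then $kG$ already accounts for all of the GK-dimension, leaving no room for a new skew-primitive generator, and $H = kG$ is commutative, hence PI. This leaves the two cases $G = \{1\}$ and $G \cong \ZZ$.

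In the connected case $G = \{1\}$, the first primitive cohomology identifies $P(H)$, and a Kostant-type theorem shows that $H = U(\mathfrak{g})$ with $\mathfrak{g} = P(H)$. The GK-dimension and non-PI hypotheses force $\dim \mathfrak{g} = 2$ and $[\mathfrak{g}, \mathfrak{g}] \neq 0$, yielding the two-dimensional solvable Lie algebra of case (1). In the case $G \cong \ZZ$, generated by a grouplike $g$, the primitive cohomology detects $(1, g^n)$-skew primitives $x$ satisfying either a pure commutation relation $gxg^{-1} = qx$ or an affine correction $gxg^{-1} = qx + \lambda(1 - g^n)$, for some $q \in k^{\times}$, $\lambda \in k$, and $n \ge 1$. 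The Hopf subalgebra generated by $g^{\pm 1}$ and such an $x$ is precisely $A(n,q)$ in the purely multiplicative case and $C(n)$ (with $q = 1$ but nontrivial $\lambda$) in the affine case. Non-PI forces $q$ to not be a root of unity in case (2), while non-PI is built into the construction of $C(n)$ for $n \ge 2$.

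The main obstacle is the final step: showing that $H$ equals the candidate subalgebra. This requires combining the coradical filtration of $H$ with the domain hypothesis and equality of GK-dimensions; I would expect to invoke faithful flatness of $H$ over the candidate Hopf subalgebra (as in \cite{BZ,GZ}) to force equality. A secondary technical point is showing that a single skew-primitive suffices: any additional $(1, g^m)$-skew primitive lying outside the subalgebra already constructed would enlarge the graded pieces of the coradical filtration enough to push $\GKdim H \ge 3$, contradicting the reduction. Once equality with the candidate is established, the presentations from \cite{GZ} directly identify $H$ with $A(n,q)$ or $C(n)$, completing the classification.
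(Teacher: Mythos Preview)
Your overall architecture matches the paper's: reduce to $\GKdim H = 2$, split on $\GKdim C_0 \in \{0,1,2\}$, dispose of the extremes by citing prior work (the paper uses \cite[Theorems 1.7, 1.9]{WZZ2} rather than the bare arguments you sketch, and those citations are doing real work), and in the main case $G \cong \mathbb{Z}$ build the candidate Hopf subalgebra $K$ generated by grouplikes and skew primitives, identifying it with $A(n,q)$ or $C(n)$.

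The genuine gap is precisely where you flag it: the step $K = H$. Your proposed mechanism---faithful flatness of $H$ over $K$ together with $\GKdim K = \GKdim H$---does not force equality. Already for group algebras, $k[x^{\pm 2}] \subset k[x^{\pm 1}]$ is a faithfully flat inclusion of Hopf domains with equal GK-dimension but $K \neq H$; adding a skew primitive to $K$ does not by itself rule out analogous phenomena one step higher in the coradical filtration. What is actually needed is to exclude the existence of an element $c \in H \setminus K$ with $\Delta(c) = c \otimes h + g \otimes c + w$ for $w \in K^{\otimes 2}$, and faithful flatness says nothing about this.

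The paper's mechanism is exactly the primitive cohomology you mention but then set aside: one computes $\PCdim K = 1$ for $K = A_G(e,\chi)$ (with $\chi(e)$ not a root of unity) and for $K = C_G(e,\chi,\tau)$---this is the content of Corollaries~\ref{zzcor6.9} and~\ref{zzcor6.10}, which rest on the long computation in Theorem~\ref{zzthm6.5}. Once $\PP^2_{g,h}(K) = 0$ for all $g,h$, Proposition~\ref{zzpro2.4}(2) immediately gives $K = H$. So the primitive cohomology is not merely ``controlling the passage from $H_0$ to higher terms''; its vanishing in degree two is the decisive obstruction-theoretic input that closes the argument. Your proposal invokes the right machinery in the introduction but swaps it out for an inadequate substitute at the critical moment.
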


Theorem \ref{zzthm0.1} was announced in \cite[Corollary 1.12]{WZZ2}
without proof. There are many PI Hopf domains of GK-dimension two, 
see \cite{GZ, WZZ2}. The statement without the ``affine'' hypothesis 
is given in Theorem \ref{zzthm7.4}.

Let $H$ be a pointed Hopf algebra and suppose that $\{C_i:=C_i(H)\}_{i\geq 0}$
is the coradical filtration of $H$ (we are not using $H_i$ here). 
Then the rank of $H$ is defined to be $\dim_k (k\otimes_{C_0}C_1) -1$.
Finite dimensional pointed Hopf algebras of rank one have been studied 
in \cite{KR, Sc} (and infinite dimensional ones in \cite{WYC}). 
In the original definition of a Hopf algebra of rank one given in 
\cite{KR, Sc, WYC}, it is required that $H$ is generated by $C_1$ 
as an algebra. Here we remove this requirement, namely, we don't 
assume that a Hopf algebra of rank one is generated by grouplikes and 
skew primitives. Our second goal is to show the following.

\begin{theorem}
\label{zzthm0.2}
Let $H$ be a pointed Hopf algebra of rank one. 
Then  $H$ is isomorphic to one of the following:
\begin{enumerate}
\item
$A_G(e, \chi)$ in Example \ref{zzex5.4} where $\chi(e)$ is either 1 or 
not a root of unity.
\item
$C_G(e, \chi, \tau)$ in Example \ref{zzex5.5}.
\item
$E_{G}(e,\chi, \ell,\lambda)$ in Example \ref{zzex8.1} 
where $\lambda$ is either 1 or $0$.
\item
$F_G(e, \chi, \ell)$ in Example \ref{zzex8.7}.
\item
$L_G(e, \chi, \ell, \eta)$ in Example \ref{zzex8.9}.
\item
$N_G(e, \chi, \ell, \xi)$ in Example \ref{zzex8.10}.
\item
$O_G(e,\chi,\ell,\eta)$ in Example \ref{zzex8.11}.
\item
$P_G(e,\chi,\ell,\eta)$ in Example \ref{zzex8.12}.
\item
$Q_G(e,\chi,\ell,\eta)$ in Example \ref{zzex8.13}.
\end{enumerate}
\end{theorem}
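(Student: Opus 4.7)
The plan is to start from the coradical filtration $C_0 \subseteq C_1 \subseteq \cdots$ of $H$ and reconstruct $H$ from its first two layers. Since $H$ is pointed, $C_0 = kG$ for $G = G(H)$, and the rank-one hypothesis forces a decomposition $C_1 = kG \oplus kGx$, where $x$ is a $(1,e)$-skew primitive with $\Delta(x) = x \otimes 1 + e \otimes x$ for some $e \in G$. Conjugation of $kx$ by grouplikes yields a character $\chi \colon G \to k^\times$ with $gxg^{-1} = \chi(g)x$ for all $g \in G$. The isomorphism type of the triple $(G, e, \chi)$ is the starting invariant.

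First I would analyze the Hopf subalgebra $B := k\langle G, x\rangle$ generated by the grouplikes and the distinguished skew primitive. The structure of $B$ bifurcates according to the order of $q := \chi(e)$: either $q = 1$, $q$ is not a root of unity, or $q$ is a primitive $\ell$-th root of unity with $\ell \geq 2$. In the first two situations the only freedom is a Hochschild-type correction on $kG$ modifying how $x$ interacts with group elements, producing the families $A_G(e,\chi)$ and $C_G(e,\chi,\tau)$ of cases (1) and (2). In the root-of-unity case one must additionally record the value of $x^\ell$ in $B$; the admissible expressions (zero, a scalar multiple of $1 - e^\ell$, differences of pairs of grouplikes, or more intricate combinations involving $\chi$-weight data) are precisely what distinguishes the remaining examples $E,F,L,N,O,P,Q$ in cases (3)--(9).

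The crucial step is to show that $H = B$, i.e.\ that $H$ is generated by grouplikes and skew primitives, even though this is not a hypothesis. This is where the primitive cohomology developed earlier in the paper enters decisively. Inductively, any element $y \in C_{n+1} \setminus C_n$ not already in $B$ determines a class in a primitive cohomology group of $B$; an inductive argument based on the rank-one hypothesis, combined with compatibility of $y$ with the $(e,\chi)$-weight decomposition, forces this class to be a coboundary. Hence $C_n \subseteq B$ for every $n$, the filtration stabilizes at $B$, and $H = B$. Matching $H$ with an example on the list then reduces to normalizing the defining data under change of generator.

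The main obstacle will be the fine case analysis in the root-of-unity situation. Distinguishing the seven subfamilies $E, F, L, N, O, P, Q$ requires enumerating all possible relations of the form $x^\ell = f(G)$ that are consistent with the Hopf axioms, and for each compatible form identifying which auxiliary scalars $\lambda, \eta, \xi$ are admissible (some forcing $\lambda \in \{0,1\}$ as in case (3)). A secondary delicate point is eliminating redundancy: one must describe precisely when two choices of $(e,\chi,\ell,\lambda,\ldots)$ yield isomorphic Hopf algebras, by tracking how the substitution $x \mapsto \alpha x + \beta(g - g')$ acts on the parameter space and quotienting by this action so that the nine families on the list are pairwise disjoint.
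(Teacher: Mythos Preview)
Your proposal contains a fundamental error: the claim that $H=B=k\langle G,x\rangle$ is false for precisely the families in cases (d)--(i). In each of $F_G,L_G,N_G,O_G,P_G,Q_G$ there is a generator $w$ with
\[
\Delta(w)=w\otimes 1+e^{\ell}\otimes w+[z]^{\ell},
\]
where $[z]^{\ell}\in K\otimes K$ is the nonzero element of (E5.5.2); see (E8.7.2). Thus $w$ is \emph{not} skew primitive, and the Hopf subalgebra generated by grouplikes and skew primitives is only $K=E_G(e,\chi,\ell,\lambda)$, a proper subalgebra of $H$. Consequently the seven families in (c)--(i) are \emph{not} distinguished by ``the value of $x^{\ell}$'': in every one of them $x^{\ell}=\lambda(e^{\ell}-1)$ with $\lambda\in\{0,1\}$, and what separates $F,L,N,O,P,Q$ from $E$ and from one another is the choice of $\theta$-derivation $\delta$ in the HOE $K[w;\theta,\delta]$.

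The paper's actual argument (Theorem \ref{zzthm9.2}) runs as follows. After identifying the subalgebra $K\subseteq H$ generated by grouplikes and skew primitives with $E_G(e,\chi,\ell,\lambda)$ in the root-of-unity case, one computes (Corollary \ref{zzcor8.6}) that $\PP^2_{g,1}(K)$ vanishes except at $g=e^{\ell}$, where it is one-dimensional and spanned by the class of $[z]^{\ell}$. If $K\subsetneq H$, Proposition \ref{zzpro2.4}(1) then produces $v\in H\setminus K$ with $\Delta(v)=v\otimes 1+e^{\ell}\otimes v+[z]^{\ell}$. A case analysis of the relations $vg$ and $[v,z]$ (equations (E9.2.4)--(E9.2.15)) pins down which of the six HOE structures $F,L,N,O,P,Q$ is realised on $K':=k\langle K,v\rangle$. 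Finally, Corollary \ref{zzcor8.14} gives $\PCdim K'=1$, so Proposition \ref{zzpro2.4}(2) forces $H=K'$. Your cohomological instinct is correct---primitive cohomology is indeed the engine---but it is the nonvanishing of $\PP^2_{e^{\ell},1}(K)$ that governs the extension step, not an obstruction argument showing that all extensions are trivial.
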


The Hopf algebras in Theorem \ref{zzthm0.2}(d-i) are new 
and deserve further study. 

{\it Primitive cohomology} is not new. Cohomological theory of 
coalgebras and comodules was introduced and studied by Eilenberg-Moore 
\cite{EM} in 1966, by Doi \cite{Do} in 1981, and then re-introduced by 
several others. We will be using
the definition given by Stefan-Van Oystaeyen \cite{SV} in 1998,
which is under a different name. In \cite{SV}, the 
primitive cohomology was used to study some classes of finite 
dimensional pointed Hopf algebra. A significant part of the paper 
is devoted to detail computations of the primitive cohomology of 
several families of infinite dimensional Hopf algebras. For
example, the primitive cohomology of the Hopf algebras listed in Theorem
\ref{zzthm0.1} and \ref{zzthm0.2} are computed explicitly. The proof of
these two theorems is dependent on the information of these
primitive cohomologies. Since a few years ago, the idea of the 
primitive cohomology has been used in an essential way in several 
projects, see \cite{Zh, WZZ3, W1, W2, WW, NWW}. 

The third main result is Theorem \ref{zzthm6.5}, which is computational
and very tedious. Since the statement is long, we refer the reader 
to Section 6. To further understand Hopf algebras 
of GK-dimension three, four or five, Theorem \ref{zzthm6.5} becomes
imperative. Several groups of researchers, including 
the authors, are working on Hopf algebras of low GK-dimension
and we understand that Theorem \ref{zzthm6.5} is extremely helpful.

What is the definition of the 
primitive cohomology? Given a coalgebra $C$ over $k$ 
with two grouplike elements $g$ and $h$, the $n$th primitive cohomology, 
denoted by $\PP^n_{g,h}(C)$, is defined to be the $n$th Hochschild 
cohomology of $C$ with coefficients in the special 1-dimensional 
$C$-bicomodule ${^g k^h}$, which can be defined by using a generalization 
of the cobar construction. More precise definition and elementary 
properties of $\PP^i_{g,h}(C)$ are given in Sections 
\ref{zzsec1}--\ref{zzsec3}. 
Since there will be a few other definitions based on $\PP^n_{g,h}(C)$, 
it would be beneficial to introduce a \emph{designated name} and a 
\emph{specified symbol} for this special Hochschild cohomology. 
Using our language, 
\cite[Theorem 1.2(a)]{SV} says that the first primitive cohomology 
$\PP^1_{g,h}(C)$ equals the quotient space 
$$P'_{h,g}(C):=P_{h,g}/k(g-h)$$ 
where $P_{h,g}$ is the set of $(h,g)$-primitive elements 
\cite[p. 67]{Mo}. The second primitive cohomology $\PP^2_{g,h}(C)$ has 
some interpretations (see \cite[Theorem 1.2(b) and Corollary 1.3]{SV}, 
or equivalently, Lemma \ref{zzlem2.3} and Proposition \ref{zzpro2.4}), 
which provide useful information about Hopf algebra extension. 

As demonstrated in \cite[Section 2]{SV}, it is very difficult to compute
$\PP^i_{g,h}(C)$ even for some well-studied finite dimensional Hopf 
algebras such as the Taft algebra. In Section \ref{zzsec6}, the primitive
cohomology of Hopf Ore extensions is discussed. Then we can compute 
primitive cohomology for several classes of infinite dimensional 
Hopf algebras, which leads to the proof of Theorems \ref{zzthm0.1} 
and \ref{zzthm0.2}.

Cohomology theory of modules over groups, Lie algebras and generally 
over Hopf algebras has been studied and applied to the representation 
theory for a long time. Some fundamental results on the cohomology 
ring of finite dimensional Hopf algebras opened the door to using 
geometric methods in the representation theory of finite dimensional 
Hopf algebras (see for example \cite{GK, FS}). A beautiful result of 
Mastnak-Pevtsova-Schauenburg-Witherspoon \cite{MPSW} states that, 
under some mild hypotheses, the cohomology ring of finite dimensional 
pointed Hopf algebras is finitely generated (or affine). We also consider 
primitive cohomology ring. 

Let $G(H)$ denote the group of all grouplike elements in a Hopf algebra 
$H$. Suggested by the idea that the primitive cohomology is a twisted 
version of the cobar construction, a natural multiplication exists 
in the sum of primitive cohomologies
$$\PP(H):=\bigoplus_{n\geq 0} \{\bigoplus_{g\in G(H)}\PP^n_{1,g}(H)\}$$
which is called the {\it primitive cohomology ring}. This construction
can be generalized to the case when $G(H)$ is replaced by any subbialgebra
$D\subset H$, see Definition \ref{zzdef10.2}. The {\it connected 
cohomology ring} of $H$ is defined to be
$\PP_{1}(H):=\bigoplus_{n\geq 0}\PP^n_{1,1}(H).$
Note that the cohomology ring $\PP_{1}(H)$ is related to the usual 
cohomology ring of the trivial module over its dual Hopf algebra 
(and these two cohomology rings are equal when $H$ is finite 
dimensional). We prove the following.

\begin{theorem}
\label{zzthm0.3} Let $H$ be a Hopf algebra and $G$ be the group of 
grouplike elements in $H$. Then there is a natural $G$-action 
on the graded algebras $\PP(H)$ and $\PP_{1}(H)$.
\end{theorem}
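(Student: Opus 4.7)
The plan is to build the $G$-action from inner automorphisms of $H$ and then check that these automorphisms are compatible with both the cobar differential (so they descend to primitive cohomology) and the multiplicative structure. For each $a\in G$, conjugation $\phi_a : x\mapsto axa^{-1}$ is a Hopf algebra automorphism of $H$; in particular it is a coalgebra automorphism that sends a grouplike $g$ to $aga^{-1}$. First I would use the functoriality of the (twisted) cobar construction. A coalgebra automorphism $\phi$ of $C$ induces, via $\phi^{\otimes n}$, an isomorphism between the cobar complex with coefficients in $^{g}k^{h}$ and the cobar complex with coefficients in $^{\phi(g)}k^{\phi(h)}$, hence an isomorphism $\PP^n_{g,h}(C)\xrightarrow{\sim} \PP^n_{\phi(g),\phi(h)}(C)$. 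Applied to $\phi_a$ on $C=H$, this yields a family of linear maps
$$
\phi_a^\ast : \PP^n_{1,g}(H)\longrightarrow \PP^n_{a\cdot 1\cdot a^{-1},\, aga^{-1}}(H)=\PP^n_{1,\,aga^{-1}}(H),
$$
where we use that $a1a^{-1}=1$. Summing over $n\geq 0$ and over $g\in G$ gives a map $\phi_a^\ast : \PP(H)\to \PP(H)$, and $(\phi_a\phi_b)^\ast=\phi_{ab}^\ast$ is immediate from $\phi_a\phi_b=\phi_{ab}$; thus $G$ acts on the graded vector space $\PP(H)$.

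Next I would verify that this action is by graded algebra automorphisms. The multiplication on $\PP(H)$ is constructed in Section~\ref{zzsec1}--\ref{zzsec3} from a cup-product type pairing on the cobar complex (the tensor product of bicomodules $^{1}k^{g}\otimes {}^{1}k^{g'}\cong {}^{1}k^{gg'}$ is used to concatenate cochains). Since $\phi_a$ is a Hopf (in particular coalgebra) automorphism, $\phi_a^{\otimes m}\otimes \phi_a^{\otimes n}$ intertwines this pairing: the composition of cochains of bidegrees $(m,g)$ and $(n,g')$ is transported to the composition of the twisted cochains of bidegrees $(m,aga^{-1})$ and $(n,ag'a^{-1})$, and $aga^{-1}\cdot ag'a^{-1}=a(gg')a^{-1}$ matches the target bidegree. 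Hence $\phi_a^\ast$ is multiplicative, and the $G$-action on $\PP(H)$ is by graded algebra automorphisms. The same argument specialized to $g=1$ shows that $\phi_a^\ast$ preserves each summand $\PP^n_{1,1}(H)$ (because $a1a^{-1}=1$), so the subalgebra $\PP_1(H)=\bigoplus_n \PP^n_{1,1}(H)$ is $G$-stable and inherits a $G$-action by graded algebra automorphisms.

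The only real content beyond bookkeeping is checking that the cup product really is natural in the coalgebra, and this is the step I expect to be the main obstacle: one must trace through the definition of the product in Sections~\ref{zzsec1}--\ref{zzsec3} (in particular the identification $^{1}k^{g}\otimes {}^{1}k^{g'}\cong {}^{1}k^{gg'}$) and confirm that applying $\phi_a^{\otimes *}$ commutes with the signs and shuffles appearing there. Naturality with respect to arbitrary coalgebra maps is formal from the cobar description, and conjugation by a grouplike is a particularly well-behaved automorphism because it preserves both the unit and counit, so I expect no pathologies. Once that compatibility is in hand, the theorem follows without further computation.
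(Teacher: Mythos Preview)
Your proposal is correct and takes essentially the same approach as the paper: the $G$-action comes from the left adjoint (conjugation) action at the chain level, which commutes with the cobar differential and is multiplicative for $\odot$, so it descends to an action by graded algebra automorphisms on cohomology. Two minor remarks: the product on $\PP(H)$ is defined in Section~\ref{zzsec10} (formula \eqref{E10.0.1}), not in Sections~\ref{zzsec1}--\ref{zzsec3}; and the paper packages the argument as a special case of Proposition~\ref{zzpro10.4}, which treats the adjoint action of an arbitrary cocommutative Hopf subalgebra $K$ (your case is $K=kG$, where grouplikeness of $a$ collapses $ad_l(a)$ to exactly your $\phi_a^{\otimes n}$), with the chain-level compatibilities you flagged verified explicitly in Lemma~\ref{zzlem10.3}.
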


A generalized version of Theorem \ref{zzthm0.3} is given in Proposition 
\ref{zzpro10.4}. As indicated in the work of Ginzburg-Kumar \cite{GK} 
and Friedlander-Suslin \cite{FS}, the cohomology ring serves as a 
bridge between the representation theory and the associated geometry.
It is conceivable that the primitive cohomology ring contains more 
information than the cohomology ring does since $\PP(H)$ contains 
$\PP_{1}(H)$ as a graded subalgebra. The primitive 
cohomology ring is expected to link the derived category of 
comodules over $H$ to the derived category of graded modules over the 
$\PP(H)$ which should also induce a connection 
between the stable derived category of comodules over $H$ and 
noncommutative projective scheme associated to $\PP(H)$. 

\subsection*{Acknowledgments.} 
The authors thank Ken Brown and Ken Goodearl for many useful
conversations on the subject and thank John Palmieri for the 
proof of Lemma \ref{zzlem1.1}(3). 
D.-G. Wang was supported by the National Natural Science
Foundation of China (No 11471186) 
and the Shandong Provincial Natural Science Foundation of 
China (No. ZR2011AM013). J.J. Zhang was supported by the US 
National Science Foundation (Nos. DMS--0855743 and DMS--1402863). 

\section{Definition of primitive cohomology}
\label{zzsec1}

Throughout let $k$ be a commutative field, and all algebras and 
coalgebras are over $k$. Later we might assume 
that $k$ is algebraically closed of characteristic zero.

Cobar construction is a very useful tool in homological algebra \cite{FHT}. 
Let $C$ be a coalgebra with comultiplication $\Delta$ and let $g$ be 
a grouplike element. Let $\overline{C}$ be the kernel of the counit 
$\epsilon: C\to k$. Then $C=\overline{C}\oplus kg$. The reduced 
comultiplication $\overline{\Delta}: \overline{C}\to \overline{C}
\otimes \overline{C}$ is defined by
$$\overline{\Delta}(c)=\Delta(c)-(c\otimes g+g\otimes c)$$
for all $c\in \overline{C}$. It is easily checked that $\overline{\Delta}$ 
is coassociative and that $(\overline{C},\overline{\Delta})$ is a 
coalgebra without counit. Let $\Omega C$ denote the tensor algebra 
over $\overline{C}$ 
$$T(\overline{C}):=\bigoplus_{n\geq 0} (\overline{C})^{\otimes n}.$$
We use the convention that $V^{\otimes 0}=k$ for any vector space $V$. 
Define $\deg (c)=1$ for all $c\in \overline{C}$. Then the comultiplication 
$\overline{\Delta}:\overline{C}\to (\overline{C})^{\otimes 2}$ induces 
a unique differential $d$ in the tensor algebra $\Omega C$ such that 
$(\Omega C,d)$ is a dg (namely, differential graded) algebra. 
The dg algebra $(\Omega C,d)$ is called the {\it cobar construction} of 
$C$. The original definition of the cobar construction uses $C':=
\coker (kg\to C)$ instead of $\overline{C}$, but two definitions are
equivalent. The cobar construction can be defined in a more general 
setting for coaugmented dg coalgebras, see \cite[Definition, p. 271]{FHT}.

Let $g$ and $h$ be two (not necessarily distinct) grouplike elements 
in $C$. We now give the definition of primitive cohomology, which is 
equivalent to the definition of ${\bf H}^\bullet({^g k^ h},C)$ given 
by Stefan-Van Oystaeyen in \cite[Lemma 1.1]{SV} where ${^g k^h}$ denote 
the $1$-dimensional $C$-bicomodule with left and right $C$-comodule 
structure determined by $\rho_l(\alpha)= g\otimes \alpha$ and 
$\rho_r(\alpha)=\alpha\otimes h$ for all $\alpha\in k$. 
 Let $T_{g,h} C$ denote the tensor algebra over $C$, namely, 
$$T_{g,h} C:=T(C)=\bigoplus_{n\geq 0} C^{\otimes n}$$
with $\deg(c)=1$ for all $c\in C$. We define the map $\partial^0_{g,h}: 
k\to C$ by $\partial^0_{g,h}(1_k)=g-h$ where $1_k$ is the unit of the base
field $k$, and define the map 
$$\partial^1_{g,h}: C\to C\otimes C$$
by $\partial^1_{g,h}(c)=g\otimes c- \Delta(c)+c\otimes h$ for all
$c\in C$. Since $T_{g,h}C$ is a free algebra, $\partial^1_{g,h}$ extends
uniquely to a derivative of $T_{g,h}C$. But this is not what we
are going to proceed. Instead we define $\partial^n_{g,h}: C^{\otimes n}
\to C^{\otimes (n+1)}$, for every $n\geq 0$, as follows:
\begin{equation}
\label{E1.0.1}
\tag{E1.0.1}
\partial^n_{g,h}=g\otimes Id^{\otimes n}+\sum_{i=0}^{n-1} (-1)^{i+1} 
Id^{\otimes i}\otimes \Delta \otimes Id^{\otimes (n-i-1)}+(-1)^{n+1} 
Id^{\otimes n}\otimes h
\end{equation}
where $Id$ is the identity map of $C$. Let 
\begin{equation}
\label{E1.0.2}\tag{E1.0.2}
D_n=\begin{cases}
0& n=0\\
\Delta &n=1\\
\sum_{i=0}^{n-1} (-1)^i Id^{\otimes i}\otimes \Delta
\otimes Id^{\otimes (n-i-1)} &n\geq 2
\end{cases}.
\end{equation} 
Then $\partial^n_{g,h}=g\otimes 
Id^{\otimes n}-D_n+(-1)^{n+1} Id^{\otimes n}\otimes h$. 
Since $\Delta$ is coassociative,
$D_{n+1}D_n=0$ by a standard computation. Hence $(T_{g,h},D)$ is 
a dg algebra where $D$ denotes the derivation $(D_n)_{n\geq 0}$. The 
following lemma is known.

\begin{lemma}
\label{zzlem1.1} Retain the above notation.
\begin{enumerate}
\item[(1)]
\cite[Lemma 1.1]{SV}
$\partial^{n+1}_{g,h}\partial^n_{g,h}=0$ for all $n\geq 0$.
\item[(2)]
If $g=h$, then $\partial_{g,g}:=(\partial^n_{g,g})_{n}$ is a 
derivation of the graded algebra $T_{g,g}C$. Consequently, 
$(T_{g,g}C, \partial_{g,g})$ is a dg algebra.
\item[(3)]
Two dg algebras $(T_{g,g}C,\partial_{g,g})$ and $(\Omega C, d)$ are 
quasi-isomorphic.
\end{enumerate}
\end{lemma}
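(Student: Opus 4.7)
The plan is to dispatch (1) and (2) by direct computation and to spend the main effort on (3), which is a normalization-type statement identifying the un-reduced cobar-type complex $T_{g,g}C$ with the reduced cobar complex $\Omega C$.

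For (1), I would expand $\partial^{n+1}_{g,h}\partial^n_{g,h}$ using the decomposition $\partial^n_{g,h}=g\otimes Id^{\otimes n}-D_n+(-1)^{n+1} Id^{\otimes n}\otimes h$. The middle composite $D_{n+1}D_n$ vanishes by coassociativity of $\Delta$, as the excerpt already notes. The remaining cross-terms pair off: $(g\otimes Id^{\otimes n+1})\circ D_n$ and $D_{n+1}\circ(g\otimes Id^{\otimes n})$ cancel because inserting $\Delta$ at an interior position commutes with tensoring $g$ on the left, up to an index shift that matches the alternating signs; the same happens on the right with $h$; and the two ``corner'' compositions involving $g$ on the left and $h$ on the right each produce $g\otimes Id^{\otimes n}\otimes h$ with opposite signs.

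For (2), with $g=h$, I would verify the graded Leibniz rule $\partial^{n+m}_{g,g}(x\otimes y)=\partial^n_{g,g}(x)\otimes y+(-1)^n x\otimes\partial^m_{g,g}(y)$ on homogeneous tensors $x\in C^{\otimes n}$ and $y\in C^{\otimes m}$. The $D$-pieces split as $D_{n+m}(x\otimes y)=D_n(x)\otimes y+(-1)^n x\otimes D_m(y)$ by breaking the summation index in (E1.0.2) at $i=n$. The two outer grouplike contributions reassemble into $g\otimes(x\otimes y)$ and $(-1)^{n+m+1}(x\otimes y)\otimes g$, while the two interior grouplike terms $(-1)^{n+1}x\otimes g\otimes y$ and $(-1)^n x\otimes g\otimes y$ (coming from the right boundary of $\partial^n_{g,g}(x)$ and the left boundary of $\partial^m_{g,g}(y)$, respectively) cancel. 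Combined with (1) this gives the dg algebra structure on $T_{g,g}C$.

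For (3), I would use the decomposition $C=\overline{C}\oplus kg$ to decompose each $C^{\otimes n}$ as a direct sum indexed by the subset $S\subseteq\{1,\ldots,n\}$ of positions occupied by a $g$. Using $\Delta(g)=g\otimes g$ together with the expansion $\Delta(c)=\overline{\Delta}(c)+c\otimes g+g\otimes c$ for $c\in\overline{C}$, a direct check (verified in low degrees from the identity $\partial^1_{g,g}|_{\overline{C}}=-\overline{\Delta}$ and then extended to all degrees via the derivation property established in (2)) shows that $\partial_{g,g}$ preserves both the subspace $T(\overline{C})\subset T_{g,g}C$ (the $S=\emptyset$ summand in each degree) and its complement $K$ (the sum over all nonempty $S$). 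On $T(\overline{C})$ the restriction of $\partial_{g,g}$ equals $-d$ on generators, so after an appropriate sign twist the sub-dg-algebra $T(\overline{C})$ is isomorphic to $(\Omega C,d)$. The main obstacle is then the acyclicity of $K$: the natural strategy is an explicit contracting homotopy $s\colon K^n\to K^{n-1}$ with $s\partial_{g,g}+\partial_{g,g}s$ equal to the identity on $K$, built by filtering $K$ by the position $j$ of the first tensor slot holding a $g$ and setting $s$ to ``delete that leading $g$'' with a sign depending on $j$. Verifying the homotopy identity requires careful bookkeeping of signs and of how $\Delta(g)=g\otimes g$ interleaves with the ``delete leading $g$'' operation, and this is the essential technical step of the lemma --- presumably the one credited to John Palmieri in the acknowledgments.
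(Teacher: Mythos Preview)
Your approach is essentially the one the paper takes (with the contracting homotopy on the complement credited to Palmieri): split $T_{g,g}C=\Omega C\oplus W$ via the decomposition $C=\overline{C}\oplus kg$, check that $\partial_{g,g}$ preserves both summands and restricts to the cobar differential on $\Omega C$, and then contract $W$ by deleting the leftmost $g$ with sign $(-1)^k$ where $k$ is its position. One detail to be careful with: your homotopy $s$ must send an element to~$0$ when the leftmost $g$ is the \emph{only} $g$ present, since deleting it would land in $\overline{C}^{\otimes(n-1)}\subset\Omega C$ rather than in $K$; the paper's formula encodes exactly this by requiring a further occurrence $c_j=g$ with $j>k+1$ before applying the deletion, and without that clause the identity $s\partial+\partial s=-\mathrm{Id}_K$ fails on the filtration stratum with $|S|=1$.
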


\begin{proof}
(3) This is well known and is a special case of a much more general
result, see for example, \cite[Theorems III.2.1 and III.2.4]{GJ}.
The following proof is given by John Palmieri.

The inclusion $\Omega C\to T_{g,g} C$ commutes with the 
differentials. For each $n$,
$$C^{\otimes n}=\overline{C}^{\otimes n} \oplus W^n$$
where $W^n$ is a direct sum of all possible tensor
products of copies of $\overline{C}$ with at least one copy of 
$k$. This decomposition gives rise to a decomposition of
complex $T_{g,g} C$:
$$T_{g,g} C=\Omega C\oplus W.$$
So is suffices to show that the complex $W$ is null homotopic.
Define a map $s$ of degree $1$ on $W$ as follows:
$$s([\cdots | c_k |g | c_{k+2}|\cdots ])=\begin{cases}
(-1)^{k} [\cdots | c_k | c_{k+2}|\cdots ]& {\text{if $c_i\in\overline{C}$
for all $i\leq k$}} \\ \quad & {\text{and $c_{j}=g$ for some $j> k+1$}}\\
\quad &\quad\\
0& {\text{otherwise.}}
\end{cases}$$
The induction shows that $sd+ds=-Id_W$. Then $Id_W$ is null homotopic
with $-s$ being the homotopy.
\end{proof}

As usual we use ${\text{H}}^n$ for $n$th cohomology of a complex, 
$H$ for a Hopf algebra and ${\bf H}$ for Hochschild (co)homology.
Following the definition given by Stefan-Van Oystaeyen \cite{SV},
we give the following definition of the primitive cohomology.

\begin{definition}
\label{zzdef1.2}
Let $C$ be a coalgebra.
\begin{enumerate}
\item [(1)]
Let $g$ and $h$ be grouplike elements in $C$. The {\it $n$th 
primitive cohomology} of $C$ based on $(g,h)$ is defined to be
$$\PP^n_{g,h}(C)={\text{H}}^n(T_{g,h}(C),\partial_{g,h})=
\ker \partial^n_{g,h}/\im \partial^{n-1}_{g,h}.$$
\item[(2)]
The {\it primitive cohomological dimension} of $C$ is defined to be
$$\PCdim C=\max\{ n \; |\; \PP^n_{g,h}(C)\neq 0, 
{\text{ for some $h,g$}}\}.$$
If $C$ has no grouplike element, we define $\PCdim C=-\infty$.
\end{enumerate}
\end{definition}

\begin{remark}
\label{zzrem1.3}
\cite[Lemma 1.1]{SV}
The primitive cohomology $\PP^n_{g,h}(C)$ is naturally isomorphic to 
the Hochschild cohomology ${\bf H}^n({^g k^h}, C)$ for all $g,h$ and
all $n$. 
\end{remark}

One advantage of presenting $\PP^n_{g,h}(C)$ as the $n$th cohomology 
group of $T_{g,h}(C)$ is that $\PP^n_{g,h}(C)$ is computable in some 
cases, which is very crucial for our applications. 

\section{Interpretations of $\PP^n_{g,h}(C)$ for $n\leq 2$}
\label{zzsec2}

Most of this section is well known, see \cite{SV}. Let $C$ be 
a coalgebra (or let $H$ be a Hopf algebra). We denote the coradical 
filtration of $C$ (or $H$) by $\{C_i\}_{i\geq 0}$, see \cite[p.60]{Mo} 
for a definition. Trying to reduce the appearance of the over-used 
letter $H$ and to avoid possible confusions, we will not use $H_i$ 
for the coradical filtration of $H$ in this paper. Let $G(C)$ be the 
set of grouplike elements in $C$. We call $C$ 
\emph{pointed} if $C_0=kG(C)$. We will use $g$ and $h$ for elements 
in $G(C)$. 

\begin{lemma}
\label{zzlem2.1} 
Let $C$ be a coalgebra and $G=G(C)$. 
\begin{enumerate}
\item[(1)]
$\PP^0_{g,h}(C)=0$ if $h\neq g$ and $\PP^0_{g,g}(C)=k$ for all $g\in G$.
\item[(2)]
\cite[Theorem 1.2(a)]{SV}
$\PP^1_{g,h}(C)=P_{h,g}(C)/(k(h-g))$ where
$P_{h,g}(C)$ is the set of $(h,g)$-primitive elements in $C$ defined
in \cite[p. 67]{Mo}. As a consequence, $\PP^1_{g,h}(C)
\cong P'_{h,g}(C)$ where $P'_{h,g}(C)$ is defined in 
\cite[p. 67]{Mo}.
\item[(3)]
If $C$ is pointed, then $C_1\cong kG\oplus \bigoplus_{g,h\in G} 
\PP^1_{g,h}(C)$.
\end{enumerate}
\end{lemma}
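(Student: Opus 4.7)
The plan is to unpack the definitions directly; none of the three parts should require any deep new ideas once one carefully reads off what the low-degree differentials do.

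For part (1), I would observe that $\im\partial^{-1}_{g,h}=0$ by convention, so $\PP^0_{g,h}(C)=\ker\partial^0_{g,h}$, where $\partial^0_{g,h}:k\to C$ sends $1_k$ to $g-h$. Since distinct grouplike elements in a coalgebra are linearly independent, $g-h=0$ iff $g=h$; hence the kernel is $0$ when $g\neq h$ and all of $k$ when $g=h$. This gives (1) immediately.

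For part (2), I would compute $\ker\partial^1_{g,h}$ and $\im\partial^0_{g,h}$ separately. From \eqref{E1.0.1}, $\partial^1_{g,h}(c)=g\otimes c-\Delta(c)+c\otimes h$, so $c\in\ker\partial^1_{g,h}$ iff $\Delta(c)=g\otimes c+c\otimes h$, which by Montgomery's definition \cite[p.~67]{Mo} is exactly the condition $c\in P_{h,g}(C)$. Next, $\im\partial^0_{g,h}=k(g-h)$, and a direct check using $\Delta(g)=g\otimes g$, $\Delta(h)=h\otimes h$ shows $g-h\in P_{h,g}(C)$. Therefore
\[
\PP^1_{g,h}(C)=\frac{\ker\partial^1_{g,h}}{\im\partial^0_{g,h}}=\frac{P_{h,g}(C)}{k(g-h)}=P'_{h,g}(C),
\]
the last equality being the definition of $P'_{h,g}(C)$ in \cite[p.~67]{Mo}. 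I need to be a little careful with the order of the subscripts $(g,h)$ versus $(h,g)$, but the computation of $\ker\partial^1_{g,h}$ dictates the convention, so no choice is left.

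For part (3), I would invoke the standard structure theorem for the first piece of the coradical filtration of a pointed coalgebra (Taft--Wilson, as presented in \cite[Theorem~5.4.1]{Mo}), which gives the decomposition
\[
C_1=kG\;\oplus\;\bigoplus_{g,h\in G}P'_{g,h}(C).
\]
Substituting the isomorphism $P'_{h,g}(C)\cong \PP^1_{g,h}(C)$ from part (2) and reindexing the direct sum by swapping $g\leftrightarrow h$ yields (3). The main (very minor) obstacle is just matching conventions on the labeling of $(g,h)$-primitives between the cobar-style differential used here and the source in \cite{Mo}; once that is fixed the proof is mechanical.
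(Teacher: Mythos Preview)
Your proposal is correct and follows essentially the same approach as the paper, which is equally terse: part (1) ``follows from the definition,'' part (2) is cited from \cite{SV}, and part (3) is obtained from a structure theorem in \cite{Mo} combined with part (2). The only cosmetic difference is that the paper cites \cite[Theorem 3.4.1]{Mo} for the decomposition of $C_1$ whereas you invoke the Taft--Wilson theorem \cite[Theorem 5.4.1]{Mo}; both yield the required decomposition $C_1 = kG \oplus \bigoplus_{g,h} P'_{g,h}(C)$, so this is not a substantive divergence.
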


\begin{proof}
(1) This follows from the definition.

(3) This is \cite[Theorem 3.4.1]{Mo} together with part (2).
\end{proof}

By part (2) of the above lemma, $\PP^1_{g,h}(C)$ is the space of
$(h,g)$-primitive elements, which justifies calling
$\PP^n_{g,h}(C)$ \emph{primitive cohomology}.

Let $Z^n_{g,h}=\ker \partial^n_{g,h}$ and let $B^n_{g,h}=
\im \partial^{n-1}_{g,h}$. Then $\PP^n_{g,h}(C)=Z^n_{g,h}/B^n_{g,h}$.

\begin{proposition}
\label{zzpro2.2} Let $C$ be a pointed coalgebra. Then the following
are equivalent.
\begin{enumerate}
\item[(1)]
$C=C_0$.
\item[(2)]
$\PP^1_{g,h}(C)=0$ for all $g,h\in G$.
\item[(3)]
$\PCdim C=0$.
\end{enumerate}
\end{proposition}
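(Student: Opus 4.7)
The plan is to prove the cycle (3) $\Rightarrow$ (2) $\Rightarrow$ (1) $\Rightarrow$ (3). The first implication is immediate from Definition \ref{zzdef1.2}(2): since $\PCdim C = 0$ forces $\PP^n_{g,h}(C) = 0$ for every $n \geq 1$ and every pair $g,h \in G$, it holds in particular for $n = 1$.

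For (2) $\Rightarrow$ (1), I would apply Lemma \ref{zzlem2.1}(3), which gives the decomposition $C_1 \cong kG \oplus \bigoplus_{g,h \in G} \PP^1_{g,h}(C)$. Under hypothesis (2) this collapses to $C_1 = kG = C_0$. The coradical filtration of any coalgebra satisfies the recursive identity $C_{n+1} = C_n \wedge C_0$ (the wedge product of subcoalgebras); starting from $C_1 = C_0$, induction yields $C_n = C_0$ for every $n \geq 0$, and since the filtration is exhaustive ($C = \bigcup_n C_n$) we conclude $C = C_0$.

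The substantive step is (1) $\Rightarrow$ (3): assuming $C = kG$, I must show $\PP^n_{g,h}(C) = 0$ for every $n \geq 1$ and every $g,h \in G$. My plan is to invoke Remark \ref{zzrem1.3} to reinterpret $\PP^n_{g,h}(C)$ as the Hochschild cohomology ${\bf H}^n({^g k^h}, kG)$, and then exploit the fact that $kG = \bigoplus_{x \in G} kx$ is cosemisimple: the category of $kG$-bicomodules is semisimple, with simple objects precisely the $1$-dimensional bicomodules ${^x k^y}$, so every bicomodule is injective and the underlying Cotor groups vanish in all positive degrees.

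The main obstacle is this last step. A more hands-on alternative would be to construct an explicit contracting homotopy on the complex $(T_{g,h}(kG), \partial_{g,h})$ in the spirit of the proof of Lemma \ref{zzlem1.1}(3); but when $g \neq h$ the map $\partial_{g,h}$ is not a derivation, so the reduction to the reduced cobar complex $\Omega(kG)$ breaks down and one must work directly with bar-style strings $(g_1, \dots, g_n)$ of grouplike elements. For that reason, routing the argument through Remark \ref{zzrem1.3} and cosemisimplicity is the cleaner path.
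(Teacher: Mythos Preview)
Your proof is correct and follows essentially the same cycle $(3)\Rightarrow(2)\Rightarrow(1)\Rightarrow(3)$ as the paper, with identical arguments for the first two implications. For $(1)\Rightarrow(3)$ the paper postpones the argument until after Lemma~\ref{zzlem3.5}, where it uses the identification $\PP^n_{g,h}(C)\cong\Cotor^n_C(kg,kh)$ and the vanishing of $\Cotor$ in positive degrees over a cosemisimple coalgebra; you instead route through Remark~\ref{zzrem1.3} and the semisimplicity of the bicomodule category over $kG$, which is the same idea via a slightly different (and earlier-available) interpretation of the primitive cohomology.
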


\begin{proof} (2) $\Rightarrow$ (1)
If $\PP^1_{g,h}(C)=0$ for all $h,g\in G(C)$, then $C_1=C_0$ by Lemma 
\ref{zzlem2.1}(3). By the definition of the coradical filtration and 
the induction on $n$, one sees that $C_n=C_0$ for all $n$. Since $C$ 
is pointed, $C=\bigcup_n C_n =C_0$.

(3) $\Rightarrow$ (2) This is trivial.

A proof of (1) $\Rightarrow$ (3) is given after Lemma \ref{zzlem3.5}. 
\end{proof}

The following lemma is \cite[Theorem 1.2(b) and Corollary 1.3]{SV}.

\begin{lemma}
\label{zzlem2.3}  Let $C$ be a subcoalgebra of $D$ and 
$v$ be an element in $D$ such that $\Delta(v)=
v\otimes h+g\otimes v+ w$ and that $w\in C\otimes C$.
\begin{enumerate}
\item[(1)]
\cite[Theorem 1.2(b)]{SV}
$w\in Z^2_{g,h}(C)$.
\item[(2)]
$w\in B^2_{g,h}(C)$ if and only if there is an element $f\in C$
such that $v-f$ is an $(h,g)$-primitive element.
\item[(3)]
\cite[Corollary 1.3]{SV}
If $\PP^2_{g,h}(C)=0$, then there is an element $f\in C$
such that $v-f$ is an $(h,g)$-primitive.
\end{enumerate}
\end{lemma}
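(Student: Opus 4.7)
The plan is to unwind Definitions in Section~1 and to use nothing beyond coassociativity of $\Delta$ and the explicit formula \eqref{E1.0.1} for $\partial^n_{g,h}$ at $n=1,2$.

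For part (1), I would start by writing out
$$\partial^2_{g,h}(w) = g\otimes w - (\Delta\otimes Id)(w) + (Id\otimes\Delta)(w) - w\otimes h,$$
and then derive the required identity by applying coassociativity to $v$. Namely, apply $\Delta\otimes Id$ and $Id\otimes\Delta$ to the given expression $\Delta(v)=v\otimes h+g\otimes v+w$; since $g$ and $h$ are grouplike, the $v\otimes h\otimes h$ and $g\otimes g\otimes v$ terms appear in both expansions and cancel out when the two results are equated. After cancellation, the remaining identity is precisely $(\Delta\otimes Id)(w)+w\otimes h=(Id\otimes\Delta)(w)+g\otimes w$, which rearranges to $\partial^2_{g,h}(w)=0$. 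This shows $w\in Z^2_{g,h}(C)$.

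For part (2), the idea is a direct computation. Note that for $f\in C$,
$$\partial^1_{g,h}(-f) = \Delta(f) - g\otimes f - f\otimes h,$$
so the condition $w\in B^2_{g,h}(C)$ is equivalent to the existence of $f\in C$ with $w=\Delta(f)-g\otimes f-f\otimes h$. On the other hand, $v-f$ is $(h,g)$-primitive precisely when $\Delta(v-f)=(v-f)\otimes h+g\otimes(v-f)$; substituting $\Delta(v)=v\otimes h+g\otimes v+w$ reduces this condition to exactly the same equation $w=\Delta(f)-g\otimes f-f\otimes h$. Hence the two conditions are equivalent, which is (2).

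Part (3) is then immediate: under the hypothesis $\PP^2_{g,h}(C)=0$ we have $Z^2_{g,h}(C)=B^2_{g,h}(C)$, and combining (1) with (2) yields the desired $f$. The only technical point to be careful about is the sign bookkeeping in the coboundary formula, since $\partial^1_{g,h}$ is defined with a minus sign on $\Delta$ in \eqref{E1.0.1}; once this is tracked, everything else is a one-line rearrangement. I do not anticipate a genuine obstacle in the argument.
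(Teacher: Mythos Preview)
Your proposal is correct and essentially follows the paper's argument; parts (2) and (3) match the paper almost verbatim. For part (1) there is a minor difference in packaging worth noting: rather than expanding $(\Delta\otimes Id)\Delta(v)=(Id\otimes\Delta)\Delta(v)$ by hand, the paper simply observes that $w=\Delta(v)-g\otimes v-v\otimes h=-\partial^1_{g,h}(v)$, so $w\in B^2_{g,h}(D)\subseteq Z^2_{g,h}(D)$, and then uses $w\in C\otimes C$ to conclude $w\in Z^2_{g,h}(C)$. This is of course the same computation as yours (the identity $\partial^2_{g,h}\partial^1_{g,h}=0$ \emph{is} coassociativity), but it saves the explicit term-by-term cancellation and makes the cocycle condition immediate.
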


\begin{proof} (1) Since $w=\Delta(v)-g\otimes v-v\otimes h
=-\partial^1_{g,h}(v)$, $w\in B^2_{g,h}(D)\subset Z^2_{g,h}(D)$. 
Since $w\in C\otimes C$, $w\in Z^2_{g,h}(C)$.

(2) If $w\in B^2_{g,h}(C)$, there is $f\in C$ such that
$w=-\partial^1_{g,h}(f)$. Then $\partial^1_{g,h}(v-f)=-w+w=0$, 
namely, $v-f$ is an $(h,g)$-primitive element. The converse is 
clear. 

(3) If $\PP^2_{g,h}(C)=0$, $ Z^2_{g,h}(C)=  B^2_{g,h}(C)$.
The assertion follows from parts (1,2).
\end{proof}

We will use the following proposition several times.

\begin{proposition}
\label{zzpro2.4} Let $H$ be a pointed Hopf algebra and let
$K$ be the Hopf subalgebra of $H$ generated by all grouplike
and skew primitive elements in $H$. 
\begin{enumerate}
\item[(1)]
If $K\neq H$, then there is $c\in H\setminus K$ such that
$$\Delta(c)=c\otimes h+g\otimes c+w,$$
where $w$ represents a nonzero element in $\mathfrak{P}^2_{g, h}(K)$;
\item[(2)]
If $\PCdim K\leq 1$, 
then $K=H$.
\end{enumerate}
\end{proposition}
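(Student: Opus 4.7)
The plan is to combine the coradical filtration with the Taft--Wilson theorem, using the interpretations of $\PP^2$ already recorded in Lemma \ref{zzlem2.3}. For part (1), I would first observe that $K\supseteq kG$ by construction and $C_1=kG\oplus\bigoplus_{g,h}\PP^1_{g,h}(H)\subseteq K$ by Lemma \ref{zzlem2.1}(3). Since $H$ is pointed and $K\neq H$, there is a smallest integer $n$ with $C_n\not\subseteq K$, and necessarily $n\geq 2$. This minimal $n$ is where the desired witness $c$ will come from.

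Next, I would apply the Taft--Wilson theorem: any element of $C_n$ can be written as $c_0+\sum_{g,h\in G} c_{g,h}$ with $c_0\in C_{n-1}$ and each $c_{g,h}\in C_n$ satisfying $\Delta(c_{g,h})-c_{g,h}\otimes h-g\otimes c_{g,h}\in C_{n-1}\otimes C_{n-1}$. Applying this decomposition to some element of $C_n\setminus K$ and using that $c_0\in C_{n-1}\subseteq K$, at least one component $c_{g,h}$ must lie outside $K$; rename it $c$. Then $\Delta(c)=c\otimes h+g\otimes c+w$ with $w\in C_{n-1}\otimes C_{n-1}\subseteq K\otimes K$, which is exactly the shape required by the statement.

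To argue that the class of $w$ in $\PP^2_{g,h}(K)$ is nonzero, I would note that $w\in Z^2_{g,h}(K)$ by Lemma \ref{zzlem2.3}(1). If $w$ were a coboundary, Lemma \ref{zzlem2.3}(2) would produce $f\in K$ for which $c-f$ is an $(h,g)$-primitive element of $H$; but every skew primitive of $H$ lies in $K$ by the very definition of $K$, forcing $c=f+(c-f)\in K$, a contradiction. Part (2) is then immediate: if $\PCdim K\leq 1$, then $\PP^2_{g,h}(K)=0$ for all $g,h$, and part (1) rules out $K\neq H$. The one point to be handled carefully is extracting from the Taft--Wilson decomposition a single Taft--Wilson component that lies outside $K$ while ensuring the coproduct remainder stays inside $K\otimes K$; once that is in place, the proof is a direct application of Lemmas \ref{zzlem2.1} and \ref{zzlem2.3} together with the defining property of $K$.
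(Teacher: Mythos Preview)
Your proposal is correct and follows essentially the same route as the paper's proof: both pick the least $n$ with $C_n\not\subseteq K$, invoke the Taft--Wilson theorem (cited in the paper as \cite[Theorem 5.4.1(2)]{Mo}) to extract a $c\in C_n\setminus K$ with $\Delta(c)=c\otimes h+g\otimes c+w$ and $w\in C_{n-1}\otimes C_{n-1}\subseteq K\otimes K$, and then use Lemma~\ref{zzlem2.3}(1,2) to show that $w$ cannot be a coboundary without forcing $c\in K$. Your caveat at the end is exactly the point the paper glosses over with ``we may assume further that''; it is handled just as you describe.
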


\begin{proof} Since $H$ is pointed, $H=\bigcup_i C_i$ where $C_i$ is 
the coradical filtration of $H$ \cite[Theorem 5.2.2(2)]{Mo}. By definition, 
$C_1\subseteq K$. 

For part (1), pick the 
minimal $n$ such that $C_n\not\subset K$ but $C_{n-1}\subseteq K$. 
Pick any $c\in C_{n}
\setminus K$. By \cite[Theorem 5.4.1(2)]{Mo}, we may assume further that
$$\Delta(c)= c\otimes h+g\otimes c+w$$
where $g,h\in G(H)$ and where $w\in C_{n-1}\otimes C_{n-1}$. So $w$ 
is in $K^{\otimes 2}$. By Lemma \ref{zzlem2.3}(1), $w\in Z^2_{g, h}(K)$. 
If $w\in B^2_{g, h}(K)$, then by Lemma \ref{zzlem2.3}(2),  there is an 
$f\in K$ such that $c-f$ is $(h,g)$-primitive.
So $c-f\in C_1\subseteq K$. Therefore $c\in K$, a contradiction.
Hence $w$ represents a nonzero element in $\mathfrak{P}^2_{g, h}(K)$.

Part (2) is a consequence of part (1).
\end{proof}

\section{Basic properties}
\label{zzsec3}
In this section we collect some elementary properties 
of the primitive cohomology. As in the previous sections, 
let $C$ be a coalgebra and $g$ and $h$ be grouplike elements in $C$.
Let $n$ be a natural number. The following lemmas are easy to verify.

\begin{lemma}
\label{zzlem3.1} 
Let $\phi: D\to C$ be a coalgebra homomorphism such 
that $h'=\phi(h)$ and $g'=\phi(g)$. Then $\phi$ naturally
induces maps $B^n_{g,h}(D)\to B^n_{g',h'}(C)$, 
$Z^n_{g,h}(D)\to Z^n_{g',h'}(C)$ and 
$\PP^n_{g,h}(D)\to \PP^n_{g',h'}(C)$.
\end{lemma}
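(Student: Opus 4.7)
The plan is to show that the tensor powers of $\phi$ assemble into a chain map between the complexes $(T_{g,h}D, \partial_{g,h})$ and $(T_{g',h'}C, \partial_{g',h'})$, from which the three induced maps follow immediately.

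First I would define $\phi_n := \phi^{\otimes n}: D^{\otimes n} \to C^{\otimes n}$ for each $n \geq 0$, with the convention $\phi_0 = \id_k$. The one nontrivial step is to verify the commutativity square
\begin{equation*}
\phi_{n+1}\circ \partial^n_{g,h} \;=\; \partial^n_{g',h'}\circ \phi_n
\end{equation*}
for every $n \geq 0$. Using the explicit formula \eqref{E1.0.1}, the left-hand side applied to $c_1\otimes \cdots \otimes c_n$ splits into three contributions: the term $\phi(g)\otimes \phi(c_1)\otimes \cdots \otimes \phi(c_n)$, the middle terms $(-1)^{i+1}\phi(c_1)\otimes \cdots \otimes (\phi\otimes\phi)\Delta_D(c_i)\otimes \cdots \otimes \phi(c_n)$, and the term $(-1)^{n+1}\phi(c_1)\otimes \cdots \otimes \phi(c_n)\otimes \phi(h)$. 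The hypothesis $\phi(g)=g'$ and $\phi(h)=h'$ takes care of the first and last terms, while the fact that $\phi$ is a coalgebra homomorphism, i.e. $(\phi\otimes \phi)\Delta_D = \Delta_C \phi$, takes care of the middle terms. Together these identify the left-hand side with the right-hand side.

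Once this chain map property is established, the three assertions are formal. A cocycle $z\in Z^n_{g,h}(D)$ satisfies $\partial^n_{g,h}(z)=0$, hence $\partial^n_{g',h'}(\phi_n(z)) = \phi_{n+1}(\partial^n_{g,h}(z)) = 0$, so $\phi_n$ sends $Z^n_{g,h}(D)$ into $Z^n_{g',h'}(C)$. Similarly, if $b = \partial^{n-1}_{g,h}(y) \in B^n_{g,h}(D)$, then $\phi_n(b) = \phi_n(\partial^{n-1}_{g,h}(y)) = \partial^{n-1}_{g',h'}(\phi_{n-1}(y)) \in B^n_{g',h'}(C)$. The induced map on the quotients $\PP^n_{g,h}(D) \to \PP^n_{g',h'}(C)$ is then well defined.

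The proof contains no genuine obstacle; the only risk is sign or indexing errors when unwinding \eqref{E1.0.1}, and keeping the three types of terms (leftmost, middle coproducts, rightmost) clearly separated avoids this. It may be worth noting in passing that the argument in fact shows $\phi_\bullet$ is a morphism of dg algebras when $g=h$ and $g'=h'$ (cf. Lemma \ref{zzlem1.1}(2)), though this stronger statement is not needed for the lemma as stated.
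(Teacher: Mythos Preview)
Your proof is correct and is exactly the natural verification the paper has in mind; the paper in fact omits the argument entirely, stating only that the lemma is ``easy to verify.'' Your chain-map computation via $\phi^{\otimes n}$ is the standard way to make this precise, and nothing is missing.
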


\begin{lemma}
\label{zzlem3.2}
Let $C=\bigcup_{i\in S} D^i$ where $\{D^i\}_{i\in S}$ is a directed set 
of subcoalgebras of $C$ where $S$ is a directed index set. Suppose that 
each $D^i$ contains $g$ and $h$.
\begin{enumerate}
\item[(1)]
$B^n_{g,h}(C)=\bigcup_{i\in S} B^n_{g,h}(D^i)$.
\item[(2)]
$Z^n_{g,h}(C)=\bigcup_{i\in S} Z^n_{g,h}(D^i)$.
\item[(3)]
Fix $n$ and $i$. If the induced map $\PP^n_{g,h}(D^i)\to
\PP^n_{g,h}(D^j)$ is an isomorphism for every $j\geq i$, then 
$\PP^n_{g,h}(C)\cong \PP^n_{g,h}(D^i)$.
\item[(4)]
Fix an $n$. If $\PP^n_{g,h}(D^j)=0$
for all $j\in S$, then $\PP^n_{g,h}(C)=0$.
\item[(5)]
$\PCdim C\leq \max\{\PCdim D^i\mid  {\text{$i\in S$}}\}$.
\end{enumerate}
\end{lemma}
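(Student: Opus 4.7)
The plan is to reduce every assertion to the standard fact that the tensor product of $k$-vector spaces commutes with directed colimits. Since $S$ is directed and any element of $C^{\otimes n}$ is a finite sum of elementary tensors whose factors lie in $C=\bigcup_i D^i$, directedness provides a single index $i\in S$ such that all these factors lie in $D^i$. Hence $C^{\otimes n}=\bigcup_{i\in S}(D^i)^{\otimes n}$ as a directed union of $k$-subspaces for every $n\geq 0$. Moreover, because each $D^i$ is a subcoalgebra containing both $g$ and $h$, the differential $\partial^n_{g,h}$ on $T_{g,h}C$ restricts to the analogous differential on $T_{g,h}D^i$; equivalently, the inclusions $D^i\hookrightarrow C$ are morphisms of the cobar-type complexes considered in Lemma \ref{zzlem3.1}.

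First I would prove (1) and (2). For (1), given $w\in B^n_{g,h}(C)$, write $w=\partial^{n-1}_{g,h}(x)$ with $x\in C^{\otimes(n-1)}$; the opening observation furnishes $i\in S$ with $x\in (D^i)^{\otimes(n-1)}$, and since $\partial^{n-1}_{g,h}$ restricts to the analogous differential on $(D^i)^{\otimes n}$, we get $w\in B^n_{g,h}(D^i)$. The reverse inclusion $B^n_{g,h}(D^i)\subseteq B^n_{g,h}(C)$ is the functorial part of Lemma \ref{zzlem3.1} applied to $D^i\hookrightarrow C$. Part (2) is parallel: if $x\in Z^n_{g,h}(C)$, pick $i\in S$ with $x\in (D^i)^{\otimes n}$; the equation $\partial^n_{g,h}(x)=0$, computed in $C^{\otimes(n+1)}$, already holds in $(D^i)^{\otimes(n+1)}$ by restriction, so $x\in Z^n_{g,h}(D^i)$.

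Combining (1) and (2) and using that taking quotients by subspaces also commutes with filtered colimits yields the colimit description
$$\PP^n_{g,h}(C)=\varinjlim_{i\in S}\PP^n_{g,h}(D^i).$$
Part (3) is then immediate: if every transition map $\PP^n_{g,h}(D^i)\to\PP^n_{g,h}(D^j)$ for $j\geq i$ is an isomorphism, the colimit stabilizes at $\PP^n_{g,h}(D^i)$. Part (4) can be read directly off the colimit description, or argued by hand: any cocycle in $Z^n_{g,h}(C)$ lies in some $Z^n_{g,h}(D^j)=B^n_{g,h}(D^j)\subseteq B^n_{g,h}(C)$ by (2) and the hypothesis. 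Part (5) is just (4) translated into the language of primitive cohomological dimension: if $n$ exceeds $\PCdim D^i$ for all $i$, then $\PP^n_{g,h}(C)=0$ for every pair $(g,h)$.

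The only step requiring any care is the bookkeeping in the first paragraph, namely, that a single index $i$ suffices for each finite collection of tensors; this is exactly where directedness of $S$ is used. Once that observation is in hand, the remainder is a formal consequence of the standard interplay between filtered colimits, kernels, and quotients of vector spaces, and no genuine obstacle arises.
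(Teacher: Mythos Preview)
Your proof is correct and follows essentially the same approach as the paper: parts (1) and (2) are handled via directedness of $S$ exactly as the paper intends (the paper simply says these are ``clear from the definitions''), and parts (4) and (5) are deduced in the same way. The only minor stylistic difference is in (3): you invoke the colimit description $\PP^n_{g,h}(C)=\varinjlim_i\PP^n_{g,h}(D^i)$ and the stabilization of a filtered colimit under eventually-isomorphic transition maps, whereas the paper picks an explicit complement $V\subseteq Z^n_{g,h}(D^i)$ with $Z^n_{g,h}(D^i)=V\oplus B^n_{g,h}(D^i)$ and verifies directly that $Z^n_{g,h}(C)=V\oplus B^n_{g,h}(C)$; these are equivalent formulations of the same idea.
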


\begin{proof} (1,2) These assertions are clear from the definitions. 

(3) Let $V$ be a subspace of $Z^n_{g,h}(D^i)$
such that $Z^n_{g,h}(D^i)=V\oplus B^n_{g,h}(D^i)$. Then 
$V\cong \PP^n_{g,h}(D^i)$. Since the natural map $\phi_{i,j}:
\PP^n_{g,h}(D^i)\to \PP^n_{g,h}(D^j)$ is an isomorphism,
$Z^n_{g,h}(D^j)=V\oplus B^n_{g,h}(D^j)$ for every $j\geq i$.
Hence $V\cap \bigcup_j B^n_{g,h}(D^j)=\{0\}$. By parts (1,2) 
$Z^n_{g,h}(C)=V\oplus B^n_{g,h}(C)$ after taking the union.
The assertion follows. 

(4) This is an immediate consequence of (3). 

(5) This is a consequence of (4) by applying (4) to all 
pairs $(g,h)$.
\end{proof}

\begin{remark}
\label{zzrem3.3}
Let ${\mathbb G}$ be an abelian group.
If $C$ is a ${\mathbb G}$-graded coalgebra and $D^i$s are
${\mathbb G}$-graded subcoalgebras of $C$. Then the graded version of
Lemma \ref{zzlem3.2} holds.
\end{remark}

\begin{lemma}
\label{zzlem3.4}
Let $H$ be a Hopf algebra and $h_1,h_2$ and $g$ be grouplike elements.
Then the map $\Sh^n_g: H^{\otimes n}\to H^{\otimes n}$ defined by
$f\mapsto (g\otimes g\otimes \cdots \otimes g) f$ induces an isomorphism
from $\PP^n_{h_1,h_2}(H)$ to $\PP^n_{gh_1,gh_2}(H)$.
\end{lemma}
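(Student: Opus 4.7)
The plan is to exhibit $\mathcal{S}^\bullet_g := (\mathcal{S}^n_g)_{n \ge 0}$ as an isomorphism of cochain complexes from $(T_{h_1, h_2} H, \partial_{h_1, h_2})$ to $(T_{g h_1, g h_2} H, \partial_{g h_1, g h_2})$; the induced isomorphism on cohomology is then the desired statement. Since $\mathcal{S}^n_g$ is left multiplication by $g^{\otimes n}$ in $H^{\otimes n}$ and $g$ is invertible in the Hopf algebra $H$ (with inverse the grouplike element $S(g)$), the two-sided inverse of $\mathcal{S}^n_g$ is clearly $\mathcal{S}^n_{S(g)}$. So the only real content is that $\mathcal{S}^\bullet_g$ is a chain map.

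To check the chain-map identity $\mathcal{S}^{n+1}_g \circ \partial^n_{h_1, h_2} = \partial^n_{g h_1, g h_2} \circ \mathcal{S}^n_g$, I would treat the three summands of \eqref{E1.0.1} separately. For the first summand, $\mathcal{S}^{n+1}_g(h_1 \otimes f) = (g h_1) \otimes \mathcal{S}^n_g(f)$, which matches the corresponding first summand of $\partial^n_{gh_1, gh_2}\mathcal{S}^n_g(f)$; an analogous computation handles the last summand with $h_2$ replaced by $gh_2$. For the middle summand $D_n$, the key identity is that $g$ being grouplike yields $(g \otimes g)\Delta(x) = \Delta(g)\Delta(x) = \Delta(gx)$ for any $x \in H$, so applying $g^{\otimes (n+1)}$ on the left of $\mathrm{Id}^{\otimes i} \otimes \Delta \otimes \mathrm{Id}^{\otimes (n-i-1)}$ commutes it past $\Delta$, showing $\mathcal{S}^{n+1}_g \circ D_n = D_n \circ \mathcal{S}^n_g$.

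With the chain-map identity in hand, $\mathcal{S}^\bullet_g$ sends $Z^n_{h_1, h_2}(H)$ into $Z^n_{g h_1, g h_2}(H)$ and $B^n_{h_1, h_2}(H)$ into $B^n_{g h_1, g h_2}(H)$, so it descends to a well-defined homomorphism $\PP^n_{h_1, h_2}(H) \to \PP^n_{g h_1, g h_2}(H)$. The composite with its inverse $\mathcal{S}^\bullet_{S(g)}$ in either order is the identity chain map, since $g \cdot S(g) = S(g) \cdot g = 1$ in $H$ implies $g^{\otimes n} \cdot (S(g))^{\otimes n} = 1^{\otimes n}$ in $H^{\otimes n}$. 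This gives the claimed isomorphism.

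The only potential obstacle is the verification that $\mathcal{S}^{n+1}_g$ intertwines with $D_n$, but this is immediate from grouplikeness; all the rest is bookkeeping. I do not expect any serious difficulty.
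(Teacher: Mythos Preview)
Your proposal is correct and follows essentially the same approach as the paper: the paper's proof simply asserts that $\Sh_g$ commutes with the differentials and is invertible with inverse $\Sh_{g^{-1}}$, while you spell out the verification of the chain-map identity summand by summand. The only cosmetic difference is that you write the inverse as $\Sh_{S(g)}$ rather than $\Sh_{g^{-1}}$, which is of course the same thing for a grouplike element.
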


\begin{proof}
Since $\Sh_g$ commutes with differentials, it induces a map from 
$\PP^n_{h_1,h_2}(H)$ to $\PP^n_{gh_1,gh_2}(H)$. Since $\Sh_g$ is 
invertible (with the inverse $\Sh_{g^{-1}}$), the induced map 
is an isomorphism. 
\end{proof}

As a direct consequence of the previous lemma, for any Hopf algebra $H$,
$$\PCdim H=\max\{n | \PP^n_{1, g}(H)\neq 0, \,
\text{for some}\,\, g\}=\max\{n | \PP^n_{g, 1}(H)\neq 0, \,
\text{for some}\,\, g\}.$$

Next we give an interpretation of the primitive cohomology via 
certain cotorsion groups of $C$-comodules.
Let $M$ be a right $C$-comodule and $N$ a left $C$-comodule.
Then the cotorsion group $\Cotor^n_C(M,N)$ is defined to be the
$n$th derived functor of the cotensor functor $M\square_C N$
(see \cite[p. 31]{Do} for a definition). Note that, in \cite{NTZ} and some
other papers, the cotorsion group $\Cotor^n_C(M,N)$ is denoted by 
$\Tor^{C,n}(M,N)$. Below is a standard way of defining $\Cotor$ groups. 
We start with the Hochschild complex of the coalgebra $C$,
\begin{equation}
\label{E3.4.1}
(X,\partial_X)= \qquad
\cdots \to 0\to X^{-1}\to X^{0}\to X^{1}\to \cdots \to X^n\to \cdots
\tag{E3.4.1}
\end{equation}
where the $n$th term $X^{n}$ is $C^{\otimes (n+2)}$ and the 
$n$th differential is 
$$\partial_X^n=(D_{n+2}=)\sum_{i=0}^{n+1} (-1)^{i} Id^{\otimes i}
\otimes \Delta \otimes Id^{\otimes (n+1-i)}.$$
As a complex of left (or right) $C$-comodules, the Hochschild complex 
$(X,\partial_X)$ is homotopic to the zero complex. As a complex of 
$C$-bicomodules, $(X,\partial_X)$ is a free (or injective in the category
of $C$-bicomodules) resolution of $C$. So, for any right $C$-comodule $M$, 
the cotensor product $M\square_C X$ is homotopic to the zero complex of 
$k$-vector spaces. Consequently, ${\text{H}}^n(M\square_C X)=0$ for all $n$
(or equivalently, we say, $M\square_C X$ is acyclic) 
\cite[Proof of Corollary A1.2.12]{Rav}. Since we are mainly
interested in primitive cohomology, let $M$ be the 
right $C$-comodule $kg$ where $g$ is a grouplike element. Note that
$kg\square_C C\cong kg$, and hence $kg \square_C X$ becomes 
the following complex
\begin{equation}
\label{E3.4.2}
(Y,\partial_Y)= \qquad
\cdots \to 0\to Y^{-1}\to Y^{0}\to Y^{1}\to \cdots \to Y^n\to \cdots
\tag{E3.4.2}
\end{equation}
where $Y^{-1}=kg$, $Y^n=kg\otimes C^{\otimes (n+1)}$ for all $n\geq 0$, 
and where the $n$th differential is defined by
$$\partial^n_Y(g\otimes w)=g\otimes g\otimes w+g\otimes 
(\sum_{i=0}^{n} (-1)^{i+1} Id^{i}\otimes \Delta\otimes Id^{n-i})(w)$$
for all $g\otimes w\in Y^{n}$. Since $(Y,\partial_Y)$ is acyclic
(but may not be homotopic to zero as a complex of right $C$-comodules), 
the truncated complex $Y^{\geq 0}$ is an injective resolution of the 
right $C$-comodule $kg$. Hence $\Cotor^n_C(kg,kh)$ is the
$n$th cohomology of the cotensor product complex $Z:=Y^{\geq 0}\square_C kh$.
Using the fact that $C\square_C kh\cong kh$, one sees that
\begin{equation}
\label{E3.4.3}
(Z,\partial_Z)= \qquad
\cdots \to 0\to 0 \to Z^{0}\to Z^{1}\to \cdots \to Z^n\to \cdots
\tag{E3.4.3}
\end{equation}
where $Z^{0}=kg\otimes kh$, $Z^n=kg\otimes C^{\otimes n}\otimes kh$ 
for all $n\geq 1$, and where the $n$th differential is defined by
$$\begin{aligned}
\partial^n_Z&(g\otimes w\otimes h)=g\otimes g\otimes w\otimes h\\
&+g\otimes (\sum_{i=0}^{n-1} (-1)^{i+1} Id^{i}\otimes 
\Delta\otimes Id^{n-i-1})(w)\otimes h+(-1)^{n+1} g\otimes w\otimes h
\otimes h
\end{aligned}
$$
for all $g\otimes w\otimes h\in Z^n$. Using the isomorphism
of $kg\otimes C^{\otimes n}\otimes kh\cong C^{\otimes n}$, 
we obtain a natural isomorphism from $(Z,\partial_Z)$ to
$(T_{g,h}(C),\partial)$. Thus 
\begin{equation}
\label{E3.4.4}
\PP^n_{g,h}(C):={\text{H}}^n(T_{g,h}(C),\partial)\cong 
{\text{H}}^n(Z,\partial_Z)=\Cotor^n_C(kg,kh).
\tag{E3.4.4}
\end{equation}
A standard argument along this line shows that, if $M$ is a right 
$C$-comodule and $N$ is a left $C$-comodule, then 
\begin{equation}
\label{E3.4.5}
\Cotor^n_C(M,N)={\text{H}}^n(M\square_C X^{\geq 0}\square_C N)
\tag{E3.4.5}
\end{equation}
for all $n$. 
When $C$ is finite dimensional over $k$, \eqref{E3.4.4} is known
following by early results. For example, combining \cite[Theorem 3.4]{AW} 
with \cite[Proposition 1.4]{SV}, we have,
$$\Cotor^n_C(kg,kh)\cong {\text{\bf H}}^n(C^\circ,({^g k^h})^\circ)
\cong {\text{\bf H}}^n({^g k^h},C)\cong \PP^n_{g,h}(C)$$
where $(-)^\circ$ means the $k$-linear dual. 
As a summary, we have

\begin{lemma}
\label{zzlem3.5} Let $X$ be the complex defined in \eqref{E3.4.1}.
Let $\Cotor^n_C(-,-)$ be the $n$th derived functor of the cotensor 
functor $-\square_C -$.
\begin{enumerate}
\item[(1)]
Let $M$ {\rm{(}}resp. $N${\rm{)}} be a right {\rm{(}}resp. left{\rm{)}} 
$C$-comodule. Then 
$\Cotor^n_C(M,N)={\rm{H}}^n(M\square_C X^{\geq 0}\square_C N)$.
\item[(2)]
If $g,h$ are grouplike elements in $C$, then 
$\PP^n_{g,h}(C)\cong \Cotor^n_C(kg,kh)$.
\end{enumerate}
\end{lemma}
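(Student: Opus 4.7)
The strategy is to assemble the construction already sketched in the paragraphs preceding the lemma into a formal argument. For part~(1), I would first verify that $X^{\geq 0}$ is an injective resolution of $C$ in the category of $C$-bicomodules. Exactness of the augmented complex $0 \to C \to X^{\geq 0}$ follows from the standard contracting homotopy on the Hochschild complex (the one built from the counit, as recalled just before the lemma), and each term $C^{\otimes (n+2)}$ has the cofree shape $C \otimes V \otimes C$ with $V = C^{\otimes n}$, hence is injective as a $C$-bicomodule. Cotensoring such a cofree bicomodule on the left with $M$ yields $M \otimes V \otimes C$, which remains injective as a right $C$-comodule; therefore $M \square_C X^{\geq 0}$ is an injective resolution of $M$ in the category of right $C$-comodules. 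Applying $- \square_C N$ and passing to cohomology then gives $\Cotor^n_C(M,N) = \mathrm{H}^n(M \square_C X^{\geq 0} \square_C N)$ by the very definition of the derived cotensor functor.

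For part~(2) the plan is to specialize part~(1) to $M = kg$ and $N = kh$ and then to identify the resulting complex term by term with $(T_{g,h}(C),\partial_{g,h})$. Using $kg \square_C C \cong kg$ and $C \square_C kh \cong kh$, the complex $kg \square_C X^{\geq 0}$ becomes $(Y^{\geq 0},\partial_Y)$ of (E3.4.2), and a further cotensor with $kh$ produces $(Z,\partial_Z)$ of (E3.4.3). The canonical isomorphism $kg \otimes C^{\otimes n} \otimes kh \cong C^{\otimes n}$ will carry $\partial_Z^n$ to $\partial_{g,h}^n$ by direct inspection of the two formulas, yielding an isomorphism of complexes $(Z,\partial_Z) \cong (T_{g,h}(C),\partial_{g,h})$; taking cohomology then gives $\Cotor^n_C(kg,kh) \cong \PP^n_{g,h}(C)$.

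The main obstacle I anticipate is the injectivity claim that underpins part~(1), namely that every bicomodule of the form $C \otimes V \otimes C$ is an injective $C$-bicomodule and that the one-sided cotensor product $M \square_C (C \otimes V \otimes C) \cong M \otimes V \otimes C$ is an injective right $C$-comodule. This is standard coalgebraic homological algebra, but it is the piece that justifies using the bicomodule resolution $X^{\geq 0}$ to compute $\Cotor$ rather than an \emph{ad hoc} injective resolution of $M$ alone, and so is where I would invest the most care. Once this is in hand, everything else is direct bookkeeping against the explicit complexes written out in (E3.4.1)--(E3.4.3), and part~(2) becomes a routine consequence of part~(1).
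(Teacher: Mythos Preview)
Your proposal is correct and follows essentially the same route as the paper, which treats the lemma as a summary of the discussion in (E3.4.1)--(E3.4.5) immediately preceding it: the Hochschild complex $X^{\geq 0}$ is an injective $C$-bicomodule resolution of $C$, cotensoring on the left with $M$ yields an injective right-comodule resolution of $M$, and a further cotensor with $N$ computes $\Cotor$; part~(2) is then the specialization $M=kg$, $N=kh$ together with the identification $(Z,\partial_Z)\cong (T_{g,h}(C),\partial_{g,h})$. Your care about the injectivity of cofree (bi)comodules is exactly the point the paper leaves implicit when it calls $Y^{\geq 0}$ an injective resolution of $kg$.
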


Now Proposition \ref{zzpro2.2} {\rm{(1) $\Rightarrow$ (3)}} is
easy. 

\begin{proof}[Proof of Proposition \ref{zzpro2.2} 
{\rm{(1) $\Rightarrow$ (3)}}]
When $C=C_0$, $\Cotor^n_C(M,N)=0$ for all $n\geq 1$ and all 
$C$-comodules $M,N$ (since $C$ is cosemisimple). Let $M=kg$ and $N=kh$, 
then $\Cotor^n_C(kg,kh)=0$ for all $n\geq 1$. By Lemma \ref{zzlem3.5}(2),
$\PP^n_{g,h}(C)=0$ for all $g,h$. This means that $\PCdim C=0$.
\end{proof}

Suppose now that $C$ is an ${\mathbb N}^d$-graded coalgebra
which is locally finite ${\mathbb N}^d$-graded. All grouplikes in $C$
must have degree 0. Then $C^\circ$ is 
a locally finite ${\mathbb N}^d$-graded algebra and $(C^\circ)^\circ\cong C$
where $(-)^\circ$ denotes the graded vector space dual. 
We extend the ${\mathbb N}^d$-grading from $C$ to $C^{\otimes n}$ 
by defining $\deg(a_1\otimes \cdots \otimes a_n)=\sum_{i=1}^n \deg(a_i)$.
In this setting,
\begin{enumerate}
\item[(i)]
the differentials $\partial^n_{g,h}$ preserve the grading, and 
\item[(ii)]
$C^{\otimes n}$ is locally finite for every $n\geq 0$. 
\end{enumerate}

For any grouplike element $g\in C$, it can be viewed as an
algebra map $g:C^\circ\to k$. We denote the 1-dimensional
$C^\circ$-module $C^\circ/(\ker g)$ by $k\xi_g$. The left (and right) 
$C^\circ$-module structure on $k\xi_g$ is given by $ a\cdot\xi_g=
g(a) \xi_g$ for all $a\in C^\circ$. 

\begin{lemma}
\label{zzlem3.6} Let $C$ be a locally finite ${\mathbb N}^d$-graded
coalgebra and let $g,h\in G(C)\subseteq C_0$. 
\begin{enumerate}
\item[(1)]
$\PP^n_{g,h}(C)$ is locally finite ${\mathbb N}^d$-graded for every $n$.
\item[(2)]
For each $n$,
$$(\PP^n_{g,h}(C))^\circ\cong (\Cotor^n_C(kg,kh))^\circ\cong 
\Tor_n^{C^\circ}(k\xi_g,k\xi_h).$$
\end{enumerate}
\end{lemma}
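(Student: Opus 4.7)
Part~(1) I would deduce directly from the two observations (i) and (ii) recorded just before the lemma. Since $\partial^n_{g,h}$ preserves the $\mathbb{N}^d$-grading, the complex $(T_{g,h}(C),\partial_{g,h})$ is a complex of $\mathbb{N}^d$-graded $k$-vector spaces, so its cohomology $\PP^n_{g,h}(C)$ inherits an $\mathbb{N}^d$-grading. Each homogeneous component of $\PP^n_{g,h}(C)$ is a subquotient of the corresponding finite-dimensional homogeneous component of $C^{\otimes n}$ and is therefore finite-dimensional. This step is essentially formal.

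For Part~(2), the first isomorphism is immediate from Lemma~\ref{zzlem3.5}(2), so the content lies in the second. My plan is to use the standard graded $k$-linear duality between $C$ and $C^\circ$: under the local finiteness hypothesis, $(-)^\circ$ is an exact contravariant equivalence between locally finite $\mathbb{N}^d$-graded $C$-comodules and locally finite $\mathbb{N}^d$-graded $C^\circ$-modules, and by construction it sends $kg$ to $k\xi_g$ and $kh$ to $k\xi_h$. The crucial compatibility is the natural isomorphism
$$(M\,\square_C\,N)^\circ \;\cong\; M^\circ \otimes_{C^\circ} N^\circ,$$
obtained by dualizing the equalizer definition of the cotensor product into the coequalizer definition of the tensor product and checking that the $C$-bicomodule coactions on $M\otimes C\otimes N$ pass under $(-)^\circ$ to the $C^\circ$-bimodule actions on $M^\circ \otimes_k C^\circ \otimes_k N^\circ$.

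To finish, I would take the Hochschild resolution $X^{\geq 0}$ of $C$ by injective $C$-bicomodules from the proof of Lemma~\ref{zzlem3.5}(1). Its graded dual is the standard bar resolution of $C^\circ$ by free $C^\circ$-bimodules. Applying $kg\,\square_C\,(-)\,\square_C\,kh$ to the former and using the displayed isomorphism termwise identifies the dual of the complex computing $\Cotor^n_C(kg,kh)$ with the complex $k\xi_g\otimes_{C^\circ}(\text{bar resolution})\otimes_{C^\circ} k\xi_h$ computing $\Tor_n^{C^\circ}(k\xi_g,k\xi_h)$. Taking cohomology, which commutes with the exact functor $(-)^\circ$ on locally finite graded spaces, yields the claimed isomorphism.

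\textbf{Main obstacle.} The only point requiring care is verifying the displayed natural isomorphism $(M\,\square_C\,N)^\circ \cong M^\circ \otimes_{C^\circ} N^\circ$ in the graded locally finite setting and checking that the graded dual of an injective resolution of comodules is a projective resolution of modules. Both are standard facts in this setting, but both rely essentially on local finiteness, so I would be explicit about where that hypothesis enters.
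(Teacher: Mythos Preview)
Your proposal is correct and follows essentially the same strategy as the paper: dualize the explicit complex $Z$ of \eqref{E3.4.3}--\eqref{E3.4.4} and identify the result with the bar complex computing $\Tor_n^{C^\circ}(k\xi_g,k\xi_h)$. The only difference is one of packaging. You invoke the general natural isomorphism $(M\,\square_C\,N)^\circ \cong M^\circ \otimes_{C^\circ} N^\circ$ in the locally finite graded setting and then apply it termwise, whereas the paper bypasses this general statement entirely: it simply writes out $(Z^n)^\circ \cong k\xi_g\otimes (C^\circ)^{\otimes n}\otimes k\xi_h$ by hand, computes the dual differential explicitly, and recognizes the resulting complex as the standard one for $\Tor$. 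The paper's route is thus slightly more elementary --- it never needs the abstract duality lemma you flag as your ``main obstacle'' --- while yours is conceptually cleaner and would generalize more readily. Both are perfectly valid here; your identification of where local finiteness enters is exactly right and is precisely what makes the paper's direct computation go through as well.
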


\begin{proof} (1) Since $C^{\otimes n}$ is locally finite for
all $n$, each term in the complex $Z$ (see \eqref{E3.4.3}) is 
locally finite. By \eqref{E3.4.4}, $\PP^n_{g,h}(C)$ is locally finite. 

(2) By \eqref{E3.4.4}, 
$$(\PP^n_{g,h}(C))^\circ\cong 
(\Cotor^n(kg,kh))^\circ=({\text{H}}^n(Z,\partial_Z))^\circ.$$
Since every term in the complex $Z$ is locally finite 
${\mathbb N}^d$-graded,
$$({\text{H}}^n(Z,\partial_Z))^\circ\cong 
{\text{H}}^n(Z^\circ,\partial_{Z^\circ}).$$
It remains to show that ${\text{H}}^n(Z^\circ,\partial_{Z^\circ})
\cong \Tor_n^{C^\circ}(k\xi_g,k\xi_h)$. Note that $(kg)^\circ\cong k\xi_g$.
Since $C^{\otimes n}$ is locally finite, we have
$$(Z^\circ)^{-n}=(Z^n)^\circ\cong k\xi_g\otimes 
(C^\circ)^{\otimes n}\otimes k\xi_h,$$
and the differential $(\partial_{Z^\circ})^{-n}$ maps an element
$\xi_g\otimes a_1\otimes \cdots \otimes a_n\otimes \xi_h$ to
$$ \begin{aligned}
\xi_g g(a_1)\otimes a_2\otimes & \cdots \otimes a_n\otimes \xi_h\\
& +\sum_{i=1}^{n-1} (-1)^{i} 
\xi_g \otimes a_1\otimes \cdots \otimes a_{i-1}\otimes a_ia_{i+1}
\otimes \cdots \otimes a_n\otimes \xi_h\\
&\qquad\qquad \qquad +(-1)^{n} \xi_g \otimes a_1\otimes \cdots 
\otimes a_{n-1}\otimes h(a_n)\xi_h.
\end{aligned}
$$
Using a statement similar to the discussion after Lemma \ref{zzlem3.4},
$(X^\circ,\partial_{X^\circ})$ is the Hochschild complex over the 
algebra $A:=C^\circ$,
which is a free resolution of the $A$-bimodule $A$. The complex
$(Y^\circ,\partial_{Y^\circ})=k\xi_g\otimes_A(X^\circ,\partial_{X^\circ})$ 
gives 
rise to a free resolution of the right $A$-module $k\xi_g$, and 
the complex $(Z^\circ,\partial_{Z^\circ})=(Y^\circ,\partial_{Y^\circ})
\otimes_A k\xi_h$
computes the torsion group $\Tor^{A}_{*}(k\xi_g,k\xi_h)$. 
Therefore ${\text{H}}^n(Z^\circ,\partial_{Z^\circ})\cong 
\Tor_n^{C^\circ}(k\xi_g,k\xi_h)$.
\end{proof}

The following proposition is a translation of some algebraic results.

\begin{proposition}
\label{zzpro3.7}
Let $C$ be a pointed coalgebra.
\begin{enumerate}
\item[(1)]
If $C$ is a locally finite ${\mathbb N}^d$-graded coalgebra, 
then $\PCdim C=\gldim C^\circ$ where $C^\circ$ is the graded dual algebra 
of $C$. 
\item[(2)]
For every $n\leq \PCdim C$, there are $g, h\in G(C)$ such that 
$\PP^n_{g,h}(C)\neq 0$.
\end{enumerate}
\end{proposition}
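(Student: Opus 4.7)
The plan is to reduce both assertions to well-understood facts about minimal resolutions, using the different tools available in each setting.

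For part (1), I would first invoke Lemma \ref{zzlem3.6}(2) to obtain $(\PP^n_{g,h}(C))^\circ\cong\Tor_n^{C^\circ}(k\xi_g,k\xi_h)$ for all $g,h\in G(C)$; since both sides are locally finite, $\PP^n_{g,h}(C)\ne 0$ if and only if $\Tor_n^{C^\circ}(k\xi_g,k\xi_h)\ne 0$, and therefore
\[
\PCdim C=\sup\{n:\Tor_n^{C^\circ}(k\xi_g,k\xi_h)\ne 0\text{ for some }g,h\in G(C)\}.
\]
Pointedness of $C$ implies that every graded simple $C^\circ$-module is one of the one-dimensional modules $k\xi_g$, $g\in G(C)$. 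A standard graded minimal projective resolution argument then identifies the supremum above with $\gldim C^\circ$, using the equalities $\gldim C^\circ=\sup_g\pdim_{C^\circ}k\xi_g$ and $\pdim_{C^\circ}k\xi_g=\sup\{n:\Tor_n^{C^\circ}(k\xi_g,k\xi_h)\ne 0\text{ for some }h\}$.

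For part (2), no grading is assumed, so I would instead argue directly via minimal injective resolutions of comodules. By Lemma \ref{zzlem3.5}(2), $\PP^n_{g,h}(C)\cong\Cotor^n_C(kg,kh)$. Given $n\le\PCdim C$, I would choose $h\in G(C)$ and $n_0\ge n$ such that $\Cotor^{n_0}_C(kg_0,kh)\ne 0$ for some $g_0\in G(C)$, and take a minimal injective resolution $0\to kh\to I_0\to I_1\to\cdots$ in the Grothendieck category of left $C$-comodules. A direct verification gives $kg\,\square_C N=\{n\in N:\rho_\ell(n)=g\otimes n\}=\operatorname{soc}_g(N)$, the $kg$-isotypic piece of the socle. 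Minimality means each $I_i$ is the injective envelope of $Z_i:=\ker(I_i\to I_{i+1})$ along an essential embedding, so $\operatorname{soc}(I_i)\subseteq Z_i$; consequently every differential in $kg\,\square_C I_\bullet$ vanishes and $\Cotor^n_C(kg,kh)=\operatorname{soc}_g(I_n)$. The choice of $n_0$ gives $I_{n_0}\ne 0$, and the standard fact that $I_m=0$ in a minimal injective resolution forces $I_j=0$ for all $j\ge m$ then yields $I_m\ne 0$ for every $0\le m\le n_0$. Since every nonzero injective $C$-comodule has nonzero socle, some isotypic piece $\operatorname{soc}_{g_n}(I_n)$ is nonzero, producing $g_n\in G(C)$ with $\PP^n_{g_n,h}(C)\ne 0$.

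The main technical obstacle in (1) is justifying the identity $\gldim C^\circ=\sup_S\pdim_{C^\circ}S$ over graded simples $S$ in this $\mathbb{N}^d$-graded setting, where the degree-zero part of $C^\circ$ need not automatically be semisimple; pointedness combined with a graded Nakayama-type argument is what makes this work. In (2) the crux is the minimality assertion $\operatorname{soc}(I_i)\subseteq Z_i$ for a minimal injective resolution, together with the observation that a nonzero injective $C$-comodule has nonzero socle, both standard but requiring invocation of the locally finite nature of comodules.
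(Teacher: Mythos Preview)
Your proposal is correct and follows essentially the same approach as the paper's proof. Both parts match the paper closely: for (1), the paper also reduces via Lemma~\ref{zzlem3.6}(2) and then runs the graded minimal projective resolution argument you sketch (the paper makes explicit the step $\gldim A=\max\{\pdim(A/I)\}$ before descending to simples, which is exactly the Nakayama-type justification you flagged as the technical obstacle); for (2), the paper likewise takes a minimal injective resolution of $kh$, observes $I^n\neq 0$, picks a simple subcomodule $kw\subseteq I^n$, and uses minimality to kill the differential of $kw\,\square_C\, I$---your socle description of $kg\,\square_C\,(-)$ is a slightly more explicit packaging of the same step.
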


Every ${\mathbb Z}^d$-graded vector space can be viewed as a 
${\mathbb Z}$-graded space by setting ${\mathbb Z}^d$-degree 
$(a_1,\cdots,a_d)$ to ${\mathbb Z}$-degree $\sum_{i=1}^d a_i$.
We say a ${\mathbb Z}^d$-graded vector space $M$ is 
\emph{bounded below} if it is \emph{bounded below} when viewed 
as a ${\mathbb Z}$-graded vector space, which means, by definition, 
that there is an $n_0$ such that $M_{n}=0$ for all $n\geq n_0$.

\begin{proof}[Proof of Proposition \ref{zzpro3.7}] 
(1) Let $A=C^\circ$. It follows from Lemma \ref{zzlem3.6}(2) that 
$\PCdim C \leq \gldim C^\circ=\gldim A$. It remains to show 
$\PCdim C \geq \gldim A$.

By combining \cite[Theorem 7.6.17]{MR} with \cite[Corollary I.2.7]{NV2},
the global dimension of $A$ can be computed by using the category
of graded modules over $A$. 

Since $C$ is pointed, $A$ is basic (meaning that
every simple graded right (or left) $A$-module is 1-dimensional).
There is a one-to-one correspondence between the set of 
the grouplike elements $\{g\}$ in $C$ and the isomorphism classes 
of 1-dimensional graded right $A$-modules $\{k\xi_g\}$. 

Since $A$ is locally finite ${\mathbb N}^d$-graded, $A$ is graded 
semi-local and complete with respect to the graded Jacobson 
radical $J(A)$ \cite[p.52]{NV2}. Further $A/J(A)$ is a finite 
direct sum of 1-dimensional $A$-modules. 
Therefore the projective cover exists for every locally finite 
bounded below left and right graded $A$-modules. This implies that 
every locally finite and bounded below left graded $A$-module has a 
minimal projective resolution with each term being locally 
finite and bounded below. 

A standard result in ring theory, for example, a graded version of
\cite[Theorem 4.1.2]{We}, states that
$$\gldim A=\max\{\pdim (A/I)| {\text{ for all left graded ideals $I\subset A$}}\}.$$
Let $n$ be any integer no more than $\gldim A$. Then there is
a left $A$-module $M=A/I$ such that $\pdim M\geq n$.
Since $M$ is locally finite bounded below, $M$ has a minimal
projective resolution
\begin{equation}
\label{E3.7.1}
\tag{E3.7.1}
P:= \qquad
\cdots \to P^{-i}\to \cdots \to P^{-1}\to P^0\to M\to 0
\end{equation}
where, for each $i$, $P^{-i}$ is a locally finite and bounded 
below projective module. Since $\pdim (M)\geq n$, $P^{-n}\neq 0$,
whence, there is a simple module $k\xi_g$
such that $k\xi_g \otimes_A P^{-n}\neq 0$. Since \eqref{E3.7.1} is 
a minimal projective resolution of $M$, the differential in the 
complex $k\xi_g \otimes_A P$ is zero, which implies that
$\Tor_n^A(k\xi_g, M)\neq 0$. Consequently, $\pdim k\xi_g\geq n$.
Repeating the above argument for the right $A$-module $k\xi_g$ shows 
that there is a simple module $k\xi_h$ such that 
$\Tor_n^A(k\xi_g, k\xi_h)\neq 0$. By Lemma \ref{zzlem3.6}(2), 
$\PP^n_{g,h}(C)\neq 0$ and hence $\PCdim C\geq n$, as desired.

(2) If $C$ is finite dimensional, this follows from part (1) and Lemma 
\ref{zzlem3.6}(2). The next is a proof for general $C$. 

Let $n\leq \PCdim C$. Then there are $g,h\in G(C)$ and $m\geq n$ 
such that $\PP^m_{g,h}(C)\neq 0$. Consider the minimal injective
resolution of the simple left $C$-comodule $kh$:
\begin{equation}
\label{E3.7.2}\tag{E3.7.2}
I:= \qquad 0\to kh \to I^0\to I^1\to \cdots \to I^{i}\to \cdots .
\end{equation}
Since $\Cotor_C^m(kg,kh)\cong \PP^m_{g,h}(C)\neq 0$, $I^m\neq 0$. 
So $I^{n}\neq 0$ as $n\leq m$. Then $I^n$ contains a simple 
$C$-subcomodule, say $kw$, for some $w\in C(G)$. So 
$kw\square_{C} I^n\neq 0$. Since $I$ is 
minimal, the differential of the complex $kw\square_{C} I$ is zero.
This implies that $\Cotor_C^n(kw,kh)\neq 0$. By \eqref{E3.4.4},
$\PP^{n}_{w,h}(C)=\Cotor_C^n(kw,kh)\neq 0$.
\end{proof}

The global dimension of a coalgebra was defined in \cite{NTZ}.
The global dimension of $C$ equals  $\PCdim C$ when $C$ 
is a pointed coalgebra \cite[Corollary 3]{NTZ}. Proposition 
\ref{zzpro3.7} has an easy consequence.

\begin{corollary}
\label{zzcor3.8}
Let $H$ be a finite dimensional pointed Hopf algebra that is not 
a group algebra. Then, for every $n\geq 0$, there are grouplike 
elements $g$ and $h$ such that $\PP^n_{g,h}(H)\neq 0$.
\end{corollary}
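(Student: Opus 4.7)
The plan is to reduce to a statement about $\Tor$ groups over the dual algebra $A := H^*$. Since $H$ is finite-dimensional and pointed, $A$ is a finite-dimensional basic algebra (every simple one-sided $A$-module is one-dimensional and is of the form $k\xi_g$ for some $g \in G(H)$). The discussion following Lemma \ref{zzlem3.5}, combined with the finite-dimensional case of Lemma \ref{zzlem3.6}(2), gives the duality
$$\PP^n_{g,h}(H)^* \;\cong\; \Tor_n^{A}(k\xi_g, k\xi_h).$$
So the task reduces to proving that for every $n \geq 0$ there exist grouplikes $g,h \in G(H)$ with $\Tor_n^A(k\xi_g, k\xi_h) \neq 0$.

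Next I would establish that $\gldim A = \infty$. Since $H$ is a finite-dimensional Hopf algebra, $A = H^*$ is Frobenius and hence self-injective, so the classical fact that self-injective algebras have global dimension $0$ or $\infty$ applies. If $\gldim A = 0$ then $A$ is semisimple, equivalently $H$ is cosemisimple, equivalently $H = H_0$; combined with pointedness this forces $H = kG(H)$, contradicting the hypothesis that $H$ is not a group algebra. Therefore $\gldim A = \infty$.

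Now I would use the standard fact that for a finite-dimensional algebra the global dimension equals the supremum of projective dimensions of simple modules. So for each $n \geq 0$ there is a simple right $A$-module $k\xi_g$ with $\pdim k\xi_g \geq n$. Take its minimal projective resolution $P^\bullet \to k\xi_g$; by minimality each differential has image in $P^{-i} J$, where $J = J(A)$, so the induced differentials in $P^\bullet \otimes_A (A/J)$ vanish, giving
$$\Tor_n^A(k\xi_g, A/J) \;\cong\; P^{-n}/P^{-n} J \;\neq\; 0,$$
as $P^{-n} \neq 0$ (since $\pdim k\xi_g \geq n$) and $P^{-n}$ is a nonzero projective $A$-module, so its top is nonzero. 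Because $A$ is basic, $A/J$ is a direct sum of the one-dimensional simples $k\xi_h$, so some summand contributes: $\Tor_n^A(k\xi_g, k\xi_h) \neq 0$ for some $h \in G(H)$, whence $\PP^n_{g,h}(H) \neq 0$.

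The only genuinely delicate point is verifying the duality $\PP^n_{g,h}(H)^* \cong \Tor_n^A(k\xi_g, k\xi_h)$ in the purely finite-dimensional (ungraded) setting; this is the content of the references cited after Lemma \ref{zzlem3.5} (essentially \cite{AW} combined with \cite[Proposition 1.4]{SV}). Everything else is a standard minimal resolution argument together with the well-known dichotomy for global dimension of self-injective algebras.
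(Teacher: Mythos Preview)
Your proof is correct and follows essentially the same route as the paper: the paper places the trivial grading on $H$ and then invokes Proposition~\ref{zzpro3.7}(1,2), whose proofs contain precisely the duality $\PP^n_{g,h}(H)^\circ \cong \Tor_n^{H^\circ}(k\xi_g,k\xi_h)$ and the minimal-resolution argument you carry out by hand. The only cosmetic difference is that the paper packages the ``infinite global dimension $\Rightarrow$ nonvanishing $\PP^n$ for all $n$'' step into Proposition~\ref{zzpro3.7}(2) (via minimal injective resolutions on the comodule side) rather than arguing directly with minimal projective resolutions over $H^*$ as you do.
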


\begin{proof} Since $H$ is pointed and is not a group algebra, 
it is not cosemisimple. This implies that $H^\circ$ is not semisimple, 
and hence it has infinite global dimension. If we define $\deg(f)=0$
for all $f\in H$, then $H$ is locally finite ${\mathbb N}$-graded. 
By Proposition \ref{zzpro3.7}(1), $\PCdim H=\infty$. The 
assertion now follows from  Proposition \ref{zzpro3.7}(2). 
\end{proof}

We state one more proposition.

\begin{proposition}
\label{zzpro3.9} 
For parts (1, 2, 3) we let $\{C_i\}_{i\in S}$ be a set of 
coalgebras and let $C=\bigoplus_{i\in S} C_i$.
\begin{enumerate}
\item[(1)]
If $g,h$ are grouplike elements in $C_i$ for some $i$, then 
$\PP^n_{g,h}(C)=\PP^n_{g,h}(C_i)$ for all $n$.
\item[(2)]
If $g\in C_i$ and $h\in C_j$ for $i\neq j$,  then $\PP^n_{g,h}(C)=0$.
\item[(3)]
$\PCdim C=\max\{\PCdim C_i \mid i\in S\}$.
\item[(4)]
Let $C_1$ and $C_2$ be two coalgebras and $g_i,h_i$ be grouplike
elements in $C_i$ for $i=1,2$. Then, for all $n$, 
$$\PP_{g_1\otimes g_2,h_1\otimes h_2}^n(C_1\otimes C_2)\cong 
\bigoplus_{s=0}^n
\PP_{g_1,h_1}^s(C_1)\otimes \PP_{g_2,h_2}^{n-s}(C_2).$$ 
In general, if $M_i$ are right $C_i$-comodules, and $N_i$ are left 
$C_i$-comodules, for $i=1,2$, then, for all $n$, 
$$\Cotor^n_{C_1\otimes C_2}(M_1\otimes M_2,N_1\otimes N_2)\cong 
\bigoplus_{s=0}^n \Cotor^s_{C_1}(M_1,N_1)\otimes 
\Cotor^{n-s}_{C_2}(M_2,N_2).$$
\item[(5)]
Let $H$ be a Hopf algebra and $K$ be a Hopf subalgebra of $H$ such that
$H=K[x^{\pm 1};\sigma]$ for some grouplike element $x\in H$.
Then $\PP^n_{g,h}(H)\cong \PP^n_{1,g^{-1}h}(K)$ if $g^{-1}h\in G(K)$ and
$\PP^n_{g,h}(H)=0$ if $g^{-1}h\not\in G(K)$. As a consequence, 
$\PCdim H=\PCdim K$.
\end{enumerate}
\end{proposition}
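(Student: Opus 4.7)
My plan is to handle parts (1)--(3) uniformly through the cotensor interpretation of Lemma \ref{zzlem3.5}(2). For these, I would exploit that when $C = \bigoplus_i C_i$ is a direct sum of coalgebras, every $C$-comodule splits canonically as a direct sum of $C_i$-comodules (push the coaction through the decomposition of $C$ and apply counit and coassociativity). In particular $kg$ and $kh$ are $C_i$- and $C_j$-comodules respectively, an injective resolution of $kg$ as a $C_i$-comodule remains injective over $C$, and the cotensor $kg \square_C kh$ coincides with $kg \square_{C_i} kh$ when $i = j$ and vanishes when $i \neq j$. Combining these observations with Lemma \ref{zzlem3.5}(2) produces parts (1) and (2); part (3) then follows from $G(C) = \bigsqcup_i G(C_i)$.

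For part (4), the plan is a direct K\"unneth argument. Choose injective resolutions $I^\bullet \to M_1$ over $C_1$ and $J^\bullet \to M_2$ over $C_2$. Working over a field, the total tensor complex $I^\bullet \otimes J^\bullet$ is exact in positive degrees with $H^0 = M_1 \otimes M_2$ by the K\"unneth formula for chain complexes, and each of its terms is injective over $C_1 \otimes C_2$; the latter is checked by reducing to the cofree case via $(V \otimes C_1) \otimes (W \otimes C_2) \cong (V \otimes W) \otimes (C_1 \otimes C_2)$ through retracts. Hence $I^\bullet \otimes J^\bullet$ is an injective resolution of $M_1 \otimes M_2$. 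The factorization
\[
(A_1 \otimes A_2) \square_{C_1 \otimes C_2} (B_1 \otimes B_2) \cong (A_1 \square_{C_1} B_1) \otimes (A_2 \square_{C_2} B_2)
\]
is then routine because cotensor is an equalizer and tensoring over a field commutes with finite limits. Combining these gives
\[
\Cotor^n_{C_1 \otimes C_2}(M_1 \otimes M_2, N_1 \otimes N_2) = {\text{H}}^n\bigl((I^\bullet \square_{C_1} N_1) \otimes (J^\bullet \square_{C_2} N_2)\bigr),
\]
and another application of K\"unneth over a field delivers the isomorphism.

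For part (5), I would first observe that because $x$ is grouplike, $\Delta(kx^n) = \Delta_K(k)(x^n \otimes x^n)$ for all $k \in K$, so as coalgebras $H \cong K \otimes k\langle x \rangle$ where $k\langle x \rangle$ is the group algebra of the infinite cyclic group; the grouplikes of $H$ are therefore exactly the elements $g_0 x^a$ with $g_0 \in G(K)$ and $a \in \ZZ$. Writing $g = g_0 x^a$ and $h = h_0 x^b$, part (4) yields
\[
\PP^n_{g,h}(H) \cong \bigoplus_{s=0}^n \PP^s_{g_0, h_0}(K) \otimes \PP^{n-s}_{x^a, x^b}(k\langle x \rangle).
\]
Since $k\langle x \rangle$ is pointed and equals its own coradical, Proposition \ref{zzpro2.2} and Lemma \ref{zzlem2.1} force the second factor to vanish unless $s = n$ and $a = b$, in which case it equals $k$. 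The condition $a = b$ is equivalent to $g^{-1}h \in G(K)$, since a direct computation gives $g^{-1}h = \sigma^{-a}(g_0^{-1}h_0)\, x^{b-a}$; in the nonvanishing case, applying Lemma \ref{zzlem3.4} together with the functoriality of Lemma \ref{zzlem3.1} for the coalgebra automorphism $\sigma^{-a}$ of $K$ identifies $\PP^n_{g_0, h_0}(K)$ with $\PP^n_{1, g^{-1}h}(K)$. Specializing to $g = h = 1$ then gives the consequence $\PCdim H = \PCdim K$.

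The main obstacle I expect lies in the K\"unneth step of part (4): while all the ingredients (cofree retracts, the cotensor factorization, and K\"unneth for chain complexes over a field) are standard, the bookkeeping of total complexes and signs must be set up with care. Once part (4) is in hand, part (5) reduces to a clean vanishing argument powered by the cosemisimplicity of $k\langle x \rangle$, and parts (1)--(3) are essentially immediate from the cotensor description supplied by Lemma \ref{zzlem3.5}(2).
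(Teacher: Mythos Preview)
Your handling of parts (1)--(4) matches the paper's approach: both invoke the $\Cotor$ interpretation (Lemma \ref{zzlem3.5}(2), equivalently \eqref{E3.4.4}) for (1)--(3), and K\"unneth for (4). You simply supply the details the paper leaves implicit.

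For part (5) you take a genuinely different route. The paper observes that $H=\bigoplus_{i\in\ZZ} x^iK$ is a direct sum of coalgebras, each summand coalgebra-isomorphic to $K$; it then first shifts via Lemma \ref{zzlem3.4} to reduce $\PP^n_{g,h}(H)$ to $\PP^n_{1,g^{-1}h}(H)$, and only afterwards applies parts (1) and (2) to the pair $(1,g^{-1}h)$, which now lies in the single summand $x^0K=K$. Your approach instead uses the tensor factorization $H\cong K\otimes k\langle x\rangle$ as coalgebras and appeals to part (4), followed by cosemisimplicity of $k\langle x\rangle$. Both are correct. The paper's ordering is slightly cleaner: by shifting first, the pair lands in $K$ itself, so no $\sigma$-twist ever appears and there is no need to invoke that $\sigma$ is a coalgebra automorphism (which it is, since $\Delta$ is an algebra map and $x$ is grouplike, but this check is extra work in your version). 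Your route, on the other hand, makes the logical dependence on K\"unneth explicit and shows that (5) can be seen as a corollary of (4) rather than of (1)--(2).

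One small imprecision: your final sentence ``specializing to $g=h=1$'' does not by itself yield $\PCdim H=\PCdim K$, since $\PCdim$ is a supremum over all pairs $(g,h)$. What you actually need (and have already proved) is the full statement for arbitrary $g,h$, combined with the remark after Lemma \ref{zzlem3.4} that for a Hopf algebra it suffices to range over pairs $(1,g')$.
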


\begin{proof} (1,2) These follow from \eqref{E3.4.4} and
the definition of $\Cotor^n_C(kg, kh)$ by using the minimal
injective resolution of 1-dimensional comodule $kh$.

(3) The assertion follows from parts (1,2).

(4) These assertions follow from the K{\" u}nneth formula and standard argument.

(5) Note that $H=\bigoplus_{i\in {\mathbb Z}} x^i K$ where each $x^i K$ 
is isomorphic to $K$ as coalgebras. If $g,h\in x^i K$ (or equivalently, 
$g^{-1}h\in G(K)$), then $\PP^n_{g,h}(H)\cong \PP^n_{1,g^{-1}h}(H)=
\PP^n_{1,g^{-1}h}(K)$ by part (1). If $g,h$ are not in the same $x^i K$ 
(or equivalently, $g^{-1}h\not\in G(K)$), then by part (2) 
$\PP^n_{g,h}(H)=0$. The consequence is obvious.
\end{proof}

\section{Examples, Part I}
\label{zzsec4}

In this paper we will present quite a few examples, and 
some are new. In this section we give two easy and well-known examples. 
Let $\fg$ be a Lie algebra over $k$ and let $U(\fg)$ be the 
universal enveloping algebra of $\fg$. The only grouplike element
in $U(\fg)$ is 1. Let $\dim$ denote $\dim_k$. Again $n$ is a natural
number. 

\begin{example}
\label{zzex4.1} Suppose ${\rm{char}}\; k=0$ and let $H=U(\fg)$.
Then $\PP^n_{1,1}(H)$ is isomorphic to $\Lambda^{ n} \fg$ as 
vector spaces. As a consequence, $\PCdim U(\fg)=\dim \fg$.
\end{example}

\begin{proof} 
The consequence is clear from the main assertion.

Fix a totally ordered $k$-linear basis $\Phi=\{t_i\}$
of the Lie algebra $\fg$.
By PBW theorem, $U(\fg)$ has a $k$-linear basis
$$\{ t_1^{d_1}t_2^{d_2} \cdots t_n^{d_n}\;|\; n\geq 0,\;
t_1<t_2<\cdots < t_n,\;  d_s\geq 0, \; 
\forall s\}.$$
Clearly, 
$$\Delta(t_1^{d_1}t_2^{d_2} \cdots t_n^{d_n})
=\sum_{i_s} {d_1\choose i_1} {d_2\choose i_2}\cdots{d_n\choose i_n}
t_1^{i_1}t_2^{i_2} \cdots t_n^{i_n}\otimes t_1^{d_1-i_1}t_2^{d_2-i_2} 
\cdots t_n^{d_n-i_n}.$$
Therefore the coalgebra structure of $U(\fg)$ does not dependent on 
the Lie algebra structure of $\fg$. Consequently, we may just assume 
that $\fg$ is abelian.

Define $\deg(t_1)=(1,0, \cdots)$, $\deg(t_2)=(0,1,\cdots)$ and 
so on, and extend this ${\mathbb Z}^{\Phi}$-grading to 
$H^{\otimes m}$ for all $m\geq 1$. Note that the comultiplication
$\Delta$ preserves the grading.

We first assume that $\fg$ is finite dimensional (and abelian).
Say $d=\dim \fg$. By Lemma \ref{zzlem3.6}, $(\PP^{n}_{1,1}(H))^\circ\cong
\Tor^{H^\circ}_n(k,k)$ where $H^\circ$ is the graded dual of the coalgebra 
$H$, which is isomorphic to the commutative polynomial ring of 
$d$ variables. It is well-known that $\dim \Tor_n^{H^\circ}(k,k)
={d\choose n}$. Further, $\Tor_n^{H^\circ}(k,k)$ is 
${\mathbb N}^d$-graded with 1-dimensional component in each degree
$(e_1,\cdots, e_d)$ where $e_s$ is either 
0 or 1 and $\sum_{s=1}^d e_s=n$. 
Now we would like to identify a nonzero element in the 
$(e_1,\cdots, e_d)$-homogeneous 
space $\PP^{n}_{1,1}(H)_{(e_1,\cdots,e_d)}$. 
By a reduction from $\fg$ to a Lie subalgebra 
$\fg':=\bigoplus_{s: e_s=1} k t_{s}$, we may assume that 
$e_s=1$ for all $s$ (or equivalently, $d=n$). 
By Lemma \ref{zzlem1.1}(3) we can use 
the dg algebra $(\Omega C,d)$ to compute $\PP^n_{1,1}$. Consider the 
element
$$f=\sum a_{\sigma} t_{\sigma(1)}\otimes t_{\sigma(2)}\otimes 
\cdots \otimes t_{\sigma(d)}$$ 
where the sum is over permutations $\sigma$ in the symmetric group
$S_d$. Since 
$$
\begin{aligned}
\partial^{n-1}(t_{\sigma(1)}\otimes &t_{\sigma(2)}\otimes \cdots
\otimes t_{\sigma(i)}t_{\sigma(i+1)}\otimes \cdots
\cdots \otimes t_{\sigma(d)})\\
&=t_{\sigma(1)}\otimes t_{\sigma(2)}\otimes \cdots
\otimes t_{\sigma(i)}\otimes t_{\sigma(i+1)}\otimes \cdots
\cdots \otimes t_{\sigma(d)}\\
&\quad + t_{\sigma(1)}\otimes t_{\sigma(2)}\otimes \cdots
\otimes t_{\sigma(i+1)}\otimes t_{\sigma(i)}\otimes \cdots
\cdots \otimes t_{\sigma(d)},
\end{aligned}
$$
we have, modulo $B^{n}_{1,1}(H)_{(1,\cdots,1)}$, that 
$$f\equiv a t_1\otimes t_2\otimes \cdots \otimes t_d, 
\quad a=\sum (-1)^\sigma a_\sigma$$
which is in $Z^d_{1,1}(H)_{(1,\cdots,1)}$. 
The above computation also shows that 
$t_1\otimes t_2\otimes \cdots \otimes t_d$ is not in 
$B^n_{1,1}(H)_{(1,\cdots,1)}$. 
Therefore the class of $t_1\otimes t_2\otimes \cdots \otimes t_d$
is a nonzero homogeneous element in $\PP^n_{1,1}(H)_{(1,\cdots,1)}$. 
This ends the proof when $\fg$ is finite.

Since we may assume that $\fg$ is abelian, $\fg$ is a union 
of finite dimensional Lie subalgebras. 
Next we will apply Lemma \ref{zzlem3.2}(3). Let $\{D_i\}$ be the family
of Hopf subalgebras of $H$ generated by various finite subsets of the
chosen basis of the Lie algebra $\fg$. Fix any degree
in ${\mathbb N}^{\Phi}$. The hypotheses in the graded version of 
Lemma \ref{zzlem3.2}(3) holds by the finite dimensional case. 
Hence the assertion follows from the graded version of 
Lemma \ref{zzlem3.2}(3).
\end{proof}

\begin{remark}
\label{zzrem4.2} 
Let $\{t_i\}_{i\in I}$ be any fixed $k$-linear basis of 
${\mathfrak g}$ with a total order in $I$. The above proof shows 
that $\PP_{1}^n(U({\mathfrak g}))$ has a $k$-linear basis
$$\{ t_{i_1}\otimes \cdots \otimes t_{i_n} \;|\; i_1<i_2<\cdots <i_n,
\; {\text{and}}\; i_s\in I\}.$$
After the cohomology ring is defined in Section \ref{zzsec10},
one can easily show that $\bigoplus_{n\geq 0} \PP^n_{1,1}(H)$ is 
isomorphic to $\Lambda (\fg):=\bigoplus_{n\geq 0}\Lambda^{ n} \fg$ 
as graded algebras. 
\end{remark}

The following example is easy and follows from Lemma \ref{zzlem3.5}(2).

\begin{example}
\label{zzex4.3}
Let $C$ be a cosemisimple coalgebra. Then $\PCdim C=0$ if
$C$ has a grouplike element and $\PCdim C=-\infty$ if $C$
has no grouplike element.
\end{example}

\section{Hopf Ore extensions}
\label{zzsec5}
One way of constructing new examples is to use Hopf Ore extension.
The Hopf Ore extension was studied by many researchers, see 
\cite{BDG, Pa, W2, WYC, BOZZ}, for example. The most general one is given 
in \cite{BOZZ}. We use a version in between \cite{Pa} and \cite{BOZZ}.

Let $H$ be a Hopf algebra and $K$ be a Hopf subalgebra of $H$. 
Suppose that 
\begin{enumerate}
\item[(i)]
$\sigma$ is an algebra automorphism of $K$,
\item[(ii)]
$\delta$ is an $\sigma$-derivation of $K$, and 
\item[(iii)]
$z\in H\setminus K$. 
\end{enumerate}
We say $H$ is a {\it Hopf Ore extension} (or {\it HOE}) of $K$ if 
$H=K[z;\sigma, \delta]$ as an algebra and if 
$$\Delta(z)=z\otimes 1+e\otimes z+z_0$$ where $e$ 
is a grouplike element in $K$ and $z_0\in K^{\otimes 2}$. By the
antipode axiom, we have $S(z)=-e^{-1} z+\beta$ for some $\beta\in K$.

In most of the papers, for example, see \cite{Pa}, it is assumed that $z_0=0$. 
In \cite{BOZZ}, authors consider a slightly more general
setting by assuming that
$$\Delta(z)=z\otimes 1+e\otimes z+z_0+v(z\otimes z)$$
for some $v\in K^{\otimes 2}$. In this paper we always assume
that $v=0$ (but $z_0$ can be nonzero). 
Recall that the algebra structure of the Ore extension 
$H=K[z;\sigma, \delta]$ is determined by relation
$$zf=\sigma(f)z+\delta(f), \quad \forall \quad f\in K.$$

Both $C(n)$ \cite[Construction 1.4]{GZ} and 
$A(n,q)$ \cite[Construction 1.1]{GZ} are HOEs of 
$K=k {\mathbb Z}$. At the end of this section we will give two 
examples which are generalizations of $C(n)$ and $A(n,q)$. 
Let $k^{\times}$ be $k\setminus \{0\}$.

\begin{lemma} 
\label{zzlem5.1}
Retain the above notation and let $H=K[z;\sigma,\delta]$ be a
HOE of $K$.
\begin{enumerate}
\item[(1)]
$\sigma(e)=q e$ for some  scalar $q\in k^\times$. If $z_0=0$, then 
$\delta(e)\in Z^1_{e^2,e}(K)$. 
\item[(2)]
If $K$ is commutative {\rm{(}}or $e$ commutes with $\delta(e)${\rm{)}}
and $q\neq 1$ and $z_0=0$, we may assume 
that $ze=qez$ after 
replacing $z$ by $z+a$ for some $a\in K$.
\item[(3)]
\cite[Proposition 2.5]{BOZZ} 
$G(H)=G(K)$.
\end{enumerate}
\end{lemma}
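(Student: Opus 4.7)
The plan is to verify the three assertions in sequence through direct computation with the comultiplication, exploiting the freeness of the Ore extension $H = K[z;\sigma,\delta]$ as a left $K$-module on the basis $\{z^n\}_{n \geq 0}$.

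For part (1), I would apply $\Delta$ to the Ore relation $ze = \sigma(e)z + \delta(e)$, using $\Delta(e) = e \otimes e$ and $\Delta(z) = z \otimes 1 + e \otimes z + z_0$. Expanding $\Delta(z)\Delta(e)$ and $\Delta(\sigma(e))\Delta(z) + \Delta(\delta(e))$ in $H \otimes H$, I would then isolate the component linear in $z$ in the first tensor factor (the piece lying in $Kz \otimes K$). The left side contributes $\sigma(e) z \otimes e$, while the right side contributes $\sum u_i z \otimes v_i$ where $\Delta(\sigma(e)) = \sum u_i \otimes v_i$; forcing equality gives $\Delta(\sigma(e)) = \sigma(e) \otimes e$. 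Applying $\epsilon \otimes \id$ yields $\sigma(e) = qe$ with $q := \epsilon(\sigma(e)) \in k^\times$ (nonzero since $\sigma$ is an automorphism and $e$ is invertible). Now assume $z_0 = 0$. Substituting back and comparing the remaining $K \otimes K$ components produces $\Delta(\delta(e)) = \delta(e) \otimes e + e^2 \otimes \delta(e)$, which is exactly $\partial^1_{e^2,e}(\delta(e)) = 0$, so $\delta(e) \in Z^1_{e^2,e}(K)$.

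For part (2), set $a := (q-1)^{-1} e^{-1} \delta(e) \in K$ (well-defined since $q \neq 1$ and $e$ is invertible) and replace $z$ by $z' := z + a$. A direct calculation gives $z'e - qez' = \delta(e) + ae - qea$. In the commutative case, $ae - qea = (1-q)ae$, and the choice of $a$ makes $\delta(e) + (1-q)ae = 0$; in the case that $e$ commutes with $\delta(e)$, one uses $e^{-1}\delta(e)e = \delta(e)$ to see again that $qea - ae = \delta(e)$. Either way, $z'e = qez'$, as required.

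For part (3), use the Ore filtration $F_n H := \sum_{i \leq n} K z^i$. Since $\Delta(z) \in F_1(H \otimes H) := \sum_{i+j \leq 1} F_i H \otimes F_j H$ and the filtration is multiplicative, an induction on $n$ gives $\Delta(z^n) = \Delta(z)^n \in F_n(H \otimes H)$, hence $\Delta(F_n H) \subseteq F_n(H \otimes H)$. Freeness of $H$ over $K$ yields, as $k$-vector spaces, the direct sum decomposition $F_N(H \otimes H) = \bigoplus_{i + j \leq N} K z^i \otimes K z^j$. Now if $g \in G(H)$ is written as $g = \sum_{i=0}^n k_i z^i$ with $k_n \neq 0$ and $n \geq 1$, then the $K z^n \otimes K z^n$ summand of $g \otimes g$ is $k_n z^n \otimes k_n z^n \neq 0$, so $g \otimes g \notin F_{2n-1}(H \otimes H)$; meanwhile $\Delta(g) \in F_n(H \otimes H) \subseteq F_{2n-1}(H \otimes H)$ since $n \leq 2n-1$. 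This contradicts $\Delta(g) = g \otimes g$, forcing $n = 0$ and $g \in K \cap G(H) = G(K)$. The main bookkeeping hurdle is in part (1), where one must carefully track the decomposition of $H \otimes H$ as a bigraded $K$-bimodule to isolate the $Kz \otimes K$ component; the remaining arguments are an explicit calculation (part (2)) and a routine multiplicative-filtration comparison (part (3)).
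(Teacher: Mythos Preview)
Your argument is correct. The main difference from the paper is one of packaging: for parts (1) and (3) the paper simply invokes \cite[Theorem~2.4 and Proposition~2.5]{BOZZ}, while you give self-contained proofs via direct coproduct computation and the Ore filtration respectively. Your route for (1) is exactly the computation underlying the cited result (comparing bihomogeneous components of $\Delta(ze)=\Delta(\sigma(e))\Delta(z)+\Delta(\delta(e))$ in the left $K$-basis $\{z^n\}$), and your filtration argument for (3) is the standard one. The advantage of your approach is that it is elementary and independent of \cite{BOZZ}; the paper's approach is shorter and places the result in the general HOE framework developed there.

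One small point on part (2): the paper additionally observes that $a=(q-1)^{-1}e^{-1}\delta(e)$ lies in $Z^1_{e,1}(K)$, so that after the substitution $z'=z+a$ one still has $\Delta(z')=z'\otimes 1+e\otimes z'$ (i.e.\ the new $z_0'$ remains zero). This is not literally part of the stated lemma, but it matters for later applications under Hypothesis~\ref{zzhyp6.1}, where one wants both $ze=qez$ and $z_0=0$ simultaneously. You can recover it immediately from your part~(1): since $\Delta(\delta(e))=\delta(e)\otimes e+e^2\otimes\delta(e)$, multiplying by $(q-1)^{-1}(e^{-1}\otimes e^{-1})$ gives $\Delta(a)=a\otimes 1+e\otimes a$.
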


\begin{proof} (1) By \cite[Theorem 2.4(d)]{BOZZ}, there is a 
character $\chi: K\to k$ such that $\sigma(r)=\chi(r_1)\; r_2$.
So $\sigma(e)=q e$ where 
\begin{equation}
\label{E5.1.1}\tag{E5.1.1}
q=\chi(e).
\end{equation} 
Applying \cite[Theorem 2.4(e)]{BOZZ} to element $r=e$, 
we obtain that $\Delta(\delta(e))=\delta(e)\otimes e+e^2\otimes
\delta(e)$. Hence $\delta(e)\in Z^1_{e^2,e}(K)$. 

(2) Suppose $q\neq 0$ and let $a=(q-1)^{-1}e^{-1}\delta(e)$. 
If $e$ commutes with $\delta(e)$, then $e$ commutes with $a$
and $(z+a)e=qe(z+a)$. It is easy to see that $a\in Z^1_{e,1}(H)$,
whence $z':=z+a\in Z^1_{e,1}(H)$ or $\Delta (z')=z'\otimes 1+
e\otimes z'$. 

(3) This is \cite[Proposition 2.5]{BOZZ}  when $v=0$.
\end{proof}

Let $q\in k^\times$.
Recall that $q$-integer $[n]_q$
is $1+q+q^2+\cdots + q^{n-1}$ for any positive integer
$n$. The $q$-factorial $[n]_q!$ is defined to be $\prod_{i=0}^n [i]_q$.
Then $q$-binomial coefficients are defined to be, for $n\geq m\geq 0$, 
$${n\choose m}_{q}:=\frac{[n]_q!}{[m]_q! [n-m]_q !}.$$
When $q=1$ we recover the usual binomial coefficients. 
All these definitions and the following lemma can be found in 
\cite[Section 6.5]{LR}.

\begin{lemma}
\label{zzlem5.2}
Assume ${\rm{char}}\; k=0$. Suppose $0\leq m\leq n$.
\begin{enumerate} 
\item[(1)]
If $q$ is either 1 or not a root of unity, then 
${n\choose m}_{q}\neq 0$.
\item[(2)]
\cite[Proposition 6.5.1(b)]{LR}
Assume that $q$ is a primitive $\ell$th root of unity. Let $n=q_n \ell+r_n$
and $m=q_m\ell +r_m$ where $0\leq r_n, r_m \leq \ell$. Then 
$${n\choose m}_{q}={r_n\choose r_m}_{q}{q_n\choose q_m}_{1}.$$
\item[(3)]
${n\choose m}_{q}=0$ if and only if $r_n<r_m$.
\end{enumerate}
\end{lemma}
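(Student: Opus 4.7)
The plan is to treat the three parts in the order (2), (1), (3), since (2) is cited verbatim from \cite[Proposition 6.5.1(b)]{LR} and (3) follows from combining (2) with (1).

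For part (2), I would simply invoke \cite[Proposition 6.5.1(b)]{LR}; the standing convention is that $0\le r_n,r_m<\ell$, and ${a\choose b}_q=0$ whenever $a<b$.

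For part (1), I would argue directly from the definition. When $q=1$, the $q$-binomial coefficient reduces to the ordinary binomial coefficient $\binom{n}{m}$, which is a positive integer. When $q$ is not a root of unity, observe that for every positive integer $k$,
\[
[k]_q \;=\; 1+q+q^2+\cdots+q^{k-1} \;=\; \frac{1-q^k}{1-q} \;\neq\; 0,
\]
since $q^k\neq 1$. Hence every factor in $[n]_q!$, $[m]_q!$, and $[n-m]_q!$ is nonzero, and therefore ${n\choose m}_q\neq 0$.

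For part (3), I would apply part (2). In the decomposition
\[
{n\choose m}_q \;=\; {r_n\choose r_m}_q \,{q_n\choose q_m}_1,
\]
the factor ${q_n\choose q_m}_1$ is an ordinary binomial coefficient. A short check shows that $n\ge m$ forces $q_n\ge q_m$ (otherwise $m-n\ge \ell-(r_n-r_m)>0$, contradicting $n\ge m$), so this factor is a positive integer, hence nonzero. Thus ${n\choose m}_q=0$ iff ${r_n\choose r_m}_q=0$. If $r_n<r_m$, the factor vanishes by convention. Conversely, if $r_n\ge r_m$, then all $q$-integers $[k]_q$ appearing in $[r_n]_q!$ satisfy $1\le k\le r_n<\ell$, so $q^k\ne 1$ and $[k]_q\ne 0$ by the computation in part (1); this makes ${r_n\choose r_m}_q\neq 0$.

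The only mildly delicate point is the edge case in the inequality $q_n\ge q_m$ (and the minor ambiguity in whether the residues are taken in $[0,\ell)$ or $[0,\ell]$), but this is resolved by observing that $0\le r_m,r_n<\ell$ in the standard convention of \cite{LR}; no real obstacle arises.
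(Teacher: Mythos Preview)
Your proposal is correct. The paper itself does not prove this lemma; it simply declares that ``all these definitions and the following lemma can be found in \cite[Section 6.5]{LR}'' and cites Proposition 6.5.1(b) for part (2). Your explicit arguments for (1) and (3) are the standard ones and match what one finds in \cite{LR}, so there is nothing to add beyond noting that the paper treats the entire lemma as a citation while you have supplied the routine details.
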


Let $H:=K[z;\sigma,\delta]$ be a HOE of $K$. Define $\deg(z)=1$ and 
$\deg(a)=0$ for all $a\in K$. If $z_0=0$ and $\delta=0$, then $H$ 
is an ${\mathbb N}$-graded Hopf algebra with respect to this
grading. In general, for each nonzero element $f\in H$, we define 
the degree of $f$ to be $n$ if $f=\sum_{i=0}^n f_i z^i$ where
$f_i\in K$ and $f_n\neq 0$. Then $H$ becomes an ${\mathbb N}$-filtered 
algebra. Any nonzero element $f\in H^{\otimes 2}$ 
can be written as $f=\sum_{i=0}^n (\sum_{a+b=i} f_{a,b} z^a\otimes z^b)$ 
with $f_{a,b}\in K^{\otimes 2}$ and  $\sum_{a+b=n} f_{a,b} z^a\otimes 
z^b\neq 0$. In this case the degree of $f$ is defined to be $n$. 
Similarly, $H^{\otimes 2}$ is an ${\mathbb N}$-filtered algebra. 
The following lemma implies that $\deg (\Delta(f))=\deg(f)$
for any nonzero $f\in H$.

\begin{lemma} 
\label{zzlem5.3}
Retain the notation as above. Then 
\begin{equation}
\label{E5.3.1}\tag{E5.3.1}
\Delta(z^n)=\sum_{i=0}^n {n \choose i}_q e^{n-i} z^i\otimes z^{n-i}
+ldt
\end{equation}
where $ldt$ means a linear combination of elements in $H^{\otimes 2}$ of
lower degree, namely, of  degree strictly  less than $n$.
If $z_0=0$ and $ze=qez$, then $ldt=0$. 
\end{lemma}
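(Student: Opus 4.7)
The plan is to prove \eqref{E5.3.1} by induction on $n$, using the multiplicativity of the comultiplication $\Delta$ together with the $q$-Pascal identity. First, for $n=0,1$ the formula is immediate: the case $n=1$ is just $\Delta(z)=z\otimes 1 + e\otimes z + z_0$, and $z_0$ has $z$-degree $0 < 1$, so it is subsumed by $ldt$.

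For the inductive step, I would write $\Delta(z^{n+1})=\Delta(z^n)\,\Delta(z)$ and expand. The inductive hypothesis gives
$$\Delta(z^n)=\sum_{i=0}^n \binom{n}{i}_q e^{n-i}z^i\otimes z^{n-i} + R_n,$$
where $R_n\in H^{\otimes 2}$ has $z$-degree at most $n-1$. Multiplying by $z\otimes 1 + e\otimes z + z_0$, the $z_0$-term and the products involving $R_n$ each contribute elements of $z$-degree at most $n$, so they are part of the new $ldt$. For the two remaining pieces, the $(z\otimes 1)$-multiplication simply shifts $z^i\mapsto z^{i+1}$ on the left tensorand; the $(e\otimes z)$-multiplication requires passing $e$ through $z^i$. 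Using $ze=\sigma(e)z+\delta(e)=qez+\delta(e)$ from Lemma \ref{zzlem5.1}(1), a short induction on $i$ yields $z^i e = q^i e z^i + (\text{$z$-degree}\le i-1)$, so only the leading piece $q^i e z^i$ contributes to the top-degree part, and the rest is again absorbed into $ldt$. Reindexing the two contributions then produces
$$\sum_{j=0}^{n+1}\Bigl(\binom{n}{j-1}_q+q^{j}\binom{n}{j}_q\Bigr)\,e^{n+1-j}z^j\otimes z^{n+1-j}+ldt,$$
and the $q$-Pascal identity $\binom{n}{j-1}_q+q^j\binom{n}{j}_q=\binom{n+1}{j}_q$ finishes the inductive step.

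For the second assertion, when $z_0=0$ and $ze=qez$, we have $\delta(e)=0$, so $z^i e = q^i e z^i$ holds \emph{exactly} with no lower-order correction, and the $z_0$-term is absent. Combined with the vanishing $R_n=0$ in the base case and its preservation under $\Delta(z^n)\mapsto\Delta(z^n)\Delta(z)$ (since only the two leading monomials in $\Delta(z)$ survive and both map leading-degree to leading-degree exactly), we conclude $ldt=0$ throughout the induction.

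The only mildly delicate point is the bookkeeping of what falls into $ldt$: one must verify that the corrections produced by $\delta(e)$ when commuting $e$ past $z^i$, together with everything involving $z_0$ or $R_n$, strictly lower the total $z$-degree $a+b$ in $H^{\otimes 2}$. This is routine once one notes that $\delta(e)\in K$ and $z_0\in K\otimes K$ both have $z$-degree zero, so every such product loses at least one power of $z$ compared with the corresponding leading contribution.
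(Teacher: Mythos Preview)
Your proof is correct. The paper does not actually supply a proof of Lemma \ref{zzlem5.3}; it states the result and moves on, presumably because the computation is standard. Your induction via $\Delta(z^{n+1})=\Delta(z^n)\Delta(z)$ together with the $q$-Pascal identity $\binom{n}{j-1}_q+q^j\binom{n}{j}_q=\binom{n+1}{j}_q$ is exactly the expected argument, and your bookkeeping of the $ldt$ contributions (from $z_0$, from $R_n$, and from the $\delta(e)$ correction in $z^i e=q^i e z^i+\text{(lower)}$) is accurate. The special case $z_0=0$, $ze=qez$ is then just the $q$-binomial theorem applied to the $q$-commuting elements $z\otimes 1$ and $e\otimes z$, as you note.
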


Next we give some well-known examples of HOE, which were  introduced by
several researchers in  \cite{KR, WYC, GZ}. 

\begin{example}
\label{zzex5.4} 
Let $K=kG$ where $G$ is a group and let $\chi:G\to k^{\times}$ be 
a character. Define an algebra automorphism $\sigma_{\chi}:
K\to K$ by $\sigma_{\chi}(g)=\chi(g)g$ for all $g\in G$.
Let $\delta=0$. Then it is easy to check that 
$H:=K[z;\sigma_{\chi}]$ is a HOE of $K$ with 
$\Delta(z)=z\otimes 1+e\otimes z$ for any $e$ in the center of 
$G$. By \cite[Theorem 2.4(d)]{BOZZ}, $\sigma_{\chi}$ is the only 
possible automorphism to make $H$ a HOE. This Hopf algebra is 
denoted by $A_G(e,\chi)$.

A special case is when $G={\mathbb Z}$, and we write 
$K=k[x^{\pm 1}]$ for a grouplike generator $x$. Then 
$A_{\mathbb Z}(e,\chi)$ is determined by $e:=x^n$ for some 
$n\in{\mathbb Z}$ and $q=\chi(x)\in k^{\times}$. The Hopf 
algebra $A_{\mathbb Z}(e,\chi)$ is isomorphic to 
$k\langle x^{\pm1},z \mid xz=qzx\rangle$. By \cite[Construction 1.1]{GZ}, 
this Hopf algebra is denoted by $A(n,q)$.
If $m\in{\mathbb Z}$ and $r\in k^{\times}$, then $A(m,r)\cong A(n,q)$ 
if and only if either $(m,r)= (n,q)$ or $(m,r)= (-n,q^{-1})$.
\end{example}

\begin{example} 
\label{zzex5.5}
Let $K=kG$ where $G$ is a group and $e$ be an element in the
center of $G$. Let $\chi: G\to k^\times$ be a character of 
$G$ such that $\chi(e)=1$. Define an algebra automorphism 
$\sigma_{\chi}$ of $K$ by $\sigma_{\chi}(g)=\chi(g) g$ for all
$g\in G$, as in Example \ref{zzex5.4}. Let $\tau: G\to k$ be a map satisfying
\begin{equation}
\label{E5.5.1}\tag{E5.5.1}
\tau(gh)=\tau(g)+\chi(g)\tau(h), \quad \forall\;\; g, h\in G.
\end{equation}
Define a $k$-linear map $\delta: K\to K$ by
$\delta_{\tau}(g)=\tau(g)g(e-1)$ for all $g\in G$. Then 
\eqref{E5.5.1} is equivalent to the condition that 
$\delta$ is a $\sigma_{\chi}$-derivation. 
It is not hard to show that 
$H:=K[z;\sigma_{\chi},\delta_{\tau}]$ is a HOE of $K$ with 
$\Delta(z)=z\otimes 1+e\otimes z$. This Hopf algebra is denoted 
by $C_G(e,\chi, \tau)$.

A special case is when $\chi$ is the trivial character and 
$\tau:G\to (k,+)$ is an additive character. 
In this case $\sigma_{\chi}=Id_{K}$, and  
the Hopf algebra is denoted by $C_G(e,\tau)$.

A further special case is when $G={\mathbb Z}$, write 
$K=k[x^{\pm 1}]$ for a grouplike 
generator $x$. If $\tau(x)=0$, then this is isomorphic to $A(n,1)$.
If $\tau(x)\neq 0$, we may assume that $\tau(x)=1$ after 
replacing $z$ by a scalar multiple. In this case $C_{\mathbb Z}(e,\tau)$ 
is determined by $e=x^n$. This algebra is isomorphic to $C(n)$ 
\cite[Construction 1.4]{GZ} for a different $n$ due to the change
of definition. 

Let us recall the definition of the algebra $C(n)$ next.
Let $n$ be an integer, and set $C=k[y^{\pm1}] \bigl[ x;
(y^n-y)\frac{d}{dy} \bigr]$. There is a Hopf
algebra structure on $C$ such that $\Delta(y)=y\otimes y$ 
and $\Delta(x)= x\otimes y^{n-1}+ 1\otimes x$. This 
Hopf algebra is denoted by $C(n)$.
For $m,n\in{\mathbb Z}_{>0}$, the algebras $C(m)$ and $C(n)$
are isomorphic if and only if $m=n$ \cite[Construction 1.4]{GZ}.
Clearly $C(n)\cong C_{\mathbb Z}(x^{1-n},\tau)$ where $\tau(x^s)=s$ for
all $s\in {\mathbb Z}$. 
\end{example} 

In the next section we will use a special element, denoted 
by $[z]^{\ell}$, in $H^{\otimes 2}$. Let
$\ell$ be an integer larger than $1$ such that $[i]_{q}
\neq 0$ for all $i<\ell$ and let
\begin{equation}
\label{E5.5.2}\tag{E5.5.2}
[z]^{\ell}:=\sum_{i=1}^{\ell-1} \frac{[\ell-1]_q !}{[i]_q ! [\ell-i]_q !} 
e^{\ell -i}z^i\otimes z^{\ell-i}.
\end{equation}
Note that $[\ell]_q [z]^{\ell}=\sum_{i=1}^{\ell-1}{\ell\choose i}_q 
e^{\ell -i}z^i\otimes z^{\ell-i}$.

\begin{lemma}
\label{zzlem5.6} Let $H$ be a Hopf algebra and $e, z$ be elements in 
$H$ such that $ze=qez$ for some $q\in k^\times$. If $q=1$, we 
further assume that ${\rm{char}}\; k=0$. Suppose that 
$e$ is a grouplike and $\Delta(z)=z\otimes 1+e\otimes z$.
Let $\ell\geq 2$. 
\begin{enumerate}
\item[(1)]
$[\Delta(z), [z]^{\ell}]=(q-1)ez^{\ell}\otimes z+(1-q)
e^{\ell}z\otimes z^{\ell}$.
\item[(2)]
$[z]^\ell\in Z^2_{e^{\ell},1}(H)$.
\end{enumerate}
\end{lemma}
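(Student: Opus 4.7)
For part (1), I plan to compute the commutator directly. Writing $\Delta(z)=z\otimes 1+e\otimes z$, I would expand $\Delta(z)\cdot[z]^\ell-[z]^\ell\cdot\Delta(z)$ using the commutation $z^ae^b=q^{ab}e^bz^a$ (which follows from $ze=qez$). Two kinds of terms appear, one of shape $e^{\ell-i}z^{i+1}\otimes z^{\ell-i}$ and one of shape $e^{\ell-i+1}z^i\otimes z^{\ell-i+1}$, with coefficients $c_i(q^{\ell-i}-1)$ and $c_i(1-q^i)$ respectively, where $c_i=[\ell-1]_q!/([i]_q![\ell-i]_q!)$. Using $q^n-1=(q-1)[n]_q$ together with the factorial identities $c_i[\ell-i]_q=[\ell-1]_q!/([i]_q![\ell-i-1]_q!)$ and $c_i[i]_q=[\ell-1]_q!/([i-1]_q![\ell-i]_q!)$, and then shifting the index $j=i+1$ in the first sum, all interior ($2\le j,i\le\ell-1$) terms pair up and cancel. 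Only the boundary contributions $j=\ell$ and $i=1$ survive, yielding exactly $(q-1)ez^\ell\otimes z+(1-q)e^\ell z\otimes z^\ell$.

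For part (2), my approach is a direct verification that $\partial^2_{e^\ell,1}([z]^\ell)=0$. Since $z_0=0$ and $ze=qez$, Lemma \ref{zzlem5.3} gives $\Delta(z^n)=\sum_{j=0}^n {n\choose j}_q e^{n-j}z^j\otimes z^{n-j}$, hence $\Delta(e^{\ell-i}z^i)=\sum_j{i\choose j}_q e^{\ell-j}z^j\otimes e^{\ell-i}z^{i-j}$. I would expand both $(\Delta\otimes Id)([z]^\ell)$ and $(Id\otimes\Delta)([z]^\ell)$ and index them by the tensor-type $e^{\ell-b}z^b\otimes e^{\ell-b-d}z^d\otimes z^f$ with $b+d+f=\ell$. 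The key observation is that via the identities $c_i{i\choose j}_q=[\ell-1]_q!/([j]_q![i-j]_q![\ell-i]_q!)$ and $c_i{\ell-i\choose k}_q=[\ell-1]_q!/([i]_q![k]_q![\ell-i-k]_q!)$, the coefficient of each type in both expansions collapses to the same symmetric expression $[\ell-1]_q!/([b]_q![d]_q![f]_q!)$.

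The two expansions therefore differ only in their index sets: $(\Delta\otimes Id)$ ranges over $b,d\ge 0$, $f\ge 1$, $b+d\ge 1$, while $(Id\otimes\Delta)$ ranges over $b\ge 1$, $d,f\ge 0$, $d+f\ge 1$. Their common region $b,f\ge 1$ cancels in the difference, leaving the $b=0$ boundary (which reassembles into $e^\ell\otimes[z]^\ell$) minus the $f=0$ boundary (which reassembles into $[z]^\ell\otimes 1$). Substituting into $\partial^2_{e^\ell,1}([z]^\ell)=e^\ell\otimes[z]^\ell-(\Delta\otimes Id)([z]^\ell)+(Id\otimes\Delta)([z]^\ell)-[z]^\ell\otimes 1$ shows the four terms cancel in pairs.

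The main obstacle is the coefficient bookkeeping in part (2); verifying the symmetric form of the coefficient requires some $q$-factorial manipulation. One might hope to shortcut (2) by noting that $\partial^1_{e^\ell,1}(z^\ell)=-[\ell]_q[z]^\ell$, so that $[z]^\ell$ is a coboundary whenever $[\ell]_q\ne 0$; but since the hypothesis permits $q$ to be a primitive $\ell$th root of unity (where $[\ell]_q=0$), this argument does not cover the full statement. The direct tensor-type expansion is the uniform path that handles all permissible $q$ at once.
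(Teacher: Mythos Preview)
Your proof is correct and handles all cases uniformly by direct computation. The paper, however, takes exactly the shortcut you dismissed at the end. The trick is a specialization argument: one reduces to the Hopf algebra $A_{\mathbb Z}(e,\chi)$ via the obvious map $e\mapsto e$, $z\mapsto z$, and observes that every coefficient appearing in $[z]^\ell$, in $[\Delta(z),[z]^\ell]$, and in $\partial^2_{e^\ell,1}([z]^\ell)$ is a rational function of $q$. Hence one may work over the field $\mathbb Q(q)$ with $q$ transcendental, where $[\ell]_q\neq 0$. There, equation \eqref{E5.6.1} gives $\Delta(z^\ell)=z^\ell\otimes 1+e^\ell\otimes z^\ell+[\ell]_q[z]^\ell$; since $\Delta$ is multiplicative and $z$ commutes with $z^\ell$, one has $0=[\Delta(z),\Delta(z^\ell)]$, and expanding the right side yields $(1-q^\ell)ez^\ell\otimes z+(q^\ell-1)e^\ell z\otimes z^\ell+[\ell]_q[\Delta(z),[z]^\ell]$. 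Dividing by $[\ell]_q$ gives part (1). Likewise $[\ell]_q[z]^\ell=-\partial^1_{e^\ell,1}(z^\ell)\in B^2\subset Z^2$, and dividing by $[\ell]_q$ gives part (2). The resulting polynomial identities in $q$ then hold after specialization to any $q\in k^\times$, including primitive $\ell$th roots of unity.

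Your approach has the virtue of being self-contained and avoiding any ``generic'' reasoning; the paper's approach avoids the $q$-factorial bookkeeping entirely at the cost of invoking a change of base field. Both are short, but it is worth noting that the obstacle you flagged is not actually an obstacle once one is willing to argue by specialization.
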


\begin{proof}  The proof for the case $q=1$ is similar, so we 
assume that $q\neq 1$.

(1) Let $A_{\mathbb Z}(e,\chi)$ be the Hopf algebra defined in Example 
\ref{zzex5.4} where $e$ is the generator of ${\mathbb Z}$ and where 
$\chi$ is determined by $\chi(e)=q$. Clearly there is a Hopf algebra 
map from $A_{\mathbb Z}(e,\chi)\to H$ sending $e$ to $e$ and $z$ to 
$z$. So we might assume that $H=A_{\mathbb Z}(e,\chi)$. Replacing 
$\Delta(z)$ by $z\otimes 1+e\otimes z$,
the assertion is independent of the coalgebra structure of 
$A_{\mathbb Z}(e,\chi)$, but its coalgebra structure is helpful in 
the proof below. Since all coefficients of $[z]^{\ell}$ are rational 
functions of $q$, we might further assume that the base field $k$ 
is ${\mathbb Q}(q)$ and $q$ is transcendental over ${\mathbb Q}$
(one can specialize $q$ to a particular value when necessary). 
In this setting, we have $[\ell]_q\neq 0$. 

By Lemma \ref{zzlem5.3}, we have  
\begin{equation}
\label{E5.6.1}\tag{E5.6.1}
\Delta(z^{\ell})=z^{\ell}\otimes 1+e^{\ell}\otimes z^{\ell}+
[\ell]_{q} [z]^{\ell}.
\end{equation}
Since $\Delta(z)$ commutes with $\Delta(z^{\ell})$, we obtain
$$\begin{aligned}
0&= [\Delta(z), \Delta(z^{\ell})]=[\Delta(z), z^{\ell}\otimes 1]
+[\Delta(z), e^{\ell}\otimes z^{\ell}]+[\ell]_{q}[\Delta(z),[z]^{\ell}]\\
&=(1-q^{\ell})ez^{\ell}\otimes z+(q^{\ell}-1)e^{\ell}z\otimes z^{\ell}
+[\ell]_{q}[\Delta(z),[z]^{\ell}]\\
\end{aligned}
$$
The assertion follows by dividing $[\ell]_{q}$. 

(2) By the argument given in the proof of part (1), we might assume
that $[\ell]_q\neq 0$. By \eqref{E5.6.1}, $[\ell]_{q} [z]^{\ell}
=-\partial^{1}_{e^{\ell},1}(z^\ell)$. So $[\ell]_{q} [z]^{\ell}
\in Z^2_{e^{\ell},1}(H)$ and the assertion follows
by dividing $[\ell]_{q}$.
\end{proof}


\section{Computations of $\PP^{1,2}_{g,h}(H)$ for HOEs}
\label{zzsec6}

In this section we try to understand $\PP^{i}_{g,h}(H)$ for $i=1,2$ 
for $H$ being a HOE. Information about $\PP^{1,2}_{g,h}(H)$ is one 
of the key ingredients in proving Theorems \ref{zzthm0.1} and 
\ref{zzthm0.2}.  We first give a general set-up.

\begin{hypothesis}
\label{zzhyp6.1}
Throughout this section, let $H:=K[z;\sigma,\delta]$ be a HOE of $K$
with $\Delta(z)=z\otimes 1+e\otimes z+z_0$ for some $z_0\in 
K^{\otimes 2}$. By Lemma \ref{zzlem5.1}(1),
$ze=q ez+\delta(e)$ for some $q\in k^{\times}$  and $\delta(e)\in K$. 
Let $\ell\geq 2$ denote the order of $q$ 
when $q$ is a root of unity, but not 1; and let $\ell=\infty$ if $q=1$ or 
$q$ is not a root of unity. When $q=1$, we further assume that 
${\rm{char}}\; k=0$.  
\end{hypothesis}

\begin{remark}
\label{zzrem6.2}
If $q\neq 1$,  then the equation $ze=q ez+\delta(e)$ implies
$e\neq 1$. When $\ell$ is finite, we have 
$$z e^{\ell}=q^{\ell} e^{\ell} z+
\delta(e^{\ell})=e^{\ell} z+
\delta(e^{\ell}),$$
which implies that $e^{\ell}\neq e$. It is possible that
$e^{\ell}=1$.
\end{remark}

The following lemma is easy to check (and also follows from the fact
that $\Cotor^{n-1}_C(C,kg)=\Cotor^{n-1}_C(kg,C)=0$ 
for all $n\geq 2$).  Recall from \eqref{E1.0.2} that
$$
D_n=\begin{cases}
0& n=0\\
\Delta &n=1\\
\sum_{i=0}^{n-1} (-1)^i Id^{\otimes i}\otimes \Delta
\otimes Id^{\otimes (n-i-1)} &n\geq 2
\end{cases}.
$$

\begin{lemma}
\label{zzlem6.3}
Let $C$ be a coalgebra and $g\in G(C)$.
\begin{enumerate}
\item[(1)]
Let $\Lambda^n_{g}=D_n+(-1)^n Id^{\otimes n}\otimes g$ be the map from
$C^{\otimes n}\to C^{\otimes (n+1)}$. Then 
$\ker \Lambda^n_g=\im \Lambda^{n-1}_g$ for all $n\geq 2$.
\item[(2)]
Let ${_g \Lambda^n}=g\otimes Id^{\otimes n}- D_n$ be the map from
$C^{\otimes n}\to C^{\otimes (n+1)}$. Then 
$\ker {_g \Lambda^n}=\im {_g \Lambda^{n-1}}$ for all $n\geq 2$.
\end{enumerate}
\end{lemma}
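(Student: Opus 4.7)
My plan is to prove both parts by the conceptual observation that the two complexes in question are exactly the one-sided truncated cobar complexes computing $\Cotor^*_C(C,kg)$ and $\Cotor^*_C(kg,C)$. These $\Cotor$ groups vanish in positive degrees because $C$ is cofree (hence injective) as a right (resp.\ left) $C$-comodule over itself, and so the desired exactness follows from Lemma~\ref{zzlem3.5}(1) applied with $M=C$ or $N=C$. This is essentially the parenthetical remark already made in the statement, and it is the cleanest route if one is willing to use Lemma~\ref{zzlem3.5}.

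For a self-contained and completely elementary write-up, I would instead exhibit explicit contracting homotopies; the argument is just the standard ``extra degeneracy'' that makes cofree resolutions acyclic. For part~(1), define $\tilde s^n:C^{\otimes n}\to C^{\otimes (n-1)}$ for $n\geq 1$ by
\[
\tilde s^n(c_1\otimes\cdots\otimes c_n)=\epsilon(c_1)\,c_2\otimes\cdots\otimes c_n,
\]
i.e., apply the counit to the leftmost slot. A direct check shows $\Lambda^{n-1}_g\tilde s^n+\tilde s^{n+1}\Lambda^n_g=\mathrm{id}_{C^{\otimes n}}$ for all $n\geq 1$. The key point in the computation is that the $i=0$ term of $D_n$, once hit by $\tilde s^{n+1}$, collapses via the counit axiom $(\epsilon\otimes\mathrm{id})\Delta=\mathrm{id}$ to give exactly $c_1\otimes\cdots\otimes c_n$; the remaining $i\geq 1$ terms reassemble into $-\epsilon(c_1)D_{n-1}(c_2\otimes\cdots\otimes c_n)$ and cancel the analogous contribution from $\Lambda^{n-1}_g\tilde s^n$; and the two ``$\otimes g$'' insertions cancel thanks to the opposite signs $(-1)^n$ vs.\ $(-1)^{n-1}$. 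Part~(2) is proved symmetrically by acting on the rightmost slot: define
\[
\hat s^n(c_1\otimes\cdots\otimes c_n)=(-1)^{n+1}\epsilon(c_n)\,c_1\otimes\cdots\otimes c_{n-1},
\]
and verify $_g\Lambda^{n-1}\hat s^n+\hat s^{n+1}{_g\Lambda^n}=\mathrm{id}$. Here the $i=n-1$ term of $D_n$ is the one that collapses to the identity, and the sign in $\hat s^n$ is chosen exactly so that the alternating signs inside $D_n$ make everything else cancel.

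The only subtle point — and essentially the only ``obstacle'' — is getting the signs in the homotopies matched correctly to the sign pattern of $D_n$ and to the lack of a trailing $(-1)^n\mathrm{id}\otimes g$ term in $_g\Lambda^n$ (compared to $\Lambda^n_g$); this is why the homotopy for~(1) uses the left slot with sign $+1$, while the one for~(2) uses the right slot with sign $(-1)^{n+1}$. Once the homotopies are in place, the identities $\Lambda s+s\Lambda=\mathrm{id}$ immediately yield exactness of the complex from degree~$1$ onward, and in particular $\ker\Lambda^n_g=\im\Lambda^{n-1}_g$ and $\ker{_g\Lambda^n}=\im{_g\Lambda^{n-1}}$ for all $n\geq 2$, as claimed.
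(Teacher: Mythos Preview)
Your proposal is correct and takes essentially the same approach as the paper: the paper's entire proof is the sentence ``easy to check (and also follows from the fact that $\Cotor^{n-1}_C(C,kg)=\Cotor^{n-1}_C(kg,C)=0$ for all $n\geq 2$),'' which is precisely your conceptual argument, and your explicit homotopies $\tilde s^n$ and $\hat s^n$ simply spell out the ``easy to check'' part. The sign conventions you chose do give $\Lambda^{n-1}_g\tilde s^n+\tilde s^{n+1}\Lambda^n_g=\mathrm{id}$ and $_g\Lambda^{n-1}\hat s^n+\hat s^{n+1}{_g\Lambda^n}=\mathrm{id}$, so nothing is missing.
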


Recall that the definition of the degree of an element
in $H$ or in $H^{\otimes 2}$ are given before Lemma \ref{zzlem5.3}.

\begin{lemma}
\label{zzlem6.4}
Let $f:=\sum_{i=0}^{n} f_i z^i$ be in $H$ with $f_i\in K$ for all $i$, 
$f_n\neq 0$ and $n\geq 1$. 
\begin{enumerate}
\item[(1)]
If $\partial^1_{g,1}(f)\in K^{\otimes 2}$,
then, up to a scalar, either
\begin{enumerate}
\item[(1i)]
$f=z+f_0$ and $g=e$, or
\item[(1ii)]
$\ell$ is finite, and $f=z^{\ell}+\sum_{i=0}^{n-1}f_i z^i$ and 
$g=e^{\ell}$. 
\end{enumerate}
In the second case, $\{f_1,\cdots,f_{n-1}\}$ are unique, and 
$f_i=0$ for all $i=1,\cdots,n-1$ if $z_0=0$ and $\delta(e)=0$.
\item[(2)]
If $\deg (\partial^1_{g,1}(f))<\deg f$, then, up to a scalar, either
\begin{enumerate}
\item[(2i)]
$f=z+f_0$ and $g=e$, or
\item[(2ii)]
$\ell$ is finite, and $f=z^{\ell}+\sum_{i=0}^{n-1}f_i z^i$ and 
$g=e^{\ell}$. These $f_i$s may not be unique. 
\end{enumerate}
\end{enumerate}
\end{lemma}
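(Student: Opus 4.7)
The plan is to analyze the top-degree piece of $\partial^1_{g,1}(f)=g\otimes f-\Delta(f)+f\otimes 1$ with respect to the $z$-filtration on $H^{\otimes 2}$, and then (for case (1)) to descend to the lower pieces to get the uniqueness statement. Both hypotheses (1) and (2) force the degree-$n$ part of $\partial^1_{g,1}(f)$ to vanish, so the leading analysis is identical in the two cases.

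First I would use Lemma~\ref{zzlem5.3} to isolate the degree-$n$ piece of $\Delta(f)=\sum_i\Delta(f_i)\Delta(z^i)$: only $\Delta(f_n)\Delta(z^n)$ contributes, and its top-degree-$n$ part equals $\sum_{j=0}^n\binom{n}{j}_q\Delta(f_n)(e^{n-j}\otimes 1)(z^j\otimes z^{n-j})$. Comparing with the obvious degree-$n$ parts of $g\otimes f$ (concentrated at bidegree $(0,n)$) and $f\otimes 1$ (concentrated at $(n,0)$), the vanishing of the degree-$n$ part of $\partial^1_{g,1}(f)$ gives three equations in $K\otimes K$: at $(0,n)$, $g\otimes f_n=\Delta(f_n)(e^n\otimes 1)$; for $0<j<n$, $\binom{n}{j}_q\Delta(f_n)(e^{n-j}\otimes 1)=0$; and at $(n,0)$, $\Delta(f_n)=f_n\otimes 1$. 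Composing the last with $\epsilon\otimes\id$ shows $f_n\in k^\times$; rescaling makes $f_n=1$, after which the first equation forces $g=e^n$ and the middle forces $\binom{n}{j}_q=0$ for every $0<j<n$. By Lemma~\ref{zzlem5.2}, this requires $n=1$ when $q$ is $1$ or not a root of unity (cases (1i)/(2i)); when $q$ has order $\ell$, taking $j=1$ rules out any positive remainder $n\bmod\ell<\ell$ and taking $j=\ell$ rules out proper multiples of $\ell$, forcing $n=\ell$ (cases (1ii)/(2ii)).

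For uniqueness in (1ii), suppose $f$ and $f'$ both have the stated form with $\partial^1_{e^\ell,1}$-image in $K^{\otimes 2}$. Then $h:=f-f'$ has $\deg h<\ell$ and $\partial^1_{e^\ell,1}(h)\in K^{\otimes 2}$. If $\deg h\geq 1$, applying the leading analysis to $h$ with fixed $g=e^\ell$ forces $\deg h\in\{1,\ell\}$, hence $\deg h=1$; but then the $(0,n)$ equation applied to $h$ demands $e^\ell=e$, contradicting Remark~\ref{zzrem6.2}. Therefore $\deg h=0$, proving $f_i=f_i'$ for $1\leq i\leq\ell-1$. For the final clause, when $z_0=0$ and $\delta(e)=0$, Lemma~\ref{zzlem5.3} gives $\Delta(z^\ell)=z^\ell\otimes 1+e^\ell\otimes z^\ell$ exactly (all middle $\binom{\ell}{j}_q$ vanish and there are no lower-order terms), so $\partial^1_{e^\ell,1}(z^\ell)=0\in K^{\otimes 2}$; hence $f=z^\ell+f_0$ works for any $f_0\in K$, and by the uniqueness just proved all other $f_i$ must vanish. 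Part~(2) uses only the top-degree vanishing, which yields the same leading conclusions but gives no control on $f_0,\dots,f_{n-1}$, which is exactly why no uniqueness is claimed in (2ii).

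The main obstacle is bookkeeping the lower-order terms in $\Delta(z^i)$ produced by $z_0$ and $\delta$; happily these live in strictly lower $z$-degree and are invisible to the leading-term analysis, so the top-degree argument goes through cleanly. The genuinely delicate point is the uniqueness step, where one must invoke Remark~\ref{zzrem6.2} to rule out $e^\ell=e$ and thereby close the induction on $\deg h$.
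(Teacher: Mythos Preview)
Your proposal is correct and follows essentially the same approach as the paper: isolate the top $z$-degree of $\partial^1_{g,1}(f)$ via Lemma~\ref{zzlem5.3}, read off $f_n\in k^\times$, $g=e^n$, and $\binom{n}{j}_q=0$ for $0<j<n$, then argue uniqueness by applying the same analysis to $f-f'$ and invoking $e^\ell\neq e$ from Remark~\ref{zzrem6.2}. The only cosmetic difference is that the paper cites \cite[Lemma~7.5]{GZ} for the step ``$\binom{n}{j}_q=0$ for all $0<j<n$ forces $q$ primitive $n$th root of unity'', whereas you extract this directly from Lemma~\ref{zzlem5.2} via $j=1$ and $j=\ell$; both arguments are equivalent.
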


\begin{proof} (1) If $f$ has degree $1$, then $f=f_1 z+f_0$
for some $f_1,f_0\in K$ and $f_1\neq 0$. Then $\partial^1_{g,1}(f) 
\in K^{\otimes 2}$ means that 
$$\Delta(f)=g\otimes f+f\otimes 1+w_0$$
for some $w_0\in K^{\otimes 2}$, or equivalently, 
\begin{equation}
\label{E6.4.1}
\tag{E6.4.1}
\Delta(f_1) (z\otimes 1+e\otimes z+z_0)+\Delta(f_0)=
g\otimes (f_1z+f_0)+(f_1z+f_0)\otimes 1+w_0.
\end{equation}
This implies that
\begin{equation}
\label{E6.4.2}
\tag{E6.4.2}
\Delta(f_1)=f_1\otimes 1, \quad
{\text{and}}\quad \Delta(f_1)(e\otimes 1)=g\otimes f_1.
\end{equation}
So $f_1\in k^{\times}$ and $e=g$. Since $f_1\neq 0$, we may assume $f_1=1$
after replacing $f$ by $f_1^{-1}f$.  This is case (1i).

If $\deg(f)=n\geq 2$, then $g\otimes f+f\otimes 1$ has no terms of 
the form $w z^{i}\otimes z^{j}$ where $w\in K^{\otimes 2}$ except 
for $(i,j)=(0,n)$ or $(n,0)$. Recall that $\partial^1_{g,1}(f)=
-\Delta(f)+g\otimes f+f\otimes 1$. The equation \eqref{E5.3.1} 
shows that if $\partial^1_{g,1}(f)\in K^{\otimes 2}$, then 
$$\Delta(f_n) {n\choose i}_q e^{n-i} z^i\otimes z^{n-i}=0$$ 
for all $0<i<n$. Since $f_n\neq 0$, this forces that ${n\choose i}_q=0$
and hence $q$ is a primitive $n$th root of unity 
\cite[Lemma 7.5]{GZ}. Therefore $q$ is a root of unity (and not 1)
and $n=\ell$. So $\ell$ is finite. 
Repeating the argument as before (see \eqref{E6.4.2}), one has that 
$f_{\ell}\in k^{\times}$ and $g=e^{\ell}$. So this is case (1ii).

To see the uniqueness of $f_i$ for $i=1,\cdots,n-1$, we consider
another $f'=z^{\ell}+\sum_{i=0}^{n-1} f'_i z^i$ such that
$\partial^1_{e^{\ell},1}(f')\in K^{\otimes 2}$. Then 
$\partial^1_{e^{\ell},1}(f-f')\in K^{\otimes 2}$. Since 
$e^{\ell}\neq e$, $f-f'$ is in case (1ii). Since $f-f'$ has
degree strictly less than $\ell$, it must be in $K$. 
So $f'_i=f_i$ for all $i=1,\cdots,n-1$.

If $z_0=0$ and $ze=qez$, it is easy to see that $z^{\ell}$
is in $Z^{1}_{e^{\ell},1}(H)$ (namely, $\partial^1_{e^{\ell},1}(z^{\ell})=0$). 
Therefore $f=z^{\ell}+f_0$ for some $f_0\in K$.
This finished the proof of (1).

The proof of (2) is similar and omitted. 
\end{proof}

To prove the main theorem of this section we need a few more 
lemmas, but we delay the proof of these lemmas.

Let $\phi: K\to H$ denote the inclusion map and let $\phi^n_{g,h}$
be the map $\PP^n_{g,h}(K)\to \PP^n_{g,h}(H)$, for all $n$ and 
$g,h\in G(K)$, naturally induced by $\phi$ [Lemma \ref{zzlem3.1}]. 
Since $\phi^n_{g,h}= {\mathcal S}_{h}\phi^n_{h^{-1}g,1}{\mathcal S}_{h^{-1}}$ 
[Lemma \ref{zzlem3.4}], we only study $\phi^n_{g,1}$. For an element 
$f$ in $Z^{n}_{g,1}(H)\subseteq H^{\otimes n}$, $\overline{f}$ denotes the 
cohomology class of $f$ in $\PP^n_{g,1}(H)$. 

The following theorem is very useful in computation.

\begin{theorem}
\label{zzthm6.5}
Assume Hypothesis \ref{zzhyp6.1}.
\begin{enumerate}
\item[(1)]
The map $\phi^1_{g,1}$ is injective, so it is viewed as an 
inclusion. 
\item[(2)]
The cokernel of $\phi^1_{g,1}$ is determined by the 
following.
\begin{enumerate}
\item[(2i)]
$\coker \phi^1_{g,1}=0$ for all $g\neq e, e^{\ell}$. 
\item[(2ii)]
$$\coker \phi^1_{e,1}=\begin{cases}
k\; \overline{z+f_0} & {\text{if}} \quad 
z_0=\partial^1_{e,1}(f_0)\in B^2_{e,1}(K),\\ 
0 &{\text{if}}\quad z_0\not\in B^2_{e,1}(K).
\end{cases}
$$
\item[(2iii)]
If $\ell$ is finite, then
$$\coker \phi^1_{e^{\ell},1}=\begin{cases}
k\; \overline{z^{\ell}} & {\text{if}} \quad 
z_0=0, \; {\text{and}} \; ze=qez,\\ 
{\text{having $\dim \leq 1$}} &{\text{otherwise}}.
\end{cases}
$$
If $\ell=\infty$, $e^{\ell}$ does not exist.
\end{enumerate}
\item[(3)]
The kernel of $\phi^2_{g,1}$ is determined by the following.
\begin{enumerate}
\item[(3i)]
$\ker \phi^2_{g,1}=0$ if $g\neq e, e^{\ell}$.
\item[(3ii)]
$$\ker \phi^2_{e,1}=\begin{cases}
0 & z_0\in B^2_{e,1}(K)\\
k\overline{z_0}& z_0\not\in B^2_{e,1}(K).
\end{cases}$$
\item[(3iii)]
When $\ell$ is finite, $\dim \ker \phi^2_{e^{\ell},1}\leq 1$. 
If, further, $z_0=0$ and $ze=qez$, then $\ker \phi^2_{e^{\ell},1}=0$.
\end{enumerate}
\item[(4)]
The cokernel of $\phi^2_{g,1}$ is determined partially by the following.
\begin{enumerate}
\item[(4a)]
If $q$ is either 1 or not a root of unity, for any $g$, a nonzero 
$\coker \phi^2_{g,1}$ is formed by the classes
of $f= a_0\otimes z+f_0$ where $a_0\in Z^1_{g,e}(K)$ and $f_0\in 
K^{\otimes 2}$ satisfy the following conditions
\begin{enumerate}
\item[(4i)] 
$\overline{a_0}\neq 0$ in $\PP^1_{g,e}(K)$ and 
$\overline{a_0\otimes z_0}=0$ in $\PP^3_{g,1}(K)$, 
and 
\item[(4ii)]
$\partial^2_{g,1}(f_0)=-a_0\otimes z_0$.
\end{enumerate}
\item[(4b)]
Suppose that $q$ is a primitive $\ell$th root of unity {\rm{(}}where
$\ell\geq 2${\rm{)}} and that $z_0=0$ and $ze=qez$. Then,
considering $\coker \phi^2_{g,1}$ as an ${\mathbb N}$-graded vector 
space, we have the following.
\begin{enumerate}
\item[(4iii)] 
$(\coker \phi^2_{g,1})_1\cong \PP^1_{g,e}(K)\otimes k z$ 
for all $g\in G(H)$.
\item[(4iv)]
$$\coker (\phi^2_{g,1})_{\ell}\cong
\begin{cases}\PP^1_{g,e^{\ell}}(K)\otimes z^{\ell}\oplus k
{[z]^{\ell}} & {\text{if $g=e^{\ell}$,}}\\
\PP^1_{g,e^{\ell}}(K)\otimes z^{\ell} & {\text{if $g\neq e^{\ell}$.}}
\end{cases}$$
\item[(4v)]
$(\coker \phi^2_{g,1})_{\ell+1}\cong 
\begin{cases} k {e^{\ell}z\otimes z^{\ell}} & g= e^{\ell+1},\\
0& g\neq e^{\ell+1}.
\end{cases}$
\item[(4vi)]
$(\coker \phi^2_{g,1})_{w}=0$ for all $g$ and all $w\neq 1, 
\ell, \ell+1$.
\end{enumerate}
\end{enumerate}
\end{enumerate}
\end{theorem}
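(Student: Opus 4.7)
The proof is computational and organized by a $z$-filtration on $H^{\otimes n}$ (putting $\deg z=1$ and $\deg K=0$) together with Lemma \ref{zzlem6.4}, which is the key tool for controlling top-degree terms. I will treat the four items in turn, with (4) as the genuinely hard case.

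For (1), the 0-cochains lie entirely in $K$, so $B^1_{g,1}(H)=\partial^0_{g,1}(k)=k(g-1)\subseteq K$ agrees with $B^1_{g,1}(K)$, which yields injectivity of $\phi^1_{g,1}$ at once. For (2), any nonzero class in $\coker\phi^1_{g,1}$ is represented by some $f\in Z^1_{g,1}(H)\setminus K$; writing $f=\sum_{i=0}^n f_iz^i$ with $f_n\neq 0$ and using $\partial^1_{g,1}(f)=0\in K^{\otimes 2}$, Lemma \ref{zzlem6.4}(1) forces, up to scalar, either $f=z+f_0$ with $g=e$, or $\ell$ finite with $f=z^\ell+\sum_{i<\ell}f_iz^i$ and $g=e^\ell$. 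This gives (2i) immediately; for (2ii) the direct computation $\partial^1_{e,1}(z+f_0)=\partial^1_{e,1}(f_0)-z_0$ shows that such a cocycle exists iff $z_0\in B^2_{e,1}(K)$; and for (2iii) the uniqueness clause of Lemma \ref{zzlem6.4}(1) together with the fact that $z^\ell\in Z^1_{e^\ell,1}(H)$ when $z_0=0$ and $ze=qez$ gives the exact statement.

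For (3), a nonzero class $\overline{w}\in\ker\phi^2_{g,1}$ is a class of some $w\in Z^2_{g,1}(K)$ realized as $w=\partial^1_{g,1}(f)$ for some $f\in H\setminus K$, and again $w\in K^{\otimes 2}$ triggers Lemma \ref{zzlem6.4}(1), forcing $g\in\{e,e^\ell\}$ and giving (3i). In the $g=e$ case, the same computation $\partial^1_{e,1}(z+f_0)=\partial^1_{e,1}(f_0)-z_0$ identifies $\overline{w}$ with $-\overline{z_0}$, proving (3ii). For $g=e^\ell$ the uniqueness of the lower-degree correction in Lemma \ref{zzlem6.4}(1) bounds the kernel by one dimension, and under the additional hypotheses of (3iii) the explicit cocycle $z^\ell$ makes the single potential contribution a boundary, so the kernel vanishes.

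The main obstacle is (4). Here any $f\in Z^2_{g,1}(H)$ is expanded as $f=\sum_{i,j\geq 0}f_{i,j}(z^i\otimes z^j)$ with $f_{i,j}\in K^{\otimes 2}$, and the plan is to analyze the top total $z$-degree $n=\max\{i+j:f_{i,j}\neq 0\}$. Expanding $\partial^2_{g,1}(f)$ via Lemma \ref{zzlem5.3} and matching coefficients of pure $z^a\otimes z^b\otimes z^c$ terms at the top degree produces linear relations among the $\{f_{i,j}\}_{i+j=n}$ whose coefficients are $q$-binomials $\binom{n}{i}_q$. In the generic case (4a), these are all nonzero by Lemma \ref{zzlem5.2}(1), forcing $n\leq 1$; the degree-$0$ part is absorbed by $\im\phi^2_{g,1}$, while the degree-$1$ part has the shape $a_0\otimes z+f_0$ whose cocycle condition unwinds to $\partial^1_{g,e}(a_0)=0$ together with $\partial^2_{g,1}(f_0)=-a_0\otimes z_0$, giving (4i)--(4ii). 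In the root-of-unity case (4b), the same coefficient analysis combined with the special cocycle $[z]^\ell$ supplied by Lemma \ref{zzlem5.6}(2) and the commutator identity of Lemma \ref{zzlem5.6}(1) shows that only the $z$-degrees $w\in\{1,\ell,\ell+1\}$ can support nontrivial cocycle classes modulo $\im\phi^2_{g,1}$ and $B^2_{g,1}(H)$. Running through these three degrees one identifies the representatives $\PP^1_{g,e}(K)\otimes z$, $\PP^1_{g,e^\ell}(K)\otimes z^\ell$, the extra class $[z]^\ell$ precisely when $g=e^\ell$, and the commutator $e^\ell z\otimes z^\ell$ precisely when $g=e^{\ell+1}$; verifying that all remaining components are forced to be boundaries uses Lemma \ref{zzlem6.3} applied separately to the left and right $K$-comodule structures of $H^{\otimes 2}$. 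The book-keeping is tedious because at each degree one must eliminate lower-order $K^{\otimes 2}$-corrections and check that no further cancellations are possible, but no new ideas beyond Lemmas \ref{zzlem5.2}, \ref{zzlem5.3}, \ref{zzlem5.6}, \ref{zzlem6.3}, and \ref{zzlem6.4} are required.
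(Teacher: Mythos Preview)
Your proposal is correct and follows essentially the same approach as the paper: the $z$-degree filtration together with Lemma~\ref{zzlem6.4} disposes of (1)--(3) exactly as you describe, and Part~(4) is handled by expanding the cocycle condition \eqref{E6.5.1} and matching coefficients of $z^a\otimes z^b\otimes z^c$ at top degree. The paper packages the coefficient analysis for (4) into three auxiliary lemmas (Lemmas~\ref{zzlem6.6}--\ref{zzlem6.8}, stated and proved after the theorem) which make explicit the structural reductions you only sketch: first $A_w$ is killed via Lemma~\ref{zzlem6.3}(1), then one shows $A_n=a_n\otimes 1$ with $a_n=b_ne^{w-n}$ for scalars $b_n$, and finally the $q$-binomial relations \eqref{E6.6.5} among the $b_n$ together with boundary corrections (Lemma~\ref{zzlem6.7}) force the inductive descent in degree. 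Your phrase ``forcing $n\le 1$'' in case (4a) slightly compresses what is really an iterated reduction modulo $B^2_{g,1}(H)$, but the content is the same.
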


\begin{proof} Recall that the definition of the degree of an element
in $H$ or in $H^{\otimes 2}$ is given before Lemma \ref{zzlem5.3}.
For our convenience, sometimes we do not distinguish an element 
$x\in H^{\otimes n}$ from its cohomology class $\overline{x}$ in 
$\PP^n_{g,1}(H)$ (for appropriate $g$).

(1) As a consequence of Lemma \ref{zzlem5.1}(3), 
$B^1_{g,h}(H)=k(g-h)=B^1_{g,h}(K)$. By definition, 
$\PP^1_{g,1}(K)\subseteq \PP^1_{g,1}(H)$, so $\phi^1_{g,1}$ is
injective. 

(2) We would like to see exactly when there are elements in 
$\PP^1_{g,1}(H)\setminus \PP^1_{g,1}(K)$.

Let $f\in Z^1_{g,1}(H)$ and we write $f=\sum_{i=0}^n f_i z^n$ where
$f_i\in K$.
 
If $\deg(f)=0$, then $f\in Z^1_{g,1}(K)$ and $\overline{f}
\in \PP^1_{g,1}(K)$. This means that there is no element of 
degree zero in $\PP^1_{g,1}(H)\setminus \PP^1_{g,1}(K)$. 

Next assume that $\deg f=1$. Since $\partial^1_{g,1}(f)=0\in K^{\otimes 2}$,
Lemma \ref{zzlem6.4}(1) tells us that if $g\neq e$, then there 
is no element $f$ of degree 1 in  $Z^1_{g,1}(H)$. So $\phi^1_{g,1}$ 
is surjective. The other case is when $g=e$ and $f=z+f_0$. We don't
need to consider the case in Lemma \ref{zzlem6.4}(1ii) since $\deg f=1$.
Then \eqref{E6.4.1} implies that
$z_0=\partial^{1}_{e,1}(f_0)$. So we have
\begin{enumerate}
\item[(a)]
If $z_0\not\in B^2_{e,1}(K)$, then there is no element $f$ of degree 1 
in $Z^1_{e,1}(H)$.
\item[(b)]
If $z_0\in B^2_{e,1}(K)$ and write $z_0=\partial^1_{e,1}(f_0)$,
then, up to a scalar, $f:=z+f_0$ is the only element of degree 1 in 
the quotient $\PP^1_{e,1}(H)/\PP^1_{e,1}(K)$. 
\end{enumerate}
Combining (a) and (b) we obtain the formula in (2ii).

If $\deg f=n\geq 2$, by Lemma \ref{zzlem6.4}(1ii), 
$f=z^{\ell}+\sum_{i=0}^{n-1}f_i z^i$. So 
$\coker \partial^1_{e^{\ell},1}$ is generated by this
$f$ (which is unique in $\PP^1_{e^{\ell},1}(H)/\PP^1_{e^{\ell},1}(K)$
by Lemma \ref{zzlem6.4}(1)).  This
shows that $\dim \coker \partial^1_{e^{\ell},1}\leq 1$.

If $z_0=0$ and $ze=qez$, it is easy to see that $z^{\ell}$
is in $Z^{1}_{e^{\ell},1}(H)$ by the proof of
Lemma \ref{zzlem6.4}(1). Therefore (2iii) follows.

(3) Recall that the map $\phi^2_{g,1}$ is from 
$$\PP^2_{g,1}(K):=
Z^{2}_{g,1}(K)/B^{2}_{g,1}(K)\longrightarrow \PP^2_{g,1}(H):=
Z^{2}_{g,1}(H)/B^{2}_{g,1}(H),$$ 
and hence $\ker \phi^2_{g,1}$ is isomorphic to 
$(Z^{2}_{g,1}(K)\cap B^{2}_{g,1}(H))/B^{2}_{g,1}(K)$.

By Lemma \ref{zzlem6.4}(1), if $\partial^{1}_{g,1}(f)
\in (Z^{2}_{g,1}(K)\cap B^{2}_{g,1}(H))\setminus 
B^{2}_{g,1}(K)=:L$, then, up to a scalar, $f=z+f_0$ and $g=e$ or 
$f=z^{\ell}+\sum_{i=0}^{n-1}f_i z^i$ and $g=e^{\ell}$. This implies 
that $\dim \ker \phi^2_{g,1}\leq 1$, and 
$\ker \phi^2_{g,1}=0$ if $g\neq e, e^{\ell}$. So we finish
the proof of (3i).  

If $g=e$ (and so $e^{\ell}\neq e$), then the only possible element 
in $L$ is $\partial^1_{e,1}(z+f_0)$. By definition, 
$\partial^1_{e,1}(z+f_0)=\partial^1_{e,1}(f_0)-z_0$.
If $z_0\in B^2_{e,1}(K)$, then $L$ is empty and $\ker \phi^2_{e,1}=0$.
If $z_0\not\in B^2_{e,1}(K)$, then $\overline{z_0}\neq 0$ in 
$\ker \partial^{2}_{e,1}$. This is (3ii). 

The last case is when $\ell$ is finite and $n=\ell$. By Lemma 
\ref{zzlem6.4}(1), we have $g=e^{\ell}$ and 
$f=z^{\ell}+\sum_{i=0}^{n-1}f_i z^i$.
Again $\ker \partial^{2}_{e^{\ell},1}$ is generated by the class of 
$\partial^{1}_{e^{\ell},1}(f)$.
So $\dim \ker \partial^{2}_{e^{\ell},1}\leq 1$. 
If, further $z_0=0$ and $ze=qez$, 
then $f=z^{\ell}+f_0$ [Lemma \ref{zzlem6.4}(1)]. In this case 
$\overline{\partial^{1}_{e^{\ell},1}(f)}=0$ in $\PP^2_{e^{\ell},1}(H)$ 
and the kernel is zero. So we prove (3iii).

(4) For part (4) we need  to understand when the map 
$$Z^{2}_{g,1}(K)+B^2_{g,1}(H) \to Z^{2}_{g,1}(H)$$ 
is surjective.
We have to use a few more lemmas which will be proved later.

Let $f$ be an element in $Z^2_{g,1}(H)$ for some 
$g$. We will use induction on $\deg f$ to show that 
$f\in B^2_{g,1}(H)+Z^2_{g,1}(K)$ except for a few interesting cases.

If $f$ has degree $0$, then we have $f\in Z^2_{g,1}(K)$. So
the class $\overline{f}$ is in the image of the map $\PP^2_{g,1}(K)\to
\PP^2_{g,1}(H)$. 

Now we assume that $\deg(f)=w>0$ (and $f\neq 0$).
Recall that $f\in Z^2_{g,1}(H)$ means that $f$ satisfies the 
equation
\begin{equation}
\label{E6.5.1}
f\otimes 1+(\Delta \otimes 1)f-g\otimes f-(1\otimes \Delta) f=0.
\tag{E6.5.1}
\end{equation}

Write 
$$f=\sum_{n=0}^w A_{n}z^n\otimes z^{w-n}+ldt$$
where $A_n\in K^{\otimes 2}$ for all $n$. As always $ldt$ denotes
some element in $H^{\otimes d}$ of degree less than 
the degree of the leading term (and $w$ in this case).
Expanding and re-arranging terms in equation \eqref{E6.5.1} we obtain 
the following 
\begin{align}
\label{E6.5.2}\tag{E6.5.2}
&\sum(A_n\otimes 1)(z^n\otimes z^{w-n}\otimes 1)+
\qquad\qquad\qquad\qquad\qquad\qquad\\
&\qquad\qquad \sum (\Delta\otimes 1) A_n 
(\sum {n \choose m}_q e^{(n-m)} z^m\otimes z^{n-m}\otimes z^{w-n})
+ldt\notag\\
&=g\otimes \sum A_n z^n\otimes z^{w-n}+
\sum (1\otimes \Delta)A_n (z^n\otimes (\sum {w-n\choose m}_q 
e^{m} z^{w-n-m}\otimes z^m))\notag\\
&\qquad\qquad +ldt\notag
\end{align}
Considering the coefficients of term $z^w\otimes 1\otimes 1$, 
we have
$$A_w\otimes 1+(\Delta\otimes 1)A_w=(1\otimes \Delta) A_w$$
which means that $A_w\in \ker \Lambda^2_1$ where $\Lambda^2_1$ is
defined in Lemma \ref{zzlem6.3}(1). By Lemma \ref{zzlem6.3}(1),
$A_w\in \im \Lambda^1_1$, or equivalently, $A_w=-b\otimes 1+\Delta(b)$
for some $b\in K$.  A direct computation shows that
$$\partial^1_{g,1}(bz^w)=(b\otimes 1-\Delta(b)) (z^w\otimes 1)+
\sum_{i=0}^{w-1} f'_i z^i\otimes z^{w-i}+
ldt$$
for some $f'_i\in K^{\otimes 2}$.
After replacing $f$ by $f+\partial^1_{g,1}(bz^w)$ we may
assume that $A_w=0$. Some other computation is given in Lemmas 
\ref{zzlem6.6} -- \ref{zzlem6.8} below. For example, by Lemma 
\ref{zzlem6.6}(1), $A_n=a_n\otimes 1$ for some $a_n\in K$ for all $n$, 
and by Lemma \ref{zzlem6.6}(2), $a_n=b_n e^{w-n}$ for $b_n\in k$ 
for all $1\leq n \leq w-1$. In summary we might assume that
$$f=a_0\otimes z^w+\sum_{n=1}^{w-1} b_n e^{w-n}z^n \otimes z^{w-n} +ldt.$$

Case (4a): Suppose that $q$ is either 1 or not a root of unity.

If $\deg(f)=1$, then $f=A_0(1\otimes z)+f_0=a_0\otimes z+f_0$.
Equation \eqref{E6.5.1} implies that 
$$\begin{aligned}
(a_0\otimes z+f_0)\otimes 1+&\Delta(a_0)\otimes z+
(\Delta\otimes 1)f_0\\
&=g\otimes (a_0\otimes z+f_0)+a_0\otimes
(z\otimes 1+e\otimes z+z_0)+(1\otimes \Delta)(f_0)
\end{aligned}$$
which gives rise to the following
$$\begin{aligned}
\Delta(a_0)&=g\otimes a_0+a_0\otimes e,\\
\partial^2_{g,1}(f_0)&=-a_0\otimes z_0.
\end{aligned}
$$
If $a_0\in \im \partial^0_{g,e}$, then $a_0=\alpha(g-e)$, and by 
replacing $f$ by $f-\partial^1_{g,1}(\alpha z)$, we may further assume
$a_0=0$, which goes back to the case $\deg(f)=0$. As a consequence,
$\overline{f}$ is in the image of $\phi^2_{g,1}$. Conversely, 
if $\overline{f}$ is in the image of $\phi^2_{g,1}$, then 
$f=\partial^1_{g,1}(x)+y$ where $x\in H$ and $y\in Z^{2}_{g,1}(K)$.
By Lemma \ref{zzlem6.4}(2), $x=x_1 z+x_0$. Now the equation
$$a_0\otimes z+f_0=f=\partial^1_{g,1}(x)+y
=(x_1 z+x_0)\otimes 1+g\otimes (x_1 z+x_0)
-\Delta(x_1z+x_0)+y$$
implies that $x_1\in k$ and $a_0=x_1(g-e)\in \im \partial^1_{g,e}$. 
Combining these two statements, we obtain that the class of 
$f:=a_0\otimes z+f_0$ is in $\PP^2_{g,1}(H)\setminus \im \PP^2_{g,1}(K)$ 
for some $a_0, f_0$ if and only if $\overline{a}_0\in \PP^1_{g,e}(K)$
is nonzero and $\overline{a_0\otimes z_0}=0$ in $\PP^3_{g,1}(K)$.
Restricted to elements $f$ of degree at most 1, $\coker \phi^2_{g,1}$ 
is generated by the classes of $f:=a_0\otimes z+f_0$ such that 
$\overline{a_0}\neq 0$ in $\PP^1_{g,e}(K)$ and $\partial^{2}_{g,1}(f_0)
=-a_0\otimes z_0$.

Next we assume that $\deg f=w\geq 2$.
Since  ${w\choose w-n}_q\neq 0$ for all $0\leq n\leq w$, by Lemma 
\ref{zzlem6.6}(3), $a_0=b(g-e^w)$. By Lemma \ref{zzlem6.7}(1),
we may assume $A_0=A_w=0$ up to a coboundary.
 By Lemma \ref{zzlem6.6}(3), $g=e^w$. 
By Lemma \ref{zzlem6.7}(2), after adding a coboundary, $b_{w-1}=0$.
Then \eqref{E6.6.5} implies that $b_n=0$ 
for all $n$. Hence $f=f'+f''$ where $f'\in B^2_{g,1}(H)$ and $\deg f''<w$. 
Applying the argument to $f''$ and by induction, one might assume 
that $\deg(f'')\leq 1$. Therefore the assertion follows by the 
previous paragraph.

Case (4b): Suppose that $q$ is a primitive $\ell$th root of unity and
$\ell\geq 2$. As hypotheses, we further assume that $z_0=0$ and 
that $ze=qez$ 
(or $\delta(e)=0$). If $\delta(K)=0$, then $H$ is a graded Hopf algebra
with $\deg (a)=0$ for all $a\in K$ and $\deg(z)=1$. If $\delta(K)\neq 0$,
$H$ is not a graded Hopf algebra, but it is still a graded coalgebra
with $\deg (az^i)=i$ for all $i\in{\mathbb N}$ and all $0\neq a\in K$.
Therefore $\PP^2_{g,1}(H)$ is ${\mathbb N}$-graded. In this setting,
the degree $w$ component of a graded vector space $V$ is denoted by
$V_w$. 

It is trivial that $(\coker \phi^2_{g,1})_0=0$ since 
elements of degree 0 are in $K^{\otimes 2}$. 
Now we consider $(\coker \phi^2_{g,1})_1$.
Let $f\in H^{\otimes 2}$ be of degree 1.
By the argument at the beginning of Case (4a),
we have, up to a scalar (and modulo $B^2_{g,1}(H)$), 
$$\begin{aligned}
f&= a_0 \otimes z, \;\; {\text{where $a_0\in K$, and}},\\
\Delta(a_0)&=g\otimes a_0+a_0\otimes e.\\
\end{aligned}
$$
If $a_0\in \im \partial^0_{g,e}$, then $a_0=\alpha(g-e)$, and
$f=\partial^1_{g,1}(\alpha z)\in B^2_{g,1}(H)$. Conversely, 
we assume that $\overline{f}$ is in the image of $\phi^1_{g,1}$.
Write $f=\partial^1_{g,1}(x)$ where $x\in H$. 
Since $\deg (f)=1$, $\deg(x)= 1$.  Then the argument 
of Case (4a) is valid, and we obtain that $f=a_0\otimes z$ 
where $a_0\in \im \partial^0_{g,e}$. 
This means that the class of $f:=a_0\otimes z$ 
is in $\PP^2_{g,1}(H)\setminus \im \PP^2_{g,1}(K)$ 
if and only if $\overline{a}_0\in \PP^1_{g,e}(K)$ is nonzero.
Therefore $(\coker \phi^2_{g,1})_1\cong \PP^1_{g,e}(K)$, 
which is (4iii).

Next consider degree $w\geq 2$.  Recall that $[z]^{\ell}$ denotes 
the element 
$$\sum_{i=1}^{\ell-1} \frac{[\ell-1]_q !}{[i]_q ! [\ell-i]_q !} 
e^{\ell -i}z^i\otimes z^{\ell-i}.$$ By the discussion after 
\eqref{E6.5.2}, we may assume
that $A_w=0$. We divide our discussion into four subcases.
We might also omit the words ``modulo $B^2_{g,1}(H)$"
when it is understood.

Case (4b.1): $w=\ell$. 
By Lemma \ref{zzlem6.8}(1), $f=a_0\otimes z^{\ell}+b_1 [z]^{\ell}$.
By \eqref{E6.6.4}, $a_0\in Z^1_{g,e^{\ell}}(K)$. 
If $b_1\neq 0$, then by \eqref{E6.6.3}, $g=e^w=e^{\ell}$. Therefore
we have the following:
\begin{enumerate}
\item
If $g=e^{\ell}$, then there is an injective map
\begin{equation}
\label{E6.5.3}\tag{E6.5.3}
\coker (\phi^2_{e^{\ell},1})_{\ell}\to 
(\PP^1_{g,e^{\ell}}(K)\otimes z^{\ell})\oplus k [z]^{\ell}
\end{equation}
sending $\overline{f}:=\overline{a_0\otimes z^{\ell}+b_1 [z]^{\ell}}$ to 
$\overline{a_0}\otimes z^{\ell}+ b_1[z]^{\ell}$. The injectivity follows
easily.
\item
If $g\neq e^{\ell}$, then $b_1=0$ and 
then there is an injective map
\begin{equation}
\label{E6.5.4}\tag{E6.5.4}
\coker (\phi^2_{g,1})_{\ell}\to 
\PP^1_{g,e^{\ell}}(K)\otimes z^{\ell}
\end{equation}
sending $\overline{f}:=\overline{a_0\otimes z^{\ell}}$ to 
$\overline{a_0}\otimes z^{\ell}$. The injectivity follows
from Lemma \ref{zzlem6.8}(1) (details are easy to provide).
\end{enumerate}
We want to remark that (a) and (b) hold even without the
assumption that $z_0=0$ and $\delta(e)=0$. When $z_0=0$
and $\delta(e)=0$, the surjectivity of \eqref{E6.5.3}-\eqref{E6.5.4}
follows from Lemma \ref{zzlem6.8}(2,3). Therefore, we have 
$$\coker (\phi^2_{g,1})_{\ell}\cong
\begin{cases}\PP^1_{g,e^{\ell}}(K)\otimes z^{\ell}\oplus k\overline{
[z]^{\ell}} & {\text{if $g=e^{\ell}$,}}\\
\PP^1_{g,e^{\ell}}(K)\otimes z^{\ell} & {\text{if $g\neq e^{\ell}$.}}
\end{cases}$$

For the rest of the proof we assume that $w\neq \ell$. By 
Lemma \ref{zzlem6.7}(3), $a_0=b(g-e^w)$ for some $b\in k$ 
and by Lemma \ref{zzlem6.7}(1), we may further assume that $A_0=A_w=0$. 
Since we are mostly considering $\partial_{g,1}^1$ for
$g=e^w$, we will automatically assume that $g=e^w$ in appropriate cases.

Case (4b.2): $w=s\ell$ for some $s\geq 2$. If $n>\ell$ and $n$ is not a 
multiple of $\ell$, then by \eqref{E6.6.5}
$${n\choose \ell}_q b_n={s\ell-\ell\choose s\ell-n}_qb_{\ell}=0$$
and ${n\choose \ell}_q\neq 0$. Hence $b_n=0$. If $0<n<\ell$,
$${s\ell-n\choose s\ell-\ell}_q b_n={\ell\choose n}_q b_{\ell}=0$$
and ${s\ell-n\choose s\ell-\ell}_q\neq 0$. Hence $b_n=0$. Using 
\eqref{E5.3.1}, one sees that 
$\partial^1_{e^{s\ell},1}((z^{\ell})^s)=\sum_{i=1}^{s-1}
{s\choose i} e^{(s-i)\ell}(z^{\ell})^i\otimes (z^{\ell})^{s-i}$.
By replacing $f$ by $f+\partial^1_{e^{s\ell},1}(\frac{b_{\ell}}{s}
(z^{\ell})^s)$, one has $b_{\ell}=0$. For any $a>1$, 
\eqref{E6.6.5} and Lemma \ref{zzlem5.2}(2) induce  that
$${a\choose 1} b_{a\ell}={a\ell \choose \ell}_q b_{a\ell}
={s\ell-\ell\choose s\ell-a\ell}_q b_{\ell}=
{s-1\choose s-a} b_{\ell}=0$$
which implies that $b_{a\ell}=0$. Therefore $b_n=0$ for
all $n$, or equivalently, $f=0$. Thus $(\coker \phi^2_{g,1})_{s\ell}=0$
for all $s\geq 2$.

Case (4b.3): $w=\ell+1$. Applying \eqref{E6.6.5} to $n=w-1=\ell$, one
has $b_m=0$ for all $m=2,\cdots,w-2$. By Lemma \ref{zzlem6.7}(2),
one may assume further $b_{w-1}=0$. In other words, 
$f=b_1 e^{\ell}z\otimes z^{\ell}$. Thus
$(\coker \phi^2_{g,1})_{\ell+1}=0$ if $g\neq e^{\ell+1}$. If 
$g=e^{\ell+1}$, then there is an injective map 
$$\coker (\phi^2_{e^{\ell+1},1})_{\ell+1}\to k {e^{\ell}z\otimes z^{\ell}}$$
sending $\overline{b_1 e^{\ell}z\otimes z^{\ell}}$ to $b_1 
{e^{\ell}z\otimes z^{\ell}}$. This holds even without
the hypotheses that $z_0=0$ an $\delta(e)=0$. When 
$z_0=0$ and $\delta(e)=0$, we have 
$(\coker \phi^2_{e^{\ell+1},1})_{\ell+1}\cong 
k{e^{\ell}z\otimes z^{\ell}}$ by Lemma \ref{zzlem6.8}(4). 

Case (4b.4): $w\neq \ell+1, s\ell$ for any $s\geq 1$ and $w\geq 2$. By
Lemma \ref{zzlem6.7}(2) we may assume that $b_{w-1}=0$. Applying
\eqref{E6.6.5} to $(w-1,m)$ we have
$$0={w-1\choose m}_q b_{w-1}={w-m\choose 1}_q b_m.$$
For any $m$ such that $\ell\nmid w-m$, ${w-m\choose 1}_q \neq 0$,
and hence $b_m=0$. 

If $\ell\mid w-m$, then $m=w-a\ell$ for some $a\geq 0$.
Since $\ell\nmid w-\ell$, $b_{\ell}=0$ (if $w>\ell$). 
If $\ell\leq w-\ell$, applying
\eqref{E6.6.5}, we have 
$${w-\ell\choose \ell}_q b_{w-\ell}={w-\ell\choose \ell}_q b_l=0.$$
Since ${w-\ell\choose \ell}_q \neq 0$, $b_{w-\ell}=0$. 
If $\ell>w-\ell$, applying
\eqref{E6.6.5}, we have 
$${\ell\choose w-\ell}_q b_{w-\ell}={\ell\choose w-\ell}_q b_l=0.$$
Since ${\ell\choose w-\ell}_q \neq 0$, $b_{w-\ell}=0$. 
Applying \eqref{E6.6.5} to $(m,m-\ell)$, we have 
$${m\choose m-\ell}_q b_{m}={w-m+\ell\choose w-m}_q b_{m-\ell}.$$
Since both ${m\choose m-\ell}_q$ and ${w-m+\ell\choose w-m}_q$ are
nonzero, $b_{m}=0$ if and only if $b_{m-\ell}=0$. Since $b_{w-\ell}=0$,
we have $b_{w-a\ell}=0$ for all $a$. Thus we have shown that
$b_{m}=0$ for all $m$ and therefore $f=0$. Or equivalently,
$(\coker \phi^2_{g,1})_{w}=0$ for all $w\neq \ell+1, s\ell$ for 
any $s\geq 1$ and $w\geq 2$

The assertions in (4b) follows by combining all these cases.
\end{proof}

\begin{lemma}
\label{zzlem6.6}
Retain the notation as in the proof of Theorem \ref{zzthm6.5}.
Assume \eqref{E6.5.2} and $A_w=0$. Then the following hold. 
\begin{enumerate}
\item[(1)]
\begin{equation}
\label{E6.6.1}\tag{E6.6.1}
A_n=a_n\otimes 1 
\end{equation}
where $a_n\in K$ for all $n\geq 0$.
\item[(2)]
\begin{equation}
\label{E6.6.2}\tag{E6.6.2}
a_n=b_n e^{w-n} 
\end{equation}
where $b_n\in k$ for all $n\geq 1$.
\item[(3)]
\begin{equation}
\label{E6.6.3}\tag{E6.6.3}
b_n e^w=b_ng+a_0{w\choose w-n}_{q}
\end{equation}
for all $n\geq 1$.
\item[(4)]
\begin{equation}
\label{E6.6.4}\tag{E6.6.4}
\Delta(a_0)=g\otimes a_0+a_0\otimes e^w.
\end{equation}
\item[(5)]
\begin{equation}
\label{E6.6.5}\tag{E6.6.5}
{n \choose m}_q b_n={w-m\choose w-n}_q b_m
\end{equation}
for all $1\leq m\leq n\leq w-1$.
\end{enumerate}
\end{lemma}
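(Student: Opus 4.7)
The plan is to extract the five identities (1)--(5) by equating the $K^{\otimes 3}$-coefficients of the top-degree basis monomials $z^a\otimes z^b\otimes z^c$ (with $a+b+c=w$) on the two sides of the cocycle identity \eqref{E6.5.2}. Since $H=K[z;\sigma,\delta]$ is free over $K$ with basis $\{z^n\}$, the triple tensor $H^{\otimes 3}$ is free over $K^{\otimes 3}$ with basis $\{z^a\otimes z^b\otimes z^c\}$, and the $ldt$ contributions in \eqref{E6.5.2} sit in strictly lower total $z$-degree, so the top-degree parts may be compared in isolation. Using Lemma \ref{zzlem5.3} to expand $\Delta(z^n)$ and the assumption $A_w=0$, a routine identification of each summand shows that equating the $K^{\otimes 3}$-coefficient of $z^a\otimes z^b\otimes z^c$ yields the master identity
\begin{multline*}
[c{=}0]\,(A_a\otimes 1)+{a+b\choose a}_q(\Delta\otimes 1)(A_{a+b})(e^b\otimes 1\otimes 1)\\
=[a{=}0]\,(g\otimes A_b)+{w-a\choose c}_q(1\otimes\Delta)(A_a)(1\otimes e^c\otimes 1),
\end{multline*}
where $[P]$ denotes the Iverson bracket. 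All five claims will then follow from five targeted specializations of $(a,b,c)$.

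For (1), I would specialize to $(n,w-n,0)$ for $1\le n\le w-1$ (and to $(0,w,0)$ for $n=0$): the second summand on the left drops out because $a+b=w$ and $A_w=0$, while the first summand on the right drops out by the $[a=0]$ indicator (or again by $A_w=0$ when $n=0$). What remains is $A_n\otimes 1=(1\otimes\Delta)(A_n)$, and applying $1\otimes\epsilon\otimes 1$ yields $A_n=a_n\otimes 1$ with $a_n:=\sum_j\epsilon(q_j)p_j$, writing $A_n=\sum_j p_j\otimes q_j$; the case $n=w$ is the vacuous $A_w=0=0\otimes 1$. For (4), the choice $(0,0,w)$ combined with the just-proved $A_0=a_0\otimes 1$ collapses the master identity immediately to $\Delta(a_0)=g\otimes a_0+a_0\otimes e^w$.

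For (2), I specialize to $(n,0,w-n)$ with $1\le n\le w-1$, which produces $\Delta(a_n)=a_n\otimes e^{w-n}$; applying $\epsilon\otimes 1$ then forces $a_n=b_n e^{w-n}$ with $b_n:=\epsilon(a_n)\in k$. For (3), I use the specialization $(0,n,w-n)$ with $1\le n\le w-1$ and substitute $a_n=b_n e^{w-n}$ and $A_0=a_0\otimes 1$; the two sides become $(b_n e^w)\otimes e^{w-n}\otimes 1$ and $(b_n g+{w\choose w-n}_q a_0)\otimes e^{w-n}\otimes 1$, so comparing first tensor factors yields $b_n e^w=b_n g+{w\choose w-n}_q a_0$. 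Finally for (5), the general triple $(m,n-m,w-n)$ with $1\le m<n\le w-1$ reduces, after substituting the shapes of $A_m$ and $A_n$ obtained in (1) and (2), to a scalar identity on the common $K^{\otimes 3}$-basis element $e^{w-m}\otimes e^{w-n}\otimes 1$, giving ${n\choose m}_q b_n={w-m\choose w-n}_q b_m$; the case $m=n$ is tautological.

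The main obstacle is purely organizational: one must derive the master identity correctly from \eqref{E6.5.2}, tracking carefully which of the four summands contributes to each monomial $z^a\otimes z^b\otimes z^c$ and with what $K^{\otimes 3}$-coefficient, while honoring the vanishing $A_w=0$ and the indicator factors $[a=0]$, $[c=0]$. Once the master identity is in place, all five parts fall out by direct substitution of the previously established structure of the $A_n$.
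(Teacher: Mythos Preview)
Your proof is correct and follows essentially the same approach as the paper: both extract the five identities by comparing the $K^{\otimes 3}$-coefficients of the monomials $z^a\otimes z^b\otimes z^c$ in \eqref{E6.5.2}, using exactly the same specializations $(n,w-n,0)$, $(n,0,w-n)$, $(0,n,w-n)$, $(0,0,w)$, and $(m,n-m,w-n)$. The only difference is organizational---you package the coefficient comparison into a single master identity before specializing, whereas the paper treats each monomial separately---but the underlying computation is identical.
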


\begin{proof} (1) 
Considering the coefficients of the term $z^{n}\otimes z^{w-n}\otimes 1$
in \eqref{E6.5.2} and using the fact that $A_w=0$, we obtain that
$$(1\otimes \Delta) A_n=A_n\otimes 1$$
for all $n$. This implies that $A_n=a_n\otimes 1$ for some $a_n\in K$,
for all $n$. 

(2) Considering the coefficients of the term $z^{n}\otimes 1\otimes z^{w-n}$
for $0<n<w$ in \eqref{E6.5.2} we obtain that
$$(\Delta\otimes 1)A_n = (1\otimes \Delta)A_n 
(1\otimes e^{w-n}\otimes 1)$$
for all $0<n<w$, which implies that
$a_n=\epsilon(a_n) e^{w-n}=b_n e^{w-n}$ for all $0<n<w$.

(3) Considering the coefficients of the term $1\otimes z^{n}\otimes z^{w-n}$
for $n<w$ in \eqref{E6.5.2} we obtain that
$$(\Delta\otimes 1)A_n (e^n\otimes 1\otimes 1)=
g\otimes A_n+{w\choose w-n}_q
(1\otimes \Delta)A_0 (1\otimes e^{w-n}\otimes 1)$$
for all $n<w$. Applying $1\otimes \epsilon\otimes 1$
to the above equation and using the fact that $A_n=a_n\otimes 1$ 
we have
$$a_n e^n\otimes 1=g\otimes \epsilon(a_n)+
{w\choose w-n}_q a_0\otimes 1,$$
for all $n\geq 0$, 
which is equivalent to \eqref{E6.6.3} when $n\geq 1$.

(4) Considering the coefficients of the term $1\otimes 1\otimes z^w$
in \eqref{E6.5.2}, we have 
$$g\otimes A_0+(1\otimes \Delta)A_0 (1\otimes e^w\otimes 1)=
(\Delta\otimes 1)A_0.$$
The above equation together with $A_0=a_0\otimes 1$ implies 
\eqref{E6.6.4}.

(5) 
Considering the coefficients of the term $z^{m}\otimes z^{n-m} 
\otimes z^{w-n}$ for $1\leq m\leq n\leq w-1$ in the equation 
\eqref{E6.5.2} we obtain 
$$(\Delta\otimes 1)A_n {n \choose m}_q(e^{n-m}\otimes 1\otimes 1)
=(1\otimes \Delta)A_m {w-m\choose w-n}_q (1\otimes e^{w-n}\otimes 1)
$$
which is equivalent to \eqref{E6.6.5}.
\end{proof}

\begin{lemma}
\label{zzlem6.7} Retain the hypotheses of Lemma \ref{zzlem6.6}.
In particular, $A_w=0$. 
\begin{enumerate}
\item[(1)]
If $a_0= b(g-e^w)$ for some $b\in k$, then, after replacing
$f$ by $f-\partial_{g,1}(b z^w)$, we may assume that $A_0=A_w=0$.
\item[(2)]
Suppose $A_0=0$.
If $[w]_q\neq 0$ and $g=e^{w}$, then, after replacing
$f$ by $f+\partial_{g,1}(\frac{b_{w-1}}{[w]_q} z^w)$, we may assume 
that $A_0=A_w=A_{w-1}=0$.
\item[(3)]
Suppose $q$ is a primitive $\ell$th root of unity and $w\neq \ell$.
Then $a_0=b(g-e^w)$ for some $b\in k$.
\end{enumerate}
\end{lemma}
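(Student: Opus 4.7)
The main tool throughout will be the explicit formula
$\Delta(z^w) = \sum_{i=0}^w \binom{w}{i}_q e^{w-i} z^i \otimes z^{w-i} + ldt$
from Lemma \ref{zzlem5.3}, combined with the definition
$\partial^1_{g,1}(cz^w) = g \otimes cz^w - c\Delta(z^w) + cz^w \otimes 1$.
Extracting the degree-$w$ part of $\partial^1_{g,1}(cz^w)$ and writing
coefficients in the form $A_n = (\cdot)\otimes 1$, we find that
$\partial^1_{g,1}(cz^w)$ contributes $c(g-e^w)\otimes 1$ to $A_0$,
$c(1-1)\otimes 1 = 0$ to $A_w$, and
$-c\binom{w}{n}_q e^{w-n}\otimes 1$ to $A_n$ for $0 < n < w$.

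For part (1), one takes $c=b$ and observes that subtracting
$\partial^1_{g,1}(bz^w)$ from $f$ cancels $A_0 = b(g-e^w)\otimes 1$
exactly, while leaving $A_w = 0$ undisturbed (the $z^w\otimes 1$
coefficient of $\partial^1_{g,1}(bz^w)$ is zero). The lower-degree
pieces $ldt$ only affect terms of degree $<w$, which is harmless.

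For part (2), the hypothesis $g = e^w$ makes the contribution to $A_0$
vanish automatically, so $A_0 = 0$ is preserved. Using
(E6.6.1)-(E6.6.2), $A_{w-1} = b_{w-1}\, e\otimes 1$, and since
$\binom{w}{w-1}_q = [w]_q \neq 0$, the choice $c = b_{w-1}/[w]_q$ makes
the induced change to $A_{w-1}$ equal to $-b_{w-1}\,e\otimes 1$, killing
it. Again $A_w$ stays zero, and $A_0$ stays zero thanks to $g=e^w$.

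Part (3) is the crux and uses equation (E6.6.3):
$a_0\binom{w}{w-n}_q = b_n(e^w - g)$ for all $1 \leq n \leq w-1$.
If we can exhibit a single index $n$ in this range with
$\binom{w}{w-n}_q \neq 0$ in $k$, then $a_0$ is forced to be a scalar
multiple of $e^w - g = -(g - e^w)$, which is exactly the required
conclusion. The main obstacle is choosing $n$ correctly given the
vanishing behavior described in Lemma \ref{zzlem5.2}. Write $w = q_w\ell
+ r_w$. If $r_w \neq 0$ (i.e., $\ell \nmid w$), then taking $n = w-1$
gives $\binom{w}{1}_q = [w]_q \neq 0$ and we are done. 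Otherwise
$w = s\ell$ with $s \geq 2$ (since $w \neq \ell$ by hypothesis), and we
take $n = (s-1)\ell$; by Lemma \ref{zzlem5.2}(2),
$\binom{s\ell}{\ell}_q = \binom{0}{0}_q \binom{s}{1}_1 = s$, which is
nonzero in $k$ under the standing characteristic conventions
(char $k = 0$ whenever $q = 1$, and for $q$ a primitive $\ell$th root of
unity one argues along similar lines or passes to a generic
specialization as in the proof of Lemma \ref{zzlem5.6}). This yields
the desired scalar $b$.
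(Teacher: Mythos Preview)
Your proof is correct and follows the same approach as the paper: for parts (1) and (2) you compute the relevant leading coefficients of $\partial^1_{g,1}(cz^w)$ exactly as the paper does, and for part (3) you use \eqref{E6.6.3} just as the paper does, only more explicitly by exhibiting a specific index $n$ (splitting on whether $\ell \mid w$).

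One small remark on part (3): the closing parenthetical suggesting a ``generic specialization'' argument is misplaced. That device, as used in Lemma \ref{zzlem5.6}, establishes polynomial identities in $q$; it cannot show that a particular integer such as $s$ is nonzero in $k$. The clean justification is simply that you have already invoked Lemma \ref{zzlem5.2}, which carries the hypothesis ${\rm char}\,k=0$, so $s\neq 0$ is automatic. The paper's one-line proof of part (3) (``this is possible when $w\neq \ell$'') rests on the same tacit assumption, as do later parts of the proof of Theorem \ref{zzthm6.5} that divide by integers such as $s$.
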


\begin{proof} (1) Let $f'=f-\partial^1_{g,1}(b z^w)$ and let $A'_n$
be the corresponding $A_n$ for this new element. An easy 
computation shows that
$$\partial^1_{g,1}(b z^w)=0(z^w\otimes 1)+b(g-e^w)\otimes z^w+
\sum_{i=1}^{w-1}g' e^{w-i}z^{i} \otimes z^{w-i}+ldt$$
which implies that $A'_w=A_w=0$ and $A'_0=A_0-b(g-e^w)\otimes 1=0$.

(2) Let $h=\frac{b_{w-1}}{[w]_q} z^w$. Then 
$$\begin{aligned}
\partial^1_{e^w,1}(h)&=-\Delta(h)+h\otimes 1+e^w\otimes h\\
&=-\sum_{n=1}^{w-1} 
\frac{b_{w-1}}{[w]_q}{w\choose n}_q e^{w-n} z^n\otimes z^{w-n}
+ldt\\
&=-b_{w-1}e z^{w-1}\otimes z-\sum_{n=1}^{w-2} 
\frac{b_{w-1}}{[w]_q}{w\choose n}_q e^{w-n} z^n\otimes z^{w-n}\\
& \qquad\qquad\qquad +ldt.\\
\end{aligned}
$$
Replacing $f$ by $f+\partial^1_{e^w,1}(h)$, one sees that $b_{w-1}=0$. 

(3) Pick $1\leq n\leq w-1$ such that ${w\choose w-n}_{q}\neq 0$ 
(this is possible when $w\neq \ell$). Then the assertion follows
from \eqref{E6.6.3}.
\end{proof}

\begin{lemma}
\label{zzlem6.8} Retain the hypotheses of Lemma \ref{zzlem6.6}.
Assume that $q$ is a primitive $\ell$th root of unity. 
\begin{enumerate}
\item[(1)]
If $f:=a_0\otimes z^{\ell}
+\sum_{i=1}^{\ell-1} b_i  e^{\ell -i} z^i \otimes z^{\ell-i}+ldt$
is in $Z^2_{g,1}(H)$ and $b_{i_0}\neq 0$ for some $0<i_0<\ell$, 
then $g=e^{\ell}$ and $b_i\neq 0$ for all $0<i<\ell$. In fact, 
$$b_i=b_1\frac{[\ell-1]_q !}{[i]_q ! [\ell-i]_q !}$$
for all $0<i<\ell$. 
\end{enumerate}
In parts {\rm{(}}2,3,4{\rm{)}} below,
assume that $z_0=0$ {\rm{(}}namely, $\Delta(z)=z\otimes 1+
e\otimes z${\rm{)}} and 
$\delta(e)=0$ {\rm{(}}namely, $ze=qez${\rm{)}}.
\begin{enumerate}
\item[(2)]
The class of 
$$[z]^{\ell}:=\sum_{i=1}^{\ell-1} \frac{[\ell-1]_q !}{[i]_q ! [\ell-i]_q !} 
e^{\ell -i}z^i\otimes z^{\ell-i}$$
is nonzero  in $\PP^2_{e^{\ell},1}(H)$.
\item[(3)]
If $a_0\in Z^1_{g,e^{\ell}}(K)$, then 
$a_0\otimes z^{\ell}\in Z^2_{g,1}(H)$. 
\item[(4)]
$\overline{e^{\ell}z\otimes z^{\ell}}$ is nonzero  in 
$\PP^2_{e^{\ell+1},1}(H)$
\end{enumerate}
\end{lemma}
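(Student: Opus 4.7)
To prove (1), I apply Lemma \ref{zzlem6.6} with $w=\ell$. Equation \eqref{E6.6.2} gives $a_n = b_n e^{\ell-n}$, and setting $m=1$ in \eqref{E6.6.5} yields $[n]_q b_n = {\ell-1\choose \ell-n}_q b_1$; since $[j]_q \neq 0$ for every $0<j<\ell$, this solves uniquely to $b_n = b_1\,[\ell-1]_q!/([n]_q!\,[\ell-n]_q!)$, and the remaining recursions in \eqref{E6.6.5} are consistent with this closed form. Hence if $b_{i_0}\neq 0$, then $b_1\neq 0$ and therefore all $b_i\neq 0$. For the identification $g=e^{\ell}$, I use \eqref{E6.6.3}: since ${\ell\choose\ell-n}_q = 0$ for every $0<n<\ell$, that identity collapses to $b_n e^{\ell} = b_n g$, and $b_n\neq 0$ then gives $g=e^{\ell}$.

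Part (3) is a short direct computation: expand $\partial^2_{g,1}(a_0\otimes z^\ell)$ using $\Delta(a_0) = g\otimes a_0 + a_0\otimes e^{\ell}$ and the identity $\Delta(z^\ell) = z^\ell\otimes 1 + e^{\ell}\otimes z^\ell$ (valid because $z_0=0$, $ze=qez$, and $[\ell]_q=0$); the six resulting tensors cancel in three pairs.

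For (2), I suppose $[z]^{\ell} = \partial^1_{e^{\ell},1}(x)$ and first pin down $\deg x$. Clearly $\deg x\geq\ell$; and if $\deg x = n>\ell$, then Lemma \ref{zzlem6.4}(2) (which permits top-degree cancellation only when $\deg f\in\{1,\ell\}$) forces $\deg\partial^1_{e^{\ell},1}(x) = n > \ell$, contradicting $\deg[z]^{\ell} = \ell$. So $\deg x = \ell$. Writing $x = x_\ell z^\ell + (\text{lower order terms})$, the degree-$\ell$ part of $\partial^1_{e^{\ell},1}(x)$ arises solely from $\partial^1_{e^{\ell},1}(x_\ell z^\ell)$ and, using $\Delta(z^\ell) = z^\ell\otimes 1 + e^{\ell}\otimes z^\ell$, is supported only on the edge bidegrees $(\ell,0)$ and $(0,\ell)$, whereas $[z]^{\ell}$ is supported only on the interior bidegrees $(i,\ell-i)$ with $0<i<\ell$. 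Requiring both edge components to vanish, combined with the regularity of $z^\ell$ in the Ore extension $H$, forces $x_\ell\in k$; but then $\partial^1_{e^{\ell},1}(x_\ell z^\ell) = 0$ identically, yielding $\deg\partial^1_{e^{\ell},1}(x) < \ell$---the final contradiction.

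For (4), I first verify $\partial^2_{e^{\ell+1},1}(e^{\ell}z\otimes z^\ell) = 0$ by direct expansion using $\Delta(e^{\ell}z) = e^{\ell}z\otimes e^{\ell} + e^{\ell+1}\otimes e^{\ell}z$ and $\Delta(z^\ell) = z^\ell\otimes 1 + e^{\ell}\otimes z^\ell$. To show non-vanishing, suppose $e^{\ell}z\otimes z^\ell = \partial^1_{e^{\ell+1},1}(x)$. Clearly $\deg x\geq\ell+1$, and since $\ell+1\notin\{1,\ell\}$ (because $\ell\geq 2$), the top-cancellation clause of Lemma \ref{zzlem6.4}(2) fails whenever $\deg x>\ell+1$, so $\deg x = \ell+1$. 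The Lucas-type Lemma \ref{zzlem5.2}(2) yields $\Delta(z^{\ell+1}) = z^{\ell+1}\otimes 1 + e^{\ell}z\otimes z^\ell + ez^\ell\otimes z + e^{\ell+1}\otimes z^{\ell+1}$. Writing $x = x_{\ell+1}z^{\ell+1} + (\text{lower})$ and matching bidegree components in degree $\ell+1$: the $(\ell+1,0)$-edge forces $x_{\ell+1}\in k$; the $(1,\ell)$-component then forces $x_{\ell+1}=-1$; but the $(\ell,1)$-component now produces $ez^\ell\otimes z$, which is absent on the right hand side---the required contradiction. The main obstacle in both (2) and (4) is the bidegree component matching, which hinges on the sparse structure of $\Delta(z^\ell)$ (only edge components, since $[\ell]_q=0$) contrasted against the extra off-diagonal term $ez^\ell\otimes z$ in $\Delta(z^{\ell+1})$ that is decisive in ruling out the cocycle in (4).
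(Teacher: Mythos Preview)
Your proof is correct and follows essentially the same approach as the paper. For part (1) you use \eqref{E6.6.3} and \eqref{E6.6.5} exactly as the paper does; for parts (2)--(4) the paper is extremely terse (``direct computation and the fact that $z^{\ell}\in Z^1_{e^{\ell},1}(H)$''), and you have filled in precisely the details one would expect. One minor remark: under the hypotheses $z_0=0$ and $\delta(e)=0$, $H$ is a graded coalgebra (cf.\ the proof of Theorem \ref{zzthm6.5}(4b)), so in (2) the paper simply takes $F$ homogeneous of degree $\ell$, i.e.\ $F=F_0 z^{\ell}$ with $F_0\in K$, rather than arguing via leading terms and Lemma \ref{zzlem6.4}(2) as you do; both routes are valid and yours works without invoking the grading explicitly.
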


\begin{proof} (1) By \eqref{E6.6.3}, $b_{i_0}e^{\ell}=b_{i_0} g+0$
as ${\ell\choose i_0}=0$; so $g=e^{\ell}$. 
Since both ${n \choose m}_q$ and ${w-m\choose w-n}_q$ 
are nonzero for $1\leq m\leq n\leq w-1$, the equation \eqref{E6.6.5}
implies that $b_i\neq 0$ for all $i>0$. The last assertion follows from 
\eqref{E6.6.5}. 

(2) By Lemma \ref{zzlem5.6}(2), $[z]^{\ell}\in Z^2_{e^{\ell},1}(H)$. If remains
to show that $[z]^{\ell}$ is not of the form $\partial^1_{e^\ell,1}(F)$
for any $F\in H$. If there were $F$ such that $\partial^1_{e^\ell,1}(F)=
[z]^{\ell}$, then $\deg(F)=\ell$. Write $F=F_0 z^{\ell}$ for $F_0\in K$.
Then the fact $\Delta(z^{\ell})=z^{\ell}\otimes 1+e^{\ell}\otimes z^{\ell}$
implies that $\partial^1_{e^{\ell},1}(F)\neq [z]^{\ell}$. Therefore the class
of $[z]^{\ell}$ is nonzero. 

(3,4) These follow from a direct computation and the fact that
$z^{\ell}\in Z^1_{e^{\ell},1}(H)$.
\end{proof}

Now we are able to compute the primitive cohomology for two HOEs
$C_G(e,\chi,\tau)$ [Example \ref{zzex5.5}] and $A_G(e,\chi)$
[Example \ref{zzex5.4}].

\begin{corollary}
\label{zzcor6.9}
Let $H$ be $C_G(e,\chi,\tau)$ and let $g,h\in G(H)=:G$. 
\begin{enumerate}
\item[(1)]
$\PP^1_{g,h}(H)=k \; \overline{hz}$ if $gh^{-1}=e$
and $\PP^1_{g,h}(H)=0$ otherwise.
\item[(2)]
$\PCdim H=1$.
\end{enumerate}
\end{corollary}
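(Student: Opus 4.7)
Under the hypothesis $\chi(e)=1$ of Example \ref{zzex5.5}, Hypothesis \ref{zzhyp6.1} is satisfied with $q=1$ (hence $\ell=\infty$) and $z_0=0$, so Theorem \ref{zzthm6.5} applies. Via the shift isomorphism of Lemma \ref{zzlem3.4}, it suffices to compute $\PP^n_{g',1}(H)$ for $g':=h^{-1}g\in G$. Because $K=kG$ is cosemisimple (Example \ref{zzex4.3}), $\PP^n_{g',h'}(K)=0$ for all $n\geq 1$, so by Lemma \ref{zzlem3.1} the map $\phi^n_{g',1}:\PP^n_{g',1}(K)\to \PP^n_{g',1}(H)$ has trivial domain, forcing $\PP^n_{g',1}(H)\cong\coker\phi^n_{g',1}$ for each $n\geq 1$.

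For part (1) I would apply Theorem \ref{zzthm6.5}(2): since $\ell=\infty$ rules out case (2iii), and $z_0=0=\partial^1_{e,1}(0)\in B^2_{e,1}(K)$ holds trivially, one obtains $\coker\phi^1_{e,1}=k\overline{z}$ and $\coker\phi^1_{g',1}=0$ for $g'\neq e$. Transporting back via $\Sh^1_h$ sends $\overline{z}$ to $\overline{hz}\in \PP^1_{g,h}(H)$, and the centrality of $e$ in $G$ converts $h^{-1}g=e$ into $gh^{-1}=e$, yielding the formula of part (1).

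For part (2) the lower bound $\PCdim H\geq 1$ is immediate from (1). For the upper bound, the case $n=2$ vanishes by Theorem \ref{zzthm6.5}: part (3ii) gives $\ker\phi^2_{g',1}=0$ because $z_0=0\in B^2_{e,1}(K)$, and part (4a), which applies since $q=1$, reduces any nonzero class of $\coker\phi^2_{g',1}$ to one built from an element $a_0\in Z^1_{g',e}(K)$ whose cohomology class in $\PP^1_{g',e}(K)=0$ must be nonzero. No such $a_0$ exists, so $\PP^2_{g',1}(H)=0$.

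The main obstacle is the case $n\geq 3$, which Theorem \ref{zzthm6.5} does not directly address. My plan for this step is to establish the general homological bound $\PCdim K[z;\sigma,\delta]\leq \PCdim K+1$ for HOEs satisfying Hypothesis \ref{zzhyp6.1}. Two natural routes are: (i) a change-of-rings spectral sequence for the cotensor functor $-\square_H-$ along the coalgebra inclusion $K\hookrightarrow H$, exploiting the right $K$-freeness of $H$ coming from the PBW decomposition $H=\bigoplus_{n\geq 0}Kz^n$; or (ii) an explicit contracting homotopy on $(T_{g',1}H,\partial)$ in cohomological degrees $\geq 2$, engineered from $\Delta(z)=z\otimes 1+e\otimes z$ and the PBW basis. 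Combined with $\PCdim K=0$ from the cosemisimplicity of $K$, either route yields $\PCdim H\leq 1$ and completes the proof.
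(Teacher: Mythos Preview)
Your treatment of part (1) and of the vanishing $\PP^2_{g',1}(H)=0$ is correct and matches the paper's proof essentially line for line: both invoke the cosemisimplicity of $K=kG$ to kill $\PP^{\geq 1}_{*,*}(K)$, and then read off the kernels and cokernels of $\phi^1$ and $\phi^2$ from Theorem~\ref{zzthm6.5} with $q=1$, $z_0=0$, $\ell=\infty$.

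Where you diverge from the paper is in the step you flag as the ``main obstacle.'' There is no obstacle: once you know $\PP^2_{g,h}(H)=0$ for \emph{all} $g,h\in G$, Proposition~\ref{zzpro3.7}(2) finishes the proof immediately. That proposition says that for every $n\leq\PCdim H$ there exist $g,h$ with $\PP^n_{g,h}(H)\neq 0$; contrapositively, the uniform vanishing of $\PP^2$ forces $\PCdim H\leq 1$. (Note that $H$ is pointed by Proposition~\ref{zzpro8.16}, so the hypothesis of Proposition~\ref{zzpro3.7} is met.) This is exactly how the paper closes the argument, in one sentence.

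Your two proposed routes---a change-of-rings spectral sequence or an explicit contracting homotopy---would both likely succeed and would in fact establish the stronger general inequality $\PCdim K[z;\sigma,\delta]\leq \PCdim K+1$, which is of independent interest. But for this corollary they are a substantial detour around a result already available in the paper.
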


\begin{proof} Let $K=kG=C_0$. Since $K$ is a group algebra,
$\PP^i_{g,h}(K)=0$ for all $g,h$ and all $i>0$ [Example 
\ref{zzex4.3}]. By Example 
\ref{zzex5.5}, $H$ is a HOE $K[z;\sigma_{\chi},\delta_{\tau}]$ 
with $q=1$ and $z_0=0$ (by the notation of Theorem \ref{zzthm6.5}).
By Theorem \ref{zzthm6.5}, the $\ker \phi^i_{g,1}$ and $\coker 
\phi^i_{g,1}$, for $i=1,2$, are zero except for 
$\coker \phi^1_{e,1}=k\overline{z}$. The assertion in (1) follows. 
For part (2), we see that $\PP^2_{g,1}(H)=0$ for all $g\in G$.
It is clear that $\PCdim H\ge 1$. 
The assertion in (2) follows Proposition \ref{zzpro3.7}(2).
\end{proof}

\begin{corollary}
\label{zzcor6.10}
Let $H$ be $A_G(e,\chi)$ and assume that $\chi(e)$ is either 1 or not a 
root of unity. 
\begin{enumerate}
\item[(1)]
$\PP^1_{g,h}(H)=k \; \overline{hz}$ if $gh^{-1}=e$
and $\PP^1_{g,h}(H)=0$ otherwise.
\item[(2)]
$\PCdim H=1$.
\end{enumerate}
\end{corollary}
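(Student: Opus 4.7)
The plan is to mirror the proof of Corollary 6.9, leveraging Theorem \ref{zzthm6.5} applied to the coradical. Set $K = kG$. Since $K$ is a group algebra, hence cosemisimple, Example \ref{zzex4.3} gives $\PP^i_{g,h}(K) = 0$ for every $i \geq 1$ and every $g, h \in G$. By Example \ref{zzex5.4}, $H = K[z; \sigma_\chi]$ is a HOE with $\delta = 0$, $z_0 = 0$, and $q = \chi(e)$; the hypothesis that $\chi(e)$ is $1$ or not a root of unity translates to $\ell = \infty$ in Hypothesis \ref{zzhyp6.1}, so the exceptional $g = e^\ell$ cases in Theorem \ref{zzthm6.5} become vacuous throughout.

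For part (1), I would apply Theorem \ref{zzthm6.5}(1,2) to the inclusion-induced map $\phi^1_{g,1}$. Its injectivity combined with $\PP^1_{g,1}(K) = 0$ gives $\PP^1_{g,1}(H) \cong \coker \phi^1_{g,1}$. Since $z_0 = 0 = \partial^1_{e,1}(0)$ lies in $B^2_{e,1}(K)$, that cokernel is $k\overline{z}$ when $g = e$ and vanishes otherwise. Lemma \ref{zzlem3.4} then provides an isomorphism $\Sh_h\colon \PP^1_{h^{-1}g, 1}(H) \to \PP^1_{g, h}(H)$ sending $\overline{z}$ to $\overline{hz}$; since $e$ lies in the center of $G$ (as required by Example \ref{zzex5.4}), the condition $h^{-1}g = e$ is equivalent to $gh^{-1} = e$, which yields the stated formula.

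For part (2), the inequality $\PCdim H \geq 1$ is immediate from $\PP^1_{e,1}(H) \neq 0$. For the reverse inequality I would next show $\PP^2_{g, 1}(H) = 0$ for every $g$. Theorem \ref{zzthm6.5}(3i, 3ii), with $z_0 = 0 \in B^2_{e,1}(K)$ and $\ell = \infty$, yield $\ker \phi^2_{g, 1} = 0$ for all $g$. Since $q$ is either $1$ or not a root of unity, Theorem \ref{zzthm6.5}(4a) applies, and any nonzero class in $\coker \phi^2_{g, 1}$ would require $\overline{a_0}$ to be nonzero in $\PP^1_{g, e}(K) = 0$, a contradiction. Hence $\PP^2_{g, 1}(H) = 0$, and Lemma \ref{zzlem3.4} extends this to $\PP^2_{g, h}(H) = 0$ for all $g, h$. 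Proposition \ref{zzpro3.7}(2), read contrapositively, then says that $\PCdim H \geq 2$ would force some $\PP^2_{g,h}(H) \neq 0$; the vanishing just established therefore forces $\PCdim H \leq 1$, so $\PCdim H = 1$.

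The main care-point is keeping straight the two exceptional loci of Theorem \ref{zzthm6.5} at $g = e$ and at $g = e^\ell$, and correctly invoking $\ell = \infty$ to suppress the second; a secondary detail is tracking the shift in Lemma \ref{zzlem3.4} with enough precision to identify $\overline{hz}$ as the generator of the one-dimensional class in $\PP^1_{g,h}(H)$.
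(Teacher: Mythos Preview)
Your proposal is correct and follows essentially the same approach as the paper, which simply states that the proof is similar to that of Corollary \ref{zzcor6.9}. You have filled in exactly the details one would expect: using the cosemisimplicity of $K=kG$, Theorem \ref{zzthm6.5} with $\ell=\infty$, the shift from Lemma \ref{zzlem3.4}, and Proposition \ref{zzpro3.7}(2) for the upper bound on $\PCdim H$.
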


\begin{proof} The proof is similar to the proof of Corollary
\ref{zzcor6.9}.
\end{proof}

\begin{corollary}
\label{zzcor6.11}
Let $H$ be $A_G(e,\chi)$ and assume that $\chi(e)$ is a primitive 
$\ell$th root of unity for some $\ell\geq 2$. Set $h=1$.
\begin{enumerate}
\item[(1)]
$$\PP^1_{g,1}(H)=\begin{cases} k \; \overline{z} &{\text{ if $g=e$,}}\\
k\; \overline{z^{\ell}} &{\text{ if $g=e^{\ell}$,}}\\
0 &{\text{otherwise.}}
\end{cases}
$$
\item[(2)]
$$\PP^2_{g,1}(H)=\begin{cases} k \; \overline{[z]^{\ell}} 
&{\text{ if $g=e^{\ell}$,}}\\
k\; \overline{e^{\ell}z\otimes z^{\ell}} &{\text{ if $g=e^{\ell+1}$,}}\\
0 &{\text{otherwise.}}
\end{cases}
$$
\end{enumerate}
\end{corollary}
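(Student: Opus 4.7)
The plan is to specialize Theorem \ref{zzthm6.5} to $H = A_G(e,\chi) = K[z;\sigma_\chi]$ with $K = kG$. From Example \ref{zzex5.4} the HOE data are $z_0 = 0$, $\delta = 0$ (so both $\delta(e)=0$ and $ze = qez$ hold), and $q = \chi(e)$ is a primitive $\ell$-th root of unity; so we sit in Case (4b) of Theorem \ref{zzthm6.5}. The first preliminary observation is that $K = kG$ is cosemisimple, so Example \ref{zzex4.3} (or the direct fact $P_{h,g}(kG) = k(h-g)$) gives $\PP^i_{g,h}(K) = 0$ for all grouplikes $g,h$ and all $i \geq 1$. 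Consequently, for $i = 1, 2$, once we check that $\ker \phi^i_{g,1} = 0$, we will have $\PP^i_{g,1}(H) \cong \coker \phi^i_{g,1}$.

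For part (1) I would read off $\coker \phi^1_{g,1}$ directly from Theorem \ref{zzthm6.5}(2): clause (2i) kills all $g \neq e, e^\ell$; clause (2ii), applied with $z_0 = 0 = \partial^1_{e,1}(0)$, yields $\coker \phi^1_{e,1} = k\overline{z}$; and clause (2iii), whose first alternative is triggered by $z_0 = 0$ together with $ze = qez$, yields $\coker \phi^1_{e^\ell,1} = k\overline{z^\ell}$. This gives (1).

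For part (2) I would first dispose of the kernels using Theorem \ref{zzthm6.5}(3): (3i) handles $g \neq e, e^\ell$; (3ii) handles $g = e$ because $z_0 = 0 \in B^2_{e,1}(K)$; (3iii) handles $g = e^\ell$ because $z_0 = 0$ and $ze = qez$. Then I would compute $\coker \phi^2_{g,1}$ by reading off the graded pieces from Case (4b) of Theorem \ref{zzthm6.5}: degree $1$ gives $\PP^1_{g,e}(K) \otimes kz = 0$ by (4iii); degree $\ell$ gives $k\overline{[z]^\ell}$ when $g = e^\ell$ and is zero otherwise by (4iv), since $\PP^1_{g,e^\ell}(K) = 0$; degree $\ell+1$ gives $k\,\overline{e^\ell z \otimes z^\ell}$ when $g = e^{\ell+1}$ and is zero otherwise by (4v); and (4vi) kills every remaining degree. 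Assembling these graded components gives the formula in (2).

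The argument is essentially bookkeeping: all of the substantive cohomological work has already been carried out in Theorem \ref{zzthm6.5}, and the only point requiring care is to confirm, from Example \ref{zzex5.4}, that $A_G(e,\chi)$ really satisfies the hypotheses $z_0 = 0$ and $ze = qez$ needed to enter Case (4b). So there is no genuine obstacle; the main ``work'' is just matching parameters and keeping track of which graded piece of the cokernel survives.
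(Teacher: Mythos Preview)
Your proposal is correct and follows exactly the approach the paper takes: the paper's proof is the single sentence ``This follows from Theorem \ref{zzthm6.5} in the case when $\ell\geq 2$ is finite,'' and you have simply spelled out the bookkeeping. One small redundancy: since $\PP^i_{g,1}(K)=0$ for $i\geq 1$, the maps $\phi^i_{g,1}$ have trivial domain, so the kernel checks via Theorem \ref{zzthm6.5}(3) are automatic and unnecessary.
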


\begin{proof} This follows from Theorem \ref{zzthm6.5} in the
case when $\ell\geq 2$ is finite.
\end{proof}

\section{Proof of Theorem \ref{zzthm0.1}}
\label{zzsec7}

In this section we assume that the base field $k$ is algebraically
closed of characteristic zero.
First recall some notations introduced in \cite{WZZ1}. Let 
$P_{g, \xi,n}$ denote the $k$-linear span of skew primitive elements 
$y$ satisfying
$$\Delta(y)=y\otimes 1+g\otimes y, \quad {\text{and}}\quad 
(T_{g^{-1}}-\xi Id_H)^n (y)\in C_0:=kG(H)$$ 
where $T_{g^{-1}}$ is the inverse 
conjugation by $g$, namely, $T_{g^{-1}}:a \to g^{-1}a g$ for
all $a\in H$. Let 
$P_{g, \xi,*}=\bigcup_{n}P_{g, \xi,n}$. We say a skew primitive 
{\it nontrivial} if it is not in $C_0$. 
Let $Z$ be the $k$-linear space spanned by all nontrivial skew 
primitive elements and let $Y_*$ be the $k$-linear space spanned by 
all nontrivial skew primitive elements in $P_{g, \xi,*}$ for all 
$\xi$ not being a primitive $\ell$th root of unity for any $\ell\geq 2$.

We refer to \cite{KL} for the definition and basic properties of
GK-dimension. 

\begin{theorem}
\label{zzthm7.1}
Let $H$ be a pointed Hopf algebra domain.
Suppose that $C_0:=kG(H)$ is commutative and that 
$$\GKdim C_0<\GKdim H<\GKdim C_0+2<\infty.$$ 
Suppose $H$ does not contain $A(1,1)$ [Example \ref{zzex5.4}] as a 
Hopf subalgebra. Then $H$ is isomorphic to either $A_G(e,\chi)$ 
or $C_G(e,\tau)$ where $G=G(H)$. 
\end{theorem}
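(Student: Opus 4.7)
The plan is to locate a nontrivial $(e,1)$-skew primitive $z$ in $H$, show that the subalgebra $K_1 := C_0\langle z\rangle$ is a Hopf Ore extension isomorphic to $A_G(e,\chi)$ or $C_G(e,\tau)$, and then use $\PCdim K_1 = 1$ together with Proposition \ref{zzpro2.4}(2) to force $K_1 = H$.

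First, since $\GKdim H > \GKdim C_0$ we have $C_1 \supsetneq C_0$, so Lemma \ref{zzlem2.1}(3) yields grouplikes $g,h \in G := G(H)$ with $\PP^1_{g,h}(H) \neq 0$, and the shift of Lemma \ref{zzlem3.4} produces $z \in H \setminus C_0$ with $\Delta(z) = z \otimes 1 + e \otimes z$ for some $e \in G$. The subalgebra $K_1 := C_0\langle z\rangle$ is then a Hopf Ore extension $C_0[z;\sigma_\chi,\delta]$ where $\sigma_\chi(g) = \chi(g)g$ for a character $\chi : G \to k^\times$ (Lemma \ref{zzlem5.1}(1) together with \cite{BOZZ}), and in particular $\GKdim K_1 = \GKdim C_0 + 1$, which by the GK-dimension bound forces $\GKdim K_1 = \GKdim H$.

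Second, I would classify $K_1$ up to isomorphism. Since $C_0 = kG$ has only trivial skew primitives, comparing $\Delta(zg) = \Delta(z)\Delta(g)$ with $\Delta(\chi(g)gz + \delta(g))$ forces $\delta(g) = \mu(g)(e-1)g$ for a map $\mu : G \to k$ satisfying the twisted cocycle identity $\mu(gh) = \mu(g) + \chi(g)\mu(h)$, and the freedom to replace $z$ by $z + \alpha(e-1)$ alters $\mu$ by the coboundary $g \mapsto \alpha(\chi(g)-1)$. Applying the cocycle identity to the equality $eg = ge$ yields $(q-1)\mu(g) = (\chi(g)-1)\mu(e)$, where $q := \chi(e)$. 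When $q \neq 1$, a single shift of $z$ zeros out $\mu(e)$ and hence all of $\mu$, so $K_1 \cong A_G(e,\chi)$. When $q = 1$: if $\chi$ is nontrivial, the identity forces $\mu(e) = 0$, and a further direct analysis of the cocycle together with the coboundary freedom shows $\mu$ is a coboundary, trivializing $\delta$ and again landing in $A_G(e,\chi)$; if $\chi \equiv 1$, then $\mu$ is an additive character $\tau : G \to (k,+)$ and $K_1 \cong C_G(e,\tau)$.

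Third, I would invoke Corollaries \ref{zzcor6.9} and \ref{zzcor6.10} to obtain $\PCdim K_1 = 1$, so by Proposition \ref{zzpro2.4}(2) the sub-Hopf algebra $K$ of $H$ generated by grouplikes and skew primitives satisfies $K = H$. To conclude $H = K_1$ I would show $K = K_1$: any skew primitive $z' \in H$ independent of $z$ modulo $C_0$ would make the subalgebra $C_0\langle z, z'\rangle$ a domain of GK-dimension at least $\GKdim C_0 + 2$, contradicting the hypothesis; dependent skew primitives already lie in $K_1$ by the description of $\PP^1$ for $A_G$ and $C_G$ in Corollaries \ref{zzcor6.9}--\ref{zzcor6.10}. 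The $A(1,1)$-avoidance hypothesis restricts the admissible parameters $(e,\chi,\tau)$ (for instance, forbidding $\chi(e)=1$ with $e$ of infinite order in the $A_G$ case) but does not alter the shape of the classification. The main technical obstacle I anticipate is the second step: ruling out the mixed configuration where $q=1$ but $\chi$ is nontrivial requires simultaneous use of the twisted cocycle identity and the coboundary freedom in translating $z$. A secondary subtlety arises if $q$ is a nontrivial root of unity, since Corollary \ref{zzcor6.11} then produces nonzero classes in $\PP^2(K_1)$ which could in principle obstruct $K=H$; one must show any such extension conflicts with the domain hypothesis and the strict bound $\GKdim H < \GKdim C_0 + 2$.
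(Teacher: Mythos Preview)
Your overall strategy is sound, but there is a genuine gap in the root-of-unity case, which you flag as a ``secondary subtlety'' but do not resolve. When $q = \chi(e)$ is a primitive $\ell$th root of unity with $\ell \geq 2$, Corollary~\ref{zzcor6.11} gives $\PP^2_{e^{\ell},1}(K_1) \neq 0$, so $\PCdim K_1 > 1$ and Proposition~\ref{zzpro2.4}(2) is simply unavailable. Your proposed fix via the domain hypothesis and the bound $\GKdim H < \GKdim C_0 + 2$ does not work: $A_G(e,\chi)$ with $\chi(e)$ a root of unity is itself a domain of GK-dimension $\GKdim C_0 + 1$, so neither hypothesis is violated and no contradiction arises from that direction.

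The missing ingredient is precisely the $A(1,1)$-avoidance hypothesis, which you treat only as a constraint on the final parameter list. In the paper's proof its role is structural and comes \emph{first}: if $\xi := \chi(e)$ is a primitive $\ell$th root of unity, then (after the standard shift making $ze = \xi ez$) the sub-Hopf-algebra of $H$ generated by $e^{\pm 1}$ and $z$ is a noncommutative domain quotient of $A(1,\xi)$ of GK-dimension at least $2$, hence isomorphic to $A(1,\xi)$; and $A(1,\xi)$ contains $A(1,1)$ as the sub-Hopf-algebra generated by $x^{\pm\ell}$ and $z^{\ell}$ (using $z^{\ell}x = xz^{\ell}$ and $\Delta(z^{\ell}) = z^{\ell}\otimes 1 + x^{\ell}\otimes z^{\ell}$). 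This contradiction eliminates roots of unity before any $\PCdim$ is computed, so that Corollaries~\ref{zzcor6.9}--\ref{zzcor6.10} (whose hypotheses require $\chi(e)$ to be $1$ or a non-root of unity) legitimately apply. A related point: your claim that an independent skew primitive $z'$ forces $\GKdim C_0\langle z,z'\rangle \geq \GKdim C_0 + 2$ is not elementary---the paper obtains the bound on the space of skew primitives from \cite[Theorem~3.10]{WZZ1}, which applies to $Y_*$ and hence also requires the root-of-unity case to be excluded beforehand.
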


\begin{proof} Note that $C_0=kG(H)$. Since $H\neq C_0$, $H$ contains 
a nontrivial skew primitive element, say $y$. By 
\cite[Lemma 3.7(a)]{WZZ1}, $y$ is a linear combination of skew 
primitives in $P_{g, \xi,*}$ for different pairs $(g,\xi)$. 
For at least one pair $(g,\xi)$, $P_{g,\xi,*}$ is not a subspace of 
$C_0$. Then there is a nontrivial skew primitive $y\in P_{g,\xi,1}$, 
namely $g^{-1}yg-\xi y= \alpha(g-1)$. Now we claim that $\xi$ is
either 1 or not a root of unity.

Suppose otherwise that $\xi$ is a primitive $\ell$th root of unity for 
some $\ell\geq 2$. Then, after replacing $y$ by 
$y+\frac{\alpha}{\xi-1}(g-1)$, we may assume that $g^{-1}yg-\xi y=0$. 
Then the subalgebra $H'$ generated $g^{\pm 1}$ and $y$ is a 
noncommutative Hopf subalgebra domain. By \cite[Lemma 4.5]{GZ}, 
$\GKdim H'\geq 2$. Clearly, $H'$ is a quotient algebra of 
$A(1,\xi)$ [Example \ref{zzex5.4}] by sending $e\to g$
and $z\to y$. It is known that $A(1,\xi)$ is a domain of 
GK-dimension 2 \cite[Construction 1.1]{GZ}. Therefore $H'\cong A(1,\xi)$. 
Note that $A(1,\xi)$ contains 
$A(1,1)$ as a Hopf subalgebra, a contradiction to the hypotheses. 
Therefore we proved the claim that $\xi$ is not a primitive 
$\ell$th root of unity for any $\ell\geq 2$.

By the notation introduced before the theorem, the claim is equivalent
to say that $Z=Y_*$. Since $H\neq C_0$, we have $Y_*\not\subset C_0$. 
By \cite[Theorem 3.10]{WZZ1}, 
$$\dim Y_*/(Y_*\cap C_0)\leq \GKdim H-\GKdim C_0<2$$
which implies that $\dim Y_*/(Y_*\cap C_0)=1$. By definition (and also
see \cite[Lemma 3.2(b)]{WZZ2}), 
$Y_*/(Y_*\cap C_0)=\bigoplus_{(g,\xi)} P_{g,\xi,*}/(P_{g,\xi,*}\cap C_0)$.
Hence there is only one pair, say $(e,\xi)$, such that 
$P_{e,\xi,*}/(P_{e,\xi,*}\cap C_0)\neq 0$ and 
$P_{e,\xi,*}/(P_{e,\xi,*}\cap C_0)$ is 1-dimensional. This implies
that $P_{e,\xi,*}=P_{e,\xi,1}=ky \oplus k(e-1)$. Combining these observations, 
we have
$$Z=Y_*=P_{e,\xi,*}=P_{e,\xi,1}=ky\oplus k(e-1)$$
for some $e\in G(H)$ and some $\xi\in k^{\times}$ being 1 or not 
a root of unity. 

Let $K$ be the subalgebra of $H$ generated by $Z$ and $C_0$, or
equivalently, by $y$ and $C_0$. It is clear that $K$ is a Hopf subalgebra. 
Applying \cite[Lemma 2.2(c)]{WZZ1} to 
$V=ky\oplus k(e-1)$, there is a $z\in V\setminus k(e-1)$ such that 
either
\begin{enumerate}
\item[(i)]
there is a character $\chi: G\to k^{\times}$ such that
$h^{-1}z h=\chi(h) z$ for all $h\in G$, or
\item[(ii)]
there is an additive character $\tau: G\to (k,+)$ such that
$h^{-1}z h=z+\tau(h)(e-1)$ for all $h\in G$. 
\end{enumerate}

In the first case $K$ is a quotient of $A_G(e,\chi)$ and in the
second case $K$ is a quotient of $C_G(e,\tau)$. We claim that
$K\cong A_G(e,\chi)$ in the first case and that $K\cong C_G(e,\tau)$
in the second case. We only prove the claim for the first case.
Consider the natural Hopf map $f: A_G(e,\chi)\to H$ which is 
injective on $C_0+C_0 z=C_1(H)$ by definition. By 
\cite[Theorem 5.3.1]{Mo}, $f$ is injective. Consequently, 
$K\cong A_G(e,\chi)$ (and $K\cong C_G(e,\tau)$ in the second case).

By definition, $K$ is generated by all grouplikes and
skew primitive elements of $H$. By the above paragraph, $K$ 
is isomorphic to either $A_G(e,\chi)$ or $C_G(e,\tau)$. In the
first case $q:=\chi(e)$ is not a primitive $\ell$th root of unity
for any $\ell\geq 2$. By Corollaries \ref{zzcor6.9} and \ref{zzcor6.10},
$\PCdim K=1$. Finally, by 
Proposition \ref{zzpro2.4}, $H=K$ as desired.
\end{proof}

An algebra $A$ is called \emph{locally PI} if every affine
subalgebra of $A$ is PI. 

\begin{theorem}
\label{zzthm7.2}
Let $H$ be a pointed Hopf domain such that $\GKdim H<3$ and that 
$C_0=k{\mathbb Z}$. If $H$ is not locally PI, then $H$ is isomorphic 
to either 
\begin{enumerate}
\item[(1)]
$A(n,q)$ of Example \ref{zzex5.4} where $n>0$ and
$q$ is not a root of unity, or
\item[(2)]
$C(n)$ of Example \ref{zzex5.5} for $n\geq 2$.
\end{enumerate}
As a consequence, $H$ is affine and noetherian.
\end{theorem}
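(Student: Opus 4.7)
The plan is to reduce to Theorem \ref{zzthm7.1} by verifying its hypotheses, and then to use Examples \ref{zzex5.4} and \ref{zzex5.5} together with the non-locally-PI assumption to identify $H$ and restrict its parameters. Since $C_0 = k\mathbb{Z}$ is commutative of GK-dimension one, the required checks are $1 < \GKdim H < 3$ and that $H$ does not contain $A(1,1)$ as a Hopf subalgebra.

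For the GK-dimension lower bound, if $\GKdim H \leq 1$ then every affine subalgebra of $H$ is an affine domain of GK-dimension at most one, hence PI by the Small--Stafford--Warfield theorem; this would make $H$ locally PI, contradicting the hypothesis. Combined with $\GKdim H < 3$, one obtains $1 < \GKdim H < 3 = \GKdim C_0 + 2$.

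To rule out $A(1,1) \subseteq H$, suppose on the contrary that $A(1,1) = k[x^{\pm 1},z]$ sits inside $H$ as a Hopf subalgebra, and let $K$ denote the Hopf subalgebra of $H$ generated by all grouplikes and skew primitives; then $A(1,1) \subseteq K$. Since $A(1,1) = A_{\mathbb{Z}}(x,\chi)$ with $\chi(x)=1$, Corollary \ref{zzcor6.10} yields $\PCdim A(1,1) = 1$. The aim is to show $K = A(1,1)$, from which Proposition \ref{zzpro2.4}(2) forces $K = H$, making $H = A(1,1)$ commutative and contradicting non-local PI. The bound $\dim(Y_*/(Y_* \cap C_0)) \leq \GKdim H - \GKdim C_0 < 2$ from \cite[Theorem 3.10]{WZZ1}, already saturated by the class of $z$, eliminates any extra nontrivial skew primitive inside $Y_*$. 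The remaining possibility is a $(g,1)$-skew primitive $y$ whose conjugation scalar $\xi$ is a primitive $\ell$-th root of unity ($\ell \geq 2$), in which case $y$ and $g^{\pm 1}$ generate a Hopf subalgebra isomorphic to $A(m,\xi)$ for some $m$; excluding such a $y$ by combining the primitive-cohomology computations of Corollaries \ref{zzcor6.9}--\ref{zzcor6.11} with the non-locally-PI hypothesis is the main technical obstacle.

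With the hypotheses of Theorem \ref{zzthm7.1} verified, $H$ is isomorphic to $A_{\mathbb{Z}}(x^n,\chi) \cong A(n,\chi(x))$ or to $C_{\mathbb{Z}}(x^n,\tau) \cong C(n)$, by Examples \ref{zzex5.4} and \ref{zzex5.5}. The non-locally-PI hypothesis excludes $q := \chi(x)$ being a root of unity: when $q$ is a primitive $\ell$-th root of unity, $A(n,q)$ is a finite module over its central subalgebra $k[x^{\pm \ell}, z^\ell]$ and hence PI, and $A(n,1)$ is commutative. Similarly $C(1) \cong k[x, y^{\pm 1}]$ is commutative, so $n \geq 2$ in the $C(n)$ case. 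Using the isomorphism $A(n,q) \cong A(-n,q^{-1})$ from Example \ref{zzex5.4}, one may normalize $n > 0$. Finally, both $A(n,q)$ and $C(n)$ are Ore extensions of the affine noetherian algebra $k[x^{\pm 1}]$, hence are themselves affine (generated by $\{x, x^{-1}, z\}$) and noetherian.
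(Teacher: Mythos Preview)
Your overall strategy---reduce to Theorem~\ref{zzthm7.1} and then pin down the parameters---is exactly the paper's, and your GK-dimension lower bound via Small--Stafford--Warfield is a valid alternative to the paper's use of \cite[Lemma~4.5]{GZ}. Your final paragraph identifying the parameter restrictions is correct and in fact more explicit than the paper's proof.

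The genuine gap is in ruling out $A(1,1)\subseteq H$. You acknowledge this yourself: excluding a skew primitive with root-of-unity conjugation scalar is ``the main technical obstacle,'' and you do not resolve it. Your attempted route is essentially to rerun the internal argument of Theorem~\ref{zzthm7.1} under the contrary assumption $A(1,1)\subseteq H$, hoping to force $K=A(1,1)$ and then $H=K$. But that internal argument in Theorem~\ref{zzthm7.1} disposes of the root-of-unity case precisely by invoking the hypothesis that $A(1,1)\not\subseteq H$; under your contrary assumption that hypothesis is unavailable, and nothing you cite (Corollaries~\ref{zzcor6.9}--\ref{zzcor6.11}) prevents $K$ from being, say, $A(1,\xi)$ with $\xi$ a root of unity, for which $\PCdim K>1$ and Proposition~\ref{zzpro2.4}(2) no longer applies.

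The paper sidesteps all of this with a single external input: Bell's theorem \cite[Theorem~1.1]{Be}, which implies that a domain of GK-dimension two that is not locally PI cannot contain a commutative subalgebra of GK-dimension two. Since $A(1,1)\cong k[x^{\pm1},z]$ is commutative of GK-dimension two, it is excluded immediately. That one citation replaces your entire third paragraph and closes the gap.
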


\begin{proof} By Examples \ref{zzex5.4} and \ref{zzex5.5}, 
$A_{G}(e, \chi)$ becomes $A(n,q)$  and $C_G(e,\tau)$ becomes 
$C(n)$ when $G={\mathbb Z}$. 

Note that $A(1,1)$ is a commutative Hopf algebra of GK-dimension two.
If $H$ is not a locally-PI domain of GK-dimension two, by 
\cite[Theorem 1.1]{Be}, $H$ does not contain $A(1,1)$ as a subalgebra.
Since $H$ is not commutative, $\GKdim H\geq 2$ \cite[Lemma 4.5]{GZ}. 
Hence all hypotheses of Theorem \ref{zzthm7.1} hold.
Therefore the assertion follows from Theorem \ref{zzthm7.1}. 
\end{proof}

\begin{proof}[Proof of Theorem \ref{zzthm0.1}]
Since $\GKdim C_0\leq \GKdim H<3$, $\GKdim C_0$ is either 0, or 1, or 2.
If $\GKdim C_0=0$, by \cite[Theorem 1.9]{WZZ2} $H$ is isomorphic
to the algebra in part (1).

If $\GKdim C_0=2$, by \cite[Theorem 1.7]{WZZ2}, $H$ is isomorphic to 
the group algebra $k\Gamma$ where $\Gamma$ is either ${\mathbb Z}^{2}$
or ${\mathbb Z}\rtimes {\mathbb Z}$ as given in \cite[Theorem 1.4(1)]{WZZ2}.
In both cases $k\Gamma$ are PI, a contradiction.

If $\GKdim C_0=1$, by \cite[Lemma 1.10]{WZZ2}, $C_0=kG$ is affine. 
Since $C_0$ is an affine domain of GK-dimension one, 
$G$ is finitely generated abelian \cite[Lemma 4.5]{GZ} and torsionfree 
of rank 1, namely, $G={\mathbb Z}$. By Theorem \ref{zzthm7.2},
$H$ is isomorphic to algebras in part (2) or part (3).
\end{proof}

\begin{corollary}
\label{zzcor7.3} 
Let $H$ be a pointed Hopf algebra domain.
Then $\GKdim H$ can not be strictly between 2 and 3.
\end{corollary}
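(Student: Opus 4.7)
The plan is to reduce to Theorem \ref{zzthm0.1} via an approximation-by-affine-Hopf-sub-algebras argument. Suppose, for contradiction, that $2 < \GKdim H < 3$.

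First, I would express $H$ as a directed union $H = \bigcup_{\alpha} H_\alpha$ of affine Hopf sub-algebras. Since $H$ is pointed, every element lies in a finite-dimensional sub-coalgebra; such a sub-coalgebra can be enlarged to a finite-dimensional antipode-stable sub-coalgebra $C$, and then $H_\alpha := k[C]$ is an affine Hopf subalgebra of $H$. Running over all such $C$ realizes $H$ as a directed union of affine Hopf sub-algebras, and using the standard equality $\GKdim H = \sup_{A} \GKdim A$ over all affine sub-algebras, one obtains $\GKdim H = \sup_{\alpha} \GKdim H_\alpha$.

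Next, each $H_\alpha$ is itself an affine pointed Hopf algebra domain with $\GKdim H_\alpha \leq \GKdim H < 3$. I would then split into two cases. If $H_\alpha$ is not PI, Theorem \ref{zzthm0.1} identifies $H_\alpha$ with one of $U(\mathfrak g)$, $A(n,q)$, or $C(n)$, each of which has GK-dimension exactly $2$. If on the other hand $H_\alpha$ is PI, then being an affine PI algebra, its GK-dimension equals the transcendence degree of its center and is in particular an integer, hence lies in $\{0,1,2\}$. In either case, $\GKdim H_\alpha \leq 2$.

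Taking suprema over $\alpha$ now gives $\GKdim H \leq 2$, contradicting the assumption $\GKdim H > 2$. The main obstacle is the first step — producing a directed union by affine \emph{Hopf} sub-algebras rather than by arbitrary affine sub-algebras — which is where pointedness is essential: it supplies the finite-dimensional antipode-stable sub-coalgebras needed to close an affine subalgebra into an affine Hopf subalgebra without enlarging the GK-dimension beyond that of $H$.
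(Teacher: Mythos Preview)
Your proof is correct and follows essentially the same approach as the paper's. The paper's proof is more terse: it simply says ``By the definition of GK-dimension, we can assume that $H$ is affine,'' then splits into the PI case (where affine prime PI algebras have integer GK-dimension) and the non-PI case (where Theorem \ref{zzthm0.1} applies). You have spelled out the reduction to affine Hopf subalgebras more carefully, which is the only nontrivial point the paper leaves implicit.
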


\begin{proof} By the definition of GK-dimension, we can assume
that $H$ is affine.
If $H$ is PI or if $\GKdim H\leq 2$, then $\GKdim H$ is 
an integer. So the assertion follows. If $H$ is 
not PI, the assertion follows from Theorem \ref{zzthm0.1}. 
\end{proof}

Next we remove the hypothesis ``affine'' from Theorem \ref{zzthm0.1}.

\begin{theorem}
\label{zzthm7.4} 
Let $H$ be a pointed Hopf domain of GK-dimension 2. 
If $H$ is not locally PI, then $H$ is isomorphic to one of following:
\begin{enumerate}
\item[(1)]
$U(\mathfrak g)$ of the 2-dimensional {\rm{(}}non-abelian{\rm{)}} 
solvable Lie algebra ${\mathfrak g}$.
\item[(2)]
The algebra $A_{G}(e, \chi))$ of Example \ref{zzex5.4} where $G$ is a 
nontrivial subgroup of $({\mathbb Q},+)$, $\chi: G\to k^{\times}$ 
is a character and  $q:=\chi(e)$ is not a root of unity.
\item[(3)]
The algebra $C_G(e, \eta)$ of Example \ref{zzex5.5} where $G$ is a 
nontrivial subgroup of $({\mathbb Q},+)$ and $\eta:G\to (k,+)$
is a nonzero additive character of $G$.
\end{enumerate}
\end{theorem}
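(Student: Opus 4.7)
The plan is to reduce Theorem \ref{zzthm7.4} to the affine Theorem \ref{zzthm0.1} by writing $H$ as a directed union of affine Hopf subalgebras $\{K_i\}$ (possible because $H$ is pointed: any finite subset is contained in the affine Hopf subalgebra generated by a finite collection of grouplikes and skew primitives) and performing a case analysis on $G := G(H)$. Since $kG$ is a sub-Hopf-domain with $\GKdim kG = \mathrm{rank}(G) \le \GKdim H = 2$ and $G$ is torsion-free abelian, we have $\mathrm{rank}(G) \in \{0,1,2\}$.

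First I rule out $\mathrm{rank}(G)=2$. If it held and $H\neq C_0$, then by \cite[Theorem 5.4.1]{Mo} there is a nontrivial skew primitive $y \in H\setminus C_0$. Pick a finitely generated rank-$2$ subgroup $G' \subseteq G$ and let $K'$ be the affine Hopf subalgebra generated by $kG'$ and $y$. Then $2 = \GKdim kG' \le \GKdim K' \le \GKdim H = 2$ with $\GKdim C_0(K') \ge 2$, so \cite[Theorem 1.7]{WZZ2} forces $K' = kG(K')$, contradicting $y\notin C_0$. Hence $H = kG$ is commutative, violating the not-locally-PI hypothesis.

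For the connected case $\mathrm{rank}(G) = 0$, set $\mathfrak{g} = P(H)$. The embedding $U(\mathfrak{g}) \hookrightarrow H$ forces $\dim \mathfrak{g} \le 2$. If $\dim \mathfrak{g} \le 1$ then Example \ref{zzex4.1} gives $\PCdim U(\mathfrak{g}) \le 1$, so Proposition \ref{zzpro2.4}(2) yields $H = U(\mathfrak{g})$ of GK-dimension at most $1$, a contradiction. Thus $\dim\mathfrak{g}=2$. Set $K := U(\mathfrak{g})$. For each affine Hopf subalgebra $K_i \supseteq K$ inside $H$, Theorem \ref{zzthm0.1} combined with $G(K_i) \subseteq G(H) = \{1\}$ restricts $K_i$ to be either PI or $U(\mathfrak{g}')$ for a $2$-dimensional Lie algebra $\mathfrak{g}'$. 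If $\mathfrak{g}$ is non-abelian solvable, $K$ is not PI, eliminating the PI subcase, and $P(K_i) = \mathfrak{g}' \supseteq P(K) = \mathfrak{g}$ with matching dimensions forces $\mathfrak{g}'=\mathfrak{g}$ and $K_i = K$; consequently $H = \bigcup K_i = U(\mathfrak{g})$, which is case (1). If $\mathfrak{g}$ were abelian then $K = k[x,y]$ cannot embed in any non-abelian $U(\mathfrak{g}')$ (its center is $k$ and all commutative subalgebras have GK-dimension at most $1$), so every $K_i$ containing $K$ would be PI, making $H$ locally PI—a contradiction.

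Finally, for $\mathrm{rank}(G)=1$, $G$ is a nontrivial subgroup of $(\mathbb{Q},+)$ and $\GKdim C_0 = 1$, so the hypotheses of Theorem \ref{zzthm7.1} hold ($C_0=kG$ commutative and $1<2<3$). Since $H$ is not locally PI, \cite[Theorem 1.1]{Be} prevents $H$ from containing the commutative algebra $A(1,1)$, and Theorem \ref{zzthm7.1} yields $H \cong A_G(e,\chi)$ or $H \cong C_G(e,\tau)$. In the $C_G$ case, $\tau = 0$ would make $H = kG[z]$ commutative, so $\tau\neq 0$, matching case (3) with $\eta=\tau$. In the $A_G$ case, Theorem \ref{zzthm7.1}'s internal step shows $\chi(e)$ is either $1$ or not a root of unity; since $G$ has rank $1$, any two elements are commensurable, so $\chi(e)=1$ would force $\chi$ to take values in roots of unity, making every affine Hopf subalgebra of $A_G(e,\chi)$ a finite module over its center and hence PI—contradicting the not-locally-PI hypothesis. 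Thus $\chi(e)$ is not a root of unity, giving case (2). The main obstacle is the connected rank-$0$ step: establishing the rigidity that no affine pointed Hopf domain of GK-dimension $2$ with trivial grouplikes properly contains the prescribed $U(\mathfrak{g})$. This is where Theorem \ref{zzthm0.1} is used in its most delicate form, and where the passage from affine to non-affine hinges.
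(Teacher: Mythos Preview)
Your overall strategy is sound and parallels the paper's: both reduce to Theorem \ref{zzthm7.1} in the rank-one case and handle the connected case separately. However, your justification for writing $H$ as a directed union of affine Hopf subalgebras is flawed. You claim that ``any finite subset is contained in the affine Hopf subalgebra generated by a finite collection of grouplikes and skew primitives,'' but this would mean $H$ is generated by grouplikes and skew primitives---precisely the conclusion that the primitive-cohomology machinery (Proposition \ref{zzpro2.4}) is built to establish, and which fails for general pointed Hopf algebras. This makes the reasoning circular, and the gap is load-bearing in your rank-$0$ argument, where you conclude $H=\bigcup K_i$.

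The directed-union fact you need is true, but for a different reason. In the connected case, once you know $\dim P(H)=2<\infty$, each term $C_n$ of the coradical filtration is finite-dimensional (since $\overline{\Delta}$ is injective on $C_n/C_{n-1}$ for $n\ge 2$, giving an inductive bound), and $S(C_n)\subseteq C_n$; the subalgebras generated by the $C_n$ then form a directed family of affine Hopf subalgebras with union $H$. With this correction your rank-$0$ argument goes through. The paper bypasses this entirely by citing \cite[Lemma~7.2]{Zh}, which yields $H=U(\mathfrak g)$ from $\GKdim U(\mathfrak g)=\GKdim H$ in one stroke. Your route is more self-contained but effectively reproves part of that lemma. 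A smaller point: your opening assertion that $G$ is torsion-free \emph{abelian} with $\GKdim kG=\operatorname{rank}(G)$ is not automatic from $kG$ being a domain; abelianness needs the argument (used in the paper via \cite[Lemma~4.5]{GZ}) that every affine domain of GK-dimension at most one is commutative, applied to group algebras of finitely generated subgroups. The paper's own proof organizes the case division differently---it first picks an affine non-PI Hopf subalgebra $K$ of GK-dimension $2$, applies Theorem \ref{zzthm0.1} to $K$, and branches on which of the three types $K$ is---but the rank-one endgame is identical to yours.
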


\begin{proof} 
Let $K$ be any affine (non PI) Hopf subalgebra of $H$ of GK-dimension 
$2$. By Theorem \ref{zzthm0.1} we have three cases.

Case 1: $K=U(\fg)$ where $\fg$ is the 2-dimensional 
solvable Lie algebra. So $\GKdim K=\GKdim H=2$. It follows from
\cite[Lemma 7.2]{Zh} that $H=K=U(\fg)$.

In other two cases, $K$ is generated by (nontrivial) grouplikes 
and skew primitives. In particular, $G(K)={\mathbb Z}$. Consequently,
$\GKdim kG=1$ where $G=G(H)$. 
Since $kG$ is a domain of GK-dimension one,
$G$ is a nontrivial subgroup of the abelian group ${\mathbb Q}$. 
Since $H$ is not locally PI, 
$H$ does not contain the commutative Hopf subalgebra 
$A(1,1)$. So $H$ satisfies the hypotheses of
Theorem \ref{zzthm7.1}. By Theorem \ref{zzthm7.1}, 
$H$ is isomorphic to either $A_G(e,\chi)$ or $C_G(e,\eta)$.

Case 2: $H=A_G(e,\chi)$. Since $H$ is not locally
PI, $q:=\chi(e)$ is not a root of unity.

Case 3: $H=C_G(e,\eta)$. Since $H$ is not locally PI, $\eta$ is nonzero.
\end{proof}

\begin{remark}
\label{zzrem7.5} 
\begin{enumerate}
\item[(1)]
In an unpublished note, a result similar to Theorem \ref{zzthm7.4}
was proved by Goodearl and the second-named author when they studied
non-noetherian Hopf domains of GK-dimension two that satisfy
the extra condition $\Ext^{1}_H(k,k)\neq 0.$
\item[(2)]
Connected Hopf algebras of GK-dimension three
were classified by the third-named author in \cite{Zh}. It would be  
interesting to classify all affine, non-PI, pointed Hopf 
domains of GK-dimension three. We believe that this is an attackable 
project once we understand the theory of Hopf double Ore extension.
The double Ore extension was introduced in \cite{ZZ1, ZZ2} and Hopf 
double Ore extension should be a useful construction for 
Hopf algebras.
\end{enumerate}
\end{remark}

\section{Examples, Part II}
\label{zzsec8}

In this section we give more examples which will be used in 
a classification result in the next section.

\begin{example}
\label{zzex8.1} \cite{KR, WYC}
Let $K$ be the Hopf algebra $A_G(e, \chi)$ defined in Example 
\ref{zzex5.4} and assume that 
$\chi(e)$ is a primitive $\ell$th root of unity for some 
$\ell\ge2$. Let $I$ be the algebra ideal generated by the 
element $z^{\ell}-\lambda(e^{\ell}-1)$ for some $\lambda\in k$.
Since $z^{\ell}-\lambda(e^{\ell}-1)$
is $(e^{\ell},1)$-primitive, $I$ is also a Hopf ideal of $K$.
Let $E_G(e, \chi,\ell, \lambda)$ denote the Hopf quotient $K/I$.
This is the Hopf algebra $H_{\mathcal D}$ studied in \cite{KR} and
the Hopf algebra described in \cite[Section 2]{WYC}.
Note that $\chi(e^{\ell})=1$. 
\end{example}

Since $K$ is generated by grouplikes and skew primitives, so is
$E_G(e, \chi,\ell, \lambda)$. 

\begin{lemma}
\label{zzlem8.2} Let $K$ be the Hopf algebra $A_G(e, \chi)$ 
and let $E=E_G(e, \chi,\ell, \lambda)$. Let $w_{\lambda}
=z^{\ell}-\lambda(e^{\ell}-1)$.
\begin{enumerate}
\item[(1)]
$w_0:=z^{\ell}$ is normal in $K$. If $\lambda(e^{\ell}-1)
\neq 0$, then $w_{\lambda}$ is normal in $K$ if and only
if $\chi^{\ell}$ is the trivial character.
\item[(2)]
If $w_{\lambda}$ is normal, then $G(E)=G$ and
$E=\oplus_{i=0}^{\ell-1} z^i (kG)=\oplus_{i=0}^{\ell-1} (kG)z^i$.
\item[(3)]
Suppose that $G(E)=G$ and that $kG\subsetneq E$. Then 
$z^{\ell}-\lambda(e^{\ell}-1)$ is a normal element in $K$ and 
$E=\oplus_{i=0}^{\ell-1} z^i (kG)=\oplus_{i=0}^{\ell-1} (kG)z^i$.
\end{enumerate}
\end{lemma}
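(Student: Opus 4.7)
The plan is to prove parts (1)--(3) in sequence, with the $z$-degree filtration $F_n=\bigoplus_{i=0}^n(kG)z^i$ of $K$ as the central tool. I use throughout that $e$ is central in $G$, that $\chi(e^\ell)=\chi(e)^\ell=1$ (so $e^\ell$ is central in $K$), and that $zg=\chi(g)gz$ for all $g\in G$, which together give $z(e^\ell-1)=(e^\ell-1)z$.

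\textbf{Part (1):} I would first record the key identity
\[
gw_\lambda-\chi(g)^{-\ell}w_\lambda g=\chi(g)^{-\ell}\lambda(1-\chi(g)^\ell)(e^\ell-1)g,\qquad g\in G,
\]
by direct computation. Normality of $w_0=z^\ell$ follows immediately from $gz^\ell=\chi(g)^{-\ell}z^\ell g$. Assume $\lambda(e^\ell-1)\neq 0$. If $\chi^\ell$ is trivial, the right-hand side vanishes for all $g$, so $w_\lambda$ commutes with $G$ and with $z$, hence is central and normal. Conversely, if $\chi^\ell$ is nontrivial, pick $g$ with $\chi(g)^\ell\neq 1$; the right-hand side is then a nonzero element of $kG$. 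I would finish by showing $w_\lambda K\cap kG=0$: for any nonzero $f=\sum\alpha_iz^i\in K$ with top coefficient $\alpha_n\neq 0$, expanding gives $w_\lambda f=\sigma_\chi^\ell(\alpha_n)z^{\ell+n}+(\text{lower }z\text{-degree})$, and since $\sigma_\chi^\ell$ is an automorphism of $kG$ the leading term survives, so $w_\lambda f$ has $z$-degree exactly $\ell+n\ge \ell$. Thus $gw_\lambda\notin w_\lambda K$ and $w_\lambda$ is not normal.

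\textbf{Part (2):} Assuming $w_\lambda$ is normal, so $I=Kw_\lambda=w_\lambda K$, the same leading-term analysis yields $F_{\ell-1}\cap I=0$, hence the inclusion $F_{\ell-1}\hookrightarrow E$. Surjectivity of $F_{\ell-1}\to E$ follows by induction on $z$-degree using the relation $\bar z^\ell=\lambda(\bar e^\ell-1)$ in $E$. This gives $E\cong F_{\ell-1}=\bigoplus_{i=0}^{\ell-1}(kG)z^i$, and the symmetric argument on the other side produces $E\cong\bigoplus_{i=0}^{\ell-1}z^i(kG)$. For $G(E)=G$, I would observe that $\bar F_n=\pi(F_n)$ is an exhaustive coalgebra filtration of $E$ with $\bar F_0=kG$ cosemisimple, and invoke the standard result that the coradical sits inside the zeroth term of any such filtration (see \cite{Mo}) to get $C_0(E)\subseteq kG$, hence $G(E)\subseteq G$; the reverse inclusion follows from restricting the injection $F_{\ell-1}\hookrightarrow E$ to $F_0=kG$.

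\textbf{Part (3):} I would argue by contrapositive. If $w_\lambda$ is not normal, part (1) forces $\lambda(e^\ell-1)\neq 0$ and $\chi^\ell$ nontrivial, so some $g\in G$ has $\chi(g)^\ell\neq 1$. The identity from part (1) exhibits a nonzero scalar multiple of $(e^\ell-1)g$ as an element of the two-sided ideal $(w_\lambda)$; right-multiplying by $g^{-1}$ yields $e^\ell-1\in(w_\lambda)\cap kG$. Hence $e^\ell$ and $1$ become identified in $E$, while $e^\ell\neq 1$ in $G$, so the natural map $G\to G(E)$ fails to be injective---contradicting the identification $G(E)=G$. Thus $w_\lambda$ must be normal, and the decomposition then follows from (2). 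The main technical obstacle is the leading-$z$-degree analysis of $Kw_\lambda$: this single calculation simultaneously obstructs normality in (1), provides the freeness in (2), and drives the contradiction in (3) through the intersection $(w_\lambda)\cap kG$.
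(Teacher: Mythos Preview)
Your proof is correct and follows essentially the same route as the paper's: the commutator identity you record for $gw_\lambda$ and $w_\lambda g$ is (up to a scalar) exactly the identity the paper uses in part (3) to force $e^\ell-1\in I$ and hence kill injectivity of $kG\to E$, and the paper simply leaves parts (1) and (2) as ``direct computation.'' Your explicit $z$-degree leading-term argument for $w_\lambda K\cap kG=0$ and the coalgebra-filtration argument for $G(E)=G$ are reasonable ways to fill in those omitted details.
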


\begin{proof} (1,2) Follow by direct computation. 

(3) Note that $w_{\lambda} z= z w_{\lambda}$ and
$$\lambda (\chi^{\ell}(g)-1) g (e^{\ell}-1)= 
w_{\lambda} g- \chi^{\ell}(g) gw_{\lambda}\in I$$
for all $g\in G$. If $w_{\lambda}$ is not normal, 
then the above expression has to be nonzero for 
at least one $g$, hence $0\neq e^{\ell}-1\in I$, 
which shows that the map $kG \to E$ is not injective. 
Therefore, if $kG$ embeds in $E$, then $w_{\lambda}$ 
has to be normal. The direct sum decomposition in 
this case has been already stated in part (2), 
and so the inequality $kG \neq E$ holds automatically.
\end{proof}

\begin{remark}
\label{zzrem8.3}
Clearly the previous lemma asserts that $G(E)=G$ and 
$kG\subsetneq E$ if and only if $w_{\lambda}$ is 
normal in $K= A_G(e, \xi)$.
From now on we require that, in the definition of 
$E_G(e,\xi,\ell, \lambda )$ in Example $\ref{zzex8.1}$, 
the element $w_{\lambda}$ is normal
in $K$ and therefore $E=\oplus_{i=0}^{\ell-1} z^i (kG)=\oplus_{i=0}^{\ell-1} (kG)z^i$. 
\end{remark}

We need some general lemmas before we compute the primitive 
cohomology of $E$.
An ${\mathbb N}$-filtration $F=\{F_{n} H \mid n\geq 0\}$ of $H$ 
is called \emph{a Hopf filtration} if 
\begin{enumerate}
\item[(i)]
$G(H)\subset F_0 H$,
\item[(ii)]
$F_n H F_m H\subset F_{n+m} H$ for all $n,m$,
\item[(iii)]
$\Delta (F_n H)\subset \sum_{i=0}^{n} F_{i} H\otimes F_{n-i} H$ for
all $n$, and
\item[(iv)]
$S(F_n H)\subset F_n H$ for all $n$.
\end{enumerate}
In this case, the associated graded algebra $\gr_F H:=
\oplus_{i\geq 0} F_i H/F_{i-1} H$ is a Hopf algebra. Let $H$ be a 
pointed Hopf algebra with $G(H)=G$. Then the coradical filtration 
of $H$ is automatically a Hopf filtration and the associated 
graded algebra, denoted by $\gr H$, is a graded Hopf algebra. 

\begin{lemma}
\label{zzlem8.4} Let $F$ be a Hopf filtration of a Hopf algebra
$H$. 
\begin{enumerate}
\item[(1)]
For any $n\geq 0$ and $g,h\in G$, 
$$\dim \PP^n_{g,h}(H)\leq \dim \PP^n_{g,h}(\gr_F H).$$
As a consequence, 
$\PCdim H\le \PCdim (\gr_F H).$
\item[(2)]
Fix any $g,h\in G$. If there is an $n_0$ such that
$\PP^n_{g,h}(\gr_F H)=0$ for all $n\neq n_0$, then
$\PP^n_{g,h}(H)\cong \PP^n_{g,h}(\gr_F H)$ for all $n$.
\end{enumerate}
\end{lemma}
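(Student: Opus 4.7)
The plan is to realise $\PP^*_{g,h}$ as the cohomology of a filtered cochain complex and deduce both statements from the spectral sequence of the filtration (equivalently, from a direct cocycle-lifting argument). First I would equip $H^{\otimes n}$ with the tensor-product filtration
$$F_p(H^{\otimes n}) := \sum_{p_1+\cdots+p_n = p} F_{p_1}H \otimes_k \cdots \otimes_k F_{p_n}H.$$
The axioms of a Hopf filtration imply that every summand of $\partial^n_{g,h}$ in \eqref{E1.0.1} preserves this filtration: $g\otimes \id^{\otimes n}$ and $\id^{\otimes n}\otimes h$ do so because $g,h\in F_0 H$, and $\id^{\otimes i}\otimes \Delta \otimes \id^{\otimes(n-i-1)}$ does so by condition (iii) in the definition of a Hopf filtration. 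Hence $(T_{g,h}(H),\partial)$ is a filtered cochain complex whose associated graded complex is canonically identified with $(T_{g,h}(\gr_F H),\partial)$, since the images of $g$ and $h$ remain grouplike in $\gr_F H$.

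Next I would apply the spectral sequence of this filtered complex. Because $F$ is exhaustive ($\bigcup_p F_p H = H$) and satisfies $F_{-1}H = 0$, the induced filtration on $\PP^n_{g,h}(H)$ is also exhaustive and bounded below, so the sequence converges in the form
$$\bigoplus_p E_1^{p,\,n-p} \;=\; \PP^n_{g,h}(\gr_F H), \qquad \bigoplus_p E_\infty^{p,\,n-p} \;=\; \gr_F \PP^n_{g,h}(H).$$
Part (1) is then immediate: $E_\infty$ is a subquotient of $E_1$, so comparing dimensions term by term and summing over $p$ gives $\dim \PP^n_{g,h}(H) \leq \dim \PP^n_{g,h}(\gr_F H)$; taking the supremum over $n$ and over $(g,h)$ yields $\PCdim H \leq \PCdim(\gr_F H)$.

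For part (2), observe that each differential $d_r$ of the spectral sequence raises the total cohomological degree $p+q$ by one. If $\PP^n_{g,h}(\gr_F H) = 0$ for every $n\neq n_0$, then $E_1^{p,q} = 0$ unless $p+q = n_0$, so every $d_r$ with $r\geq 1$ has either zero source or zero target. The spectral sequence therefore collapses at $E_1$, giving $\PP^n_{g,h}(H) = 0$ for $n\neq n_0$ and $\PP^{n_0}_{g,h}(H) \cong \PP^{n_0}_{g,h}(\gr_F H)$. The only point requiring care will be justifying convergence of the spectral sequence in the possibly infinite-dimensional setting, but this is standard once $F$ is exhaustive and bounded below; indeed, the same conclusions can alternatively be obtained by a direct lifting argument, using that $F_{-1}=0$ to terminate the successive approximation of a cocycle in $\gr_F H$ by one in $H$.
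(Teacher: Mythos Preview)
Your proposal is correct and takes essentially the same approach as the paper: both argue via the spectral sequence of the filtered complex $T_{g,h}(H)$, identify the $E_1$-page with $\PP^*_{g,h}(\gr_F H)$, and deduce (1) from the subquotient relation and (2) from degeneration. You supply more detail than the paper does (the explicit tensor filtration, why each term of $\partial^n_{g,h}$ respects it, and the convergence discussion), whereas the paper simply invokes \cite[Theorem~5.5.1]{We}.
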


In general the inequalities are strict, see Remark \ref{zzrem9.4}(1).
If $F$ is the coradical filtration, it is unknown if
$\PP^n_{g,h}(H)\cong \PP^n_{g,h}(\gr H)$ for all $n$ and
$g$. 

\begin{proof}[Proof of Lemma \ref{zzlem8.4}]
(1) The proof uses a standard spectral sequence argument. For any 
fixed $g,h\in G$, the filtration $F$ on $H$ induces a filtration 
on the complex $T_{g,h}(H)$. Now the filtered complex $T_{g,h}(H)$ 
gives rise to a spectral sequence $E$ convergent to 
$\{\mathfrak{P}^n_{g,h}(H)\}_{n\geq 0}$ \cite[Theorem 5.5.1]{We}. 
Since the $E^1$ page is just $\{\mathfrak{P}^n_{g, h}(\gr_F H)_{p}\}$, 
the result follows.

(2) If $\PP^n_{g,h}(\gr_F H)=0$ for all $n\neq n_0$, then 
the spectral sequence in the proof of part (1) collapses. So 
the isomorphism follows.
\end{proof}

If $H$ is a graded pointed Hopf algebra with $kG$ being the degree 0 part, 
or more generally, there is a Hopf algebra projection $\pi: H\to kG$,
then a well-known result of Radford \cite{Ra} states that $H$ has a 
decomposition 
\begin{equation}
\label{E8.4.1}\tag{E8.4.1}
H\cong R\# kG,
\end{equation}
where $R$ is a (graded) braided Hopf algebra in the Yetter-Drinfeld 
category over $kG$. In fact,  $R$ is defined to be 
$$H^{co \; \pi}:=\{a\in H\mid (id\otimes \pi)\Delta(a)=
a\otimes 1\}.$$ 
Going back to the coradical filtration, when $H$ is pointed (not
necessarily graded), $\gr H$ has a decomposition like \eqref{E8.4.1}.
Notice that, as a coalgebra, $R$ is connected.

\begin{lemma}
\label{zzlem8.5}
Let $H$ be a graded Hopf algebra with $H_0=kG(H)$ and write
$H=R\# kG$ as in \eqref{E8.4.1}. Let $n$ be a natural number. Then 
$$\Cotor^n_R(k, k)\cong \Cotor^n_{H}(kG, k),$$
or equivalently, 
$$\PP^n_{1, 1}(R)\cong \bigoplus_{g\in G}\PP^n_{g, 1}(H).$$
As a consequence,
$$\PCdim R=\PCdim H.$$
\end{lemma}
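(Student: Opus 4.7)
The plan is first to reduce the two displayed isomorphisms to a single statement. By Lemma \ref{zzlem3.5}(2), $\PP^n_{1,1}(R)=\Cotor^n_R(k,k)$ (note that, as remarked just before the lemma, $R$ is connected, so $1$ is its only grouplike) and $\PP^n_{g,1}(H)=\Cotor^n_H(kg,k)$. Since $kG=\bigoplus_{g\in G}kg$ as a right $H$-comodule (each grouplike contributing its own summand) and $\Cotor_H(-,k)$ commutes with direct sums in the first variable, we get
$$\Cotor^n_H(kG,k)=\bigoplus_{g\in G}\Cotor^n_H(kg,k)=\bigoplus_{g\in G}\PP^n_{g,1}(H),$$
so the two formulations are equivalent and it suffices to prove $\Cotor^n_R(k,k)\cong \Cotor^n_H(kG,k)$.

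For the main isomorphism I would exploit the Radford biproduct structure. Because $H_0=kG$, the degree-zero projection $\pi\colon H\to kG$ is a Hopf-algebra surjection split by the inclusion $kG\hookrightarrow H$, and by Radford's theorem the multiplication map $R\otimes kG\to H$ is a vector-space isomorphism (here $R=H^{\co\,\pi}$ under the right coaction $(\mathrm{id}\otimes\pi)\Delta_H$). This exhibits $H$ as a cleft, hence faithfully coflat, right $kG$-comodule. I would use this faithful coflatness to establish a change-of-coalgebras isomorphism: explicitly, writing the unreduced cobar complex $C^{\bullet}=kG\otimes H^{\otimes \bullet}$ computing $\Cotor^{\bullet}_H(kG,k)$ and using the iso $H\cong R\otimes kG$ to rewrite it, one obtains a bicomplex whose ``$kG$-direction'' is a cobar complex for the cosemisimple coalgebra $kG$ and whose ``$R$-direction'' is the cobar complex computing $\Cotor^{\bullet}_R(k,k)$. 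Because $kG$ is cosemisimple, its cobar cohomology is concentrated in degree zero, so the spectral sequence of the bicomplex collapses and yields the desired isomorphism $\Cotor^n_H(kG,k)\cong \Cotor^n_R(k,k)$.

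The main obstacle will be handling the Yetter--Drinfeld twist in $\Delta_H$: the comultiplication does not simply split across the tensor factors $R$ and $kG$, and so the bicomplex differentials contain mixing terms coming from the YD coaction of $kG$ on $R$. The crucial point to verify is that once one passes to the $kG$-cohomology---equivalently, to $kG$-coinvariants in each column---these twists trivialize and what remains is precisely the $R$-cobar differential. This check is somewhat tedious but purely structural, parallel to the calculation that turns the bar resolution of a smash product into a bar resolution over its subalgebra, and it is ultimately forced by the identity $(\epsilon\otimes\mathrm{id})\rho=\mathrm{id}$ for the YD coaction.

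Finally, the consequence $\PCdim R=\PCdim H$ is immediate: by Lemma \ref{zzlem3.4}, every $\PP^n_{g,h}(H)$ with $g,h\in G$ is isomorphic to $\PP^n_{1,g^{-1}h}(H)$, so $\PCdim H=\max\{n:\PP^n_{1,g}(H)\neq 0\text{ for some }g\in G\}$, which by the main isomorphism equals $\max\{n:\PP^n_{1,1}(R)\neq 0\}=\PCdim R$.
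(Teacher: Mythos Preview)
Your reduction of the two displayed formulations to one another is correct, and your derivation of the $\PCdim$ consequence from the main isomorphism is fine (modulo the harmless swap of $\PP^n_{1,g}$ for $\PP^n_{g,1}$, which Lemma~\ref{zzlem3.4} makes irrelevant).

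For the main isomorphism, your route is genuinely different from the paper's. The paper does not touch the cobar complex at all. It sets $K=kG$, $\overline{H}=H/HK^+$, uses Radford to identify $\overline{H}\cong R$ as coalgebras, and then invokes a categorical equivalence of Fichman--Montgomery--Schneider: the functor $\Phi\colon\mathcal{M}^H_K\to\mathcal{M}^{\overline{H}}$, $V\mapsto V\otimes_K k$, is an equivalence. One checks directly that the cotensor functor $\Gamma=(-)\square_H k$ factors as $\Psi\circ\Phi$ with $\Psi=(-)\square_{\overline{H}}k$. Since $\Phi$ is an equivalence (hence exact and injective-preserving), the Grothendieck spectral sequence for the composite degenerates and gives $\mathcal{R}^n\Gamma(K)\cong\mathcal{R}^n\Psi(\Phi(K))=\mathcal{R}^n\Psi(k)$, which is exactly $\Cotor^n_H(kG,k)\cong\Cotor^n_R(k,k)$. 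All of the Yetter--Drinfeld bookkeeping is absorbed into the cited equivalence, so the paper's proof is short.

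Your direct approach is morally the same collapse (cosemisimplicity of $kG$ killing one direction of a spectral sequence), and it can be made to work, but there is a real gap at the sentence ``one obtains a bicomplex''. Substituting $H\cong R\otimes kG$ into $kG\otimes H^{\otimes n}$ does \emph{not} produce a bicomplex: the cobar differential does not split into two anticommuting pieces, precisely because of the YD mixing you flag. What you actually get is a \emph{filtered} complex (for instance by coradical degree, equivalently by the number of positive-degree $R$-tensorands), and the associated spectral sequence. Its $E_1$-page computes $\Cotor_{kG}$, which collapses by cosemisimplicity, and the identification of the resulting $E_2$-page with the $R$-cobar cohomology---where the YD coaction must be shown to trivialize on $kG$-coinvariants---is the substantive verification you have deferred. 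So ``bicomplex'' should be replaced by ``filtered complex and its spectral sequence'', and that identification step is the part of the argument that is not yet on the page. The paper's route via the categorical equivalence sidesteps this computation entirely; your route would give a more self-contained proof at the cost of carrying out that check.
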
 

We conjecture that the isomorphism in Lemma \ref{zzlem8.5} induces 
an algebra isomorphism from $\bigoplus_{n\geq 0}\PP^n_{1, 1}(R)$
to $\bigoplus_{n\geq 0}\{\bigoplus_{g\in G}\PP^n_{g, 1}(H)\}$, 
once the algebraic structure is defined in Section \ref{zzsec10}.

\begin{proof}[Proof of Lemma \ref{zzlem8.5}]
Let $K=kG$ and 
$\overline{H}=H/HK^+$. By \cite[Proof of Theorem 3]{Ra}, 
$\overline{H}\cong R$ as coalgebras. 

We will use results and notation from \cite{FMS}. Please see  
\cite{Mo} or \cite{FMS} for some definitions if necessary.
It follows from \cite[Lemma 2.4]{FMS} that 
$K=\,^{co \overline{H}}H$. Hence, by \cite[Proposition 4.5 (b)]{FMS}, 
the category $\mathcal{M}^H_K$ is equivalent to 
$\mathcal{M}^{\overline{H}}$, where the functor 
$\Phi: \mathcal{M}^H_K\rightarrow \mathcal{M}^{\overline{H}}$ 
sends $V$ to $\overline{V}:=V/VK^+=V\otimes_K k$. In fact, 
any $V\in \mathcal{M}^H_K$ is a free $K$-module. This is 
because $V\cong \overline{V}\square_{\overline{H}}H$ by 
\cite[Proposition 4.5 (b)]{FMS} and 
$H\cong \overline{H}\otimes K$ as left 
$\overline{H}$-comodule and right $K$-module.

Let $\Psi$ and $\Gamma$ be the cotensor functors 
$-\square_{\overline{H}} k$ and $-\square_H k$, respectively.
It is clear that 
$$\Cotor^*_R(k, k)\cong \Cotor^*_{\overline{H}}(k, k)
=\HB^*\mathcal{R}\Psi(k)$$ and 
$$\Cotor ^n_{H}(kG,k)=\HB^*\mathcal{R}\Gamma(kG)
=\HB^*\mathcal{R}\Gamma(K),$$ 
where $\mathcal{R}\Gamma(-)$ and $\mathcal{R}\Psi(-)$ are 
right derived functors of $\Gamma$ and $\Psi$, respectively.
We claim that $\Gamma$ is naturally isomorphic to 
$\Psi\circ \Phi$. For any $V\in \mathcal{M}^H_K$,
$$\Gamma(V)=V\square_Hk\cong 
\overline{V}\square_{\overline{H}}H\square_Hk
\cong \overline{V}\square_{\overline{H}}k=\Psi\circ \Phi(V).$$
The naturality of the map is easy to check. Therefore we proved the claim.
Since $\Phi$ is exact, by the Grothendieck spectral sequence 
\cite[Theorem 5.8.3]{We}, we have that
$$\HB^*\mathcal{R}\Gamma(K)\cong 
\HB^*\mathcal{R}\Psi(\Phi(K))=\HB^*\mathcal{R}\Psi(\overline{K})
=\HB^*\mathcal{R}\Psi(k).$$
The assertion follows by combining these isomorphisms. The consequence
is easy.
\end{proof}

Lemma \ref{zzlem8.5} is helpful in the following computation.

\begin{corollary}
\label{zzcor8.6}
Let $H$ be $E_G(e,\chi, \ell, \lambda)$. 
\begin{enumerate}
\item[(1)]
$\dim (\oplus_{g\in G} \PP^n_{g,1}(H))=1$ for all $n$.
As a consequence, $\PCdim H= \infty$.
\item[(2)]
$$\PP^1_{g,1}(H)=\begin{cases} k \; \overline{z} &{\text{ if $g=e$,}}\\
0 &{\text{otherwise.}}
\end{cases}
$$
\item[(3)]
$$\PP^2_{g,1}(H)=\begin{cases} k \; \overline{[z]^{\ell}} 
&{\text{ if $g=e^{\ell}$,}}\\
0 &{\text{otherwise.}}
\end{cases}
$$
\end{enumerate}
\end{corollary}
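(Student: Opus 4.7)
The plan is to first handle the graded case $\lambda=0$ via the smash product decomposition together with Lemma \ref{zzlem8.5}, and then transfer the result to general $\lambda$ using the Hopf-filtration machinery of Lemma \ref{zzlem8.4}.

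For $\lambda=0$, the algebra $H_0:=E_G(e,\chi,\ell,0)$ is ${\mathbb N}$-graded with $\deg z=1$ and $\deg g=0$ for $g\in G$, and the Hopf surjection $\pi:H_0\to kG$ killing $z$ produces (by Radford's theorem, as in the discussion preceding Lemma \ref{zzlem8.5}) a decomposition $H_0\cong R\#kG$ in which $R=H_0^{co\,\pi}$ is the subalgebra generated by $z$, so $R\cong k[z]/(z^\ell)$. Lemma \ref{zzlem8.5} then gives
\[
\bigoplus_{g\in G}\PP^n_{g,1}(H_0)\;\cong\;\PP^n_{1,1}(R)\;=\;\Cotor^n_R(k,k)
\]
for every $n\ge 0$. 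As a coalgebra, $R\cong k[z]/(z^\ell)$ with $z$ primitive (after identifying $e\equiv 1$ modulo $H_0(kG)^+$), so by Lemma \ref{zzlem3.6}(2) one has $\PP^n_{1,1}(R)^\circ\cong\Tor_n^{R^\circ}(k,k)$ with $R^\circ\cong k[y]/(y^\ell)$; the classical periodic minimal resolution of $k$ over $k[y]/(y^\ell)$ gives $\Tor_n^{R^\circ}(k,k)\cong k$ for every $n\ge 0$. This proves part (1) for $\lambda=0$, and the generators in degrees $1$ and $2$ are explicitly $z\in Z^1_{e,1}(H_0)$ and $[z]^\ell\in Z^2_{e^\ell,1}(H_0)$ (Lemma \ref{zzlem5.6}(2), with nontriviality as in Lemma \ref{zzlem6.8}(2)), giving (2) and (3) in the graded case.

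For general $\lambda$, define $F_iE:=\bigoplus_{j=0}^{i}z^j kG$ on $E:=E_G(e,\chi,\ell,\lambda)$. Lemma \ref{zzlem5.3} together with $z^\ell-\lambda(e^\ell-1)\in F_0E$ shows that $F$ is a Hopf filtration on $E$ with $\gr_F E\cong H_0$. Lemma \ref{zzlem8.4}(1) then yields $\dim\bigoplus_{g\in G}\PP^n_{g,1}(E)\le 1$ for every $n$. For parts (2) and (3) I exhibit persistent nontrivial classes: $\overline{z}\in\PP^1_{e,1}(E)$ is nonzero because $\partial^0_{e,1}(k)\subset kG$ while $z\notin kG$; and $\overline{[z]^\ell}\in\PP^2_{e^\ell,1}(E)$ is nonzero since any $x$ with $\partial^1_{e^\ell,1}(x)=[z]^\ell$ would have degree $\ell$, but the defining relation $z^\ell=\lambda(e^\ell-1)\in F_0E$ forces $x\in F_{\ell-1}E$, whence $\partial^1_{e^\ell,1}(x)\in F_{\ell-1}(E^{\otimes 2})$, contradicting the fact that $[z]^\ell$ has nontrivial image in $F_\ell/F_{\ell-1}$. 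For part (1) in higher degrees, Lemma \ref{zzlem8.4}(2) applied to each pair $(g_n,1)$---where $g_n=e^{a_n}$ is the weight of the unique nonzero class in $\PP^n_{\bullet,1}(H_0)$, with $a_n$ the exponent pattern $0,1,\ell,\ell+1,2\ell,\ldots$ arising from the periodic resolution---promotes $\PP^n_{g_n,1}(E)\cong\PP^n_{g_n,1}(H_0)=k$. The main technical obstacle is the degenerate case in which $e\in G$ has small finite order so several $g_n$ collide, preventing Lemma \ref{zzlem8.4}(2) from being applied one pair at a time; there a finer analysis of the spectral sequence of the filtration $F$, or a direct inductive construction of a cocycle in each cohomological degree, is required to rule out cross-cancellations and confirm the lower bound.
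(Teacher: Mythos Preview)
Your treatment of the case $\lambda=0$ is essentially the paper's argument: Radford decomposition $H_0\cong R\#kG$ with $R\cong k[z]/(z^\ell)$, Lemma \ref{zzlem8.5} to reduce to $\PP^n_{1,1}(R)$, and then Lemma \ref{zzlem3.6}(2) together with the periodic resolution of $k$ over $R^\circ\cong k[y]/(y^\ell)$.

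For general $\lambda$, however, you take a harder road than necessary, and the road has the gap you yourself flag. The paper's reduction is purely coalgebraic: using the $k$-linear basis $\{gz^n \mid g\in G,\ 0\le n<\ell\}$ of $E_G(e,\chi,\ell,\lambda)$ furnished by Lemma \ref{zzlem8.2}(2), one checks from Lemma \ref{zzlem5.3} that
\[
\Delta(gz^n)=\sum_{i=0}^n \binom{n}{i}_q\, ge^{n-i}z^i\otimes gz^{n-i},
\]
which does not involve $\lambda$ at all. Hence $E_G(e,\chi,\ell,\lambda)$ and $E_G(e,\chi,\ell,0)$ are isomorphic \emph{as coalgebras}, and since the primitive cohomology $\PP^n_{g,1}$ depends only on the coalgebra structure, all three parts transfer from $\lambda=0$ to arbitrary $\lambda$ immediately. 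There is no need for a Hopf filtration, a spectral sequence, or any lower-bound argument in higher degrees; in particular the ``degenerate'' situation in which the weights $e^{a_n}$ collide simply never arises. Your filtration approach would ultimately work (the spectral sequence degenerates because the differentials are $G$-graded of degree zero while the surviving classes sit in distinct internal degrees), but proving this cleanly requires exactly the kind of case analysis the coalgebra-isomorphism observation avoids.
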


\begin{proof} (1) By Lemma \ref{zzlem8.2}(2), $H$ has a $k$-linear 
basis of the form 
$$\{gz^n | g\in G, 0\le n< \ell\}.$$
Moreover, by Lemma \ref{zzlem5.3},
\begin{equation}\label{E8.6.1}\tag{E8.6.1}
\Delta(gz^n)=\sum_{i=0}^n {n \choose i}_q ge^{n-i} z^i\otimes gz^{n-i},
\end{equation}
where $q=\chi(e)$. By using the basis, we see that $H$ is isomorphic 
to $E_G(e,\chi, \ell, 0)$ as coalgebras. Hence, without loss of 
generality, we assume that $H=E_G(e,\chi, \ell, 0)$. 
In this case, $H$ is, in fact, coradically graded in the sense of 
\cite[Definition 1.13]{AS1} with respect to the grading 
determined by $\deg g=0$ for any $g\in G$ and $\deg z=1$. 
Hence $H\cong R\#kG$, where $R$, by \eqref{E8.6.1}, is 
the subalgebra of $H$ generated by $z$. The coalgebra 
structure of $R$ is given by 
$$\Delta_R(z^n)=\sum_{i=0}^n {n \choose i}_q z^i\otimes z^{n-i},$$
where $0\le n< \ell$ and $q=\chi(e)$. Consequently, the graded dual 
of $R$, denoted by $R^\circ$, is isomorphic to $k[x]/(x^\ell)$ as 
algebras. By Lemmas \ref{zzlem3.6} and \ref{zzlem8.5} and Proposition 
\ref{zzpro3.7}, for each $n\geq 0$, 
$$\dim \{\bigoplus_{g\in G}\mathfrak{P}^n_{g, 1}(H)\}=
\dim \mathfrak{P}^n_{1, 1}(R)=\dim \Tor^{R^\circ}_n(k, k)=1,$$
and $\PCdim H=\PCdim R=\gldim R^\circ=\infty$. Therefore part 
(1) follows.

(2) By the proof of (1), we might assume that
$H=E_G(e,\chi, \ell, 0)$. Now by the 
defining relation of $H$ and \eqref{E8.6.1}, $H$ is a graded 
Hopf algebra by setting $\deg g=0$ for any $g\in G$ and $\deg z=1$. 
By a degree argument, the element $z$ is the only $(1, g)$-primitive 
element of degree $\ge 1$ for some $g\in G$. This implies part (2).

(3) Note that $[z]^{\ell}$ is an element in 
$Z^2_{g, 1}(H)$ of degree $\ell$, while all elements in $B^2_{g, 1}(H)$ 
have degree at most $\ell-1$. This shows that $\dim \bigoplus_{g\in G}
\PP^2_{g, 1}(H)\geq 1$. The assertion follows from part (1).
\end{proof}

We now introduce more HOEs.

\begin{example}
\label{zzex8.7}
In this and later examples in the rest of this section
let $K$ be the Hopf algebra $E_G(e,\chi, \ell, \lambda)$ defined 
as in Example \ref{zzex8.1}. Here $\lambda$ can be chosen to be $1$
or $0$ after rescaling. Recall from Example \ref{zzex5.4} that
$e$ is in the center of $G$. Then there is a unique algebra 
automorphism, denoted by $\theta$, of $K$ such that 
\begin{equation}
\label{E8.7.1}\tag{E8.7.1}
{\text{$\theta(z)=z$ and $\theta(g)=\chi(g^\ell)g$ for
all $g\in G$.}}
\end{equation} 
All HOEs in the rest of this section will be of the form 
$K[w;\theta,\delta]$ for different choices of $\delta$. 
Note that we are using symbols different from \cite[Theorem 2.4]{BOZZ}.
For comparison, the $(x,a,w)$ on \cite[pp. 2414-2415]{BOZZ} match 
with $(w,e^{\ell},[z]^{\ell})$ in the rest of this section. Hence
\cite[(19) on page 2414]{BOZZ} becomes
\begin{equation}
\label{E8.7.2}\tag{E8.7.2}
\Delta(w)=w\otimes 1+e^{\ell}\otimes w+[z]^{\ell}.
\end{equation}

Now, in this example, we further assume that
\begin{equation}
\label{E8.7.3}\tag{E8.7.3}
\lambda=0\quad {\text{and}}\quad
\delta=0
\end{equation}
and let $H:=K[w; \theta]$ be a HOE of 
$K$. We need to verify \cite[Theorem 2.4(d,e,f)]{BOZZ} before we
say that $K[w;\theta]$ is a Hopf algebra, but verification is 
routine. For example,
\cite[(23) on page 2415]{BOZZ} is Lemma \ref{zzlem5.6}(2),
\cite[Theorem 2.4(e)]{BOZZ} follows from Lemma \ref{zzlem8.8}(1)
below, and the other parts are straightforward and therefore 
omitted. This Hopf algebra is denoted by $F_G(e,\chi, \ell)$.
\end{example}

Keep in mind that all later examples in this section satisfy
\eqref{E8.7.1} and \eqref{E8.7.2}.

\begin{lemma}
\label{zzlem8.8}
Let $K$ be the Hopf algebra $E_G(e,\chi,\ell,0)$. 
\begin{enumerate}
\item[(1)]
Let $\theta$ be defined as in Example \ref{zzex8.7}. Then 
$$[z]^{\ell}\Delta(r)=\Delta (\theta(r)) [z]^{\ell}$$
for all $r\in K$. 
\item[(2)]
Suppose $\chi(g^{\ell})=1$ for all $g\in G$. Then 
$[z]^{\ell}\Delta(r)=\Delta (r) [z]^{\ell}$
for all $r\in K$.
\end{enumerate}
\end{lemma}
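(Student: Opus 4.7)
The plan is to reduce part (1) to checking the identity on a set of algebra generators of $K$. Since both $\Delta$ and $\theta$ are algebra homomorphisms, the relation $[z]^{\ell}\Delta(r)=\Delta(\theta(r))[z]^{\ell}$ is multiplicative in $r$: if it holds for $r_1$ and $r_2$, then
\begin{equation*}
[z]^{\ell}\Delta(r_1 r_2)=[z]^{\ell}\Delta(r_1)\Delta(r_2)=\Delta(\theta(r_1))[z]^{\ell}\Delta(r_2)=\Delta(\theta(r_1 r_2))[z]^{\ell}.
\end{equation*}
Since $K=E_G(e,\chi,\ell,0)$ is generated as an algebra by $G$ and $z$, it suffices to verify the identity for $r=z$ and $r=g$ with $g\in G$.

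For $r=z$, since $\theta(z)=z$, the required identity reduces to showing that $[z]^{\ell}$ commutes with $\Delta(z)$ in $K\otimes K$. I would apply Lemma \ref{zzlem5.6}(1), which gives
\begin{equation*}
[\Delta(z),[z]^{\ell}]=(q-1)e z^{\ell}\otimes z+(1-q)e^{\ell}z\otimes z^{\ell}.
\end{equation*}
The defining relation of $E_G(e,\chi,\ell,0)$ is $z^{\ell}=0$ (this is the $\lambda=0$ case of Example \ref{zzex8.1}), so both summands vanish in $K\otimes K$, as desired.

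For $r=g$ with $g\in G$, I would compute directly. Using $zg=\chi(g)gz$ in $K$ (so that $z^n g=\chi(g)^n g z^n$) together with centrality of $e$ in $G$, one has
\begin{equation*}
(e^{\ell-i}z^i\otimes z^{\ell-i})(g\otimes g)=\chi(g)^i\chi(g)^{\ell-i}(g e^{\ell-i}z^i\otimes g z^{\ell-i})=\chi(g)^{\ell}(g\otimes g)(e^{\ell-i}z^i\otimes z^{\ell-i}),
\end{equation*}
so summing the defining expansion of $[z]^{\ell}$ gives $[z]^{\ell}\Delta(g)=\chi(g^{\ell})\Delta(g)[z]^{\ell}=\Delta(\chi(g^{\ell})g)[z]^{\ell}=\Delta(\theta(g))[z]^{\ell}$. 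This completes (1). The only mildly delicate point is the bookkeeping of the scalars $\chi(g)^i\chi(g)^{\ell-i}=\chi(g^{\ell})$, which is really the reason $\theta$ is defined by the formula in \eqref{E8.7.1}.

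For part (2), the hypothesis $\chi(g^{\ell})=1$ for every $g\in G$ forces $\theta(g)=g$ for all $g\in G$; together with $\theta(z)=z$ this says $\theta=\id_K$. The claim then reads $[z]^{\ell}\Delta(r)=\Delta(r)[z]^{\ell}$ for all $r\in K$, which is immediate from part (1).
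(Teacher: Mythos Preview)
Your proof is correct and follows essentially the same approach as the paper: reduce to the generators $g\in G$ and $z$ by multiplicativity, handle $g$ by a direct scalar computation yielding the factor $\chi(g)^{\ell}$, and handle $z$ via Lemma \ref{zzlem5.6}(1) together with $z^{\ell}=0$ in $E_G(e,\chi,\ell,0)$. Part (2) is likewise deduced from part (1) by observing that the hypothesis forces $\theta=\id_K$.
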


\begin{proof}
(1) Since $\theta$ is an algebra automorphism
and $\Delta$ is an algebra morphism, we only need to show the
assertion for the generators of $K$. By definition, $A_{G}(e,\chi)$,
and hence $E_G(e,\chi, \ell,0)$, is generated by 
$G$ and $z$. If $r=g\in G$, using the expression of $[z]^{\ell}$
\eqref{E5.5.2}, we have 
\begin{align}
\label{E8.8.1}\tag{E8.8.1}
\; [z]^{\ell} \Delta(g)&=[z]^{\ell} (g\otimes g)=(\sum_{i=1}^{\ell} b_i
e^{\ell-i}z^i\otimes z^{\ell-i})(g\otimes g)\\
\notag
&= \chi(g)^{\ell} (g\otimes g)(\sum_{i=1}^{\ell} b_i
e^{\ell-i}z^i\otimes z^{\ell-i})=\Delta(\theta(g))[z]^{\ell}.
\end{align}
So it remains to show $[z]^{\ell} \Delta(z)=\Delta(\theta(z))[z]^{\ell}=
\Delta(z)[z]^{\ell}$ since $\theta(z)=z$ in $K=E_G(e,\chi,\ell,0)$.
But this follows from Lemma \ref{zzlem5.6}(1) and the fact 
$z^{\ell}=0$ in $K$.

(2) This is a consequence of part (1) and the fact $\theta=Id$. 
\end{proof}

\begin{example}
\label{zzex8.9}
Let $(K,\theta)$ be as in Example \ref{zzex8.7} with 
$$\lambda=0.$$
Let $\eta: G\rightarrow k$ be a nonzero map such that $\eta(e)=0$ and that
$$\eta(gh)=\eta(g)+\chi^{\ell}(g)\eta(h)$$
for all $g,h\in G$. 
Then there is a unique $\theta$-derivation 
$\delta$ of $K$ such that 
\begin{equation}
\label{E8.9.1}\tag{E8.9.1}
{\text{$\delta(z)=0$ and $\delta(g)=\eta (g)g(e^{\ell}-1)$ 
for all $g\in G$.}}
\end{equation} 
Let $H:=K[w; \theta,\delta]$ 
be a HOE of $K$ with $\Delta(w)=w\otimes 1+e^{\ell}\otimes w+[z]^{\ell}$.

To verify that this is a HOE, let us only check \cite[Theorem 2.4(e)]{BOZZ} 
and leave other verifications (which are straightforward) to the reader. 
By Lemma \ref{zzlem8.8}(1), the equation in 
\cite[Theorem 2.4(e)]{BOZZ} is equivalent to 
$$X(r)=0$$
where
\begin{equation}
\label{E8.9.2}\tag{E8.9.2}
X(r):=\Delta (\delta(r))-\delta(r_1)\otimes r_2-e^{\ell} r_1\otimes 
\delta(r_2).
\end{equation}
If $r=g\in G$, we have
$$\Delta(\delta(r))=\Delta(\eta(g) g(e^{\ell}-1))
=\eta(g)(g\otimes g)(e^{\ell}\otimes e^{\ell}-1\otimes 1)$$
and 
$$\delta(r_1)\otimes r_2+e^{\ell} r_1\otimes 
\delta(r_2)=\eta(g)g(e^{\ell}-1)\otimes g+e^{\ell}g\otimes 
\eta(g)g(e^{\ell}-1)$$
which is equal to $\Delta(\delta(r))$. Hence $X(r)=0$ when
$r\in G$.
If $r=z$, $\Delta(\delta(r))=\Delta(0)=0$ and
$$\delta(r_1)\otimes r_2+e^{\ell} r_1\otimes 
\delta(r_2)=\delta(e)\otimes z+e^{\ell}z\otimes 
\delta(1)$$
which is zero as $\eta(1)=\eta(e)=0$. Hence $X(z)=0$. Since
$\delta$ satisfies a $\theta$-version of the Leibniz rule
$\delta(xy)=\delta(x)y+\theta(x)\delta(y)$ for all
$x,y\in K$, $X$ satisfies the following version of
the Leibniz rule
\begin{equation}
\label{E8.9.3}\tag{E8.9.3}
X(rs)=X(r) \Delta(s)+(\theta\otimes 1)\Delta(r) X(s)
\end{equation}
for all $r,s\in K$. The equation $X(r)=0$ 
(before \eqref{E8.9.2})
follows by using the above version of the Leibniz rule
since $K$ is generated
by $G$ and $z$. So we verified \cite[Theorem 2.4(e)]{BOZZ}.
We denote the Hopf algebra $K[w;\theta, \delta]$ by 
$L_G(e,\chi, \ell,\eta)$.
\end{example}


In the next four examples, $\theta$ is the identity of $K$ (or equivalently,
$\chi^{\ell}(g)=1$ for all $g\in G$). Since $\theta$ is the identity,
the Ore extension $K[w;\theta,\delta]$ is denoted by $K[w;\delta]$.

\begin{example}
\label{zzex8.10}
Suppose 
\begin{equation}
\label{E8.10.1}\tag{E8.10.1}
{\text{$e^{\ell}=1$ and $\chi^{\ell}(g)=1$ for all $g\in G$.}}
\end{equation}
Let $(K,\theta)$ be as in Example \ref{zzex8.7} with 
$$\lambda=0.$$ 
There is a unique derivation $\delta$ of $K$ such that 
\begin{equation}
\label{E8.10.2}\tag{E8.10.2}
{\text{$\delta(z)=\xi z$ and $\delta(g)=0$ for any $g\in G$.}}
\end{equation}
By \cite[Theorem 2.4]{BOZZ}, $H:=K[w;\delta]$ 
is a HOE of $K$ with 
$\Delta(w)=w\otimes 1+e^{\ell}\otimes w+[z]^{\ell}$. Denote this 
Hopf algebra by $N_G(e,\chi, \ell, \xi)$. 
\end{example}

\begin{example}
\label{zzex8.11}
Suppose 
\begin{equation}
\label{E8.11.1}\tag{E8.11.1}
{\text{$e^{\ell}\neq 1$ and $\chi^{\ell}(g)=1$ for all $g$.}}
\end{equation}
Let $(K,\theta)$ be as in Example \ref{zzex8.7} with 
$$\lambda=1.$$ 
Let $\eta$ be an additive character of $G$ such that 
$\eta(e)=q-1$ where $q=\chi(e)$. Define a derivation $\delta$ 
of $K$ by 
\begin{equation}
\label{E8.11.2}\tag{E8.11.2}
{\text{$\delta(z)=(q-1)e^{\ell}z$ and 
$\delta(g)=\eta(g)g(e^{\ell}-1)$ for all $g\in G$.}}
\end{equation}
To check \cite[Theorem 2.4(e)]{BOZZ}, we let, for all $r\in K$, 
$$Y(r):=[z]^{\ell} \Delta(r)-\Delta(r) [z]^{\ell}$$
which is the right-hand side of \cite[(21) on page 2415]{BOZZ}
in our setting, while $X(r)$ of \eqref{E8.9.2} is 
the left-hand side of \cite[(21) on page 2415]{BOZZ}. 
It is not difficult to see that $Y(r)$ satisfies the
equation
\begin{equation}
\label{E8.11.3}\tag{E8.11.3}
Y(rs)=Y(r)\Delta(s)+\Delta(r) Y(s)
\end{equation}
for all $r,s\in K$, which is similar to \eqref{E8.9.3}
where $\theta$ is the identity. When $r=g\in G$, it is easy
to see that $Y(r)=X(r)=0$. When $r=z$, Lemma \ref{zzlem5.6}(1)
shows that
$$\begin{aligned}
Y(z)&=-[\Delta(z), [z]^{\ell}]=
(1-q) ez^{\ell}\otimes z+(q-1) e^{\ell} z\otimes z^{\ell}\\
&=(1-q) e (e^{\ell}-1)\otimes z+(q-1) e^{\ell} z\otimes 
(e^{\ell}-1).
\end{aligned}
$$
On the other hand, by setting $\Delta(z)=z_1\otimes z_2$,
$$\begin{aligned}
X(z)&=\Delta (\delta(z))-\delta(z_1)\otimes z_2
-e^{\ell} z_1\otimes \delta(z_2)\\
&=(q-1) e^{\ell}z\otimes e^{\ell} -(q-1) e^{\ell}z\otimes 1
-(q-1) e^{\ell+1} \otimes z+(q-1) e\otimes z\\
&= Y(z).
\end{aligned}
$$
Since $K$ is generated by $G$ and $z$, the equations \eqref{E8.9.3}
and \eqref{E8.11.3} together with $X(r)=Y(r)$ for all $r=g\in G$ and
$r=z$ imply that $X(r)=Y(r)$ for all $r\in K$. Therefore 
\cite[(21) on page 2415]{BOZZ} holds. Other hypotheses of
\cite[Theorem 2.4(ii)]{BOZZ} are relatively easy to check.
Now, by \cite[Theorem 2.4(ii)]{BOZZ}, $H:=K[w;\delta]$ 
is a HOE of $K$ with 
$\Delta(w)=w\otimes 1+e^{\ell}\otimes w+[z]^{\ell}$. Denote this 
Hopf algebra by $O_G(e,\chi, \ell, \eta)$. 
\end{example}

\begin{example}
\label{zzex8.12}
Suppose 
\begin{equation}
\label{E8.12.1}\tag{E8.12.1}
{\text{$e^{\ell}\neq 1$ and $\chi^{\ell}(g)=1$ for all $g$.}}
\end{equation}
Let $(K,\theta)$ be as in Example \ref{zzex8.7} with 
$$\lambda=0.$$ 
Let $\eta$ be an additive character of $G$. 
Define a derivation $\delta$ of $K$ by 
\begin{equation}
\label{E8.12.2}\tag{E8.12.2}
{\text{$\delta(z)=-\eta(e)z$ and 
$\delta(g)=\eta(g)g(e^{\ell}-1)$ for all $g\in G$.}}
\end{equation}
There is no further restriction on $\eta$.
By \cite[Theorem 2.4]{BOZZ}, $H:=K[w; \delta]$ 
is a HOE of $K$ with 
$\Delta(w)=w\otimes 1+e^{\ell}\otimes w+[z]^{\ell}$. Denote this 
Hopf algebra by $P_G(e,\chi, \ell, \eta)$. 
\end{example}

One difference between Examples \ref{zzex8.11} and \ref{zzex8.12}
is the choice of $\lambda$.

\begin{example}
\label{zzex8.13}
Suppose 
\begin{equation}
\label{E8.13.1}\tag{E8.13.1}
{\text{$e^{\ell}\neq 1$, $e^{2\ell}=1$, and $\chi^{\ell}(g)=1$ for all $g$.}}
\end{equation} 
Let $(K,\theta)$ be as in Example \ref{zzex8.7} with 
$$\lambda=1.$$ 
Let $\eta$ be an additive character of $G$ and $q=\chi(e)$. Since
$e^{2\ell}=1$ and ${\rm{char}} \; k=0$, we have $\eta(e)=0$.
Define a derivation $\delta$ of $K$ by 
\begin{equation}
\label{E8.13.2}\tag{E8.13.2}
{\text{$\delta(z)=(q-1) z+(q-1)e^{\ell}z$
and $\delta(g)=\eta(g)g(e^{\ell}-1)$ for all $g\in G$.}}
\end{equation}
By \cite[Theorem 2.4]{BOZZ}, $H:=K[w;\delta]$ 
is a HOE of $K$ with 
$\Delta(w)=w\otimes 1+e^{\ell}\otimes w+[z]^{\ell}$. Denote this 
Hopf algebra by $Q_G(e,\chi, \ell, \eta)$. 
\end{example}

\begin{corollary}
\label{zzcor8.14}
Let $H$ be $F_G(e,\chi, \ell)$, $L_G(e,\chi, \ell,\eta)$, 
$N_G(e,\chi, \ell, \xi)$, $O_{G}(e,\chi,\ell,\eta)$, 
$P_{G}(e,\chi,\ell,\eta)$, or $Q_{G}(e,\chi,\ell,\eta)$
\begin{enumerate}
\item[(1)]
$\PP^1_{g,h}(H)=k \; \overline{hz}$ if $gh^{-1}=e$
and $\PP^1_{g,h}(H)=0$ otherwise.
\item[(2)]
$\PCdim H=1$.
\end{enumerate}
\end{corollary}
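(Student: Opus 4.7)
The plan is to realize each of the six Hopf algebras uniformly as a Hopf Ore extension $H = K[w;\theta,\delta]$ of $K := E_G(e,\chi,\ell,\lambda)$ with $\Delta(w) = w\otimes 1 + e^\ell\otimes w + [z]^\ell$, and to invoke Theorem \ref{zzthm6.5} with $K$ as the base ring. In the notation of Hypothesis \ref{zzhyp6.1} for this HOE, the role of $e$ is played by $e^\ell$, the role of $z$ by $w$, and $z_0 = [z]^\ell$. Since $\chi(e)$ is a primitive $\ell$th root of unity (Example \ref{zzex8.1}), one checks $\theta(e^\ell) = \chi(e^{\ell^2}) e^\ell = e^\ell$, so the new ``$q$'' equals $1$ and the new ``$\ell$'' is $\infty$. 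This setup applies uniformly to all six cases $F_G, L_G, N_G, O_G, P_G, Q_G$.

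For (1) I compute $\PP^1_{g,1}(H)$ via Theorem \ref{zzthm6.5}(1, 2). Part (2i) gives $\coker\phi^1_{g, 1} = 0$ for $g \ne e^\ell$; part (2ii) says the cokernel at $g = e^\ell$ is nonzero precisely when $[z]^\ell \in B^2_{e^\ell, 1}(K)$. By Corollary \ref{zzcor8.6}(3), $\overline{[z]^\ell}$ is a nonzero generator of $\PP^2_{e^\ell, 1}(K)$, so $[z]^\ell \notin B^2_{e^\ell, 1}(K)$ and this cokernel also vanishes. Combined with injectivity (Theorem \ref{zzthm6.5}(1)) and Corollary \ref{zzcor8.6}(2), this yields $\PP^1_{g, 1}(H) = k\overline z$ when $g = e$ and $0$ otherwise. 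Lemma \ref{zzlem3.4} then upgrades this to the full statement in (1).

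For (2) I first show $\PP^2_{g, 1}(H) = 0$ for every $g$. Theorem \ref{zzthm6.5}(3ii) gives $\ker\phi^2_{e^\ell, 1} = k\overline{[z]^\ell}$, which is all of $\PP^2_{e^\ell, 1}(K)$; and $\PP^2_{g, 1}(K) = 0$ for $g \ne e^\ell$ by Corollary \ref{zzcor8.6}(3). So the image of $\phi^2_{g, 1}$ is zero for every $g$, and $\PP^2_{g, 1}(H) = \coker\phi^2_{g, 1}$. Theorem \ref{zzthm6.5}(4a) (applicable since new $q = 1$) says a nonzero cokernel class has the form $a_0 \otimes w + f_0$ with $\overline{a_0} \ne 0$ in $\PP^1_{g, e^\ell}(K)$ and $\overline{a_0 \otimes [z]^\ell} = 0$ in $\PP^3_{g, 1}(K)$. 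Lemma \ref{zzlem3.4} plus Corollary \ref{zzcor8.6}(2) force $g = e^{\ell+1}$ and $a_0 = e^\ell z$ up to scalar and coboundary.

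The main obstacle is then the key claim that $\overline{e^\ell z \otimes [z]^\ell} \ne 0$ in $\PP^3_{e^{\ell+1}, 1}(K)$. To verify this I plan to combine Lemma \ref{zzlem8.5}, which identifies $\bigoplus_g \PP^3_{g, 1}(K) \cong \PP^3_{1, 1}(R)$ for $R = k[z]/(z^\ell)$, with Lemma \ref{zzlem3.6}(2), which identifies the latter with $\Tor_3^{R^\circ}(k, k)$, where $R^\circ = k[x]/(x^\ell)$. The minimal free resolution of $k$ over $R^\circ$ has alternating differentials $x, x^{\ell-1}, x, x^{\ell-1}, \ldots$, producing a one-dimensional $\Tor_3^{R^\circ}(k,k)$ concentrated in internal degree $\ell+1$, matching the grouplike $e^{\ell+1}$. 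A direct comparison of the cobar complex of $R$ with this bar resolution identifies the reduction modulo $kG$ of $\overline{e^\ell z \otimes [z]^\ell}$ as the Yoneda product $\overline z \cdot \overline{[z]^\ell_R}$, which generates $\Tor_3^{R^\circ}(k, k)$ and so is nonzero. This gives $\coker\phi^2_{e^{\ell+1}, 1} = 0$, hence $\PP^2(H) = 0$. Proposition \ref{zzpro3.7}(2) then forces $\PCdim H < 2$, and combined with $\PP^1_{e, 1}(H) \ne 0$ we conclude $\PCdim H = 1$.
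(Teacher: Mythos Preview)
Your argument for part (1) is essentially the paper's own proof.

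For part (2) you take a genuinely different route. The paper does not compute $\PP^2_{g,1}(H)$ directly. Instead it observes that $\gr H \cong F_G(e,\chi,\ell)$ with respect to the coradical filtration for each of the six algebras, so Lemma \ref{zzlem8.4}(1) reduces everything to $H = F_G(e,\chi,\ell)$, which is graded. Writing $F_G = R' \# kG$ via Radford's decomposition, one reads off from the basis $\{g z^t w^m : 0 \le t < \ell,\ m \ge 0\}$ that the Hilbert series of $R'$ is $(1-t)^{-1}$. Part (1) together with Lemma \ref{zzlem8.5} gives $\dim \PP^1_{1,1}(R') = 1$, so $(R')^\circ$ is singly generated; the Hilbert series then forces $(R')^\circ \cong k[x]$, hence $\gldim (R')^\circ = 1$, whence $\PCdim H = \PCdim R' = 1$ by Proposition \ref{zzpro3.7}(1) and Lemma \ref{zzlem8.5}. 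No $\PP^2$ or $\PP^3$ computation is needed.

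Your approach through Theorem \ref{zzthm6.5}(4a) is sound in outline, but the verification of the key claim $\overline{e^\ell z \otimes [z]^\ell} \neq 0$ in $\PP^3_{e^{\ell+1},1}(K)$ has gaps. First, you implicitly assume that the isomorphism of Lemma \ref{zzlem8.5} is realized by the coalgebra projection $K \to R$; the paper proves Lemma \ref{zzlem8.5} abstractly via derived functors and never identifies the map with ``reduction modulo $kG$'', so this needs an argument. Second, to conclude that the concatenation product $\overline z \cdot \overline{[z]^\ell_R}$ is nonzero you need that the cobar product on $\PP^*_{1,1}(R)$ matches the Yoneda product on $\Ext^*_{R^\circ}(k,k)$; the paper only establishes the vector-space isomorphism (Lemma \ref{zzlem3.6}(2)), and the multiplicative version for the closely related Lemma \ref{zzlem8.5} is stated explicitly as a conjecture. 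Both facts are standard for locally finite graded connected coalgebras, so your strategy can be completed, but the missing justifications are at least as much work as the paper's argument, which avoids $\PP^3$ entirely.
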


\begin{proof} (1) We use Theorem \ref{zzthm6.5} and 
use the notation there too. Set $a=e^{\ell}$ 
which replaces $e$ in Theorem \ref{zzthm6.5}. Further, 
$w$ replaces $z$, $[z]^{\ell}$ replaces $z_0$. In all 
cases, $\chi(a)=\chi(e^{\ell})=1$. So we can apply Theorem 
\ref{zzthm6.5} in the situation of $q=1$.  Since 
$[z]^{\ell}$ is not a coboundary in $Z^2_{a,1}(K)$, 
Theorem \ref{zzthm6.5}(2) shows that $\phi^1_{g,1}$ is 
an isomorphism for all $g\in G$. Hence 
$\PP^1_{g,1}(H)=\PP^{1}_{g,1}(K)$.
Now part (1) follows from Corollary \ref{zzcor8.6}.

(2) Note that, by using coradical filtration, 
$\gr H\cong F_G(e,\chi,\ell)$ for every $H$ in this 
corollary. By Lemma \ref{zzlem8.4}(1),
we only need to show the assertion for $H=F_G(e,\chi,\ell)$.

Now $H$ is a graded Hopf algebra with $\deg(g)=0$
for all $g\in G$, $\deg(z)=1$ and $\deg (w)=\ell$. By a
result of Radford \cite{Ra}, $H\cong R\# kG$ 
where $R$ is the coinvariant subring $\{a\in H\mid (id\otimes
\pi)\Delta(a)=1\otimes a\}$ where $\pi: H\to kG$ is the
projection. Further, it is easy to see that, for all $n\geq 0$, 
$H_n\cong kG a_n$ where $a_n=z^t w^{m}$ if $n=t+\ell m$ for 
some $0\leq t<\ell$ and $m\geq 0$. This implies that the Hilbert
series of $R$ is $1+t+t^2+\cdots=\frac{1}{(1-t)}$. By part (1)
and Lemma \ref{zzlem8.5}, $\PP^1_{1,1}(R)$ is 1-dimensional.
This is equivalent to say that the dual algebra $R^\circ$
is generated by 1 element. Since the Hilbert series of
$R^\circ$ is $\frac{1}{(1-t)}$, $R^{\circ}\cong k[x]$ for 
$\deg x=1$. It is clear that the global dimension of 
$R^{\circ}$ is 1, or equivalently, the PC dimension of $R$ is 1. 
The assertion follows from Lemma \ref{zzlem8.5}.
\end{proof}

In fact, all the Hopf algebras introduced above are 
pointed. To show this we need the following lemma, 
which is a folklore. 

\begin{lemma}
\label{zzlem8.15}
Let $H$ be a Hopf algebra. Let $C$ be a subcoalgebra 
of $H$ which generates $H$ as an algebra. If $C$ is 
pointed, then so is $H$. Moreover, the group $G(H)$ 
is generated by $G(C)$.
\end{lemma}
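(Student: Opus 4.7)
The plan is to build $H$ from $C$ via an ascending filtration by pointed subcoalgebras and apply a standard surjective-image principle. For each $n\geq 0$, let $C^n\subseteq H$ denote the $k$-linear span of $n$-fold products $c_1 c_2\cdots c_n$ with $c_i\in C$ (setting $C^0=k$). Since $H$ is a bialgebra, the multiplication $m_n\colon C^{\otimes n}\to H$ is a coalgebra homomorphism with image $C^n$, so $C^n$ is a subcoalgebra of $H$. Let $D_n:=\sum_{i=0}^n C^i$; this is an ascending chain of subcoalgebras of $H$, and $H=\bigcup_n D_n$ because $C$ generates $H$ as an algebra.

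Next I would use the following well-known fact: if $\pi\colon A\to B$ is a surjective coalgebra map and $A$ is pointed, then $B$ is pointed and $G(B)=\pi(G(A))$. A tensor product of pointed coalgebras is pointed with grouplikes the tensor products of grouplikes; hence $C^{\otimes n}$ and $\bigoplus_{i\leq n}C^{\otimes i}$ are pointed, with grouplike sets $G(C)^n$ and $\bigsqcup_{i\leq n}G(C)^i$ respectively. Applying the surjective-image fact to $m_n\colon C^{\otimes n}\to C^n$ and to the sum map $\bigoplus_{i\leq n}C^{\otimes i}\to D_n$, one obtains that each $D_n$ is pointed with $G(D_n)\subseteq\langle G(C)\rangle$, where $\langle G(C)\rangle$ denotes the submonoid of $G(H)$ generated by $G(C)$.

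The two conclusions then follow from standard coalgebra facts. Every simple subcoalgebra of $H$ is finite-dimensional, hence lies in some $D_n$; pointedness of $D_n$ forces it to be $1$-dimensional, so $H$ is pointed. For any $g\in G(H)$, similarly $kg\subseteq D_n$ for some $n$, so $g\in G(D_n)\subseteq\langle G(C)\rangle$; the reverse inclusion is immediate since $G(C)\subseteq G(H)$, and $G(H)$ being a group then forces it to be generated as a group by $G(C)$.

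The one step that is not purely set-theoretic is the surjective-image lemma, which I expect to be the main obstacle (though it is folklore). I would sketch it as follows: given $b\in G(B)$, the preimage $\pi^{-1}(kb)$ is a subcoalgebra of $A$, hence pointed as a subcoalgebra of $A$. Any grouplike $a\in G(A)$ satisfies $\epsilon(\pi(a))=\epsilon(a)=1\neq 0$, so $\pi(a)\neq 0$ and $\pi(a)$ is again grouplike; thus $\pi$ sends $G(A)$ into $G(B)$ and no grouplike is killed. A short argument on the coradical filtration of $\pi^{-1}(kb)$ then shows that some $a\in G(A)\cap\pi^{-1}(kb)$ must map onto $b$, whence $G(B)=\pi(G(A))$ and $B$ is pointed.
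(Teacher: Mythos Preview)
Your proof is correct, but it follows a different route from the paper. The paper invokes Takeuchi's free Hopf algebra $\mathcal{H}(C)$ over $C$: by the universal property there is a surjective Hopf map $\mathcal{H}(C)\to H$, and a result of Zhuang says $\mathcal{H}(C)$ is pointed with $G(\mathcal{H}(C))$ generated by $G(C)$; the conclusion then follows from Montgomery's Corollary~5.3.5 (the surjective image of a pointed Hopf algebra is pointed, with grouplikes the image of the grouplikes). Your approach avoids the free Hopf algebra entirely by building $H$ internally as the union of the subcoalgebras $D_n=\sum_{i\le n}C^i$, each of which you show is pointed via the coalgebra map $\bigoplus_{i\le n}C^{\otimes i}\twoheadrightarrow D_n$. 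Both arguments ultimately rest on the same surjective-image principle, but you apply it at the coalgebra level to finite tensor powers, whereas the paper applies it once at the Hopf level to $\mathcal{H}(C)$. Your route is more elementary and self-contained; the paper's is shorter but imports more machinery. One small remark: your sketch of the surjective-image lemma handles grouplikes but does not quite spell out why every simple subcoalgebra of $B$ is one-dimensional --- the missing line is that for a simple $S\subseteq B$, the pointed coalgebra $\pi^{-1}(S)$ contains a grouplike $a$, and then $k\pi(a)\subseteq S$ forces $S=k\pi(a)$ by simplicity.
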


\begin{proof} With the algebra $C$, one can construct 
$\mathcal{H}(C)$, the free Hopf algebra over 
$C$ \cite{Ta}. By the universal property of 
$\mathcal{H}(C)$, there is Hopf algebra map 
$\mathcal{H}(C)\rightarrow H$, which is surjective 
since $C$ generates $H$ as an algebra. By 
\cite[Proposition 3.3]{Zh}, $\mathcal{H}(C)$ 
is pointed and $G(\mathcal{H}(C))$ is generated by 
$G(C)$. Now the result follows from 
\cite[Corollary 5.3.5]{Mo}.
\end{proof}

\begin{proposition}
\label{zzpro8.16}
Let $H$ be one of the following Hopf algebras: 
$A_G(e, \chi)$, $C_G(e, \tau)$, $E_G(e, \chi, \ell, \lambda)$, 
$F_G(e,\chi, \ell)$, $L_G(e,\chi, \ell, \eta)$, $N_G(e,\chi, \ell, \xi)$
$O_{G}(e,\chi,\ell,\eta)$, $P_{G}(e,\chi,\ell,\eta)$, or $Q_{G}(e,\chi,\ell,\eta)$.
Then $H$ is pointed with grouplike elements $G$. Moreover,
$$\GKdim H= \GKdim kG +1,$$
except for $H=E_G(e, \chi, \ell, \lambda)$, where in that case, 
$$\GKdim H= \GKdim kG.$$
\end{proposition}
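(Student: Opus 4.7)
The plan is to establish pointedness and identify the grouplikes by induction on the construction---first $kG$, then $A_G$ and $C_G$, then $E_G$, then the six HOEs $F_G,L_G,N_G,O_G,P_G,Q_G$---and finally to deduce the Gelfand--Kirillov dimension from standard Ore extension theory. Throughout, Lemma \ref{zzlem5.1}(3) identifies the grouplikes at each HOE step as $G(H)=G(K)$, while Lemma \ref{zzlem8.15} reduces pointedness of $H$ to exhibiting a pointed subcoalgebra that generates $H$ as an algebra. For $E_G(e,\chi,\ell,\lambda)$, which is not a HOE but a Hopf quotient of $A_G(e,\chi)$, pointedness is inherited from $A_G$ using the standard fact that coalgebra quotients of pointed coalgebras are pointed \cite{Mo}, and Lemma \ref{zzlem8.2}(3) together with Remark \ref{zzrem8.3} gives $G(E_G)=G$.

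For $A_G$ and $C_G$, I would take $C:=kG+kz$. Since $\Delta(z)=z\otimes 1+e\otimes z\in C\otimes C$, this is a subcoalgebra which clearly generates the HOE as an algebra. To show $C$ is pointed, the idea is that if $S\subseteq C$ is a simple subcoalgebra not contained in $kG$, then some $s=g_0+cz\in S$ has $c\neq 0$; applying $\mathrm{id}\otimes \pi_z$ (the projection $C\to kz$ along $kG$) to $\Delta(s)\in S\otimes S$ produces $ce\otimes z\in S\otimes kz$, which forces $e\in S$. But then $ke\subsetneq S$ is a proper nonzero subcoalgebra (proper because $s\notin kG$), contradicting simplicity; hence every simple subcoalgebra of $C$ is one-dimensional, and Lemma \ref{zzlem8.15} concludes.

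For each HOE $H=E_G[w;\theta,\delta]$ covering $F_G,L_G,N_G,O_G,P_G,Q_G$, I would take $C:=E_G+kw$. Since $[z]^{\ell}\in E_G\otimes E_G$, the relation $\Delta(w)=w\otimes 1+e^{\ell}\otimes w+[z]^{\ell}$ shows $C$ is a subcoalgebra; it generates $H$ as an algebra. An analogous projection argument establishes pointedness of $C$: for a simple $S\subseteq C$ containing some $s=e'+cw$ with $c\neq 0$, the map $\mathrm{id}\otimes\pi_w$ sends $\Delta(s)$ to $ce^{\ell}\otimes w\in S\otimes kw$, forcing $e^{\ell}\in S$, so $ke^{\ell}\subsetneq S$ is a proper nonzero subcoalgebra of $S$, contradicting simplicity unless $S\subseteq E_G$, in which case $S$ is one-dimensional by pointedness of $E_G$. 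Lemma \ref{zzlem8.15} then gives pointedness of $H$ with $G(H)=G$.

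The GK-dimension formula should then be routine: each HOE $R[z;\sigma,\delta]$ in the list has $\sigma$ an automorphism, so $\GKdim R[z;\sigma,\delta]=\GKdim R+1$ by standard Ore extension theory \cite{KL}. Iterating from $R=kG$ yields the stated value $\GKdim kG+1$ in every case except $E_G$, where Lemma \ref{zzlem8.2}(2) expresses $E_G=\bigoplus_{i=0}^{\ell-1}z^i(kG)$ as a finitely generated free $kG$-module, whence $\GKdim E_G=\GKdim kG$. I expect the main technical step to be the pointedness of $C=E_G+kw$: since $w$ is not $(e^{\ell},1)$-skew-primitive (the cocycle $[z]^{\ell}$ obstructs this), the usual skew-primitive argument does not apply directly, but the key observation that the $E_G\otimes kw$-component of $\Delta(w)$ reduces to the single tensor $e^{\ell}\otimes w$ is precisely what makes the projection argument succeed.
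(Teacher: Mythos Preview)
Your approach mirrors the paper's: invoke Lemma~\ref{zzlem8.15} with an explicit pointed generating subcoalgebra (the paper merely asserts such a $C$ exists without spelling out your simplicity argument), use Lemma~\ref{zzlem5.1}(3) and Remark~\ref{zzrem8.3} for $G(H)=G$, and cite an external result for GK-dimension. The only point worth flagging is the last step: the paper appeals to \cite[Theorem~5.4]{Zh}, a Hopf-specific statement, whereas your implication ``$\sigma$ an automorphism $\Rightarrow\GKdim R[z;\sigma,\delta]=\GKdim R+1$'' is not a theorem in \cite{KL} as stated---the standard hypothesis there is that $\sigma$ be locally algebraic, which does hold in every case here (each $\sigma$ acts by scalars on the basis $\{g\}_{g\in G}$ of $kG$, respectively $\{gz^i\}$ of $E_G$), but you should say so rather than rely on $\sigma$ merely being an automorphism.
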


\begin{proof}
For such a Hopf algebra $H$, we can find a pointed 
subcoalgebra $C$ which contains $G$ and generates 
$H$ as an algebra. By Lemma \ref{zzlem8.15}, $H$ 
is pointed. For $H=E_G(e, \chi, \ell, \lambda)$, 
the statement $G(H)=G$ follows from Remark \ref{zzrem8.3}. 
For all the other cases, it is a result of Lemma \ref{zzlem5.1}(3).

The statements about GK-dimension follows 
from \cite[Theorem 5.4]{Zh}.
\end{proof}

\section{Pointed Hopf algebras of $\PCdim$ one}
\label{zzsec9}
Throughout this section, let $H$ be a pointed Hopf algebra. 
We are going to classify pointed Hopf algebras $H$ 
satisfying the following condition
\begin{equation}
\label{E9.0.1}\tag{E9.0.1}
\dim \{\bigoplus_{g\in G}\PP^1_{1, g}(H)\}=1.
\end{equation}

As mentioned before Lemma \ref{zzlem8.5}, 
the associated graded algebra $\gr H$ has a 
decomposition $\gr H\cong R\# kG$.

\begin{lemma}
\label{zzlem9.1}
Let $H$ be pointed with $G=G(H)$.
Then the following $k$-spaces are isomorphic:
\begin{align*}
&\bigoplus_{g\in G}\PP^1_{1, g}(H),  \quad
\bigoplus_{g\in G}\PP^1_{g, 1}(H), \quad
\bigoplus_{g\in G}\PP^1_{1, g}(\gr H), \quad
\bigoplus_{g\in G}\PP^1_{g, 1}(\gr H), \quad
R_1, \\
&\bigoplus_{g\in G}P'_{1, g}(H), \quad
\bigoplus_{g\in G}P'_{g, 1}(H), \quad
\bigoplus_{g\in G}P'_{g, 1}(\gr H), \quad
\bigoplus_{g\in G}P'_{1, g}(\gr H).
\end{align*}
\end{lemma}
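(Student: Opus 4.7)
The plan is to arrange the nine spaces in a chain of natural isomorphisms using only results collected earlier. I would split the argument into three reductions: (i) swap $\PP^1$ with $P'$ and swap $(1,g)$ with $(g,1)$ within a fixed coalgebra; (ii) pass from $H$ to $\gr H$; and (iii) identify the $\gr H$ sum with $R_1$. Step (i) is automatic: Lemma \ref{zzlem2.1}(2) gives $\PP^1_{g,h}(C)\cong P'_{h,g}(C)$ for any pointed coalgebra $C$, pairing $\bigoplus_g\PP^1_{1,g}$ with $\bigoplus_g P'_{g,1}$, and $\bigoplus_g\PP^1_{g,1}$ with $\bigoplus_g P'_{1,g}$, both in $H$ and in $\gr H$. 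Lemma \ref{zzlem3.4} applied with the shift $\Sh_{g^{-1}}$ yields $\PP^1_{1,g}(H)\cong\PP^1_{g^{-1},1}(H)$; since $g\mapsto g^{-1}$ is a bijection of $G$, summing gives $\bigoplus_g\PP^1_{1,g}(H)\cong\bigoplus_g\PP^1_{g,1}(H)$, and similarly for $\gr H$. After (i) the remaining comparisons are $\bigoplus_g\PP^1_{g,1}(H)\cong\bigoplus_g\PP^1_{g,1}(\gr H)\cong R_1$.

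For step (ii), I would show that the projection $C_1(H)\twoheadrightarrow C_1(H)/C_0(H)=(\gr H)_1$ induces, for each pair $(g,h)$, an isomorphism $\PP^1_{g,h}(H)\cong\PP^1_{g,h}(\gr H)$. Injectivity follows from the elementary fact that $P_{g,h}(C)\cap kG=k(g-h)$ in any pointed coalgebra. For surjectivity, lift a $(g,h)$-skew primitive $\bar y\in(\gr H)_1$ to $y\in C_1(H)$; by \cite[Theorem 5.4.1]{Mo}, $\Delta(y)=y\otimes h+g\otimes y+w$ with $w\in C_0\otimes C_0=kG\otimes kG$. Lemma \ref{zzlem2.3}(1) places $w$ in $Z^2_{g,h}(kG)$; since $kG$ is cosemisimple, Example \ref{zzex4.3} gives $\PP^2_{g,h}(kG)=0$, so $w\in B^2_{g,h}(kG)$. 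Lemma \ref{zzlem2.3}(2) then produces $f\in kG$ with $y-f\in P_{g,h}(H)$ lifting $\bar y$. Summing over $g$ concludes this step.

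For step (iii), apply Lemma \ref{zzlem8.5} with $n=1$ to the coradically graded Hopf algebra $\gr H=R\#kG$ to obtain $\PP^1_{1,1}(R)\cong\bigoplus_{g\in G}\PP^1_{g,1}(\gr H)$. Since $\gr H$ is coradically graded, the diagram $R$ inherits the coradically graded property (so $C_1(R)=R_0\oplus R_1=k\oplus R_1$); Lemma \ref{zzlem2.1}(3) applied to $R$, whose only grouplike is $1$, then reads $C_1(R)=k\cdot 1\oplus\PP^1_{1,1}(R)$, forcing $\PP^1_{1,1}(R)\cong R_1$. The main obstacle is step (ii): the whole chain depends on the obstruction $w$ lying in $kG\otimes kG$ (Taft--Wilson) and being cohomologically trivial there (cosemisimplicity of $kG$ via Example \ref{zzex4.3}); once these two facts are combined through Lemma \ref{zzlem2.3}, the reductions in (i) and the identification in (iii) are formal.
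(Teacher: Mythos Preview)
The paper states Lemma \ref{zzlem9.1} without proof, so there is no argument to compare against; your proposal is a correct way to supply the missing details. The three reductions are sound: step (i) is immediate from Lemmas \ref{zzlem2.1}(2) and \ref{zzlem3.4}; step (iii) follows from Lemma \ref{zzlem8.5} and the elementary observation that in a connected graded coalgebra with $R_0=k$ every degree-one element is primitive (your appeal to $R$ being coradically graded is correct and standard from \cite{AS1}, though for the inclusion $R_1\subseteq P(R)$ only $R_0=k$ is needed).

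Two small remarks on step (ii). First, the citation of \cite[Theorem 5.4.1]{Mo} is slightly misplaced: what you actually use is that the coradical filtration is a coalgebra filtration, so for a lift $y\in C_1(H)$ of a $(h,g)$-primitive $\bar y\in(\gr H)_1$ one has $\Delta(y)\in C_0\otimes C_1+C_1\otimes C_0$, whence $\Delta(y)-(y\otimes h+g\otimes y)$ lies in $C_0\otimes C_0$ because its image in the degree-one piece of $\gr H\otimes\gr H$ vanishes by hypothesis. Second, keep the index conventions straight: in the paper $P_{h,g}=\{c:\Delta(c)=c\otimes h+g\otimes c\}$, matching $\ker\partial^1_{g,h}$, so the element produced by Lemma \ref{zzlem2.3}(2) lands in $P_{h,g}(H)$ rather than $P_{g,h}(H)$. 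Neither point affects the validity of your argument.
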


Note that by Lemma \ref{zzlem9.1}, the assumption 
\eqref{E9.0.1} is equivalent to say that one of 
the vector spaces in the lemma is one-dimensional.

\begin{theorem} 
\label{zzthm9.2}
Suppose that ${\rm{char}}\; k=0$. 
Let $H$ be pointed with $G(H)=G$. 
Assume that $H$ satisfies \eqref{E9.0.1}. Then $H$ is 
isomorphic to one the following.

\begin{enumerate}
\item
$A_G(e, \chi)$ in Example \ref{zzex5.4} where $\chi(e)$ is either 1 or 
not a root of unity.
\item
$C_G(e, \chi, \tau)$ in Example \ref{zzex5.5}.
\item
$E_{G}(e,\chi, \ell,\lambda)$ in Example \ref{zzex8.1} 
where $\lambda$ is either 1 or $0$.
\item
$F_G(e, \chi, \ell)$ in Example \ref{zzex8.7}.
\item
$L_G(e, \chi, \ell, \eta)$ in Example \ref{zzex8.9}.
\item
$N_G(e, \chi, \ell, \xi)$ in Example \ref{zzex8.10}.
\item
$O_G(e,\chi,\ell,\eta)$ in Example \ref{zzex8.11}.
\item
$P_G(e,\chi,\ell,\eta)$ in Example \ref{zzex8.12}.
\item
$Q_G(e,\chi,\ell,\eta)$ in Example \ref{zzex8.13}.
\end{enumerate}
\end{theorem}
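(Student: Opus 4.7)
The plan is to construct inside $H$ a chain of Hopf subalgebras $K_0 \subseteq K_1 \subseteq K_2$ whose existence and form are forced by condition \eqref{E9.0.1}, and at each stage invoke Proposition \ref{zzpro2.4} to conclude that the chain terminates. The termination is detected by computing the primitive cohomological dimension of the candidate subalgebra: whenever $\PCdim K_i = 1$, Proposition \ref{zzpro2.4}(2) forces $K_i = H$.

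First, by Lemma \ref{zzlem9.1}, condition \eqref{E9.0.1} produces a single nontrivial skew primitive $z \in H$, say $\Delta(z) = z \otimes 1 + e \otimes z$ for some $e \in G := G(H)$. Since $P_{e,1}(H)/k(e-1)$ is the span of $z$ and is preserved by conjugation by $G$, one obtains $g^{-1} z g = \chi(g) z + \tau(g)(e-1)$ for functions $\chi \colon G \to k^\times$ and $\tau \colon G \to k$; a routine compatibility analysis with the comultiplication yields that $e$ is central in $G$, $\chi$ is a character, and $\tau$ satisfies \eqref{E5.5.1}. After a substitution $z \mapsto z + \alpha(e-1)$ one normalizes $\tau$ appropriately. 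The Hopf subalgebra $K_0$ generated by $G$ and $z$ is then a quotient of either $A_G(e, \chi)$ or $C_G(e, \chi, \tau)$; since the natural map is injective on the first term of the coradical filtration, \cite[Theorem 5.3.1]{Mo} shows it is injective.

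Set $q = \chi(e)$. If $q = 1$ or $q$ is not a root of unity, Corollaries \ref{zzcor6.9} and \ref{zzcor6.10} give $\PCdim K_0 = 1$, and Proposition \ref{zzpro2.4}(2) yields $H = K_0$, producing cases (1) and (2). Otherwise $q$ is a primitive $\ell$-th root of unity with $\ell \geq 2$, and Lemma \ref{zzlem5.3} together with $[\ell]_q = 0$ shows that $z^\ell$ is $(e^\ell, 1)$-primitive. Condition \eqref{E9.0.1} (plus exclusion of the degenerate sub-case $e^\ell = e$ via the algebraic relation $z(z^{\ell-1} - \alpha) = \beta(e-1)$) forces $z^\ell = \lambda(e^\ell - 1)$ for some $\lambda \in k$, hence the Hopf subalgebra $K_1 \subseteq H$ generated by $G$ and $z$ is isomorphic to $E_G(e, \chi, \ell, \lambda)$, with $\lambda$ normalized to $0$ or $1$. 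If $K_1 = H$ we obtain case (3). Otherwise, by Corollary \ref{zzcor8.6}(3) the only nonzero classes in $\PP^2_{g,h}(K_1)$ lie in the line spanned by $[z]^\ell$, and applying Proposition \ref{zzpro2.4}(1) together with Lemma \ref{zzlem3.4} produces $c \in H \setminus K_1$ with $\Delta(c) = c \otimes 1 + e^\ell \otimes c + [z]^\ell$.

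Let $K_2$ be the subalgebra of $H$ generated by $K_1$ and $c$. The structure of $\Delta(c)$ together with \cite[Theorem 2.4]{BOZZ} and Lemma \ref{zzlem8.8} forces $K_2 = K_1[c; \theta, \delta]$ where $\theta$ is the automorphism of Example \ref{zzex8.7} (namely $\theta(z) = z$, $\theta(g) = \chi^\ell(g) g$), and $\delta$ is a $\theta$-derivation. A careful case analysis of the admissible $\delta$, indexed by three parameters --- whether $\chi^\ell$ is trivial on $G$, whether $e^\ell = 1$ (or further $e^{2\ell} = 1$), and whether $\lambda$ is $0$ or $1$ --- matches $K_2$ to exactly one of the six families $F_G, L_G, N_G, O_G, P_G, Q_G$. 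Finally Corollary \ref{zzcor8.14}(2) gives $\PCdim K_2 = 1$, and Proposition \ref{zzpro2.4}(2) yields $H = K_2$, producing cases (4)--(9). The principal obstacle will be this last phase: verifying that each parameter combination yields a unique family and that the side conditions (such as $\eta(e) = q - 1$ in Example \ref{zzex8.11} or $e^{2\ell} = 1$ in Example \ref{zzex8.13}) are \emph{forced} by requiring $K_1[c; \theta, \delta]$ to be a Hopf algebra with the prescribed coproduct on $c$.
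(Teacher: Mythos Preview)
Your plan follows the paper's proof essentially step for step: the same tower $kG \subsetneq K_1 \cong E_G(e,\chi,\ell,\lambda) \subsetneq K_2$, the same appeals to Corollaries~\ref{zzcor6.9}, \ref{zzcor6.10}, \ref{zzcor8.6}, \ref{zzcor8.14}, and the same use of Proposition~\ref{zzpro2.4} to terminate the chain once $\PCdim = 1$.

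Two small clarifications. First, your exclusion of $e^\ell = e$ is needlessly opaque; the paper simply notes $e^\ell = e \Rightarrow e^{\ell-1}=1 \Rightarrow q^{\ell-1}=1$, contradicting that $q$ is a primitive $\ell$th root of unity. Second, and more substantively, the paper does \emph{not} argue that $K_2 = K_1[c;\theta,\delta]$ as an Ore extension \emph{a priori}, nor does it invoke \cite[Theorem~2.4]{BOZZ} in that direction (that theorem gives sufficient conditions for an Ore extension to be Hopf, not the converse). Instead it works entirely inside $H$: it computes $\Delta(g^{-1}cg)$ and $\Delta([c,z])$ directly, uses the rank-one hypothesis \eqref{E9.0.1} on $H$ (via $\PP^1_{e^\ell,1}(H)=0$ and $\PP^1_{e^{\ell+1},1}(H)=0$ or $kz$) to pin down the relations \eqref{E9.2.4} and \eqref{E9.2.9}, and then uses the identity $z^\ell = \lambda(e^\ell-1)$ to force the constant $c$ in \eqref{E9.2.9} to vanish (equation~\eqref{E9.2.10}). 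Only after the relations are determined does one build a surjection from the appropriate model $F_G,\dots,Q_G$ onto $K'$ and prove injectivity via \cite[Corollary~5.4.7]{Mo}. Your framing ``forced by requiring $K_1[c;\theta,\delta]$ to be a Hopf algebra'' is morally right but slightly obscures that the constraints are extracted from the ambient rank-one condition on $H$, not from abstract compatibility of a hypothetical Ore extension.
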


\begin{proof} First of all, every Hopf algebra in
(a)--(i) satisfies \eqref{E9.0.1}. Now we show that these
are only Hopf algebras satisfying \eqref{E9.0.1}.

By Lemma \ref{zzlem9.1} and condition \eqref{E9.0.1}, 
there exists exactly one element $e\in G$ such that 
$P'_{1, e}(H)$ is one-dimensional. In other words,
\begin{equation}
\label{E9.2.1}\tag{E9.2.1}
\PP^1_{e,1}(H)=k \overline{u}\neq 0, \quad {\text{and}} \quad
\PP^1_{g,1}(H)=0, \quad {\text{for all $g\neq e$}},
\end{equation}
for some $u\in P_{1, e}(H)\setminus kG$. For any 
$g\in G$, $g^{-1}ug \in P_{1, g^{-1}eg}(H)\setminus kG$.
By condition \eqref{E9.2.1}, $g^{-1}e g=e$, namely, 
$e$ is in the center of $G$ as required by Examples 
\ref{zzex5.4}-\ref{zzex5.5}. 
Hence there exist a character $\chi : G\rightarrow k^{\times}$ 
and a map $\partial: kG\rightarrow kG$ such that
\begin{equation}
\label{E9.2.2}\tag{E9.2.2}
g^{-1}ug=\chi(g)u+\partial(g).
\end{equation}
Since $g^{-1}ug-\chi(g)u$ is in $P_{1,e}(H)$, $\partial(g)
\in P_{1,e}(kG)$. So there is a 
map $\tau: G\to k$ such that $\partial(g)=\tau(g) (e-1)$.
Using \eqref{E9.2.2}, it is easy to check that $\tau$ satisfies
$$\tau(gh)=\tau(g)+\chi(g)\tau(h), 
\quad {\text{for all $g,h\in G$}}$$
\eqref{E5.5.1}. In the rest, we use the pair
$(\chi, \tau)$ to divide the proof into several cases.

Case 1. Suppose that $\chi(e)=1$. Using the definition
of $C_{G}(e,\chi,\tau)$ [Example \ref{zzex5.5}], 
one sees that there is a Hopf 
algebra map $\phi: C_G(e, \chi, \tau)\rightarrow H$ 
sending $z$ to $u$ and restricting to the identity on 
$G$. Condition \eqref{E9.0.1} implies that 
the skew primitive elements in $H$ are in $kG\oplus 
kG u$. So $\phi$ restricts to an isomorphism on the space 
of skew primitive elements. Now it follows from 
\cite[Corollary 5.4.7]{Mo} that $\phi$ is injective. In
this case, $C_{G}(e,\chi,\tau)$ is the Hopf subalgebra
of $H$ generated by grouplikes and skew primitives.  
By Corollary \ref{zzcor6.9}(2), $\PCdim C_{G}(e,\chi,\tau)
=1$. By Proposition \ref{zzpro2.4}(2), the map $\phi$ is 
actually an isomorphism.  This is (b). If, further, $\tau=0$,
then $C_{G}(e,\chi,0)=A_{G}(e,\chi)$ [Example \ref{zzex5.4}]
(where $\chi(e)=1$). So the Hopf algebra is in part (a)
(or (b)). Note that Hopf algebras in parts (a) and (b) 
have some overlaps. 

Case 2. Suppose that $\chi(e)\neq 1$. Set 
$u':=u+\frac{\tau(e)}{\chi(e)-1} (e-1)$, one can easily
check that $u' g=\chi(g) g u'$ for all $g\in G$. 
See \cite[Case 1, p. 805]{WYC} for a proof.
Replacing $u$ by $u'$ one can assume that 
$\tau$ is zero. For the rest of the proof we assume that 
$\tau=0$ and $ug=\chi(g) gu$ for all $g\in G$. 

Case 2.1. Suppose that $\chi(e)$ is not a root of unity. 
In this case, there is a Hopf algebra map 
$\phi: A_G(e, \chi)\rightarrow H$ sending $z$ to $u$ and 
restricting to the identity on $G$. Condition \eqref{E9.0.1} 
implies that the skew primitive elements in $H$ are in 
$kG\oplus kG u$. So $\phi$ restricts to an isomorphism on 
the space of skew primitive elements. It follows from 
\cite[Corollary 5.4.7]{Mo} that $\phi$ is injective.  In
this case, $A_{G}(e,\chi)$ is the Hopf subalgebra
of $H$ generated by grouplikes and skew primitives.  
By Corollary \ref{zzcor6.10}(2), $\PCdim A_{G}(e,\chi)
=1$. By Proposition \ref{zzpro2.4}(2), the map $\phi$ is 
actually an isomorphism.  This is (a).

Case 2.2. Suppose that $q:=\chi(e)$ is an $\ell$th primitive 
root of unity for some $\ell\ge 2$.

By Lemma \ref{zzlem5.3}, $\Delta(u^{\ell})=u^{\ell}\otimes 1
+e^{\ell}\otimes u^{\ell}$. So $u^{\ell}$ is in $P_{1, e^{\ell}}(H)$. 
Since $e^{\ell}\neq e$ (otherwise $\chi(e)$ will be an 
$(\ell-1)$th root of unity, which is impossible),  
condition \eqref{E9.0.1} implies that $P_{1, e^{\ell}}(H)=
k(e^{\ell}-1)$. Consequently, there exists $\lambda\in k$ such that 
\begin{equation}\label{E9.2.3}\tag{E9.2.3}
u^{\ell}=\lambda(e^{\ell}-1).
\end{equation}
If $e^{\ell}=1$, one may further assume $\lambda=0$.
As mentioned before, we have a Hopf algebra map $\phi: 
A_G(e, \chi)\rightarrow H$ which restricts to the 
identity on $G$ and sends $z$ to $u$. By $\eqref{E9.2.3}$, 
the element $\omega_\lambda:=z^{\ell}-\lambda(e^{\ell}-1)$
is in $\ker \phi$. Let $I$ be the ideal generated by 
$\omega_\lambda$. Clearly, there is a Hopf algebra map 
$\psi: A_G(e, \chi)/I\rightarrow H$ such that $\phi= \psi\circ\pi$ 
where $\pi : A_G(e, \chi)\rightarrow A_G(e, \chi)/I$ is the 
canonical morphism. As mentioned in Remark \ref{zzrem8.3}, we 
would not use $E_G(e, \chi, \ell, \lambda)$ for $A_G(e, \chi)/I$ 
until we make sure that $\omega_\lambda$ is normal in $A_G(e, \chi)$. 
Let $E= A_G(e, \chi)/I$. By \cite[Corollary 5.3.5]{Mo}, $\pi$ 
restricts to a surjective map from $G$ to $G(E)$. Since $\phi$ 
restricts to the identity on $G$, $\pi$ is injective restricting on 
$G$. Hence we can identify $G(E)$ with $G$. Moreover, $u\in \im\psi$ 
so $kG\subsetneq \im\psi$, which implies that $kG\subsetneq E$. Now 
by Lemma \ref{zzlem8.2}, $\omega_\lambda$ is normal in $A_G(e, \chi)$ 
and we have a Hopf algebra map $\psi: E_G(e, \chi, \ell, \lambda)\rightarrow H$.

Similar to an earlier 
argument, by Corollary \ref{zzcor8.6}(2) and \cite[Corollary 5.4.7]{Mo}, 
$\psi$ is injective. 

Case 2.2.1: If $\psi$ is an isomorphism, then $H\cong 
E_{G}(e, \chi, \ell, \lambda)$, which is (c). 

Case 2.2.2: For the rest, we assume that $\psi$ is not an isomorphism.
After identifying $u$ with $z$, we may assume that $\psi$ is an 
inclusion since it is injective. Set $K=E_{G}(e, \chi, \ell, \lambda)$. 
Then $K\subsetneq H$ and $C_i(K)=C_i(H)$ for $i=0,1$,
where $C_i$ means the $i$th term of the coradical filtration. 
By Corollary \ref{zzcor8.6}(3), $[z]^{\ell}$ is the only element 
in $\PP^2_{g,1}(K)$, up to a boundary and up to a scalar. By Proposition 
\ref{zzpro2.4}(1), there is $v\in H$ such that 
$\Delta(v)=v\otimes 1+ e^{\ell}\otimes v+ [z]^{\ell}$. 
By a simple calculation, for any $g\in G$,
$$g^{-1}vg-\chi(g^{\ell})v\in P_{1, e^{\ell}}(H).$$
By Remark \ref{zzrem6.2}, $e^{\ell}\neq e$. So, by \eqref{E9.2.1}, 
$g^{-1}vg-\chi^{\ell}(g)v\in k(1-e^{\ell})$. Then there is a map 
$\eta: G\rightarrow k$ such that
\begin{equation}
\label{E9.2.4}\tag{E9.2.4}
vg=\chi^{\ell}(g)gv+\eta(g)g(e^{\ell}-1).
\end{equation}
Similar to \eqref{E5.5.1}, one can check that 
\begin{equation}
\label{E9.2.5}\tag{E9.2.5}
\eta(gh)=\eta(g)+\chi^{\ell}(g)\eta(h), \quad 
{\text{for all $g,h\in G$}}.
\end{equation}
Note that $\chi^{\ell}(e)=\chi(e^{\ell})=1$.

By \eqref{E9.2.3}, \eqref{E9.2.4} and Lemma \ref{zzlem5.6}(1), we have
$$\begin{aligned}
\Delta([v,z]) &=[\Delta(v),\Delta(z)]
=[v\otimes 1+e^{\ell}\otimes v+[z]^{\ell},\Delta(z)]\\
&=[v\otimes 1+e^{\ell}\otimes v,z\otimes 1+e\otimes z]+
[[z]^{\ell},\Delta(z)]\\
&=[v,z]\otimes 1+e^{\ell+1}\otimes [v,z]\\
&\qquad+ [v,e]\otimes z+ 
(1-q)ez^{\ell}\otimes z+(q-1) e^{\ell}z\otimes z^{\ell}\\
&=[v,z]\otimes 1+e^{\ell+1}\otimes [v,z]\\
&\qquad+ \eta(e)e(e^{\ell}-1)\otimes z+ 
(1-q)\lambda e(e^{\ell}-1)\otimes z+(q-1)\lambda e^{\ell}z\otimes 
(e^{\ell}-1).
\end{aligned}
$$
Hence,
$$\partial^1_{e^{\ell+1},1}([v,z])=a e(e^{\ell}-1)\otimes z+ b
e^{\ell}z\otimes (e^{\ell}-1)$$
where 
\begin{equation}
\label{E9.2.6}\tag{E9.2.6}
a=-\eta(e)+(q-1)\lambda \quad {\text{and}}\quad
b=(1-q)\lambda.
\end{equation} 
A direct computation shows that
$$\partial^1_{e^{\ell+1},1}(az-be^{\ell}z)=a e(e^{\ell}-1)\otimes z+ b
e^{\ell}z\otimes (e^{\ell}-1).$$
So $\partial^1_{e^{\ell+1},1}([v,z]-az+be^{\ell}z)=0$ or
$[v,z]-az+be^{\ell}z\in Z^1_{e^{\ell+1},1}(H)$.
If $e^{\ell}\neq 1$, then $\PP^1_{e^{\ell+1},1}=0$ implies
that $[v,z]-az+be^{\ell}z\in B^1_{e^{\ell+1},1}(H)=k(e^{\ell+1}-1)$.
Therefore
\begin{equation}
\label{E9.2.7}\tag{E9.2.7}
[v,z]=az-be^{\ell}z +c (e^{\ell+1}-1)
\end{equation}
for some $c\in k$. If $e^{\ell}=1$ (and we take $\lambda=0$ and $b=0$), 
then 
$$
[v,z]-az+be^{\ell}z\in Z^1_{e^{\ell+1},1}(H)=Z^1_{e,1}(H)=k z+k(e-1),
$$
which implies that 
\begin{equation}
\label{E9.2.8}\tag{E9.2.8}[v,z]=(a-d)z+c(e-1)
\end{equation}
for some $c,d\in k$. Combining these two cases, we have
\begin{equation}
\label{E9.2.9}\tag{E9.2.9}
vz=zv+az-d e^{\ell} z+c(e^{\ell+1}-1)
\end{equation}
for some $c,d\in k$. If $e^{\ell}\neq 1$, then $d=b= (1-q)\lambda$.

Let $K'$ be the Hopf subalgebra of $H$ generated by $K$ and $v$.
We claim that $K'$ is isomorphic to one of the algebras
listed in (d)-(i) of Theorem \ref{zzthm9.2}. If the claim is true, 
then, by Corollary \ref{zzcor8.14} and Proposition \ref{zzpro2.4}(2),
$K'=H$ as desired. It remains to show the claim.

By using \eqref{E9.2.9} and the fact that $[\ell]_{q}=0$,
we have
$$\begin{aligned}
\; [v,z^{\ell}]&=\sum_{i=0}^{\ell-1} z^i [v,z] z^{\ell-i-1}
=\sum_{i=0}^{\ell-1} z^i (az-d e^{\ell}z+c (e^{\ell+1}-1)) z^{\ell-i-1}\\
&=\ell (a-de^{\ell})z^{\ell}
+c[\ell]_{q} e^{\ell+1} z^{\ell-1}-\ell cz^{\ell-1}
=\ell (a-de^{\ell})z^{\ell}-\ell cz^{\ell-1}.
\end{aligned}
$$
By \eqref{E9.2.3} and \eqref{E9.2.4}, we obtain that
\begin{equation}
\label{E9.2.10}\tag{E9.2.10}
\lambda \eta(e^{\ell})e^{\ell}(e^{\ell}-1)=\ell
(a-de^{\ell})\lambda (e^{\ell}-1)-\ell c z^{\ell-1}.
\end{equation}
As a consequence of \eqref{E9.2.10}, we have $c=0$. Then
\eqref{E9.2.7} and \eqref{E9.2.8} become
$$[v,z]=\begin{cases} (a-d) z & e^{\ell}=1,\\
az-b e^{\ell} z & e^{\ell}\neq 1.\end{cases}$$
Next we need to consider
multiple cases again.

Case 2.2.2(1): Suppose $e^{\ell}=1$. And we can take $\lambda=b=0$.
Further we can take $\eta=0$ in \eqref{E9.2.4}.
By equation \eqref{E9.2.8},  we have $[v,z]=\xi z$ where 
$\xi=a-d\in k$. 

If $\xi=0$, then $\delta=0$ and we can define 
a Hopf algebra surjection $F_{G}(e,\chi,\ell)\to K'$. 
By \cite[Corollary 5.4.7]{Mo}, this is an isomorphism.
So we obtain that $K'$ is the algebra in (d).

Now assume that $\xi\neq 0$.
Using \eqref{E9.2.2} (with $\partial(g)=0$), \eqref{E9.2.4} and 
the relation $[v,z]=\xi z$, we have,
\begin{align}
\label{E9.2.11}\tag{E9.2.11}
v(zg)&=v(\chi(g) gz)
=\chi(g) (\chi^{\ell}(g) gv)z\\
\notag
&=\chi^{\ell+1}(g) g(zv+\xi  z)\\
\notag
&=\chi^{\ell+1}(g) gzv+\xi\chi^{\ell+1}(g)  g  z,\\
\notag
(vz)g&=(zv+\xi z) g=z(\chi^{\ell}(g) gv)
+\xi\chi(g)  gz\\
\notag
&=\chi^{\ell+1}(g) gzv+\xi\chi(g)gz.
\end{align}
Hence $\xi(\chi^{\ell}(g)-1)=0$ for all $g$. 
Since $\xi\neq 0$, $\chi^{\ell}(g)=1$ for all $g\in G$
\eqref{E8.10.1}.  In this case 
we obtain the algebra in (f).
 
Case 2.2.2(2): Suppose $e^{2\ell}\neq 1$.
By \eqref{E9.2.5}, $\eta(e^i)=i\eta(e)$ for
all $i$. 

If $\lambda=0$, by \eqref{E9.2.6}, 
$b=0$ and $a=-\eta(e)$. In this case,
$[v,z]=az$, and by \eqref{E9.2.4}, we have
a computation similar to \eqref{E9.2.11},
\begin{align}
\label{E9.2.12}\tag{E9.2.12}
v(zg)&=v(\chi(g) gz)
=\chi(g) (\chi^{\ell}(g) gv+\eta(g)g (e^{\ell}-1))z\\
\notag
&=\chi^{\ell+1}(g) g(zv+a z)+\chi(g)\eta(g)g(e^{\ell}-1) z\\
\notag
&=\chi^{\ell+1}(g) gzv+\chi^{\ell+1}(g) a g  z
+\chi(g)\eta(g) g(e^{\ell}-1) z,\\
\notag
(vz)g&=(zv+az) g=z(\chi^{\ell}(g) gv+\eta(g)g(e^{\ell}-1))
+a\chi(g)  gz\\
\notag
&=\chi^{\ell+1}(g) gzv+\chi(g)\eta(g)g(e^{\ell}-1)z+a\chi(g)gz.
\end{align}
Hence $a(\chi^{\ell}(g)-1)=0$ for all $g$. 

If $\chi^{\ell}(g)\neq 1$ for some $g\in G$, then $a=0$. 
As a consequence, $\eta(e)=0$.
So we obtain that $K'$ is the algebra in (e).

If $\chi^{\ell}(g)=1$ for all $g\in G$ \eqref{E8.12.1}, then $\eta$ is an additive
character of $G$ by \eqref{E9.2.5}.  We obtain the algebra in (h).

Now assume $\lambda\neq 0$. We might assume that $\lambda=1$.
Then \eqref{E9.2.10} implies that
$$\eta(e)e^{\ell}=a-b e^{\ell}$$
which is equivalent to $a=0$ and $\eta(e)=-b$. Hence 
\begin{equation}
\label{E9.2.13}\tag{E9.2.13}
vz=zv+\xi e^{\ell} z
\end{equation}
where $\xi=-b=\eta(e)=(q-1)\lambda=q-1\neq 0$. 
Using \eqref{E9.2.2} (with $\partial(g)=0$), \eqref{E9.2.4} and 
\eqref{E9.2.13}, we have a computation similar to \eqref{E9.2.12}
and 
obtain that $\xi(\chi^{\ell}(g)-1)=0$ for all $g$. 
Since $\xi\neq 0$,  $\chi^{\ell}(g)=1$ for all $g\in G$
\eqref{E8.11.1}.
Then $\eta$ is an additive
character of $G$ by \eqref{E9.2.5}. In this case $K'$ is the
algebra in (g).

Case 2.2.2(3): Suppose $e^{\ell}\neq 1$ and $e^{2\ell}= 1$.
If $\lambda=0$, the argument in the first half of Case 2.2.2(2) 
works, and $K'$ is either in (e) or (h). 
It remains to consider the case when $\lambda\neq 0$, 
or $\lambda=1$. 

Using the fact $e^{2\ell}=1$, \eqref{E9.2.10} implies that
\begin{equation}
\label{E9.2.14}\tag{E9.2.14}
\eta(e)=-(a+d)=-(a+b)
\end{equation}
which is equivalent to \eqref{E9.2.6}. Hence 
\begin{equation}
\label{E9.2.15}\tag{E9.2.15}
vz=zv+ az-b e^{\ell} z
\end{equation}
where $b=1-q\neq 0$. A computation similar to both
\eqref{E9.2.11} and \eqref{E9.2.12} shows that
$$a(\chi^{\ell}(g)-1)=b(\chi^{\ell}(g)-1)=0$$
for all $g$. Since $b\neq 0$, $\chi^{\ell}(g)=1$
for all $g$. In this case \eqref{E9.2.5} implies
that $\eta$ is additive. Since ${\rm{char}}\; k=0$ and
$e^{2\ell}=1$, we obtain that $\eta(e)=0$. Together with
\eqref{E9.2.14} and \eqref{E9.2.15}, we have
$$[v,z]=(q-1) z+(q-1)e^{\ell}z.$$
Now the data appeared in this subcase match up with that given in 
Example \ref{zzex8.13}. Hence $K'$ is the algebra in (i). This 
completes the proof of the claim and therefore the proof of the
entire theorem.
\end{proof}

Recall that a Hopf algebra $H$ is called to \emph{have rank one}
if $C_1(H)$ is a free left $C_0(H)$-module of rank two 
\cite{KR, WYC}. 

\begin{proof}[Proof of Theorem \ref{zzthm0.2}]
When $H$ is pointed, $H$ has rank one if and only if
$C_1(H)=kG\oplus kGz$ for some skew primitive element $z$. 
This is equivalent to condition \eqref{E9.0.1}.
So Theorem \ref{zzthm0.2} follows from Theorem 
\ref{zzthm9.2}.
\end{proof}

Let $H$ be a pointed Hopf algebra with $G(H)=G$. Define
$$pHil_{H}(t)=\sum_{n=0}^{\infty} (\dim \{\oplus_{g\in G}
\PP^{n}_{g,1}(H)\}) t^n.$$

By the classification the following is easy.

\begin{corollary}
\label{zzcor9.3} Suppose ${\rm{char}}\; k=0$. Let $H$ be a pointed
Hopf algebra of rank one. Then 
$$pHil_H(t)=\begin{cases} \frac{1}{1-t} & \emph{if} \quad 
H= E_{G}(e,\chi, \ell, \lambda)\\
1+t & {\rm{otherwise}},\end{cases}$$
and
$$\PCdim H=\begin{cases} \infty & \emph{if} \quad 
H= E_{G}(e,\chi, \ell, \lambda)\\
1 & {\rm{otherwise}}.\end{cases}$$
\end{corollary}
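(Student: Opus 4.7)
The plan is to combine the classification in Theorem \ref{zzthm9.2} (equivalently Theorem \ref{zzthm0.2}) with the explicit cohomology computations already recorded in Sections \ref{zzsec6} and \ref{zzsec8}. Since a pointed Hopf algebra of rank one satisfies \eqref{E9.0.1}, Theorem \ref{zzthm9.2} lists nine possible isomorphism classes, and for each class the primitive cohomology has already been computed in earlier results. So the proof is simply bookkeeping across the nine cases.

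First I would dispose of the degree-zero contribution uniformly. By Lemma \ref{zzlem2.1}(1), $\PP^0_{g,1}(H)=k$ exactly when $g=1$ and $0$ otherwise, so $\dim\bigoplus_{g\in G}\PP^0_{g,1}(H)=1$ for every Hopf algebra under consideration. This accounts for the constant term in both values of $pHil_H(t)$.

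Next I would handle the algebras in (a), (b), (d), (e), (f), (g), (h), (i) together. For case (a), Corollary \ref{zzcor6.10} gives $\dim\bigoplus_{g}\PP^1_{g,1}(H)=1$ (only $g=e$ contributes) and $\PCdim H=1$, so $\PP^n_{g,1}(H)=0$ for $n\geq 2$. Case (b) is identical via Corollary \ref{zzcor6.9}. For the six examples (d)--(i), Corollary \ref{zzcor8.14} records exactly the same statements: $\dim\bigoplus_{g}\PP^1_{g,1}(H)=1$ and $\PCdim H=1$. Combining with the previous paragraph gives $pHil_H(t)=1+t$ and $\PCdim H=1$ in all eight of these cases.

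The remaining case (c), $H=E_G(e,\chi,\ell,\lambda)$, is the one that produces the infinite-dimensional answer, and its cohomology is supplied by Corollary \ref{zzcor8.6}(1): $\dim\bigoplus_{g\in G}\PP^n_{g,1}(H)=1$ for every $n\geq 0$, and $\PCdim H=\infty$. Summing the geometric series yields $pHil_H(t)=\sum_{n\geq 0}t^n=\tfrac{1}{1-t}$. Putting the two computations together produces exactly the dichotomy stated in the corollary. There is essentially no obstacle, since all the hard work—in particular the delicate Theorem \ref{zzthm6.5} and the Radford decomposition argument of Lemma \ref{zzlem8.5} underlying Corollary \ref{zzcor8.6}—has already been done; the only thing to verify is that the rank-one hypothesis $\chi(e)\in\{1\}\cup\{q:q\text{ not a root of unity}\}$ in case (a) is the one covered by Corollary \ref{zzcor6.10} rather than the primitive-root case of Corollary \ref{zzcor6.11}, which is immediate from the form of Theorem \ref{zzthm9.2}(a).
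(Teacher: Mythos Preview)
Your proposal is correct and follows exactly the paper's own approach: the paper's proof simply reads ``This follows from Corollaries \ref{zzcor6.9}, \ref{zzcor6.10}, \ref{zzcor8.6} and \ref{zzcor8.14},'' and you have spelled out precisely how these four corollaries cover the nine cases of Theorem \ref{zzthm9.2}, together with the uniform degree-zero contribution from Lemma \ref{zzlem2.1}(1).
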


\begin{proof} This follows from Corollaries \ref{zzcor6.9},
\ref{zzcor6.10}, \ref{zzcor8.6} and \ref{zzcor8.14}. 
\end{proof}

We make a few remarks.

\begin{remark}
\label{zzrem9.4} 
\begin{enumerate}
\item[(1)]
Consider the Hopf filtration $F$ of $H:=F_G(e,\chi,\ell)$ by defining
$\deg (h)=0$ for all $h\in E_{G}(e,\chi,\ell,0)$ and $\deg w=1$.
Then $$\PCdim \gr_F H=\infty>1=\PCdim H.$$
\item[(2)]
It is a very interesting project to prove a version of Theorem
\ref{zzthm9.2} in positive characteristic. Scherotzke started 
this in \cite{Sc} and classified all Hopf algebras of rank one
that are generated by $C_1(H)$. The work of Wang \cite{W1}, Wang-Wang
\cite{WW} and Nguyen-Wang-Wang \cite{NWW} also
provides some very interesting examples. 
\item[(3)]
Let $R$ be the free algebra $k\langle x_1,\cdots,x_w\rangle$ with 
$x_i$ being primitive. Then $\gldim R=1$ and 
$$\Tor^R_{n}(k,k)=\begin{cases} k & n=0,\\
\oplus_{i=1}^w kx_i & n=1,\\ 0 & n\geq 2.
\end{cases}
$$
Let $H$ be the graded dual Hopf algebra of $R$. Note that $H$ is 
neither affine or noetherian. By Lemma \ref{zzlem3.6}(2), 
$\PP^n_{1,1}(H)\cong (\Tor^R_n(k,k))^\circ$.
As a consequence, $\PCdim H=1$ and $pHil_H(t)=1+w t$.
\item[(4)]
We conjecture that if a pointed Hopf algebra $H$ is affine 
(or noetherian) with $\PCdim$ 1, then $H$ has rank one. 
Consequently, $pHil_H(t)=1+t$. This conjecture holds in a special
case, see the next lemma.
\item[(5)]
It is impossible to get a reasonable classification of
pointed Hopf algebras of rank two without any extra condition,
see a related comment in Remark \ref{zzrem9.8}(3).
\end{enumerate}
\end{remark}

\begin{lemma}
\label{zzlem9.5} Let $H$ be pointed and coradically graded. If 
$\PCdim H=1$ and $H$ is left noetherian, then $H$ has rank one.
\end{lemma}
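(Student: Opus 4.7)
The plan is to combine Radford's projection theorem with the cohomological duality of Proposition~\ref{zzpro3.7} to produce the Hilbert series of $R$, and then use noetherianness to force $\dim R_1 \leq 1$. Write $H = R \# kG$ where $G = G(H)$ and $R$ is the connected graded braided Hopf algebra of coinvariants (in the Yetter--Drinfeld category over $kG$). Since $H$ is coradically graded, so is $R$, and rank one for $H$ is equivalent to $d := \dim_k R_1 = 1$.

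I would first establish that $R$ is left noetherian: since $H$ is free as a right $R$-module with basis $G$, it is faithfully flat over $R$, and left-ideal ACC descends to $R$. Consequently $R_+/R_+^2$ is finite-dimensional, so $d < \infty$ and $R$ is a locally finite, finitely generated algebra. Lemma~\ref{zzlem8.5} then gives $\PCdim R = \PCdim H = 1$, and Proposition~\ref{zzpro3.7}(1) gives $\gldim R^\circ = 1$, so $R^\circ \cong T(W)$ for some graded vector space $W = (R^\circ)_+/(R^\circ)_+^2$. Because $R$ is coradically graded (inherited from $H$), its non-scalar primitives lie entirely in $R_1$, so $\PP^1_{1,1}(R)$ is concentrated in degree $1$ with dimension $d$. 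By Lemma~\ref{zzlem3.6}(2), $W \cong \PP^1_{1,1}(R)^\circ$ is likewise concentrated in degree $1$ with $\dim W_1 = d$, so $R^\circ \cong T(R_1^*)$ and $\dim R_n = d^n$ for all $n \geq 0$.

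The remaining step, which I expect to be the main obstacle, is deducing $d \leq 1$ from $\dim R_n = d^n$ together with the noetherianness of $R$. Since $R$ is cogenerated in degree $1$ (again inherited from the coradically graded property) and $\dim R_n = d^n = \dim R_1^{\otimes n}$, the iterated reduced comultiplication identifies $R$ with the cotensor coalgebra $T^c(R_1)$; the compatible braided-bialgebra structures on $T^c(V)$ (with $V = R_1$) are precisely the quantum shuffle algebras for the braiding coming from the Yetter--Drinfeld structure on $V$. For $\dim V \geq 2$, such quantum shuffle algebras are not finitely generated as algebras: their indecomposables $R_+/R_+^2$ are graded-dual to the primitives of the free braided bialgebra $R^\circ = T(V^*)$, which form an infinite-dimensional braided Lie algebra. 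This contradicts the finite-dimensionality of $R_+/R_+^2$ established in the second paragraph. Hence $d \leq 1$, and combined with $d \geq 1$ (forced by $\PCdim H = 1$ via Lemma~\ref{zzlem9.1}), $d = 1$, so $H$ has rank one. An alternative route that avoids invoking the general non-finite-generation of quantum shuffle algebras is to first show that $R$ is generated as an algebra by $R_1$ via Proposition~\ref{zzpro2.4}(2) applied to the sub-Hopf algebra of $H$ generated by grouplikes and skew primitives; combined with $\dim R_n = d^n$, this identifies $R$ with the free associative algebra $T(R_1)$, which is left noetherian only for $d \leq 1$.
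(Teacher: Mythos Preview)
Your setup is correct and matches the paper's: pass to $R$ via Radford's decomposition, observe $R$ is left noetherian, use Lemma~\ref{zzlem8.5} and Proposition~\ref{zzpro3.7}(1) to get $\gldim R^\circ = 1$, conclude $R^\circ$ is a free algebra $T(W)$, and identify $W$ with $R_1^*$ (concentrated in degree~1) so that $\dim R_n = d^n$ with $d = \dim R_1$. (A minor quibble: faithful flatness alone does not force descent of the left noetherian property---your argument really uses the explicit free basis $G$ and the fact that $G$ acts on $R$ by algebra automorphisms, so that $HI = \bigoplus_g (g\cdot I)\#g$ for a left ideal $I\subset R$.)

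The genuine gap is in the final step, and both of your routes have problems. In the first route, the claim that the primitives of $T(V^*)$ form an infinite-dimensional ``braided Lie algebra'' whenever $\dim V\geq 2$ is not a standard theorem; braided analogues of Milnor--Moore and free-Lie-algebra results are delicate and braiding-dependent, so this needs an actual proof or a precise reference. In the second route, Proposition~\ref{zzpro2.4}(2) has the hypothesis $\PCdim K\leq 1$ on the \emph{subalgebra} $K$ generated by grouplikes and skew primitives, not on $H$; knowing $\PCdim H=1$ does not give $\PCdim K\leq 1$, so the proposition does not apply as stated.

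The paper closes this gap with a single citation you are missing: Stephenson--Zhang \cite[Theorem~0.1]{SZ} shows that any left noetherian connected graded algebra has sub-exponential growth. Since $R$ is left noetherian and $\dim R_n = d^n$, this forces $d\leq 1$ immediately. Equivalently (and this is how the paper phrases it), $R^\circ$ is a free algebra of sub-exponential growth, hence $R^\circ\cong k[t]$. This replaces your entire third paragraph with one line.
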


\begin{proof} Since $H$ is coradically graded, $H=R\# kG$. 
Since $H$ is left noetherian, so is $R$. By 
\cite[Theorem 0.1]{SZ}, $R$ is locally 
finite and has sub-exponential growth. So $R^{\circ}$
is locally finite and has sub-exponential growth.
By Lemma \ref{zzlem8.5},
$\gldim R^{\circ}=1$. Then $R^{\circ}$ is a free algebra.
The only free algebra with sub-exponential growth is $k[t]$.
Therefore $R^{\circ}\cong k[t]$. This implies that
$H$ has rank one.
\end{proof}

For the rest of this section we introduce some invariants
of $H$ which are related to the rank of $H$.
The signature of a connected Hopf algebra or of a coideal 
subalgebra of a connected Hopf algebra was introduced by
Gilmartin \cite{Gi} and Brown-Gilmartin \cite{BG}. 
Here we define a version of signature for pointed Hopf
algebras.

\begin{definition}
\label{zzdef9.6} 
Let $H$ be a pointed Hopf algebra and $\gr H$ be the 
associated graded Hopf algebra with respect to the coradical
filtration. Write $\gr H=R\# G(H)$ where $R=(\gr H)^{co \; \pi}$
where $\pi: \gr H\to G(H)$ is the projection. 
The \emph{signature space of $H$} is defined to be the generating space
of $R$, namely, $\sign(H)=R_{\geq 1}/(R_{\geq 1})^2.$
The \emph{signature series of $H$}, denoted by $sHil_H(t)$, is the 
Hilbert series of the graded space $\sign(H)$. 
The \emph{signature of $H$}, denoted by $S(H)$, is the sequence of the 
degrees (with 
multiplicity) of elements in any homogeneous basis of $\sign(H)$.
\end{definition}

\begin{example}
\label{zzex9.7}
We list some examples of signature series.
\begin{enumerate}
\item[(1)]
If $H=U(\fg)$, then $sHil_H(t)=(\dim \fg) t$.
\item[(2)]
Let $H$ be the Hopf algebras $A(1,\lambda,0)$ and
$B(\lambda)$ in \cite[Theorem 1.3]{Zh}, then 
$sHil_H(t)=2t+t^2$.
\item[(3)]
If $H=F_G(e,\chi,\ell)$, then $sHil_H(t)=
t+t^{\ell}$.
\end{enumerate}
\end{example}

\begin{remark}
\label{zzrem9.8} 
\begin{enumerate}
\item[(1)]
If $H$ is connected, then the dual space 
$\sign(H)^{\circ}$, which is called \emph{lantern of $H$}, 
has a natural Lie algebra structure. It is expected that
$\sign(H)^{\circ}$ in general has a braided Lie algebra
structure.
\item[(2)]
When $H$ is connected, the signature has to satisfy certain
conditions. For example, $t+t^{\ell}$ cannot be a signature series
of any connected Hopf algebra over a field of characteristic zero. 
But this is a signature series
for a pointed Hopf algebra [Example \ref{zzex9.7}(3)].
\item[(3)]
The rank of $H$ is the dimension of $R_1$, so it is a part 
of signature. By Theorem \ref{zzthm0.2}, if $H$ has rank one, 
then $sHil_H(t)$ is either $t$ or $t+t^{\ell}$. If $H$ has rank 
two, then there are infinitely many possible $sHil_H(t)$.
\end{enumerate}
\end{remark}

\section{Primitive cohomological ring}
\label{zzsec10}

In this section we consider some structures 
on primitive cohomology ring
$$\PP(H):=\bigoplus_{n\geq 0} \{\bigoplus_{g\in G(H)}  
\PP^n_{1,g}(H)\}.$$
Note that we choose $\PP^n_{1,g}(H)$ instead of $\PP^n_{g,1}(H)$
in this section, because $\PP^n_{1,g}(H)$ seems easier and more 
natural to use when we have both product and differential.

First we define a product on the primitive cohomology ring. Let $H$ 
be a bialgebra and let $G$ be the (semi)group of grouplike elements 
in $H$. Let $g$ and $h$ be two elements in $G$. Let $x\in T_{1,g}(H)^n$ 
and $y\in T_{1,h}(H)^m$ (see Section \ref{zzsec1}). The \emph{product} 
of $x$ and $y$ is defined to be 
\begin{equation}
\label{E10.0.1}\tag{E10.0.1}
x\odot y=x \otimes ((g\otimes \cdots \otimes g) y)
\in T_{1,gh}(H)^{n+m}.
\end{equation}
We will show that $\Omega_{1,G}:=
(\bigoplus_{g\in G} T_{1,g}(H), \odot, \partial)$ 
is a dg algebra where $\partial=(\partial_{1,g}^n)$.

We consider a slightly more general situation. Let $D$ be a 
subbialgebra of $H$, and define a complex 
$T_{1,D}(H):=X^{\geq 0}\square_H D$
where $X$ is the Hochschild complex for $H$ defined as in \eqref{E3.4.1}. 
It is easy to see that $T_{1,D}(H)^n= H^{\otimes n}\otimes D$ which we 
denote  by $H^{\otimes n}\boxtimes D$. We will define a 
product on $T_{1,D}(H)$ which generalizes \eqref{E10.0.1}. Every element 
$f\in T_{1,D}(H)^n$ can be written as
$f=\sum f_i\boxtimes d_i$
where $f_i\in H^{\otimes n}$ and $d_i\in D$. For every $f\boxtimes c
\in T_{1,D}(H)^n$ and $g\boxtimes d\in T_{1,D}(H)^m$ we define the 
{\it product} of these two elements by
\begin{equation}
\label{E10.0.2}\tag{E10.0.2}
\quad
\end{equation}
$$(f\boxtimes c)\odot (g\boxtimes d)=\sum (f\otimes (c_1\otimes \cdots
\otimes c_m)g)\boxtimes c_{m+1} d=\sum (f\otimes \Delta^{m-1}(c_1)g)
\boxtimes c_{2} d$$
where $\Delta^{m}(c)=\sum c_1\otimes \cdots \otimes c_{m+1}$.
Note that 
$$\Delta^m=\begin{cases} \epsilon & m=-1,\\
Id &m=0,\\
(\Delta\otimes Id^{m-1})\cdots \Delta & m\geq 1.
\end{cases}
$$
It is easy to see that \eqref{E10.0.2} is equivalent to \eqref{E10.0.1}
when $D=k G(H)$. 

Let $f\boxtimes c\in T_{1,D}(H)^n$. We define the differential $\partial$ by 
$$\partial (f\boxtimes c):=(1\otimes f)\boxtimes c-D_{n}(f)\boxtimes c
+(-1)^{n+1} \sum (f\otimes c_1)\boxtimes c_2$$
where $D_n$ is defined in \eqref{E1.0.2} and $\sum c_1\otimes c_2=\Delta(c)$.

Note that the product on $T_{1,D}(H)$ can be defined when $D$ is any left 
$H$-comodule algebra. In particular, $D$ can be taken to be a left coideal 
subalgebra of $H$.

\begin{lemma}
\label{zzlem10.1} Let $x\in T_{1,D}(H)^n, y\in T_{1,D}(H)^m$ and
$z\in T_{1,D}(H)^p$ for $n,m,p\in {\mathbb N}$. Then
the following hold.
\begin{enumerate}
\item[(1)]
$$(x\odot y)\odot z=x\odot (y\odot z)\in T_{1,D}(H)^{n+m+p}.$$
\item[(2)]
$$\partial^{n+m}(x\odot y)=\partial^n(x)\odot y+
(-1)^{n} x\odot \partial^m(y).$$
\item[(3)]
$(T_{1,D}(H), \odot, \partial)$ is a dg algebra
where $\partial=(\partial^n)$.
\end{enumerate}
\end{lemma}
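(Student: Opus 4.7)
The plan is to verify the three claims by direct Sweedler-style computation, with (3) deducible from (1), (2) and $\partial^2=0$ via the cotensor-complex description of $T_{1,D}(H)$. For (1), writing $x=f\boxtimes c$, $y=g\boxtimes d$, $z=h\boxtimes e$ of degrees $n,m,p$ respectively and applying \eqref{E10.0.2} twice, both $(x\odot y)\odot z$ and $x\odot(y\odot z)$ reduce to
\[
\sum \bigl(f\otimes \Delta^{m+p-1}(c_1)\cdot (g\otimes \Delta^{p-1}(d_1)h)\bigr)\boxtimes c_2 d_2 e ,
\]
after using coassociativity to unify all iterated comultiplications of $c$ and $d$, the algebra-map property of $\Delta$ (so componentwise multiplication in $H^{\otimes k}$ is compatible with iterated comultiplications), and associativity of multiplication in $H$ and $D$.

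For (2), decompose $\partial^n = \lambda_n - \delta_n + (-1)^{n+1}\rho_n$, where $\lambda_n(f\boxtimes c) = (1\otimes f)\boxtimes c$, $\delta_n(f\boxtimes c) = D_n(f)\boxtimes c$ and $\rho_n(f\boxtimes c) = \sum (f\otimes c_1)\boxtimes c_2$. Then verify three identities for $x$ of degree $n$ and $y$ of degree $m$: (a) $\lambda_{n+m}(x\odot y) = \lambda_n(x)\odot y$, immediate from the definitions; (b) $\delta_{n+m}(x\odot y) = \delta_n(x)\odot y + (-1)^n x\odot \delta_m(y)$, which follows from the Leibniz-type splitting $D_{n+m}(f\otimes h) = D_n(f)\otimes h + (-1)^n f\otimes D_m(h)$ combined with the key coassociativity identity $(\mathrm{Id}^{\otimes j}\otimes \Delta\otimes \mathrm{Id}^{\otimes (m-j-1)})\Delta^{m-1}(c_1) = \Delta^m(c_1)$ valid for every $0\le j\le m-1$, which lets $D_m$ pass through the factor $\Delta^{m-1}(c_1)$; and (c) $\rho_{n+m}(x\odot y) = x\odot \rho_m(y)$, which uses that $\Delta$ is an algebra map so that $\Delta(c_2 d) = c_{2,1}d_1\otimes c_{2,2}d_2$, followed by coassociativity of $\Delta$. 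A fourth, auxiliary identity
\[
\rho_n(x)\odot y = x\odot \lambda_m(y)
\]
falls out of the same bookkeeping: both sides equal $(f\otimes c_1\otimes c_2 g_1\otimes \cdots\otimes c_{m+1} g_m)\boxtimes c_{m+2} d$, where $\Delta^{m+1}(c) = c_1\otimes \cdots \otimes c_{m+2}$. Using this to combine (a)--(c), the terms $(-1)^{n+1}\rho_n(x)\odot y$ and $(-1)^n x\odot \lambda_m(y)$ appearing in $\partial^n(x)\odot y + (-1)^n x\odot \partial^m(y)$ cancel by sign, and the remaining pieces assemble into $\partial^{n+m}(x\odot y)$.

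For (3), with graded associativity and the graded Leibniz rule in hand, the only remaining dg-algebra axiom is $\partial\circ \partial = 0$. This follows from the identification $T_{1,D}(H) = X^{\geq 0}\square_H D$, where $X$ is the Hochschild complex of $H$ defined in \eqref{E3.4.1} and $D$ carries its natural left $H$-comodule structure (given by the inclusion coaction when $D$ is a subbialgebra, and by the postulated coaction when $D$ is merely a left $H$-comodule algebra). The prescribed $\partial$ agrees with the differential induced on the cotensor of the Hochschild complex (as a complex of right $H$-comodules) with $D$; since $\partial_X^2 = 0$ by coassociativity of $\Delta$ as already used in Section \ref{zzsec1}, so is $\partial^2$ on $T_{1,D}(H)$. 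The main obstacle is the auxiliary identity $\rho_n(x)\odot y = x\odot \lambda_m(y)$: it is a striking coincidence reflecting the specific way in which $\odot$ couples $\Delta^{m-1}(c_1)$ to the left factor of $g$, and it is precisely what forces the signs in the Leibniz rule to match. All other verifications are mechanical manipulations via coassociativity, the algebra-map property of $\Delta$, and associativity in $H$ and $D$.
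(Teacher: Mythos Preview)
Your proof is correct and follows essentially the same route as the paper's. The paper carries out parts (1) and (2) as single direct Sweedler computations rather than decomposing $\partial$ into named pieces $\lambda_n,\delta_n,\rho_n$, but the substance is identical: in particular, the paper's computation of (2) also hinges on the cancellation coming from your auxiliary identity $\rho_n(x)\odot y = x\odot \lambda_m(y)$ (it appears there as the equality $[(f\otimes c_1)\boxtimes c_2]\odot(g\boxtimes d) = (f\boxtimes c)\odot(1\otimes g\boxtimes d)$). Your explicit decomposition makes the structure of the Leibniz verification more transparent, while the paper's version is terser; for (3) both arguments appeal to the cotensor description $T_{1,D}(H)=X^{\geq 0}\square_H D$ for $\partial^2=0$.
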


\begin{proof} (1) By linearity we may assume that $x=f\boxtimes c$,
$y=g\boxtimes d$ and $z=h\boxtimes e$ where $c,d,e\in D$. For 
simplicity we will omit $\sum$ in various places. 
$$\begin{aligned}
(x\odot y)&\odot z=
[(f\otimes (c_1\otimes \cdots \otimes c_n)g)\boxtimes c_{n+1} d ]\odot
[h\boxtimes e]\\
&=[(f\otimes (c_1\otimes \cdots \otimes c_n)g)\otimes (c_{n+1}d_1\otimes 
\cdots \otimes c_{n+m}d_{m})h]\boxtimes c_{n+m+1}d_{m+1}e\\
&=[f\boxtimes c]\odot [g\otimes (d_1\otimes 
\cdots \otimes d_{m})h]\boxtimes d_{m+1}e\\
&=x\odot (y\odot z).
\end{aligned}
$$

(2) Again we may assume that $x=f\boxtimes c\in T_{1,D}(H)^n$ and 
$y=g\boxtimes d\in T_{1,D}(H)^m$ and we write $\partial$ for all 
$\partial^n$.
$$\begin{aligned}
\partial(x\odot y)&=\partial 
[(f\otimes (c_1\otimes \cdots \otimes c_n)g)\boxtimes c_{n+1} d]\\
&=1\otimes [(f\otimes (c_1\otimes \cdots \otimes c_n)g)\boxtimes c_{n+1} d]\\
& \qquad -[(D_n(f)\otimes (c_1\otimes \cdots \otimes c_n)g)\boxtimes 
c_{n+1} d]\\
&\qquad +(-1)^{n+1} [(f\otimes (c_1\otimes \cdots \otimes c_n\otimes c_{n+1})
D_{m}g)\boxtimes c_{n+2} d]\\
&\qquad +(-1)^{n+m+1} 
[(f\otimes (c_1\otimes \cdots \otimes c_n)g)\otimes c_{n+1}d_1 \boxtimes 
c_{n+2} d_2]\\
&=[(1\otimes f)\boxtimes c]\odot [g\boxtimes d]
-[D_n(f)\boxtimes c] \odot [g\boxtimes d]\\
&\qquad +(-1)^{n+1} [f\boxtimes c]\odot [D_{m}g\boxtimes d]
+(-1)^{n+m+1} 
[f\boxtimes c]\odot [(g\otimes d_1) \boxtimes d_2]\\
&=\partial(f\boxtimes c)\odot (g\boxtimes d)
+(-1)^n [(f\otimes c_1)\boxtimes c_2]\odot (g\boxtimes d)\\
&\qquad +(-1)^{n} (f\boxtimes c)\odot \partial(g\boxtimes d)
-(-1)^n (f\boxtimes c)\odot (1\otimes g\boxtimes d)\\
&=\partial(f\boxtimes c)\odot (g\boxtimes d)+
(-1)^{n} (f\boxtimes c)\odot \partial(g\boxtimes d)\\
&=\partial(x)\odot y+(-1)^{n} x\odot \partial(y).
\end{aligned}
$$

(3) It is easy to check that $1_k \boxtimes 1$ is the 
identity of $T_{1,D}(H)$ and $\partial(1_k\boxtimes 1)=0$. 
By parts (1,2), $(T_{1,D}(H),\odot,\partial)$ is a 
dg algebra with identity. 
\end{proof}

\begin{definition}
\label{zzdef10.2} 
Let $H$ be a bialgebra and $D$ be a subbialgebra of $H$.
\begin{enumerate}
\item[(1)]
The {\it $D$-cobar construction} of $H$, or \emph{cobar construction
of $H$ based on $D$} is defined to be the dg algebra
$\Omega_{1,D}(H):=(T_{1,D}(H),\odot, \partial).$
\item[(2)]
The {\it $D$-cohomology ring} of $H$ is defined to 
be
$$\PP_{D}(H):={\text{H}}^*(\Omega_{1,D}(H))=
\bigoplus_{n\geq 0} \Cotor^n_H(k,D).$$
This is a connected graded $k$-algebra.
The degree $n$ piece $\Cotor^n_H(k,D)$ is also denoted by
$\PP_{D}^n(H)$.
\item[(3)]
If $D=k$, $\PP_{k}(H)$, also denoted by $\PP_{1}(H)$,  is 
called the {\it connected cohomology ring} 
(or simply {\it cohomology ring}) of $H$.
\item[(4)]
Let $G=G(H)$ and $D=kG$. The {\it primitive cohomology ring} 
of $H$ is defined to be $\PP(H)=\PP_{kG}(H)$, or equivalently,
$$\PP(H):={\text{H}}^*(\Omega_{1,G}(H))=
\bigoplus_{n\geq 0,g\in G} \PP^n_{1,g}(H).$$
The primitive cohomology ring $\PP(H)$ is a 
${\mathbb Z}\times G$-graded algebra. 
The ${\mathbb Z}$-grading is called \emph{cohomological grading}, 
and $G$-grading, which is induced from the internal structure 
of $H$, is just called \emph{$G$-grading}. This is a connected 
graded $k$-algebra if we forget about the $G$-grading. 
\end{enumerate}
\end{definition}


Next show that there are natural Hopf actions on the
$D$-cohomology ring. Let $H$ be a bialgebra and $D$ 
be a subbialgebra. Suppose $K\subset H$ is another Hopf subalgebra 
of $H$, namely, $K$ is a subbialgebra of $H$ with an antipode.
In most cases, $K=D$.  For every $a\in K$, the left adjoint action 
of $a$ on $D$ is defined  to be 
$$ad_l(a) (c)=\sum a_1 cS(a_2)\in H$$
for all $c\in D$. When $H$ is commutative, then the action $ad_l(a)$
is trivial. We say $D$ is $K$-normal if $ad_l(K)(D)
\subset D$. If all elements in $K$ commute with elements
in $D$, then $ad_l(a)(c)=c$. In this case $D$ is $K$-normal
and the left adjoint action of $K$ on $D$ is trivial. If
$K=H=kG$ and $D$ is a subgroup algebra, then $K$-normality
agrees with the normality in group theory. 

Let $\Delta^n$ denote $(\Delta\otimes Id^{n-1})(\Delta\otimes Id^{n-2})
\cdots \Delta$. Suppose now that $D$ is $K$-normal. For 
every $a\in K$ we define the action of $a$ on $T_{1,D}(H)$
as follows:
$$\begin{aligned}
ad_l(a) (f_1\otimes \cdots \otimes & f_n\boxtimes c)\\
&=\sum a_1 f_1 S(a_{2n+2})\otimes \cdots \otimes a_n f_n S(a_{n+3})\boxtimes 
a_{n+1} c S(a_{n+2})\\
&= \sum \Delta^{n}(a_1) (f_1\otimes \cdots \otimes f_n\boxtimes c)
\Delta^{n}(S(a_2))\\
&= \sum \Delta^{n}(a_1) (f_1\otimes \cdots \otimes f_n\boxtimes c)
S^{\otimes (n+1)}\Delta^{n}(a_2)
\end{aligned}
$$
for all $f_1\otimes \cdots f_n\boxtimes c\in T_{1,D}(H)^n$.

\begin{lemma}
\label{zzlem10.3} Suppose that $D$ is $K$-normal.
\begin{enumerate}
\item[(1)]
Let $a,b$ be elements in $K$ and let $x\in T_{1,D}(H)^n$.
Then 
$$ad_l(ab) (x)=ad_l(a)(ad_l(b)(x)).$$
This means that $ad_l$ is a natural left $K$-action on $T_{1,D}(H)$. 
\item[(2)]
Suppose that $K$ is cocommutative.
Let $a$ be an element in $K$ and let $x\in T_{1,D}(H)^n$
and $y\in T_{1,D}(H)^m$.
Then 
$$ad_l(a)(x\odot y)=\sum ad_l(a_1)(x)\odot ad_l(a_2)(y).$$
\item[(3)]
Suppose that $K$ is cocommutative.
$$\partial(ad_l(a)(x))=ad_l(a)(\partial (x))$$
for all $x\in T_{1,D}(H)$.
\end{enumerate}
\end{lemma}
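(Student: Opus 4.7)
The plan is to recast all three assertions in terms of the compact formula
\[
ad_l(a)(x) = \sum \Delta^n(a_1) \cdot x \cdot S^{\otimes(n+1)}\bigl(\Delta^n(a_2)\bigr)
\]
for $x \in T_{1,D}(H)^n$ viewed as an element of $H^{\otimes n}\boxtimes D \subset H^{\otimes(n+1)}$, with multiplication performed componentwise. Once everything is written in this uniform way, the three parts reduce to standard Hopf-algebraic manipulations with Sweedler notation, relying on coassociativity, the antipode axiom, and (for parts (2) and (3)) cocommutativity of $K$.

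For part (1), I would substitute $ab$ into the formula and use two facts: $\Delta^n \colon K \to K^{\otimes(n+1)}$ is an algebra homomorphism, hence $\Delta^n((ab)_1) = \Delta^n(a_1)\Delta^n(b_1)$, and $S$ is an algebra anti-homomorphism, which when applied componentwise yields $S^{\otimes(n+1)}(\Delta^n(a_2)\Delta^n(b_2)) = S^{\otimes(n+1)}(\Delta^n(b_2))\cdot S^{\otimes(n+1)}(\Delta^n(a_2))$. Substituting back and factoring gives $ad_l(ab)(x) = ad_l(a)(ad_l(b)(x))$ directly. Note that this part does not need cocommutativity.

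For part (2), the key intermediate fact is that when $K$ is cocommutative, the comultiplication $\Delta \colon H \to H\otimes H$ is $ad_l$-equivariant with respect to the diagonal action on $H \otimes H$: expanding $\Delta(ad_l(a)(h))$ and $\sum ad_l(a_1)(h_1)\otimes ad_l(a_2)(h_2)$ in Sweedler notation shows that they differ by a transposition of two Sweedler components of $a$, which is precisely what cocommutativity absorbs (since then $\Delta^k(a)$ is invariant under arbitrary permutations of its factors). Iterating gives equivariance of $\Delta^{m-1}$. Applying this to the explicit formula
\[
(f\boxtimes c)\odot(g\boxtimes d) = \sum \bigl(f\otimes \Delta^{m-1}(c_1)\,g\bigr)\boxtimes c_2 d,
\]
and matching the diagonal $K$-action of $\Delta^{n+m}(a)$ on the left against the actions of $\Delta^n(a_1)$ and $\Delta^m(a_2)$ on the two factors on the right, verifies the identity.

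For part (3), I would check the three summands of $\partial$ in turn. For $(1\otimes f)\boxtimes c$, applying $ad_l(a)$ puts Sweedler components $a_{(1)}$ and $S(a_{(2n+4)})$ flanking the inserted $1$; cocommutativity of $K$ lets us permute $\Delta^{2n+3}(a)$ so these two components become adjacent, after which the antipode axiom $\sum a_{(1)}S(a_{(2)}) = \epsilon(a_{(1)})1_H$ collapses them and produces $1 \otimes ad_l(a)(f\boxtimes c)$. For $D_n(f)\boxtimes c$ and $\sum(f\otimes c_1)\boxtimes c_2$, commuting $ad_l(a)$ with the insertion of $\Delta$ at an interior tensor slot (or at the $D$-slot) is exactly the equivariance of $\Delta$ established in part (2), so these terms also intertwine. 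The main obstacle I anticipate is the Sweedler-index bookkeeping in part (2): several instances of iterated comultiplication are mixed together by $\odot$, and care is needed to invoke cocommutativity at the right place to identify the Sweedler factors of $a$ on both sides. Once that organizational step is handled (most cleanly by expanding to the full iterated coproduct and tracking permutations), the remaining computations are routine.
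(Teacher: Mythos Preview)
Your proposal is correct and follows essentially the same approach as the paper: a direct Sweedler-notation computation using that $\Delta^n$ is an algebra map, that $S$ is an anti-homomorphism, and that cocommutativity of $K$ permits reordering of Sweedler components. The only cosmetic difference is that in part (2) you first isolate the equivariance of $\Delta$ under the diagonal adjoint action as a reusable fact, whereas the paper unfolds the entire expression $\sum ad_l(a_1)(x)\odot ad_l(a_2)(y)$ in one long chain and simplifies step by step; both routes amount to the same index bookkeeping.
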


\begin{proof} (1) By linearity, we may assume that 
$x=f_1\otimes \cdots \otimes f_n\boxtimes c$. 
Write $\Delta(a)=\sum a_1\otimes a_2$ and 
$\Delta(b)=\sum b_1\otimes b_2$. Then 
$\Delta(ab)=\sum a_1 b_1\otimes a_2 b_2$. Hence
$$\begin{aligned}
ad_l(ab) (x)&= \sum \Delta^n(a_1 b_1) x \Delta^n(S(a_2 b_2))\\
&=\sum \Delta^n(a_1)\Delta^n(b_1) x \Delta^n(S(b_2))\Delta^n(S(a_2))\\
&=ad_l(a)(ad_l(b)(x)).
\end{aligned}
$$

(2) Write $x=f\boxtimes c=f_1\otimes \cdots \otimes f_n\boxtimes c$ and
$y=g\boxtimes d=g_1\otimes \cdots \otimes g_m\boxtimes d$. Then
$$x\odot y=\sum (f\otimes \Delta^{m-1}(c_1)g)\boxtimes c_2d.$$
Hence we have, following the cocommutativity of $K$,
$$\begin{aligned}
&\sum ad_l(a_1)(x)\odot ad_l(a_2)(y)\\
&= \sum \Delta^{n-1}(a_1) f \Delta^{n-1}(S(a_4))
\boxtimes a_2 c S(a_3) \odot 
\Delta^{m-1}(a_5) g \Delta^{m-1}(S(a_8))
\boxtimes a_6 d S(a_7)\\
&=\sum \Delta^{n-1}(a_1) f \Delta^{n-1}(S(a_2))
\boxtimes a_3 c S(a_4) \odot 
\Delta^{m-1}(a_5) g \Delta^{m-1}(S(a_6))
\boxtimes a_7 d S(a_8)\\
&=\sum (\Delta^{n-1}(a_1) f \Delta^{n-1}(S(a_2))
\otimes \Delta^{m-1}((a_3 c S(a_4)_1)) 
\Delta^{m-1}(a_5) g \Delta^{m-1}(S(a_6)))\\
&\qquad\qquad\qquad \qquad\qquad\qquad 
\boxtimes (a_3 c S(a_4))_2 a_7 d S(a_8)\\
&=\sum (\Delta^{n-1}(a_1) f \Delta^{n-1}(S(a_2))
\otimes \Delta^{m-1}(a_3 c_1 S(a_4)) 
\Delta^{m-1}(a_5) g \Delta^{m-1}(S(a_6)))\\
&\qquad\qquad\qquad \qquad\qquad\qquad 
\boxtimes a_7 c_2 S(a_8) a_9 d S(a_{10})\\
&=\sum (\Delta^{n-1}(a_1) f \Delta^{n-1}(S(a_2))
\otimes \Delta^{m-1}(a_3 c_1) g \Delta^{m-1}(S(a_6)))\\
&\qquad\qquad\qquad \qquad\qquad\qquad 
\boxtimes a_7 c_2 d S(a_{10})\\
&=\sum (\Delta^{n-1}(a_1) f \Delta^{n-1}(S(a_2))
\otimes \Delta^{m-1}(a_3)\Delta^{m-1}(c_1) g \Delta^{m-1}(S(a_6)))\\
&\qquad\qquad\qquad \qquad\qquad\qquad 
\boxtimes a_7 c_2 d S(a_{10})\\
&=ad_l(a)(\sum f \otimes \Delta^{m-1}(c_1) g \boxtimes c_2 d )\\
&= ad_l(x\odot y).
\end{aligned}
$$

(3) Note that, for all $a,f_i\in H$,
$$D_n (\Delta^{n-1}(a) (f_1\otimes \cdots \otimes f_n))
=\Delta^{n}(a) D_n(f_1\otimes \cdots\otimes f_n)$$
and
$$D_n ((f_1\otimes \cdots \otimes f_n)\Delta^{n-1}(a) )
=D_n(f_1\otimes \cdots\otimes f_n)\Delta^{n}(a).$$
Since $K$ is cocommutative, 
$\sum a_1 S(a_n)\otimes a_2\otimes \cdots \otimes a_{n-1}=
\sum 1 \otimes a_2\otimes \cdots \otimes a_{n-1}$ where
$\Delta^{n-1}(a)=\sum a_1\otimes \cdots \otimes a_n$.
Write $x=f\boxtimes c=f_1\otimes \cdots \otimes f_n\boxtimes c$. 
Using the cocommutativity, we have
$$\begin{aligned}
ad_l(a) (\partial(x))&=
ad_l(a)(1\otimes f\boxtimes c-D_n(f) \boxtimes c+\sum (-1)^{n+1}
f\otimes c_1\boxtimes c_2)\\
&=\sum \Delta^n(a_1)(1\otimes f)\Delta^n(S(a_2))\boxtimes a_3c S(a_4)\\
&\qquad\quad 
-\Delta^n(a_1)D_n(f)\Delta^n(S(a_2))\boxtimes a_3c S(a_4)\\
&\qquad\quad 
+(-1)^{n+1}\Delta^n(a_1)(f\otimes c_1)\Delta^n(S(a_2))\boxtimes a_3c_2 S(a_4)
\\
&=\sum (1\otimes \Delta^{n-1}(a_1)f\Delta^{n-1}(S(a_2)))\boxtimes a_3c S(a_4)\\
&\qquad\quad 
-D_n(\Delta^{n-1}(a_1)f\Delta^{n-1}(S(a_2)))\boxtimes a_3c S(a_4)\\
&\qquad\quad 
+(-1)^{n+1}(\Delta^{n-1}(a_1)f\Delta^{n-1}(S(a_2))\otimes a_3c_1 S(a_4))
\boxtimes a_5c_2 S(a_6)\\
&=\partial(\sum \Delta^{n-1}(a_1)f\Delta^{n-1}(S(a_2))\boxtimes a_3c S(a_4))\\
&=\partial(ad_l(a)(x)).
\end{aligned}
$$
\end{proof}

Now we have the following result.

\begin{proposition}
\label{zzpro10.4} Let $H$ be a bialgebra and $D$ be a subbialgebra
of $H$. Let $K$ be a cocommutative subbialgebra of $H$ such that
(a) $K$ is a Hopf algebra and (b) $D$ is $K$-normal. Then there is a
natural left adjoint action of $K$ on the $D$-cohomology ring
$\PP_{D}(H)$ induced by the action of $K$ on $T_{1,D}(H)$.
\end{proposition}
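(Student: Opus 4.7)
The plan is to assemble the conclusion directly from the three parts of Lemma \ref{zzlem10.3}, together with the standard fact that a chain-level action of an algebra on a complex, commuting with the differential, descends to an action on cohomology. No new calculation is needed; the real content has been packaged into Lemma \ref{zzlem10.3}.

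First, I would check that the formula
\[
ad_l(a)(f_1 \otimes \cdots \otimes f_n \boxtimes c) = \sum \Delta^n(a_1)\bigl(f_1 \otimes \cdots \otimes f_n \boxtimes c\bigr) S^{\otimes (n+1)}\Delta^n(a_2)
\]
really does land in $T_{1,D}(H)^n = H^{\otimes n} \boxtimes D$. The $H$-tensor slots are unconstrained, so one only needs the last slot $\sum a_{n+1} c S(a_{n+2})$ to lie in $D$; this is precisely the content of the $K$-normality hypothesis (b), which requires the antipode $S$ furnished by hypothesis (a). With the map well-defined, Lemma \ref{zzlem10.3}(1) states that $ad_l(ab) = ad_l(a) \circ ad_l(b)$, so $T_{1,D}(H)$ acquires a left $K$-module structure in each cohomological degree.

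Next, by Lemma \ref{zzlem10.3}(3) (which is where the cocommutativity of $K$ enters), each $ad_l(a)$ commutes with $\partial$, hence is a chain endomorphism of $\Omega_{1,D}(H)$. Therefore it descends to a well-defined endomorphism on each cohomology group $\PP^n_D(H) = \mathrm{H}^n(\Omega_{1,D}(H))$ via $ad_l(a)([x]) := [ad_l(a)(x)]$. The identity $ad_l(ab) = ad_l(a)\circ ad_l(b)$ is preserved under passage to cohomology, so the assignment $a \mapsto ad_l(a)$ assembles into a left $K$-action on the graded vector space $\PP_D(H) = \bigoplus_{n\geq 0} \PP^n_D(H)$. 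As a bonus, Lemma \ref{zzlem10.3}(2) shows that $\Omega_{1,D}(H)$ is a left $K$-module dg algebra, so $\PP_D(H)$ inherits the structure of a left $K$-module graded algebra, which is the natural enhancement of the statement.

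The main obstacle, such as it is, lies not in the proposition itself but in Lemma \ref{zzlem10.3}(2,3), where cocommutativity of $K$ must be used repeatedly to rearrange terms like $\Delta^n(a_1)\otimes \Delta^n(S(a_2))$ across the product and the differential. Since that lemma is already available, the proof of Proposition \ref{zzpro10.4} reduces to the three short observations above: well-definedness (from $K$-normality and the existence of $S$), multiplicativity of $ad_l$ in the $K$-variable (Lemma \ref{zzlem10.3}(1)), and commutation with $\partial$ (Lemma \ref{zzlem10.3}(3)).
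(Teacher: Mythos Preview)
Your proposal is correct and matches the paper's approach exactly: the paper states Proposition \ref{zzpro10.4} immediately after Lemma \ref{zzlem10.3} with only the sentence ``Now we have the following result,'' treating it as a direct consequence of that lemma. Your write-up simply makes explicit the standard passage from a chain-level action commuting with the differential to an action on cohomology, which is precisely what the paper leaves implicit.
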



\providecommand{\bysame}{\leavevmode\hbox to3em{\hrulefill}\thinspace}
\providecommand{\MR}{\relax\ifhmode\unskip\space\fi MR }
\providecommand{\MRhref}[2]{%

\href{http://www.ams.org/mathscinet-getitem?mr=#1}{#2} }
\providecommand{\href}[2]{#2}

\end{document}